\numberwithin{equation}{section}
\newtheorem{thm}{Theorem}[section]
\newtheorem{lmm}[thm]{Lemma}
\newtheorem{prop}[thm]{Proposition} 
\newtheorem{coro}[thm]{Corollary}
\theoremstyle{definition}
\newtheorem{defn}[thm]{Definition}
\newtheorem{remark}[thm]{Remark}
\newtheorem*{remark*}{Remark} 
\newtheorem{example}[thm]{Example}
\numberwithin{equation}{section}
\newcommand{\R}{\ensuremath{\mathbb{R}}}
\newcommand{\RR}{\ensuremath{\mathbb{R}}}
\newcommand{\Z}{\ensuremath{\mathbb{Z}}}
\newcommand{\N}{\ensuremath{\mathbb{N}}}
\newcommand{\C}{\ensuremath{\mathbb{C}}}
\newcommand{\WF}{\ensuremath{\mathrm{WF}}}
\newcommand{\Ell}{\ensuremath{\mathrm{Ell}}}
\newcommand{\supp}{\ensuremath{\mathrm{supp}}}
\newcommand{\mk}{\ensuremath{\mathfrak}}
\newcommand{\msf}{\ensuremath{\mathsf}}
\newcommand{\la}{\ensuremath{\langle}}
\newcommand{\ra}{\ensuremath{\rangle}}
\newcommand{\sgn}{\ensuremath{\mathrm{sgn}}}
\newcommand{\Id}{\ensuremath{\mathrm{Id}}}
\newcommand{\oc}{\ensuremath{\mathrm{1c}}}
\newcommand{\ps}{\ensuremath{\mathrm{ps}}}
\newcommand{\full}{\ensuremath{\mathrm{full}}}
\newcommand{\sub}{\ensuremath{\mathrm{sub}}}
\newcommand{\ff}{\ensuremath{\mathrm{ff}}}
\newcommand\SL{\mathcal{L}}
\newcommand{\SM}{\mathcal M}
\newcommand{\SR}{\mathcal R}
\newcommand{\SI}{\mathcal I}
\newcommand\xoc{{x_{\oc}}}
\newcommand\yoc{{y_{\oc}}}
\newcommand\etaoc{{\eta_{\oc}}}
\newcommand\xioc{{\xi_{\oc}}}
\newcommand\zetaoc{{\zeta_{\oc}}}
\newcommand\rhops{{\rho_{\ps}}}
\newcommand\ffocps{{\ff_{\oc-\ps}}}
\newcommand\ffpsoc{{\ff_{\ps-\oc}}} 
\newcommand\ffococ{{\ff_{\oc-\oc}}} 
\newcommand\xoco{x_{\oc, 1}}
\newcommand\yoco{y_{\oc, 1}}
\newcommand\xioco{\xi_{\oc, 1}}
\newcommand\etaoco{\eta_{\oc, 1}}
\newcommand\xoct{x_{\oc, 2}}
\newcommand\yoct{y_{\oc, 2}}
\newcommand\xioct{\xi_{\oc, 2}}
\newcommand\etaoct{\eta_{\oc, 2}}
\newcommand\Poi{\mathcal{P}}
\newcommand\Poim{\mathcal{P}_-}
\newcommand\Poip{\mathcal{P}_+}
\newcommand\Poiz{\mathcal{P}_0}
\newcommand\Poipm{\mathcal{P}_\pm}
\newcommand{\Rp}{\mathcal{R}_+}
\newcommand{\Rm}{\mathcal{R}_-}
\newcommand{\Rpm}{\mathcal{R}_\pm}
\newcommand\Legps{L}
\newcommand\Lagps{\Lambda}
\newcommand\ang[1]{\langle #1 \rangle}
\renewcommand\sgn{\operatorname{sgn}}
\newcommand\FBR{\mathrm{FBR}}
\newcommand\BBR{\mathrm{BBR}}
\newcommand\FSR{\overline{\Lambda_-'}}
\newcommand\BSR{\overline{\Lambda_+'}}
\newcommand\ocphase{\overline{{}^{\oc} T^* \R^n}}
\newcommand\psphase{\overline{{}^{\ps} T^* \R^{n+1}}}
\newcommand\Diag{\operatorname{Diag}}
\newcommand\SjR{\mathrm{SR}}
\newcommand\Schw{\mathcal{S}}
\newcommand\dvolh{d\mathrm{vol}^{1/2}}
\newcommand{\ococparaone}{v}
\newcommand{\ococparatwo}{w}
\newcommand{\cl}{\ensuremath{\mathrm{cl}}}
\newcommand{\Char}{\ensuremath{\mathrm{Char}}}
\newcommand{\Sg}{\ensuremath{\mathfrak{Cl}_g}}
\newcommand{\csmp}{\ensuremath{\tilde{S}}}
\newcommand{\ococb}{\ensuremath{ \leftidx{^{ \mathsf{L}\oc}}{ T^*X^2_b} } }
\newcommand{\rHp}{\ensuremath{ H^{m',0}_p } }
\title{The scattering map for the Schr\"odinger operator on curved spaces}
\author{Andrew Hassell, Qiuye Jia}
\date{\today}
\thanks{We acknowledge the support of the Australian Research Council through grant FL220100072}
\begin{document}

\begin{abstract}
 Let $P$ be a Schr\"odinger operator $D_t+\Delta_g$ with metric and potential perturbation that are compactly supported in spacetime $\R^{n+1}$. 
 Here $D_t = -i \partial_t$ and $\Delta_g$ is the positive Laplacian.
 We consider the scattering map $S$ defined previously by the first author with Gell-Redman and Gomes \cite{gell2022propagation}, which relates the asymptotic data, as $t \to \pm \infty$, of global solutions $u$ to $Pu = 0$. We show that $S$ is a `1-cusp' Fourier integral operator, where `1-cusp' refers to a pseudodifferential calculus introduced by Vasy and Zachos \cite{zachos2022inverting} in the completely different setting of inverse problems on asymptotically conic manifolds. Our viewpoint is that 1-cusp geometry is the natural setting for studying the asymptotic data of solutions to Schr\"odinger's equation. 
\end{abstract}
\maketitle

\tableofcontents

\section{Introduction}\label{sec:intro}

\subsection{The Poisson operator and the scattering map}
\label{subsec:1-1}
We consider the Schr\"odinger operator $P$ on $\R^{n+1}_{z, t}$, where $z \in \R^n$, $t \in \R$, 
\begin{equation} \label{eq:def-Schrodinger-op}
P = D_t + \Delta_{g(t)} + V(z, t), 
\end{equation}
where $D_t = -i \partial_t$, $\Delta_{g(t)}$ is the positive Laplace operator with respect to a smooth family of metrics $g(t)$ on $\R^n_z$, and $V$ is a smooth complex-valued potential function. In addition, we make the strong assumption that $g(t)$ is a compactly supported, in spacetime, perturbation of the flat metric, and $V$ is also compactly supported in spacetime. Thus, there exist large constants $R$, $T$ such that if either $|z| \geq R$, or if $|t| \geq T$, then $g(t) = \sum_{i,j} g_{ij}(z,t) dz_i dz_j$ coincides with the flat metric $g_0 = \sum_i dz_i^2$, and $V$ vanishes identically. To put this another way, near spacetime infinity, $P$ coincides with the `free' Schr\"odinger operator $P_0 := D_t + \Delta_0$, where $\Delta_0$ is the standard (positive) Laplacian on $\RR^n$. We assume, in addition, that each $g(t)$ is non-trapping. The support condition on $g(t) - g_0$ and $V$ is a severe restriction, and a more natural assumption would be a decay rate on the perturbation. However, even this strong assumption leads to an interesting and novel result, with a (relatively) transparent proof. For expository clarity, the authors have chosen to explain their result in this simplest possible setting. 

Our interest is in global solutions to $Pu = 0$, and in the asymptotic data, otherwise known as final state data or scattering data. Every  global solution $u$ has an asymptotic expansion of the form 
\begin{equation}\label{eq:u expansion}
u \sim (4\pi it)^{-n/2} e^{i|z|^2/4t} f_\pm\big( \frac{z}{2t} \big) + O(|t|^{-n/2 - \epsilon}), \quad t \to \pm \infty
\end{equation}
for large positive or negative times, where this holds in a pointwise sense if $f_\pm$ are sufficiently regular, and distributionally in general. The functions $f_\pm$ are by definition the asymptotic data of the solution $u$. 

We may also view the $f_\pm$ as `Cauchy data'. In fact, this is a useful analogy that we will pursue further. Consider a classical elliptic problem such as the Poisson problem, that is, solving the flat Laplace equation $\Delta v = 0$ in the unit ball $B$ of $\R^n$, with given boundary data. The Cauchy data for this problem consists of the function $v$ restricted to the boundary $\partial B$, together with the normal derivative of $v$, denoted $\partial_\nu v$, restricted to $\partial B$. Given a smooth solution to $\Delta v = 0$ --- or even just $v$ such that $\Delta v$ vanishes to infinite order at $\partial B$ --- the full Taylor series of $v$ at $\partial B$ is determined by the Cauchy data. 

In a similar way, we can represent a Schr\"odinger solution $u$ (so $Pu = 0$) for large $|t|$ (in which case, due to our assumption on $g(t)$, the operator is locally the free Schr\"odinger operator) in the form 
\begin{equation}
u(z, t) = (4\pi it)^{-n/2} e^{i|z|^2/4t} F_\pm\big( \frac{z}{2t}, t \big).
\end{equation}
Writing $F_\pm$ as a function of 
\begin{equation}\label{eq:Z defn}
Z = \frac{z}{2t}, \quad s = \frac1{t}, 
\end{equation} 
we find that under the assumption that $F_\pm$ is a smooth function of these variables, then the full Taylor series of $F_\pm$ at $s=0$ is determined by its value at $s=0$, that is, by the asymptotic data $f_\pm$. (In fact, as a function of $s$ and $Z$, $F_\pm$ itself solves a Schr\"odinger equation thanks to the pseudoconformal symmetry of Schr\"odinger's equation, which certainly shows that the Taylor series of $F_\pm$ at $s=0$ being determined by its leading term. However, the general point that we are making, that the value of $f_\pm$ determines the full Taylor series of $F_\pm$, follows from a Taylor series analysis in $s$ and does not require any such symmetry.) That means $u$ itself is determined up to a rapidly decreasing error.  In this sense, the $f_\pm$ justify the name `Cauchy data'. 

As is well known for the Poisson problem, one cannot specify \emph{both} sets of Cauchy data if one wants to solve the global problem $\Delta v = 0$ on $B$. Indeed, Green's formula for two such solutions shows that 
\begin{equation}
\int_{\partial B} \Big( v_1 (\partial_\nu v_2) - (\partial_\nu v_1) v_2 \Big) \, d\sigma = 0.
\label{eq:pairing Laplace}\end{equation}
This can be interpreted as saying that the Cauchy data $(v, \partial_\nu v)$ for \emph{global} solutions lie in an isotropic subspace for a symplectic pairing of two sets of Cauchy data. Because of this, it can be at most `half-dimensional'. Therefore, it makes heuristic sense to expect that one can prescribe one piece of Cauchy data, not both. As is well known, prescribing either one of the two pieces of Cauchy data leads to well-posed problems, the Dirichlet problem (prescribing $v$ at $\partial B$) and the Neumann problem (prescribing $\partial_\nu v$ at $\partial B$), although in the latter case there is a one-dimensional obstruction to solvability, that is, the Neumann data must have mean zero for a solution to exist, and then the solution is unique up to an arbitrary constant. 

A similar phenomenon is true for the asymptotic data of a solution to $Pu = 0$. In fact, there is a similar pairing formula as in \eqref{eq:pairing Laplace}, except that $P$ is not formally self-adjoint (on $L^2(\R^{n+1})$ with respect to either the standard measure $dt \, dz$ or the metric measure $dt \, d\mathrm{vol}(g(t))$) , so the pairing formula involves a global solution $u$ to $Pu = 0$ and a global solution $w$ to $P^* w = 0$. If the asymptotic data of $u$ and $w$ are $f_\pm$ and $g_\pm$ respectively, then we obtain 
\begin{equation}\label{eq: pairing}
\int_{\R^n} \Big( f_+(Z) \overline{g_+(Z)} - f_-(Z) \overline{g_-(Z)} \Big) \, dZ = 0. 
\end{equation}
This again shows that the Cauchy data of $P$ and $P^*$ annihilate each other with respect to a symplectic pairing. Under the reasonable assumption that the solvability theory of $P$ and $P^*$ are similar (they differ only by a smooth, compactly supported multiplication operator), this shows that again the Cauchy data of global solutions are at most `half-dimensional' and supports the heuristic that to solve the $Pu = 0$ one should specify only one piece of Cauchy data. This heuristic was verified in \cite{gell2022propagation} where it was shown that such a `final-state problem' has a unique solution, given arbitrary distributional data $f_-$. 

Returning once more to the Poisson problem, given that the Dirichlet data alone determines a solution to $\Delta v = 0$ in the ball $B$, one can study the map that takes the Dirichlet data to the Neumann data of the same solution, that is, the Dirichlet-to-Neumann map $\Lambda$. From a microlocal point of view this is a relatively simple operator, due to the ellipticity of $\Delta$. Elliptic regularity implies that the Neumann data cannot have wavefront set larger than that of the Dirichlet data, and indeed $\Lambda$ is an (elliptic) pseudodifferential operator, in particular preserving the wavefront set. 

Similarly, for the Schr\"odinger operator $P$, one can study the map that takes the asymptotic data $f_-$ to the global solution $u$ to $Pu = 0$ with asymptotic data $f_-$ as $t \to -\infty$ (which we call the Poisson operator $\Poim$, i.e. $u = \Poim f_-$), as well as the map that takes $f_-$ to  asymptotic data $f_+$ for a solution to $Pu = 0$, since $f_-$ alone determines $u$, and $u$ determines $f_+$. This is called the scattering map in \cite{gell2022propagation}, and denoted $S$, thus $Sf_- = f_+$. 
However, \emph{unlike} the Laplacian, the Schr\"odinger operator is non-elliptic and, in the parabolic calculus introduced by Lascar \cite{lascar} and developed in \cite{gell2022propagation}, there is microlocal propagation along bicharacteristics. We will describe this more fully shortly, but here we just observe that the heuristic analogue of the statement that $\Lambda$ is a pseudodifferential operator would be a hypothesis that $S$ is a Fourier integral operator. Indeed, this is the main result of the present article --- see Theorem~\ref{thm: main}. An analogous result has been obtained in many similar situations, for a scattering-type map relating different pieces of `Cauchy data'. The first time this was noticed seems to be in Guillemin \cite{guillemin1976sojourn}. It was shown in a discrete setting by Zelditch \cite{zelditch1997quantizedcontact}, for Helmholtz scattering on asymptotically conic manifolds by Melrose and Zworski \cite{melrose1996scattering}, in a semiclassical setting by Alexandrova \cite{alexandrova2005, alexandrova2006}, by the first author with Wunsch \cite{hassell-wunsch2008}, and by Vasy for the wave operator on a Lorentzian spacetime \cite{vasy2010deSitter}. Thus, it is almost a `meta-theorem' that such a result should hold, and in that context, our result is unsurprising.

However, there is an unexpected aspect to our result, which is that $S$ is an FIO of a previously undescribed type, which we call a 1-cusp FIO as it is related to the Lie Algebra of 1-cusp vector fields as introduced by Vasy and Zachos \cite{zachos2022inverting}. To explain this, we first describe in more detail the geometry of the bicharacteristic (or Hamilton) flow of $P$. This flow takes place,  in the microlocal setup of \cite{gell2022propagation}, on a \emph{compactification} of the natural phase space $\R^{n+1}_{t,z} \times \R^{n+1}_{\tau, \zeta}$. 
 The characteristic variety $\Sigma(P)$ is the zero set of the symbol of $P$ on this compactified phase space, 
 and the set $\Char(P)$, where $\Sigma(P)$ meets the boundary of the compactification, is the stage on which microlocal propagation takes place. (We remind the reader that microlocal propagation takes place along the Hamilton vector field of the symbol $p$ of $P$ within the characteristic set. Integral curves of the Hamilton vector field within the characteristic set are called bicharacteristics.) The characteristic set $\Char(P)$ consists of an open part $\Char^\circ(P)$, where the (suitably rescaled) bicharacteristic flow is non-vanishing and two \emph{radial sets} $\SR_\pm$ defined in \eqref{eq:radial defn}, where the bicharacteristic  flow vanishes.  
 The dynamics of bicharacteristic flow are rather simple, on account of the non-trapping assumption on the metrics $g(t)$: $\Char^\circ(P)$ is foliated by bicharacteristics, all of which converge to $\SR_-$ in the backward direction and $\SR_+$ in the forward direction. In fact, $\SR_-$ is a source, and $\SR_+$ a sink, of the Hamilton flow on $\Sigma(P)$. 
 
 The dimension of $\Sigma(P)$ is $2n+1$, while the dimension of each $\SR_\pm$ is $n$. Thus, for each point $q \in \SR_\pm$ there is an $n$-dimensional family of bicharacteristics that tends to $q$ (along either the forward flow, corresponding to the $+$ sign, or the backward flow if $-$). It turns out, and this is explained in detail in Section~\ref{subsec: 1c arises}, that bicharacteristic flow determines a canonical symplectic map between $T^* \SR_-^\circ$ and $T^* \SR_+^\circ$, where $\SR_\pm^\circ$ denotes the interior of $\SR_\pm$. (The interior $\SR_\pm^\circ$ is diffeomorphic to $\R^n_Z$, where $Z$ is as in \eqref{eq:Z defn}, and $\SR_\pm$ is diffeomorphic to its radial compactification --- see Section~\ref{subsec:Hvf radial}.) This map extends continuously, and therefore also canonically, to a diffeomorphism between the `1-cusp' cotangent bundles $^{\oc}T^* \SR_-$ and $^{\oc}T^* \SR_+$; this is a particular compactification of the cotangent bundle over the interior taking account of the so-called 1-cusp structure on $\SR_\pm$. It does \emph{not} typically extend to a map between the usual cotangent bundle or the scattering cotangent bundle; \emph{the 1-cusp geometry is intrinsically associated to the Schr\"odinger equation}. 
 
 We describe the 1-cusp structure in Section~\ref{sec: 1c PsiDO}. Here we give an informal description near the boundary of the radial compactification of $\R^n_Z$, which could represent either of $\SR_\pm$. We use polar coordinates $r = |Z|$ and $y_i$, $1 \leq i \leq n-1$ being angular coordinates (i.e. homogeneous of degree zero); for example, in a conic neighbourhood of a given direction, without loss of generality given by $(0, 0, \dots, 1)$, i.e the coordinate axis in the $Z_n$ direction, we could take $y_i =Z_i/Z_n$. Then a basis of sections of the 1-cusp cotangent bundle, which is both smooth and uniformly linearly independent as $r \to \infty$, is given by $r dr$ and $r dy_i$. Compare this to a smooth basis of the scattering cotangent bundle, which would take the form $dr$ and $r dy_i$, or a smooth basis of the usual cotangent bundle, which would take the form $d(1/r)$ and $dy_i$. 
 
 We can now give a partial explanation for why it is the 1-cusp structure that is the correct structure for understanding $S$ as a Fourier integral operator. From \cite{gell2022propagation}, we have a formula for $S$ of the form\footnote{There is a sign error in \cite{gell2022propagation} that we have corrected here. The sign can be checked by noting that $S$ is the identity when $P = P_0$.}
 \begin{equation}
 S = i(2\pi)^n \mathcal{P}_+^* [P,Q_+] \mathcal{P}_-,
\label{eq:S formula} \end{equation}
 where $\mathcal{P}_\pm$ are the Poisson operators that map asymptotic data $f_\pm$ to the global solution $u$ with that asymptotic data as $t \to \pm \infty$, and $Q_+$ is a microlocal cutoff, such that $[P, Q_+]$ is microsupported away from the radial sets. Let us consider, in this introduction, just the free Poisson operator $\mathcal{P}_0$, which maps the asymptotic data $f_-(Z)$, where $Z$ is as in \eqref{eq:Z defn}, to the free Schr\"odinger solution with this asymptotic data at $t=-\infty$. In the free case, we have $\Poim = \Poip = \Poi_0$. Using Fourier analysis we can write this down explicitly:
 \begin{equation}\label{eq: Poisson zero}
\mathcal{P}_0f(t, z) = (2\pi)^{-n} \int e^{i(z \cdot Z - t |Z|^2)} f(Z) \, dZ.
\end{equation}
This can be interpreted as a Fourier integral operator, and as usual, its canonical relation is given by 
\begin{equation}
\mathcal{C} = \{ (z, t, \zeta, \tau; Z, \mathfrak{Z}) \mid \tau = - |\zeta|^2, \ \zeta = Z, \ \mathfrak{Z} = -z + 2t Z \}.
\label{eq:canrel}\end{equation}
Here $\zeta, \tau$ are the frequency coordinates dual to $z, t$, while $\mathfrak{Z}$ is dual to $Z$. We interpret this canonical relation geometrically, over a compact set in $(z,t)$-space. We note that, as per the discussion above, the coordinate $\mathfrak{Z}$ can be interpreted as an element of the 1-cusp cotangent bundle over $\overline{\R^n_Z}$. Indeed, writing $z = \hat Z z^\parallel + z^\perp$ in components parallel and orthogonal to the direction $\hat Z$, the coefficient of $r dr$, $r = |Z|$, is equal to $2t - r^{-1} z^\parallel$, while the angular component of $\mathfrak{Z}$ is given by $- z^\perp$, the component of $z$ orthogonal to $Z$.

We now consider which points $(z, t, \zeta, \tau)$ are related to $(Z, \mk{Z})$. Notice that the canonical relation has dimension $2n+1$, while $(Z, \mk{Z})$ is $2n$-dimensional. Since $P \Poiz = 0$, one might guess that the set of $(z, t, \zeta, \tau)$ related to a fixed $(Z, \mk{Z})$ is a bicharacteristic, and we shall show that this is the case. 

We begin by writing the equations of bicharacteristic flow. Outside the compact set in spacetime where the metric perturbation is supported, the flow is simply given by the free flow which is very simple:
\begin{equation}
\dot t = 1, \quad \dot z = 2 \zeta, \quad \dot \tau = 0, \quad \dot \zeta = 0. 
\end{equation}
Combined with $\zeta = Z$ from \eqref{eq:canrel}, we obtain 
\begin{equation}
\dot t = 1, \quad \dot z = 2 Z. 
\end{equation}
We deduce from this that if $t_0$ is the time when $z$ is orthogonal to $Z$, say $z(t_0) = z_0^\perp$, then 
\begin{equation}
z = z_0^\perp + 2(t-t_0) Z    . 
\end{equation}
The parameters $(z_0^\perp, t_0)$ parametrize all the bicharacteristics with a fixed spatial frequency $\zeta = Z$. 

On the other hand, we also have $z = -\mk{Z} + 2tZ$ from \eqref{eq:canrel}, and comparing these equations we see that indeed, the $(z, t, \zeta, \tau)$ related to $(Z, \mk{Z})$ comprise the bicharacteristic of $P$ with $(z_0^\perp, t_0)$ determined by $\mk{Z}$ by 
\begin{equation}
    \mk{Z} = - z_0^\perp + 2 t_0 Z. 
\end{equation}
Therefore, $-z_0^\perp$ and $2t_0$ are precisely the perpendicular, resp. parallel 1-cusp components of the 1c-frequency variable $\mk{Z}$. Moreover, $\mathfrak{Z}$ is large \emph{as a 1c-frequency} if and only if the corresponding bicharacteristic has $(z_0^\perp, t_0)$ large, i.e. if and only if the bicharacteristic avoids a large ball in spacetime.

Now returning to \eqref{eq:S formula}, the perturbed Poisson operators $\mathcal{P}_\pm$  map $L^2(\R^n_\zeta)$ into the null space of $P$. Their canonical relations are therefore invariant, not under the free bicharacteristic flow, but the perturbed bicharacteristic flow. That means that, at time $t$, they follow geodesics of the metric $g(t)$ in the $(z, \xi)$ variables. We view \eqref{eq:S formula} as a composition of FIOs, and the first thing to check is that the canonical relations compose sensibly. \emph{This is the case provided one uses the 1-cusp cotangent bundle for $\SR_\pm$.}  One checks that the overall canonical relation of $S$ is a canonical graph that relates the initial point of each bicharacteristic, viewed as a point of ${}^{\oc} T^* \SR_-$, with its final point viewed as a point in ${}^{\oc} T^* \SR_+$. The metric perturbation ensures that this is (at least typically) a nontrivial perturbation of the identity map. Then it is straightforward to check that $S$ is indeed a 1-cusp FIO, provided of course that this class of operators has been correctly defined. The setting up of FIO theory in the 1-cusp setting occupies the greater part of this article.

\subsection{Main results}

As discussed above, bicharacteristic flow leads to a map from ${}^{\oc} T^* \SR_-$ to ${}^{\oc} T^* \SR_+$, relating the initial and final points of each (compactified) bicharacteristic. We call this map the \emph{classical scattering map}, and denote it $\Sg$.  The main result of this article is 

\begin{thm}\label{thm: main}
The scattering map $S$ is a elliptic 1-cusp Fourier integral operator of order zero. In the notation of Section~\ref{sec: 1c-1c Lagrangian distribution} (see Remark~\ref{rem:main theorem notation} for a brief explanation), 
\begin{align}\label{eq:S 1c-1c FIO}
S \in I^0_{\oc-\oc}(X_b^2,\beta_b^*(\mathrm{Gr}(\Sg)')), \quad X = \overline{\R^n}. 
\end{align}
The scattering map $S$ acts as the identity microlocally on functions (asymptotic data) supported in a compact subset of $\R^n$, or are supported microlocally near frequency-infinity in the 1-cusp sense. 
\end{thm}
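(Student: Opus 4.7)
My plan is to prove the theorem by reading the explicit formula \eqref{eq:S formula}, $S = i(2\pi)^n \Poip^* [P, Q_+] \Poim$, as a composition of three FIOs that close up in the 1-cusp--parabolic calculus developed in the earlier sections. First I would certify each factor. The Poisson operators $\Poipm$ should be FIOs of mixed type --- 1-cusp on the asymptotic-data side $X = \overline{\R^n}$ and parabolic on the spacetime side --- whose canonical relations are the bicharacteristic transports of the free relation \eqref{eq:canrel}. For the free operator $\Poiz$ this is immediate from \eqref{eq: Poisson zero}: the phase $\Phi = z\cdot Z - t|Z|^2$ is a non-degenerate phase in the mixed sense precisely because, in the 1-cusp polar coordinates $(r,y)$ on $\overline{\R^n_Z}$, the derivatives $r\partial_r\Phi$ and $r\partial_{y_i}\Phi$ are smooth and bounded, and read off the fibre variable $\mathfrak{Z}$ expanded in the 1-cusp basis $\{r\,dr, r\,dy_i\}$ exactly as in the introduction. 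For the perturbed Poisson operators I would use the intertwining description from \cite{gell2022propagation} expressing $\Poipm$ as a parabolic-FIO dressing of $\Poiz$, which produces the correct transport of the canonical relation along the bicharacteristic flow of $P$. Independently, by construction of $Q_+$ in \cite{gell2022propagation}, $[P, Q_+]$ is a parabolic pseudodifferential operator whose microsupport is a compact subset of $\Char^\circ(P)$, bounded away from both radial sets.

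The geometric heart of the argument is the composition $\Poip^* \circ [P, Q_+] \circ \Poim$. Non-trapping ensures that every bicharacteristic in $\Char^\circ(P)$ runs from $\SR_-$ to $\SR_+$ and meets the microsupport of $[P, Q_+]$ in a compact (and, after arranging $Q_+$ to interpolate monotonically between $1$ near $\SR_-$ and $0$ near $\SR_+$, connected) set of times. The middle factor therefore selects, for each pair of boundary 1-cusp fibre coordinates $(Z,\mathfrak{Z})$ on $\SR_-$ and $(Z',\mathfrak{Z}')$ on $\SR_+$, at most one bicharacteristic joining them; the pair lies in the composed relation iff $(Z',\mathfrak{Z}') = \Sg(Z,\mathfrak{Z})$, by the very definition of $\Sg$. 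The two compositions are clean in the 1-cusp--parabolic sense because the Hamilton flow is transverse to the $[P, Q_+]$-shell and because the 1-cusp rescaling of $\mathfrak{Z}$ is exactly the one that renders the asymptotic parametrization of bicharacteristics by their boundary points on $\SR_\pm$ smooth up to, and including, fibre infinity. The main obstacle --- and the payoff for the calculus developed earlier in the paper --- is to verify this cleanness uniformly in the joint limit of $Z \to \partial \overline{\R^n}$ and $|\mathfrak{Z}| \to \infty$, where the 1-cusp structure genuinely differs from the scattering and standard cotangent structures; once that is in hand the 1-cusp FIO composition theorem delivers an element of $I^0_{\oc-\oc}(X_b^2,\beta_b^*(\mathrm{Gr}(\Sg)'))$.

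To close I would compute the order and principal symbol along the composition diagram. Bookkeeping of orders and dimensional factors gives order $0$, and the principal symbol on $\mathrm{Gr}(\Sg)$ is the non-vanishing half-density assembled by bicharacteristic transport of the principal symbols of the three factors. The prefactor $i(2\pi)^n$ in \eqref{eq:S formula} is calibrated so that $S = \Id$ when $P = P_0$, which simultaneously fixes the order at $0$ and certifies ellipticity. The final \emph{identity microlocally} claim is the statement that $\mathrm{Gr}(\Sg)$ coincides with the diagonal on the relevant regions of ${}^{\oc}T^*X$; this follows directly from the dynamical observation recorded in the introduction that $|\mathfrak{Z}|$ large in the 1-cusp sense corresponds precisely to bicharacteristics avoiding $\supp(g-g_0)\cup\supp V$, so that the flow there is free and $\Sg = \Id$. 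The compact-support case is subsumed, since the 1-cusp wavefront set of a distribution supported in a compact subset of $\R^n$ lies over that compact subset, where the 1-cusp cotangent bundle coincides with the usual cotangent bundle, so the only 1-cusp fibre-infinity is the classical one, to which the same avoidance argument applies.
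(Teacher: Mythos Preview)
Your overall strategy---read \eqref{eq:S formula} as a composition of FIOs and invoke a composition theorem---is the same as the paper's. But there is a genuine structural gap in how you handle the boundary of the 1-cusp phase space, and a related gap in the ellipticity step.

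The 1c-ps FIO class set up in Section~\ref{sec: 1c-ps FIO} (Definition~\ref{defn: admissible 1c-ps Lagrangian submanifold and 1c-ps fibred-Legendre submanifold}) explicitly requires the Lagrangian to stay away from all boundary faces except $\ffocps$; in particular it must avoid 1c-fibre-infinity $\{\rho_{\oc}=0\}$ and spacetime infinity $\{x_{\ps}=0\}$. The unlocalized Poisson operators $\Poipm$ do \emph{not} satisfy this: their canonical relations run out to the radial sets at spacetime infinity, and at large $|\mathfrak{Z}|$ the relevant bicharacteristics escape any compact set in $(z,t)$. So your sentence ``once that is in hand the 1-cusp FIO composition theorem delivers an element of $I^0_{\oc-\oc}$'' cannot be made to work uniformly in the limit $|\mathfrak{Z}|\to\infty$ you flag as the main obstacle---the composition theorem (Theorem~\ref{thm: 1c-ps composition to 1c-1c}) simply does not apply there. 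The paper instead splits $S = SQ_{\oc} + S(\Id-Q_{\oc})$ with $Q_{\oc}$ a 1c-PsDO cutting off away from fibre-infinity; the piece $SQ_{\oc}$ is handled by the composition machinery after further microlocalization by a $Q_{\ps}$ on the spacetime side (Propositions~\ref{prop:microlocalized Poisson are 1c-ps FIOs} and~\ref{prop:S microlocalized}), while $S(\Id-Q_{\oc})$ is shown directly to equal $\Id-Q_{\oc}$ modulo Schwartz by unwinding $\Poipm$ in terms of $\Poiz$ and $P_\pm^{-1}$ and repeatedly using that $(P-P_0)\Poiz(\Id-Q_{\oc})$ is smoothing (Lemma~\ref{lem:S near fibre infinity}). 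This second argument is what actually proves the ``identity microlocally near 1c-fibre-infinity'' clause, and it is not a byproduct of any FIO composition.

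Your ellipticity argument is also insufficient: that the prefactor $i(2\pi)^n$ is calibrated so $S=\Id$ when $P=P_0$ fixes the normalization but does not show the principal symbol is nonvanishing for the perturbed operator. The paper's Proposition~\ref{prop: ellipticity of tilde S} computes the principal symbol via \eqref{eq: principal symbol, 1c-ps composition to 1c-1c} as an integral along each bicharacteristic of the product $\overline{a_{+,0}}\,(H_p^{2,0}q_+)\,a_{-,0}$; the transport-equation construction of $a_{\pm,0}$ (Proposition~\ref{prop: Possion parametrix, K,-}) makes them strictly positive, and $H_p^{2,0}q_+\geq 0$ with total integral $1$ along each bicharacteristic, so the integral is strictly positive. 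That positivity argument is the missing content.
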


\begin{remark}\label{rem:main theorem notation}
In \eqref{eq:S 1c-1c FIO}, $X$ is the radial compactification of $\R^n = \R^n_Z$, $Z = z/(2t)$ as in \eqref{eq:Z defn}, and is identified with both the incoming and outgoing radial sets as explained in Section~\ref{subsec:Hvf radial}. The scattering map $S$ acts on functions of $Z$, thus its Schwartz kernel is \emph{a priori} a distribution on $\R^n_Z \times \R^n_Z$. For technical reasons we view the Schwartz kernel of $S$ as living on the b-double space $X^2_b$, that is, $X^2$ with the corner $(\partial X)^2$ blown up. We use $\beta_b$ to denote the blow-down map from $X^2_b$ to $X^2$. Next,  
$\mathrm{Gr}(\Sg)$ denotes the graph of the classical scattering map; 
$\mathrm{Gr}(\Sg)'$ is a Lagrangian submanifold of the 1c-1c phase space $\ococb$ in the sense of Definition~\ref{defn: admissible 1c-1c Lagrangian submanifold and 1c-1c fibred-Legendre submanifold} and $I^0_{\oc-\oc}(X_b^2,\beta_b^*(\mathrm{Gr}(\Sg)'))$ is the class of 1c-1c FIO associated to it of order zero defined as in Definition~\ref{def:1c-1c-FIO}. 
The prime in $\mathrm{Gr}(\Sg)'$ indicates negation of the 1c-fibre variable, as is usual in FIO theory: $\mathrm{Gr}(\Sg)$ is a canonical relation, and $\mathrm{Gr}(\Sg)'$ is the corresponding Lagrangian submanifold. 
\end{remark}

\begin{remark} The scattering map $S$ may not be unitary, as we did not assume that $P$ is self-adjoint. In the self-adjoint case, for which the imaginary part of the potential term in \eqref{eq:def-Schrodinger-op} is determined by the family of metrics $g(t)$, $S$ will be unitary thanks to the pairing formula \eqref{eq: pairing}. 
\end{remark}

We record some corollaries of Theorem~\ref{thm: main}. The brief proofs will be provided in Section~\ref{subsec:proof of corollaries}. 

\begin{coro}\label{coro:bounded}
The scattering map $S$ is a bounded map on all 1c-Sobolev spaces $H_{\oc}^{s, l}(\R^n)$, for all $s, l \in \R$.     
\end{coro}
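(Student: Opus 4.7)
The plan is to bootstrap from $L^2 = H^{0,0}_{\oc}$-boundedness to boundedness on every 1c-Sobolev space by conjugating $S$ with elliptic 1c-pseudodifferential operators. The essential inputs from Theorem~\ref{thm: main} are that $S$ has order zero and that its canonical relation is the graph of a diffeomorphism.

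First, I would invoke the $L^2$-boundedness of every element of $I^0_{\oc-\oc}(X_b^2, \beta_b^*(\mathrm{Gr}(\Sg)'))$. The standard H\"ormander route is through $S^*S$: because $\mathrm{Gr}(\Sg)$ is a canonical graph, the composition $\mathrm{Gr}(\Sg)^{-1} \circ \mathrm{Gr}(\Sg)$ reduces transversally to the diagonal, so the 1c-1c FIO composition formula should yield $S^*S \in \Psi^0_{\oc}(X)$, whose $L^2$-boundedness is a routine feature of the 1c-pseudodifferential calculus. Then $\|Su\|_{L^2}^2 = \la S^*S u, u \ra \lesssim \|u\|_{L^2}^2$. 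I expect these composition and $L^2$-boundedness statements to be among the foundational results of the 1c-1c FIO calculus developed in the body of the paper; this is where the real work lies and is the main (but already-addressed) obstacle.

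For arbitrary $(s,l) \in \R^2$, I next pick an elliptic, invertible $\Lambda_{s,l} \in \Psi^{s,l}_{\oc}(X)$, constructed by the standard parametrix procedure (adjusted by a smoothing correction if necessary to ensure genuine invertibility), restricting to an isomorphism $H^{s',l'}_{\oc} \to H^{s'-s,l'-l}_{\oc}$ for all $s',l'$. I then form the conjugate
\[
\tilde S := \Lambda_{s,l} \circ S \circ \Lambda_{s,l}^{-1}.
\]
Because any 1c-pseudodifferential operator has the diagonal as its canonical relation, pre- and post-composition with $\Lambda_{s,l}^{\pm 1}$ preserves the class $I^{\bullet}_{\oc-\oc}(X_b^2,\beta_b^*(\mathrm{Gr}(\Sg)'))$ and merely shifts the order by $\pm(s,l)$; hence $\tilde S$ is again an order-zero 1c-1c FIO associated to the same canonical graph, and by the preceding paragraph is bounded on $L^2$. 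The boundedness of $S$ on $H^{s,l}_{\oc}$ then follows from the chain
\[
\|Su\|_{H^{s,l}_{\oc}} \sim \|\Lambda_{s,l} S u\|_{L^2} = \|\tilde S \, \Lambda_{s,l} u\|_{L^2} \lesssim \|\Lambda_{s,l} u\|_{L^2} \sim \|u\|_{H^{s,l}_{\oc}}.
\]
Beyond the first step, no new obstacles are anticipated: the corollary is a purely formal consequence of the $L^2$-boundedness of order-zero canonical-graph FIOs plus the composition rules of the 1c-1c calculus.
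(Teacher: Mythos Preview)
Your proposal is correct and follows essentially the same route as the paper: show that $S^*S$ is a 1c-pseudodifferential operator of order $(0,0)$ via the 1c-1c FIO composition theorem (the paper's Proposition~\ref{prop: 1c-1c transversal composition} and Proposition~\ref{prop: tilde S*S is 1c PsiDO}), deduce $L^2$-boundedness, and then conjugate by an elliptic invertible $B \in \Psi_{\oc}^{s,l}$ to reach general $H_{\oc}^{s,l}$. The only cosmetic difference is that the paper first splits $S = SQ_{\oc} + S(\Id - Q_{\oc})$ using Lemma~\ref{lem:S near fibre infinity}, handling the second piece as a 1c-pseudodifferential operator directly and applying the $S^*S$-argument only to the microlocalized piece $\tilde S = SQ_{\oc}$, whose Lagrangian stays away from fibre-infinity so that the composition calculus applies cleanly.
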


This was previously proved for $s \in \mathbb{N}$ and $l=s$ in \cite{gell2022propagation}, where the spaces were denoted $\mathcal{W}^s$. 

\begin{coro}\label{coro:wavefront}
The scattering map acts on 1c-wavefront set according to the classical scattering map: if $q_- \in \partial \ocphase$ and $q_+ = \Sg(q_-)$ then 
\begin{equation}
q_+ \in \WF_{\oc}(Sf) \Longleftrightarrow q_- \in \WF_{\oc}(f). 
\end{equation}
\end{coro}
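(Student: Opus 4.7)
The plan is to deduce this from the standard microlocal mapping property of elliptic Fourier integral operators associated to canonical graphs, transported to the 1-cusp setting established by Theorem~\ref{thm: main}. Since $\Sg$ is a diffeomorphism of $^{\oc}T^* \SR_-$ onto $^{\oc}T^* \SR_+$ arising from the bicharacteristic flow (hence a symplectomorphism), the Lagrangian $\mathrm{Gr}(\Sg)'$ is a canonical graph, and Theorem~\ref{thm: main} asserts that $S$ is an elliptic 1c-FIO of order zero on this graph.

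First, I would establish the inclusion $\WF_{\oc}(Sf) \subseteq \Sg(\WF_{\oc}(f))$. This is a direct consequence of the microlocal composition of a 1c-FIO with a 1c-pseudodifferential operator: if $q_- \notin \WF_{\oc}(f)$, choose a 1c-pseudodifferential operator $Q_-$, elliptic at $q_-$ and with microsupport in a small neighbourhood of $q_-$, such that $Q_- f$ is 1c-smoothing microlocally at $q_-$; more usefully, choose $Q_+$ elliptic at $q_+$ with microsupport close enough to $q_+$ that $Q_+ S = S Q_- + R$ for some 1c-pseudodifferential $Q_-$ microsupported near $q_-$ and some 1c-smoothing $R$ (the essential step uses the composition calculus developed earlier in the paper, together with the fact that $\mathrm{Gr}(\Sg)'$ is a graph so that one may pull back microlocal cutoffs on the target to microlocal cutoffs on the source). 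Since $q_- \notin \WF_{\oc}(f)$, the right-hand side applied to $f$ is 1c-smoothing microlocally, and ellipticity of $Q_+$ at $q_+$ gives $q_+ \notin \WF_{\oc}(Sf)$.

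For the reverse inclusion I would invoke the existence of a parametrix. Ellipticity of $S$ together with the 1c-1c composition calculus (see the discussion preceding the definition of $I^0_{\oc-\oc}$) produces a 1c-FIO $\tilde S \in I^0_{\oc-\oc}(X_b^2, \beta_b^*(\mathrm{Gr}(\Sg^{-1})'))$ such that $\tilde S S = \Id + R$ with $R$ a 1c-smoothing operator. Applying the forward inclusion to $\tilde S$, associated to the canonical graph $\Sg^{-1}$, gives $\WF_{\oc}(\tilde S g) \subseteq \Sg^{-1}(\WF_{\oc}(g))$ for any $g$; setting $g = Sf$ and noting $\WF_{\oc}(Rf) = \emptyset$, we obtain $\WF_{\oc}(f) \subseteq \Sg^{-1}(\WF_{\oc}(Sf))$, equivalently $\Sg(\WF_{\oc}(f)) \subseteq \WF_{\oc}(Sf)$. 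Combined with the forward inclusion and the fact that $\Sg$ is a bijection, this yields the claimed equivalence.

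The only real step of substance is the construction of the elliptic parametrix $\tilde S$ and the composition $\tilde S S = \Id + R$; both reduce to the 1c-1c composition theorem and the ellipticity statement for principal symbols of 1c-FIOs on canonical graphs, both of which should be in hand by the time this corollary is proved. The microlocal cutoff argument in the forward inclusion is routine once the symbol calculus and the graph structure of $\mathrm{Gr}(\Sg)'$ are available.
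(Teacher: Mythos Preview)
Your argument is correct and close in spirit to the paper's, but the reverse direction is organized differently. Both you and the paper obtain the forward inclusion via an Egorov-type identity $Q_+ S = S Q_- + R$ (this is exactly Proposition~\ref{prop: 1c-1c Egorov}), together with boundedness of $S$ on 1c-Sobolev spaces. For the reverse inclusion, the paper does not build a global parametrix; instead it simply runs the same Egorov argument in the opposite direction: given $B'$ elliptic at $q_+$ with $B'\tilde S f \in L^2$, Proposition~\ref{prop: 1c-1c Egorov} produces $B$ elliptic at $q_-$ with $B'\tilde S = \tilde S B$ modulo smoothing, and $L^2$-boundedness of $\tilde S$ then gives $Bf \in L^2$. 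This avoids the iterative construction of $\tilde S^{-1}$ modulo smoothing that you invoke. Your route is the more classical FIO-textbook one and is perfectly valid given the 1c-1c composition calculus (Proposition~\ref{prop: 1c-1c transversal composition}) and the symbol exact sequence; the paper's route is slightly more economical since it only needs Egorov plus $L^2$-boundedness, both of which are already available without assembling a full parametrix.
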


The next corollary states that, whenever the classical scattering map is nontrivial, $S$ is a nontrivial perturbation of the identity operator. 

\begin{coro} \label{coro:noncompact-difference}
When $\mathrm{Gr}(\Sg) \neq \mathrm{Gr}(\mathfrak{Cl}_{g_0})$ (which is just the graph of the identity map), for all $s,l \in \R$, $S-\Id$ is not a compact operator from $H_{\oc}^{s,l}(\R^n)$ to itself. In particular, for $s=l=0$, $S - \Id$ is not compact on $L^2(\R^n)$. 
\end{coro}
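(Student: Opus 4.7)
The plan is to argue by contradiction, microlocalizing near a point where $\Sg$ moves. Because $\mathrm{Gr}(\Sg) \neq \mathrm{Gr}(\mathfrak{Cl}_{g_0})$, there is some $q_- \in \partial \ocphase$ with $q_+ := \Sg(q_-) \neq q_-$; using the Hausdorff property of the 1c-phase space, choose disjoint open neighbourhoods $U_-$ of $q_-$ and $U_+$ of $q_+$. From the 1c-pseudodifferential calculus that must be set up earlier in the paper, produce $A, B \in \Psi^0_{\oc}(\R^n)$ that are microlocally elliptic at $q_-$ and $q_+$ respectively, with $\WF'_{\oc}(A) \subset U_-$ and $\WF'_{\oc}(B) \subset U_+$.

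Suppose for contradiction that $S - \Id$ is compact on $H^{s,l}_{\oc}(\R^n)$. Then $B(S - \Id)A = BSA - BA$ is compact. The 1c-psdo composition $BA$ has symbol supported, to all orders in the asymptotic expansion, in $\WF'_{\oc}(A) \cap \WF'_{\oc}(B) = \emptyset$, so $BA$ is residual and hence compact on every 1c-Sobolev space; therefore $BSA$ must be compact as well. By Theorem~\ref{thm: main} together with the composition calculus of 1c-FIOs with 1c-psdos, $BSA$ is an order-zero 1c-FIO associated to the piece of $\mathrm{Gr}(\Sg)'$ with source in $U_-$ and target in $U_+$, and its principal symbol at the point $(q_-, q_+)$ of the canonical relation is the product of the principal symbols of $B$, $S$ and $A$ at $q_+$, $(q_-, q_+)$ and $q_-$. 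Each factor is nonzero by ellipticity of $S$ (Theorem~\ref{thm: main}) and of $A, B$ at the relevant points, so $BSA$ is microlocally elliptic at $q_-$.

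The final step is to derive a contradiction from compactness of this elliptic 1c-FIO of order zero. I would invoke a microlocal parametrix: construct $R \in I^0_{\oc-\oc}$ associated to (a piece of) the inverse canonical graph of $\Sg$ near $q_+$, so that $R \circ BSA \equiv E$ modulo residual operators, with $E \in \Psi^0_{\oc}$ microlocally elliptic at $q_-$. Compactness of $BSA$ would force $E$ to be compact; but this contradicts non-compactness of a 1c-psdo of order zero which is elliptic at a boundary point of the 1c-phase space. The latter is a standard consequence of testing $E$ against a sequence of approximate 1c-coherent states (or similar concentrating sequences) at $q_-$: bounded in $H^{s,l}_{\oc}$, converging weakly to zero since their microsupport exits any compact subset of $\ocphase$, yet satisfying $\|E f_n\|_{H^{s,l}_{\oc}} \geq c > 0$ by microlocal ellipticity at $q_-$.

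The main obstacle is thus the construction of the parametrix $R$ within the 1c-FIO calculus, together with exhibiting the concentrating sequences at $q_- \in \partial \ocphase$; both are expected to be routine given the calculus developed for the proof of Theorem~\ref{thm: main}, but they are the essential new ingredient beyond what is stated above. Once available, the argument gives non-compactness of $S - \Id$ on $H^{s,l}_{\oc}(\R^n)$ for every $s, l \in \R$, and in particular on $L^2(\R^n) = H^{0,0}_{\oc}(\R^n)$.
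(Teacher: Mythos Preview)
Your proposal is correct in outline, but the paper's argument is cleaner and avoids precisely the ``main obstacle'' you flag. You localize with $A$ near $q_-$ and $B$ near $q_+=\Sg(q_-)$, so that $BSA$ is an \emph{elliptic} 1c-FIO; this forces you to build an FIO parametrix $R$ associated to $\Sg^{-1}$ in order to reduce to an elliptic pseudodifferential operator. The paper instead uses the \emph{same} microlocalizer $Q\in\Psi^{0,0}_{\oc}$ on both sides, chosen elliptic at a point $q$ with $\Sg(q)\neq q$ and with $\WF'_{\oc}(Q)$ small enough that $\Sg(\WF'_{\oc}(Q))\cap\WF'_{\oc}(Q)=\emptyset$. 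Then, by the composition results already available (Proposition~\ref{prop: 1c-1c transversal composition}), the operator $QSQ$ has empty $\oc$--$\oc$ wavefront set, hence has Schwartz kernel. Consequently $Q(S-\Id)Q\equiv -Q^2$ modulo a Schwartz term, and non-compactness follows immediately from non-compactness of the elliptic $\oc$-pseudodifferential operator $Q^2$ on $H^{s,l}_{\oc}$. In short: by placing both cutoffs at the \emph{source} rather than one at the source and one at the target, the paper makes the $S$-term trivial and puts all the weight on the already-understood $\Id$-term, so no FIO parametrix is needed. Your route would work too once the 1c-FIO parametrix is set up, but it is strictly more effort here.
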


\begin{remark}
    One may have a more general statement for $g_1,g_2$, which are two metrics in the class we are considering. Let $S_i$ be the scattering map associated to them respectively, if the classical scattering maps of $g_1,g_2$ do not coincide, then the difference $S_1-S_2$ will not be compact on any $H^{s,l}_{\oc}(\R^n)$.
    This gives rise to a inverse problem in the reversed direction that will be addressed elsewhere in the future: 
    suppose $S_1-S_2$ is compact on $H^{s,l}_{\oc}(\R^n)$, or even more restrictively suppose $S_1=S_2$, then can we say that $g_1$ and $g_2$ are the same modulo certain natural obstructions? 
\end{remark}



\subsection{Relation with previous literature}

We briefly review the relation with previous works in this subsection. 
Since the topic of scattering matrix or scattering map has been a vast subject, we can only mention a very limited part of it that is closely related. Moreover, we completely omit any discussion of the vast literature on dispersive equations.

The theory of Lagrangian distributions Fourier integral operators arose in the 1960s through work of Maslov \cite{Maslov1967}, Egorov \cite{Egorov1971} and others, building on earlier studies of WKB-type expansions in quantum physics, and geometrical optics expansions of Lax, Keller, Arnold and others. It was systematized and put on a rigorous foundation (for example, giving a rigorous, invariant definition of the principal symbol) by H\"ormander \cite{FIO1} and Duistermaat-H\"ormander \cite{FIOII}. Duistermaat-Guillemin \cite{duistermaat-guillemin1975spectrum} extended the theory to encompass clean phase functions. Melrose-Zworski \cite{melrose1996scattering} defined Legendre distributions in close analogy with Lagrangian distributions and used this to show that the scattering matrix for the Laplacian on an asymptotically conic space (manifold with scattering metric) is a Fourier integral operator. They did this by first constructing a microlocal parametrix for the Poisson operator (in the context of the Helmholtz operator) using a subtle symbolic construction for pairs of Legendre distributions, one of which has a conic singularity that is resolved by blowup. This strategy was simplified by Vasy \cite{vasy1998geometric} who showed how to avoid working near the conic singularity. 

Our strategy in the present paper follows the approach of Melrose-Zworski and Vasy. 
In particular, the formula \eqref{eq:S formula} (which comes from \cite{gell2022propagation}) and strategy of viewing the scattering map as a composition of certain types of Fourier integral operators is adapted from Vasy's work. This strategy is also closely related to formulae previously obtained by Isozaki-Kitada \cite{isozakikitada1985, isozakikitada1986} and Yafaev \cite{yafaevbook2000} for analyzing long-range scattering; indeed, they used an operator that (in our language) microlocalizes away from one of the radial sets, and hence plays a similar role to the $Q_+$ operator in \eqref{eq:S formula}. 

In the semiclassical setting, Guillemin \cite{guillemin1976sojourn} showed in several instances the scattering matrix has a semiclassical expansion involving the `sojourn time' of classical rays, and hypothesized that this should be a general phenomenon. This hypothesis was confirmed by Alexandrova \cite{alexandrova2005, alexandrova2006} and by the first author and Wunsch in \cite{hassell-wunsch2008}, where it was pointed out that the `sojourn time' is really a Lagrangian submanifold (called the sojourn relation in \cite{hassell-wunsch2008}) rather than a function. The sojourn relation shows up in exactly the same way in the present article (see Section~\ref{sec: forward and backward sojourn relations}). 

In the present article, we define `fibred-Legendre distributions' which is the analogue of the class of Legendre (or Lagrangian) distributions in our geometric setup. The fibred aspect is reminiscent of spaces of distributions described in \cite[Part I]{hassell-wunsch2008}, and has similarities to the fibrations at the boundary of the Legendre distributions in \cite{hassell2001resolvent}. 
As well as a parametrix-type construction of the Poisson operator as a fibred-Legendre distribution,  we also use results  obtained by Gell-Redman, Gomes and the first author in \cite{gell2022propagation} using a non-elliptic Fredholm framework. In addition, the module regularity used by the first author, Gell-Redman and Gomes in \cite{gell2022propagation, gell2023scattering} corresponds to the 1c-regularity of the final state data.
This correspondence is implicit in \cite{gell2022propagation} but was made explicit in \cite{hassell2024final}. As already mentioned, the 1-cusp structure was first introduced by Vasy and Zachos in \cite{zachos2022inverting}.

\subsection{Outline of this article} In sections~\ref{sec: parabolic sc PsiDO} and \ref{sec: 1c PsiDO} we outline the two pseudodifferential calculi that we require, the parabolic scattering (abbreviated to ps) calculus on $\R^{n+1}_{z,t}$, where the operator $P$ acts, and the 1-cusp (abbreviated to 1c) calculus on $\R^n$, on which the scattering map $S$ acts. However, we also need to understand the Poisson operators $\Poipm$ microlocally, which map functions on $\R^n$ to functions on $\R^{n+1}$. In Sections~\ref{sec:1c-ps geometry} and \ref{sec: 1c-ps FIO}, we discuss 1c-ps Fourier integral operators, which act in precisely this way. We show that suitably microlocalized Poisson operators belong to this class of operators. Then, in Sections~\ref{sec:1c-1c geometry} and \ref{sec: 1c-1c Lagrangian distribution}, we discuss 1c-1c Fourier integral operators. In these four technical sections, the theory broadly follows the standard theory of Fourier integral operators as developed by H\"ormander  \cite{FIO1} and and Duistermaat-H\"ormander \cite{FIOII}, but there are interesting changes due to the anisotropies in both the ps- and 1c-calculi. These manifest in the structure of the boundary of Lagrangian submanifolds at the boundary of phase space. In contrast to the usual setting, where the boundary is a contact manifold, in both the ps and 1c settings, the boundary fibres over a Legendrian curve with fibres that are symplectic for a family of symplectic structures on the boundary. This geometry is explained in detail in Sections~\ref{subsec:symplectic structures 1cps} and \ref{subsec: symplectic structures 1c 1c}.  In Section~\ref{sec:composition} we analyze the composition of Poisson-type operators in a setting that covers (after suitable microlocalization) the formula \eqref{eq:S formula}. The main theorem and corollaries are proved in Section~\ref{sec: scattering map}. The appendices cover technical material that is standard in the theory of Fourier integral operators but requires some minor adaptation to the current setting. 


\section{Parabolic scattering calculus and the Schr\"odinger operator}
\label{sec: parabolic sc PsiDO}
In this section we give a rather brief introduction of the parabolic scattering pseudodifferential algebra, and the geometry and dynamics of the Schr\"odinger operator within this calculus.  We refer readers to \cite{lascar}, \cite[Section~2]{gell2022propagation} and references therein for more details.

\subsection{Parabolic scattering algebra}
As the name indicates, it is the `parabolic version' of the scattering pseudodifferential algebra introduced by Melrose \cite{melrose1994spectral}, where `parabolic' refers to the way to compactify the fiber infinity. More precisely, let 
\begin{align}
(t,z,\tau,\zeta)
\end{align}
be coordinates on $T^*\R^{n+1}$, with $\tau dt + \zeta \cdot dz$ being the canonical form.
Then the (compactified) parabolic scattering cotangent bundle, denoted by $\overline{^{\ps}T^*\R^{n+1}}$ , is defined by compactifying $T^*\R^{n+1}$ in following way.
The `base' $\R^{n+1}$ is compactified radially to be a ball with boundary defining function
\begin{align}\label{eq:xps defn}
x_{\ps} = (1+t^2+|z|^2)^{-1/2},
\end{align}
while each fiber is compactified `parabolically' to be a ball with boundary defining function
\begin{align}\label{eq:rhops defn}
\rho_{\ps} = (1+\tau^2+|\zeta|^4)^{-1/4},
\end{align}
with $\ps$ standing for `parabolic scattering'. 

\begin{defn}
The symbol class $S_{\ps}^{m,l}(\R^{n+1})$, with $m,l$ differential and decay (in fact growth) order respectively, we use, is defined to be the space of $a \in C^\infty(T^*\R^{n+1})$ (in fact also their extension to $\overline{^{\ps}T^*\R^{n+1}}$) such that
\begin{align*}
\|a\|_{ S_{\ps,N}^{m,l}} 
:= \sum_{|\alpha|+k+|\beta|+j \leq N} \sup_{T^*\R^{n+1}} |x_{\ps}^{l-|\alpha|-k} \rho_{\ps}^{m-\beta-2j} \partial_z^\alpha \partial_t^k \partial_\zeta^\beta \partial_\tau^j a(z,t,\zeta,\tau) | < \infty,
\end{align*}
for any $N \in \N$. And these norms give $S_{\ps}^{m,l}(\R^{n+1})$ a structure of Fr\'echet space. In addition, when $\rho_{\ps}^m x_{\ps}^l a$ extends to a smooth function on $\overline{^{\ps}T^*\R^{n+1}}$, then we say $a$ is classical, and the corresponding symbol class is denoted by $S_{\ps,\cl}^{m,l}(\R^{n+1})$.
\end{defn}
\begin{remark}
In the definition above, each $\partial_\tau$ improves the decay at fiber infinity by 2 order whereas $\partial_\zeta$ only improves one order. This reflects the parabolic nature of this symbol class.

Geometrically, the difference between the large symbol class $S_{\ps}^{m,l}(\R^{n+1})$ and the `small' symbol class $S_{\ps,\cl}^{m,l}(\R^{n+1})$ is that the membership of the former means only conormal property on $\overline{^{\ps}T^*\R^{n+1}}$, while the membership of the latter means smoothness on $\overline{^{\ps}T^*\R^{n+1}}$ (in particular, down to its boundaries).
\end{remark}

Then the corresponding parabolic scattering pseudodifferential operators 
$\Psi_{\ps}^{m,l}(\R^{n+1})$ are operators that are quantizations of symbols in
$S_{\ps}^{m,l}(\R^{n+1})$, which means they have Schwartz kernels of the form
\begin{align*}
q_{L,\ps}(a) = (2\pi)^{-(n+1)} \int e^{ i (t-t')\tau+(z-z')\cdot \zeta}
a(t,z,\tau,\zeta) d\zeta d\tau,
\end{align*}
in the distributional sense.

The principal symbol of $A = q_{L,\ps}(a) \in \Psi_{\ps}^{m,l}(\R^{n+1})$ is defined to be the equivalence class of $a$ in $S_{\ps}^{m,l}(\R^{n+1})$ quotient by $S_{\ps}^{m-1,l-1}(\R^{n+1})$:
\begin{align*}
[a] \in S_{\ps}^{m,l}(\R^{n+1})/S_{\ps}^{m-1,l-1}(\R^{n+1}).
\end{align*}

A symbol $a \in S_{\ps}^{m,l}(\R^{n+1})$ (and corresponding operator $A=q_{L,\ps}(a)$) is said to be elliptic if it satisfies:
\begin{align*}
|\rho_{\ps}^m x_{\ps}^{l}a| \geq C, \text{ when } \rho_{\ps} \leq \epsilon 
\text{ or } x_{\ps} \leq \epsilon, 
\end{align*}
for some $\epsilon>0,C>0$. And we say that $a$ (and corresponding operator $A=q_{L,\ps}(a)$) is elliptic at $q \in \partial \overline{^{\ps}T^*\R^{n+1}}$ if there is a neighborhood of $q$ on which above inequality is satisfied. And the set of all such $q \in \partial \overline{^{\ps}T^*\R^{n+1}}$ is denoted by $\Ell_{\ps}^{m,l}(a)$ (or $\Ell_{\ps}^{m,l}(A)$).

Let $A \in \Psi_{\ps}^{m,l}(\R^{n+1})$ be classical, i.e. $A = q_{L, \ps}(a)$ where the symbol $a$ is classical. Thus, by assumption, $\rho_{\ps}^{m} x_{\ps}^{l}  a$ extends smoothly to the boundary of $\overline{^{\ps}T^*\R^{n+1}}$. 
We define $\Sigma(A)$ to be the set where the normalized symbol $\rho_{\ps}^{m} x_{\ps}^{l}  a$ vanishes on $\overline{^{\ps}T^*\R^{n+1}}$, and define the 
 characteristic set $\Char_{\ps}^{m,l}(A)$ (or $\Char_{\ps}^{m,l}(a)$) to be the intersection of $\Sigma(A)$ with the boundary of phase space: 
\begin{align}
\Char_{\ps}^{m,l}(A) = \{ q \in \partial \overline{^{\ps}T^*\R^{n+1}} \ | 
\  (\rho_{\ps}^m x_{\ps}^{l}a)(q) = 0 \}.
\end{align}
Notice that $\Char_{\ps}^{m,l}(A)$ (but not $\Sigma(A)$) depends only on the principal symbol of $A$. Microlocal propagation takes place at the boundary of phase space, therefore strictly speaking only $\Char_{\ps}^{m,l}(A)$ is relevant, but we will sometimes find it convenient to discuss propagation through the interior of phase space, that is, in $\Sigma(A) \setminus \Char_{\ps}^{m,l}(A)$, as well. (Actually only the jet of $\Sigma(A)$ at the boundary of $\overline{^{\ps}T^*\R^{n+1}}$ is relevant.) 


One of the key ingredient of our analysis is the global source-sink structure of the rescaled Hamilton flow associated to $P$, which we introduce below.

Using the symplectic structure on $T^*\R^{n+1}$ we have a local expression for the Hamilton vector field
\begin{align}
H_a = \partial_\tau a \partial_t - \partial_ta \partial_\tau
+ \partial_\zeta a \cdot \partial_z - \partial_z a \cdot \partial_\zeta.
\end{align}
When $a \in S_{\ps,\cl}^{m,l}(\R^{n+1})$, we define the rescaled Hamilton vector field to be
\begin{align}\label{eq:rescaled Hvf}
\mathsf{H}^{m,l}_a = \rho_{\ps}^{m-1} x_{\ps}^{l-1} H_a.
\end{align}
Due to the parabolic nature, $\partial_\tau a \partial_t - \partial_ta \partial_\tau$ is one more order lower than other parts of the expression at fiber infinity:
\begin{align}
\msf{H}^{m,l}_a|_{\rho_{\ps}=0}
= \rho_{\ps}^{m-1} x_{\ps}^{l-1}(\partial_\zeta a \cdot \partial_z - \partial_z a \cdot \partial_\zeta).
\end{align}
As shown in \cite[Section~2]{gell2022propagation}, $\msf{H}^{m,l}_a$ extends to a smooth vector field on $\overline{^{\ps}T^*\R^{n+1}}$ that is tangent to the boundary. In addition, it is tangent to $\Char_{\ps}^{m,l}(a)$, and the vector field restricted to the characteristic set depends only on the principal symbol. 

The rescaled bicharacteristic flows of time dependent Schr\"odinger equations that we are going to investigate have a global source-sink structure (see Section \ref{sec: forward and backward sojourn relations} for more discussion) with following `radial sets' being the source and sink.
\begin{defn} \label{defn: radial sets}
The radial set (of $A$) $\mathcal{R}$ is defined to be 
\begin{align}
\{ q \in \Char(A): \msf{H}^{m,l}_a \text{ vanishes at } q \}.
\end{align}
\end{defn}

\begin{remark} Since we are only interested in the integral curves of $\msf{H}^{m,l}_a$, when working locally it is harmless to replace the global boundary defining functions $x_{\ps}$ and $\rho_{\ps}$ by any other locally defined boundary defining function. This means in particular that when working away from spacetime infinity, $x_{\ps}$ can be replaced (locally) by $1$, i.e. the rescaling in spacetime can be disregarded. Similarly when working away from fibre-infinity, $\rho_{\ps}$ can be replaced by $1$. Moreover it is enough to work with (locally defined) boundary defining functions that are valid in a neighbourhood of $\Sigma(A)$. In an abuse of notation, we will denote any of these locally-defined rescalings of $H_a$ by $H_a^{m,l}$, even though they are not literally equal. Similarly, to ease the notational burden, we will use the notation for $x_{\ps}$ and $\rho_{\ps}$ for locally defined boundary defining functions of spacetime infinity and fibre-infinity respectively, even though these are not the same as the globally defined quantities \eqref{eq:xps defn} and \eqref{eq:rhops defn} respectively. 
See the calculation of the Hamilton vector field of $P_0$ below for examples where this is convenient. 
\end{remark}

Finally we recall the concept of (parabolic) wavefront sets, which is also called the micro-support.
For $A = q_{L,\ps}(a)$, its parabolic scattering operator wavefront set $WF'_{\ps}(A)$ is defined, as a subset of $\partial \overline{^{\ps}T^*\R^{n+1}}$, as follows.
For $q \in \partial \overline{^{\ps}T^*\R^{n+1}}$, we say
$q \notin \WF'_{\ps}(A)$ if and only if there is $\chi \in C^\infty(\overline{^{\ps}T^*\R^{n+1}})$ with $\chi(q) = 1$ such that $\chi a \in \mathcal{S}(\overline{^{\ps}T^*\R^{n+1}})$.
In particular, $\WF'_{\ps} (A) = \emptyset$ if and only if $A \in \Psi_{\ps}^{-\infty,-\infty}(\R^{n+1})$.

Similarly, for $u \in \mathcal{S}'(\R^{n+1})$ its $\ps$-wavefront set of order $s,r$, denoted by $\WF_{\ps}^{s,r}(u)$, is defined to be the (microlocal) loci of where $u$ fails to lie in $H_{\ps}^{s,r}(\R^{n+1})$.
Concretely, for $q \in \partial \overline{^{\ps}T^*\R^{n+1}}$, we have
\begin{equation}
    q \notin \WF_{\ps}^{s,r}(u) \iff \text{ there exists } A \in \Psi_{\ps}^{s,r} \text{ that is elliptic at } q, \, \text{ such that } Au \in L^2(\R^{n+1}).
\end{equation}
Then the loci of the (microlocal) loci of where $u$ fails to lie in $\mathcal{S}(\R^{n+1})$ is denoted by
\begin{equation}
    \WF_{\ps}(u) = \bigcup_{s,r} \WF_{\ps}^{s,r}(u).
\end{equation}

\subsection{\texorpdfstring{Hamilton vector field and radial sets of $P_0$}{Hamilton vector field and radial sets of P0}}\label{subsec:Hvf radial}
For example, we compute the radial set of the free Schr\"odinger operator $P_0 = D_t + \Delta_0$, largely following \cite[Section 3]{gell2022propagation}. The Hamilton vector field is 
\begin{equation}
H_p = 2\zeta \cdot \partial_z + \partial_t.
\end{equation}
Let us rescale this vector field as follows: first we work over a compact region of spacetime, near fibre-infinity. Then we can take $x_{\ps} = 1$ and $\rho_{\ps} = |\zeta|^{-1}$, as this is a boundary defining function for fibre-infinity in a neighbourhood of $\Char_{\ps}^{2,0}(P_0)$. We obtain the rescaled vector field 
\begin{equation}
H_p^{2,0} = \frac{2\zeta}{|\zeta|} \cdot \partial_z + \frac1{|\zeta|} \partial_t.
\end{equation}
This vector field is never vanishing, as the coefficient of $\partial_z$ always has length $2$. Thus, the radial set of $P_0$ lives over spacetime infinity. We now investigate spacetime infinity. We do this in two regions: first, when $|t| \geq 10 |z|$, and second when $|z| \geq 10|t|$. In the first region we can take as spacetime boundary defining function $x_{\ps} = 1/|t|$ and the remaining spacetime coordinates can be $Z = z/(2t)$ as in \eqref{eq:Z defn}. We divide by $\ang{\zeta}$ instead of $|\zeta|$ as we want to consider finite, even zero, frequencies at spacetime infinity. Further, we define (away from fibre-infinity) $w = Z - \zeta$. We obtain then 
\begin{equation}\label{eq:Hp polar}
H_p^{2,0} = (-\sgn t) \Big(\frac{\zeta}{\ang{\zeta}} \cdot \partial_Z + \frac1{\ang{\zeta}} \big(x_{\ps} \partial_{x_{\ps}} - Z \cdot \partial_Z)\Big)  =  
\frac{(-\sgn t)}{\ang{\zeta}}  \Big( w \cdot \partial_{w} + x_{\ps} \partial_{x_{\ps}} \Big). 
\end{equation}
The first expression shows that $H_p^{2,0}$ is non-vanishing in a neighbourhood of fibre-infinity in this region. Restricting to finite frequencies, then the $w$ coordinates are valid, and the second expression shows that the rescaled Hamilton vector field vanishes precisely where 
\begin{align}\label{eq:rad.interior}
\begin{split}
\Rpm \supset \{ x_{\ps} = 0, \ w = 0, \tau & = - |\zeta|^2, \pm t \geq 0 \} \cap \{ \frac{|z|}{|t|} \leq 10 \} 
\\ & =  \{ x_{\ps}  = 0, \ \zeta = Z, \tau = - |Z|^2, \pm t \geq 0 \} \cap \{  \frac{|z|}{|t|} \leq 10 \}.
\end{split}
\end{align}
The condition $\tau = - |\zeta|^2$ enforces containment in $\Sigma(P)$.  We also note that the set in \eqref{eq:rad.interior} has two components, depending on the sign of $t$, and  this rescaled Hamilton vector field is a source at $\Rm$, when $t < 0$, and a sink at $\Rp$, when $t > 0$. Notice that $Z$ is a coordinate on $\Rpm$ in this region. 

We move to the region $|z| \geq 10|t|$. In this region,  we may suppose without loss of generality that the first spatial coordinate $z_1$ is a locally dominant variable, i.e. $z_1 \geq 1/2 \max_i |z_i|$. Then we can take $x_{\ps} =  1/z_1$. We use projective coordinates $s = t/z_1$ and $v_j = z_j/z_1$ for $j \geq 2$ for the remaining spacetime coordinates. First working in a region where frequency is bounded, we rescale the Hamilton vector field by dividing by $x_{\ps}$, that is, multiplying by $z_1$. Using coordinates $(x_{\ps}, s, v_j, \zeta, \tau)$, we have in this region 
$$
H_{p_0}^{2,0} = \big( 1 - 2s \zeta_1 \big) \partial_s + \sum_{j \geq 2} \big( 2 \zeta_j - 2 \zeta_1 v_j \big) \partial_{v_j} - 2 \zeta_1 x_{\ps} \partial_{x_{\ps}}.  
$$
We are interested in the region where $s$ is small, otherwise the previous calculation applies. In order for this vector field to vanish, we see from the $\partial_s$ coefficient that when $s$ is small, necessarily $|\zeta_1|$ is large, and either positive or negative depending on the sign of $s$. Thus to find zeroes of $H_p^{2,0}$ it suffices to work in a neighbourhood of fibre-infinity, and we may assume that $\zeta_1$ is a dominant variable, i.e. $|\zeta_1| \geq c |\zeta|$ (as we will soon show, if $z_1$ is dominant and $\zeta_1$ is not dominant, we would be disjoint from $\Sigma(P_0)$).  First taking the case that $\zeta_1$ is large and positive, i.e. the same sign as $z_1$, we  use fibre boundary defining function $\rho_{\ps} = 1/\zeta_1$ and projective coordinates $\omega_j = \zeta_j/\zeta_1$ and $\sigma = \tau/|\zeta|^2$. The rescaled vector field to be $x_{\ps}^{-1} \rho_{\ps} H_p$ takes the form in these coordinates 
$$
z_1 \zeta_1^{-1} H_{p_0} = \big( \rho_{\ps} - 2s \big) \partial_s + \sum_{j \geq 2} \big( 2 \omega_j - 2  v_j \big) \partial_{v_j} - 2  x_{\ps} \partial_{x_{\ps}}.
$$
Changing variables to $\tilde s = 2s - \rho_{\ps}$, $\tilde v_j = v_j - \omega_j$, this vector field expressed  in coordinates $(\tilde s, \tilde v_j, x_{\ps}, \rho_{\ps}, \omega_j, \sigma)$ takes the form 
\begin{equation}\label{eq:Hvf Rplus equator}
-2\tilde s \partial_{\tilde s} -2  \sum_{j \geq 2} \tilde v_j \partial_{\tilde v_j} - 2  x_{\ps} \partial_{x_{\ps}}.
\end{equation}
We see that in this region,  the radial set is given by $\{ x_{\ps} = 0, \tilde s = 0, \tilde v_j = 0, \sigma = -1 \}$.  These equations define a submanifold of dimension $n$ inside spacetime infinity, which meets fibre infinity transversely at $s = 0$, that is at the `equator'.

\begin{figure}
    \centering
     \subfloat{{\includegraphics[width=6cm]{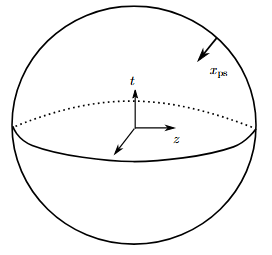} }}%
      \qquad
\subfloat{{\includegraphics[width=5.7cm]{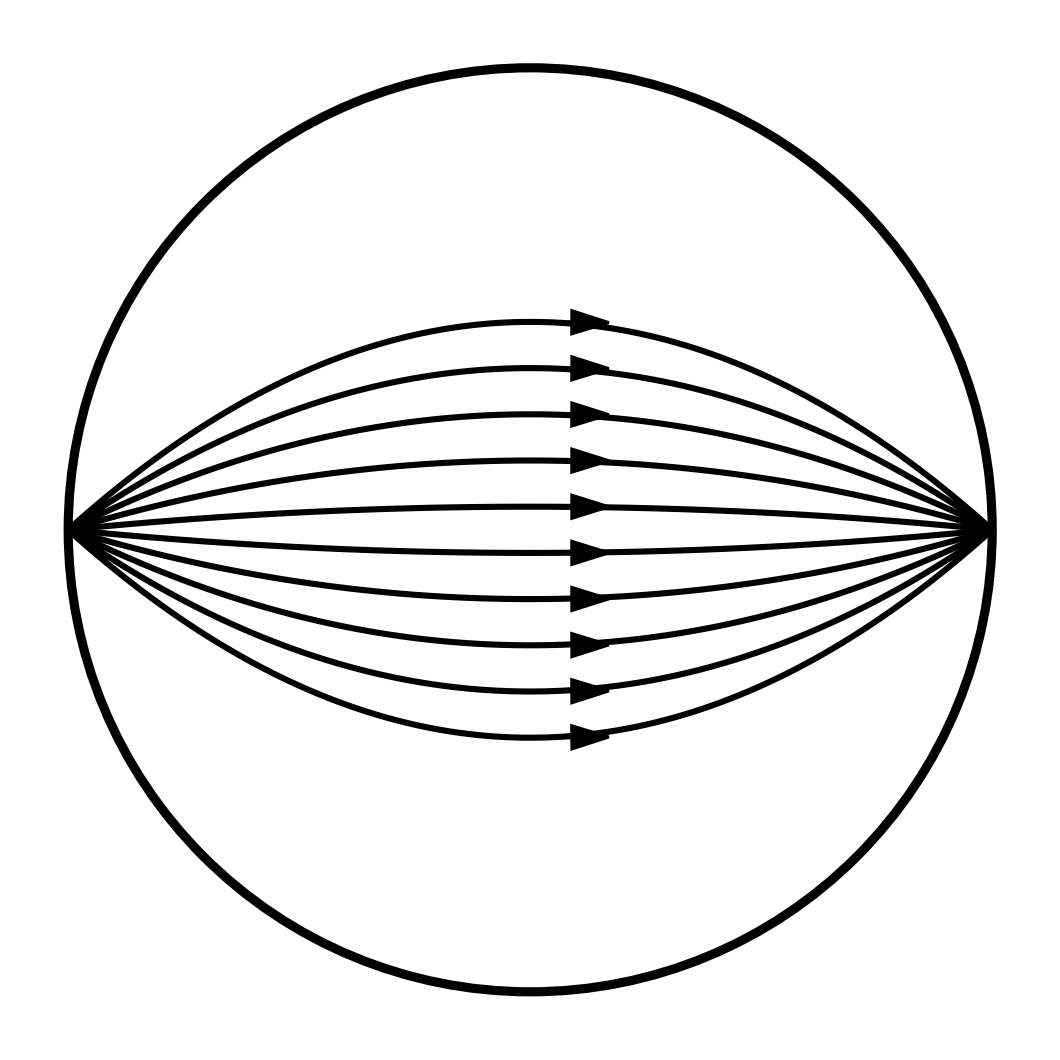} }}%

    \caption{ On the left: the radially compactified spacetime $\overline{\R^{n+1}_{z,t}}$. The function $x_{\ps} = (1+t^2+|z|^2)^{-1/2}$ defines (i.e. vanishes simply at) spacetime infinity. The radial sets live in phase space $\psphase$ over the boundary of $\overline{\R^{n+1}_{z,t}}$; $\SR_+$ lies over the top, or `northern' hemisphere, while the radial set $\SR_-$ lies over the bottom or `southern' hemisphere. The intersection of the northern and southern hemispheres is referred to as the equator. 
    \\ On the right: Projection to $\overline{\R^{n+1}}$ of bicharacteristics starting from the boundary of $\mathcal{R}_-$, which lies over the equator. Each bicharacteristic lies in a single level set of $t$.}
    \label{fig:flow-equator}
\end{figure}

It is not hard to check that where $s$ is sufficiently positive, this set coincides with the set $\mathcal{R}_+$ defined in \eqref{eq:rad.interior}. We write the equations for $\mathcal{R}_+$ using more natural coordinates in this region as 
\begin{equation}\label{eq:rad.corner.p}
\mathcal{R}_+ \supset \{ x_{\ps} = 0, \ s = \rho_{\ps}/2, \ \hat \zeta = \hat z, \ \sigma = -1 \} \cap \{  \frac{|t|}{|z|} \leq 10 \}
\end{equation}
or, equivalently, 
\begin{equation}\label{eq:rad.corner.p2}
\mathcal{R}_+ \supset \{ x_{\ps} = 0, \ 1/|\zeta| = 1/|Z|, \ \hat \zeta = \hat Z, \ \sigma = -1 \} \cap \{  \frac{|t|}{|z|} \leq 10 \}
\end{equation}
In particular, this shows that the identification of $\Rpm^\circ$ with $\R^n_Z$ in \eqref{eq:rad.interior}, \eqref{eq:rad.corner.p2} extends to an identification of $\Rpm$ with the radial compactification of $\R^n_Z$. 

The rescaled Hamilton vector field in \eqref{eq:Hvf Rplus equator} is clearly a sink at $\mathcal{R}_+$ in this region. Notice that the projection to spacetime infinity gives the closed `northern hemisphere', since $s = \rho_{\ps}/2 \geq 0$ in \eqref{eq:rad.corner.p}. Wherever $s$ is strictly positive, $\mathcal{R}_+$ is a graph, but it fails to be so at the boundary $s=0$: the graph `turns vertical' at the equator, and has a boundary at fibre-infinity. We can repeat the analysis assuming that $\zeta_1 < 0$, that is, when the signs of $z_1$ and $\zeta_1$ are opposite (and this is a good exercise for the reader); then we find zeroes of $H_p^{2,0}$ at 
\begin{equation}\label{eq:rad.corner.m}
\mathcal{R}_- \supset \{ x_{\ps} = 0, \ s = -\rho_{\ps}/2, \ \hat \zeta = -\hat z, \ \tau/|\zeta|^2 = -1 \} \cap \{  \frac{|t|}{|z|} \leq 10 \},
\end{equation}
where the Hamilton vector field takes the form (with now $\tilde s = 2s + \rho_{\ps}$)
\begin{equation}\label{eq:Hvf Rminus equator}
2\tilde s \partial_{\tilde s} + 2  \sum_{j \geq 2} \tilde v_j \partial_{\tilde v_j} + 2  x_{\ps} \partial_{x_{\ps}}.
\end{equation}
Now we are over the `southern hemisphere' since $s \leq 0$. We see that $\Rm$ is a graph over the southern hemisphere that `turns vertical' over the equator, and that the Hamilton vector field is a source at $\Rm$.


\section{The 1-cusp pseudodifferential algebra} \label{sec: 1c PsiDO}

In this section we briefly introduce the 1-cusp pseudodifferential algebra, and refer readers to \cite{Zachos:Thesis}\cite[Section~2]{zachos2022inverting}\cite[Section~2]{jia2022tensorial} for more details.

\subsection{1-cusp tangent and cotangent bundles}\label{subsec: 1c bundles}
Let $M$ be an $m-$dimensional manifold with boundary, with boundary defining function $x_{\oc}$ that is distinguished up to quadratic terms; in other words, the section $d\xoc$ of the conormal bundle at $\partial M = \{ \xoc = 0 \}$ is distinguished. Let $y=(y_1,..,y_{m-1})$ be a coordinate system of $\partial M$, which together with $x_{\oc}$ form a local coordinate system of $M$ in a collar neighbourhood of $\partial M$. Then the space of 1-cusp vector fields, denoted by $\mathcal{V}_{\oc}$, is locally spanned over $C^\infty(M)$ by
\begin{align} \label{eq: 1-cusp vector fields, local}
x_{\oc}^3\partial_{x_{\oc}}, x_{\oc}\partial_{y_j}, j=1,2,...,m-1.
\end{align}
$\mathcal{V}_{\oc}$ give rise to a vector bundle with \eqref{eq: 1-cusp vector fields, local} being its local frame. This is called the 1-cusp tangent bundle, and denoted by $^{\oc}TM$.

The 1-cusp differential operators of order (at most) $k$, denoted by $\mathrm{Diff}_{\oc}^k(M)$, consists of polynomials of these vector fields of degree at most $k$:
\begin{align}  \label{eq: Diff 1c definition}
\begin{split}
\mathrm{Diff}_{\oc}^k(M) 
= \{ \sum_{\alpha+|\beta| \leq k}  a_{\alpha\beta}(x_{\oc},y)(x_{\oc}^3\partial_{x_{\oc}})^\alpha(x_{\oc}\partial_{y})^\beta : 
\\ a_{\alpha \beta} \in C^\infty(M), \alpha \in \N, \beta \in \N^{m-1} \}.
\end{split}
\end{align}

The 1-cusp cotangent bundle, denoted by $^{\oc}T^*M$ is the dual bundle of $^{\oc}TM$. It is locally spanned over $C^\infty(M)$ by 
\begin{align} 
\frac{dx_{\oc}}{x_{\oc}^3}, \frac{dy_j}{x_{\oc}}, j=1,2,...,m-1.
\end{align}
Then $T^*M$ embeds into $^{\oc}T^*M$ canonically in the interior of $M$, giving it a symplectic structure naturally. In particular, one may write the canonical one form as
\begin{align}  \label{eq: 1c- canonical form}
\xi_{\oc} \frac{dx_{\oc}}{x_{\oc}^3} + \eta_{\oc} \cdot \frac{dy}{x_{\oc}},
\end{align}
where $(\xi_{\oc},\eta_{\oc})$ are coordinates of fibers on $^{\oc}T^*M$. 
We use $\overline{^{\oc}T^*M}$ to denote the compactification of $^{\oc}T^*M$, obtained by compactifying each fiber radially to be a ball
with boundary defining function
\begin{align}
\rho_{\oc} = (1+\xi_{\oc}^2+|\eta_{\oc}|^2)^{-1/2}.
\end{align}

Notice the minor difference with $\overline{^{\ps}T^*\R^{n+1}}$, which is compactified on both base and fiber level, is because the 1-cusp construction is happening on the manifold with boundary $M$, which is already `compactified in a priori'.

\subsection{1-cusp pseudodifferential algebra}
The 1-cusp symbol class of differential order $m$ and decay (in fact growth) order $l$, denoted by $S_{\oc}^{m,l}(M)$ ,is defined to be smooth functions on ${}^{\oc}T^*M$ satisfying for any $j,\alpha,k,\beta$ there exists a constant $C_{j\alpha k \beta}$ such that
\begin{align}
 |x_{\oc}^l \rho_{\oc}^{m-k-|\beta|} (x_{\oc}\partial_{x_{\oc}})^j\partial_y^\alpha\partial_{\xi_{\oc}}^k\partial_{\eta_{\oc}}^\beta a(x_{\oc},y,\xi_{\oc},\eta_{\oc})|
 \leq C_{j\alpha k \beta}.
\end{align}
And locally they quantize to be operators acting by
\begin{align}
(q_{L,\oc}(a)u)(x_{\oc},y) = (2\pi)^{-n}
\int e^{ i \xi_{\oc}\frac{x_{\oc}-x_{\oc}'}{x_{\oc}^3}+\eta_{\oc} \cdot \frac{y-y'}{x_{\oc}}}
a(x_{\oc},y,\xi_{\oc},\eta_{\oc}) u(x_{\oc}',y') d\xi_{\oc}d\eta_{\oc} \frac{dx_{\oc}'dy'}{(x_{\oc}')^{n+2}}.
\end{align}
The collection of all such operators, are called 1-cusp pseudodifferential operators with differential order $m$ and decay order $l$, and denoted by $\Psi_{\oc}^{m,l}(M)$.

Next we define the ellipticity of symbols and operators.
\begin{defn}
A symbol $a \in S^{m,l}_{\oc}(M)$ is called elliptic if
\begin{align*}
|a(x_{\oc},y,\xi_{\oc},\eta_{\oc})| \geq cx_{\oc}^{-l} \la (\xi_{\oc},\eta_{\mathrm{1c}}) \ra^m, \quad c>0 \text{ when }  |(\xi_{\oc},\eta_{\oc})|^{-1} \leq \epsilon \text{ or } x_{\oc} \leq \epsilon,
\end{align*}
for some $\epsilon>0,C>0$, and its quantization $A$ is also called elliptic in this case.
\end{defn}
Under this condition, see \cite[Section~2.5]{zachos2022inverting}, its quantization $A$ has a parametrix $B \in \Psi_{\oc}^{-m,-l}(M)$ such that
\begin{align*}
AB-\Id, \; BA-\Id \in \Psi_{\oc}^{-\infty,-\infty}(M).
\end{align*}

One can now define 1-cusp Sobolev spaces
$H^{s,r}_{\oc}(M)$ (see \cite[Section~2.5]{zachos2022inverting}) for $s \geq 0$ by choosing $A \in \Psi_{\oc}^{s,0}(M)$ elliptic, and demanding
$$
u\in H^{s,r}_{\oc}(M) \Leftrightarrow u\in x_{\oc}^r L_{\oc}^2(M)\ \text{and}\ Au\in x_{\oc}^r L_{\oc}^2(M);
$$
here $L_{\oc}^2(M)$ is the $L^2$ space relative to the 1-cusp density $\frac{dx\,dy}{x_{\oc}^{n+2}}$.
They are equipped with norms:
$$
\|u\|^2_{H^{s,r}_{\mathrm{1c},h}}=\|x_{\oc}^{-r}u\|_{L_{\oc}^2}^2+\sum_{j+|\alpha|\leq s}\|(hx_{\oc}^3D_{x_{\oc}})^j(h^{1/2}x_{\oc}D_y)^\alpha\|_{L_{\oc}^2}^2.
$$
The 1-cusp Sobolev spaces for other $s$ are defined via interpolation and duality.

With this definition, 1-cusp pseudodifferential operators are bounded on 1-cusp Sobolev spaces:
\begin{equation}\label{eq:1c boundedness}
\text{ For all $B \in \Psi_{\oc}^{m,l}(M)$ and all $s,r$, $B : H^{s,r}_{\oc} \to H^{s-m,r-l}_{\oc}$ is bounded.}    
\end{equation}


The 1-cusp wavefront set, which characterizes the location of singularities in the phase space, is defined by:
\begin{defn} \label{defn: 1c WF, both decay and smoothness}
\begin{align} \label{eq: definition of 1c WF, both decay and smoothness}
\begin{split}
\WF_{\oc}(u) =&  \Big(\{ q \in \partial \overline{{}^\oc{T^*M}}: \text{There exists } A \in \Psi_{\oc}^{0,0}(M) \text{ such that } 
 \\& A \text{ is elliptic at } q, \,Au \in \mathcal{S}(M) \} \Big)^c.
\end{split}
\end{align}
\end{defn}
And we have a refined version which captures the order of the singularities:
\begin{defn} \label{defn: 1c WF, both decay and smoothness, with order}
\begin{align} \label{eq: definition of 1c WF, both decay and smoothness, with order}
\begin{split}
\WF_{\oc}^{s,r}(u) =&  \Big(\{ q \in \partial \overline{{}^\oc{T^*M}}: \text{There exists } A \in \Psi_{\oc}^{s,r}(M) \text{ such that } 
 \\& A \text{ is elliptic at } q, \,Au \in L^2(M)  \}\Big)^c.
\end{split}
\end{align}
\end{defn}

The residual one, for example $\WF_{\oc}^{-\infty,r}(u)$ is defined by 
\begin{align}
\WF_{\oc}^{-\infty,r}(u) = \bigcap_{s \in \Z} \WF_{\oc}^{s,r}(u). 
\end{align}
In the opposite direction, we define
\begin{align}
\WF_{\oc}^{\infty,r}(u) = \overline{\bigcup_{s \in \Z} \WF_{\oc}^{s,r}(u)}. 
\end{align}

\subsection{\texorpdfstring{How the $\oc$-cotangent bundle arises in the geometry of the Schr\"odinger operator}{How the 1c-cotangent bundle arises in the geometry of the Schr\"odinger operator}}
\label{subsec: 1c arises}
We explain here how the $\oc$-cotangent bundle naturally arises from the geometry of the Schr\"odinger operator, as developed in \cite{gell2022propagation}. 

We first make some remarks on the blowup of a submanifold $T$ of a manifold with boundary $M$, such that $T \subset \partial M$. The blowup $[M; T]$ is a manifold with corners of codimension two. The new face created by blowing up $T$, which we call the front face and denote by $\ff$, is fibred over $T$. On the interior of this new front face, we have projective coordinates. Choose local coordinates $(x, y, z)$ such that $x$ is a boundary defining function for $\partial M$, $x = y = 0$ defines $T$, and $z$ are local coordinates when restricted to $T$. Then the projective coordinates are 
\begin{equation}\label{eq:Yj}
Y_j = \frac{y_j}{x}.
\end{equation}
How do these coordinates change under smooth changes of coordinates $(x, y, z) \mapsto (\tilde x, \tilde y, \tilde z)$ still having the properties listed above? It is easy to check that the corresponding $\tilde Y_j$ are related to $Y_k$ by an affine change of variables. (A good exercise for the reader!) Thus, the interior of the fibres of $\ff \to T$ have a natural affine structure; that is, the `fibre-interior' $\ff^\circ$ is\footnote{By the fibre-interior $\ff^\circ$ of $\ff$, we mean the union, over all $p \in T$ (including $p \in \partial T$) of the interior of the fibre over $p$.} an affine bundle over $T$.

If we are provided with slightly more structure, then we can view $\ff^\circ$ as a vector bundle instead. What we need is a section $V$ of the normal bundle of $T$ that is transverse to $\partial M$. It can be viewed as the `1-jet' of an extension of the submanifold $T$ to a submanifold of dimension one greater that is transverse to $\partial M$. The point of $\ff^\circ$ lying over $z \in T$ that corresponds to a curve reaching $z$ arriving with direction $V$ can then be deemed to be the origin of the fibre over $z$, giving the interior of each fibre the structure of a vector space, not just an affine space. More explicitly, the functions $y_j$ can be taken to vanish over some extension of $T$ that is tangent to $V$,  and then the functions $Y_j$ in \eqref{eq:Yj} are linear functions on the interior of each fibre. 

We now turn to the particular case where $T$ is one of the radial sets $\Rpm$ of a Schr\"odinger operator $P$ (see Section~\ref{subsec:Hvf radial}) of the form specified in the Introduction. Recall that the assumption on the metric perturbation $g(t)$ is that $g(t) - g_0$ (with $g_0$ being the Euclidean metric on $\R^n$) is a smooth perturbation compactly supported in spacetime, and that $g(t)$ is non-trapping for each $t$. Because the radial set lives at spacetime infinity, it is not affected by the perturbation at all, so we can assume that $P = P_0$ for the purposes of this discussion. Then each radial set, in its respective hemisphere $\pm t \geq 0$, is defined by 
\begin{equation}\label{eq:radial defn}
\SR_{\pm} = \{ x_{\ps} = 0, \quad \zeta = \frac{z}{2t}, \quad \tau + |\zeta|^2 = 0 \}.
\end{equation}
We can extend $\SR_{\pm}$ into the interior by making them conically invariant in $(z, t)$. (We ignore issues of smoothness near the origin in spacetime, as this is not relevant to us. Actually all that is relevant is the 1-jet of the extension at the spacetime boundary, as discussed earlier.) We denote these by $\hat \SR_{\pm}$. Thus, 
\begin{equation}
\hat \SR_{\pm} = \{  \zeta = \frac{z}{2t}, \quad \tau + |\zeta|^2 = 0 \}.
\end{equation}
As noted in \cite[Section 3]{gell2022propagation}, these are Lagrangian submanifolds of $T^* \RR^{n+1}$ with its canonical symplectic structure. We shall use this Lagrangian property to prove the following lemma. In preparation, let $\Sigma_0$ be the (interior) characteristic variety of $P_0$, that is, the codimension 1 submanifold
\begin{equation}
\Sigma = \{ \tau + |\zeta|^2 = 0 \} \subset 
\overline{{}^{\ps} T^*\R^{n+1} }.
\end{equation}
Recall that $\SR_{\pm}$ are $n$-dimensional submanifolds of $\Sigma$, which has dimension $2n+1$. Let $W_\pm$ be the front face of the blowup 
$$
[\Sigma; \SR_{\pm}].
$$

\begin{lmm} Consider the radial sets $\SR_{\pm}$ endowed with the distinguished extension $\hat \SR_{\pm}$ into the interior of ${}^{\ps} T^* Y$. Then the fibre-interior $W_\pm^\circ$ of $W_\pm$ is canonically isomorphic to the one-cusp cotangent bundle ${}^{\oc} T^* \SR_{\pm}$ and $W_\pm$ itself is canonically isomorphic to the radially compactified one-cusp cotangent bundle. If the radial set is not endowed with a distinguished extension (or just a 1-jet of extension) then $W_{\pm}^\circ$ is canonically isomorphic to ${}^{\oc} T^* \SR_{\pm}$ as an affine bundle rather than a vector bundle. 
\end{lmm}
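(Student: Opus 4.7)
The plan is to prove the identification by explicit local computation in two coordinate charts covering a neighbourhood of $\SR_\pm$---the interior chart $|t|\geq 10|z|$ and the equator chart $|z|\geq 10|t|$---and to verify consistency on their overlap. The key observation, already visible in \eqref{eq:canrel}, is that the fibre variable $\mathfrak{Z} = -z + 2tZ$ of the free canonical relation vanishes precisely on $\hat\SR_\pm$, which has already been noted to be Lagrangian in $T^*\R^{n+1}$. Thus $\mathfrak{Z}$ plays the role of a globally defined ``distinguished normal coordinate'' to $\hat\SR_\pm$ in $\Sigma$, and we will show that its image on the front face of $[\Sigma;\SR_\pm]$ recovers, up to a constant factor, the 1-cusp fibre coordinate on $\SR_\pm$.

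Work first in the interior chart. Introduce coordinates $(Z, x_{\ps}, w)$ on $\Sigma$ with $Z = z/(2t)$, $x_{\ps} = 1/t$, and $w = \zeta - Z$. Then $\hat\SR_\pm = \{w=0\}$ and $\SR_\pm = \{w=0,\, x_{\ps}=0\}$, so the $w_j$ are admissible functions vanishing on the distinguished extension, and the fibre-interior of the front face carries projective coordinates $W_j = w_j/x_{\ps}$. A free bicharacteristic labelled by $(Z_0, \mathfrak{Z})\in T^*\R^n_Z$ via \eqref{eq:canrel} satisfies $z(t) = 2tZ_0 - \mathfrak{Z}$, $\zeta(t) = Z_0$; substituting into the coordinates yields $Z = Z_0 - x_{\ps}\mathfrak{Z}/2$ and $w = x_{\ps}\mathfrak{Z}/2$, so the bicharacteristic terminates in the blowup at $(Z = Z_0,\, W = \mathfrak{Z}/2)$. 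Since $\mathfrak{Z}$ is by definition a fibre coordinate on $T^*\SR_\pm^\circ$, which coincides with ${}^{\oc}T^*\SR_\pm$ over $\SR_\pm^\circ$, this produces the canonical identification $W_\pm^\circ \cong {}^{\oc}T^*\SR_\pm$ over the interior of $\SR_\pm$ via $W = \mathfrak{Z}/2$.

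For the equator chart, use the coordinates $(x_{\ps}, s, v_j, \rho_{\ps}, \omega_j, \sigma)$ of Section~\ref{subsec:Hvf radial}, in which $\SR_+ = \{x_{\ps}=0,\, \tilde s := 2s-\rho_{\ps}=0,\, \tilde v_j := v_j-\omega_j=0,\, \sigma=-1\}$ and $\tilde s, \tilde v_j$ vanish on $\hat\SR_+$, giving front-face coordinates $Y_1 = \tilde s/x_{\ps}$ and $Y_j = \tilde v_j/x_{\ps}$. A 1-cusp boundary defining function of $\SR_+$ here is $x_{\oc} = 1/|Z|$ (equivalently $\rho_{\ps}$, up to a smooth positive factor), with angular coordinates $\hat Z_j$; the 1-cusp frame $\{|Z|\,d|Z|,\; |Z|\,d\hat Z_j\}$ coincides with $\{dx_{\oc}/x_{\oc}^3,\; d\hat Z_j/x_{\oc}\}$ up to sign. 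Substituting $Z_1 = 1/\rho_{\ps}$, $Z_j = \omega_j/\rho_{\ps}$ into $\mathfrak{Z}$ and expanding to leading order on the front face, one finds that the decomposition $\mathfrak{Z} = \mathfrak{Z}_\parallel\hat Z + \mathfrak{Z}_\perp$ produces 1-cusp fibre coordinates $\xi_{\oc} = \mathfrak{Z}_\parallel/|Z|$ and $\eta_{\oc,j}$ (proportional to components of $\mathfrak{Z}_\perp$) that are smooth invertible linear functions of $(Y_1, Y_j)$ with coefficients smooth in $(\rho_{\ps}, \omega)$: the apparent $1/\rho_{\ps}$ singularities cancel because $\mathfrak{Z}_\perp$ is orthogonal to $\hat Z$, and for instance at $\omega = 0$ the identification reduces to $\xi_{\oc} = Y_1$, $\eta_{\oc,j} = -Y_j$. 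Radially compactifying the fibres on both sides yields $W_\pm \cong \overline{{}^{\oc}T^*\SR_\pm}$; the two local identifications agree on the overlap of the charts because both express the same intrinsic object $\mathfrak{Z}$, equivalently because both are dictated by the symplectic Lagrangian isomorphism $N\hat\SR_\pm \cong T^*\hat\SR_\pm$.

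For the affine-structure statement, observe that in the absence of a distinguished extension the defining functions $w_j$ (or $\tilde s, \tilde v_j$) are determined only up to $y_j\mapsto M_{jk}y_k + x_{\ps} h_j(z)$ with $M$ a smooth invertible matrix; the $M$ ambiguity amounts to a smooth change of frame on the fibre (which is intrinsic to any vector-bundle identification), while the $x_{\ps} h_j$ ambiguity shifts $Y_j$ by the translation $h_j(z)$ depending only on the base point in $\SR_\pm$. This kills the distinguished origin but preserves the affine structure. The principal technical obstacle is the equator computation: one must verify that the parabolic scaling at spacetime infinity (via $x_{\ps}$) and the parabolic scaling at fibre infinity (via $\rho_{\ps}$) combine to reproduce \emph{exactly} the cubic $x_{\oc}^3$ weight of the 1-cusp frame on $\SR_\pm$, rather than some other homogeneity; as the cancellation in $\eta_{\oc,j}$ illustrates, this clean matching ultimately rests on $\hat\SR_\pm$ being Lagrangian in the full symplectic structure of $T^*\R^{n+1}$, not merely in the parabolic scattering phase space.
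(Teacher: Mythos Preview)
Your proof is correct and lands on the same identification as the paper, but the route is organized differently enough to be worth a comparison.

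You proceed dynamically: each free bicharacteristic is labelled by $(Z_0,\mathfrak{Z})\in T^*\R^n_Z$ via the canonical relation \eqref{eq:canrel}, and you read off where it hits the front face of $[\Sigma;\SR_\pm]$, finding $W=\mathfrak{Z}/2$ in the interior chart and checking the equator chart by decomposing $\mathfrak{Z}$ into components parallel and perpendicular to $\hat Z$. The paper proceeds statically in the interior: it uses the abstract Lagrangian isomorphism $N^*_q\hat\SR_\pm\cong T_q\hat\SR_\pm$, $df\mapsto H_f$, to show directly that linear functions on the front-face fibres are canonically identified with tangent vectors on $\SR_\pm$, hence the front face is the cotangent bundle. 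Only after this conceptual step does the paper pass to explicit coordinates, and it additionally verifies that the symplectic form on $W_\pm$ obtained from symplectic reduction agrees with the intrinsic cotangent-bundle symplectic form (your argument does not do this, though it is not strictly required by the lemma as stated). Near the boundary of $\SR_\pm$ the two approaches converge: the paper computes the 1c-fibre coordinates $(\xi_{\oc},\eta_{\oc})$ from the canonical 1-form identity $(2t\zeta-z)\cdot d\zeta=\xi_{\oc}\,dx_{\oc}/x_{\oc}^3+\eta_{\oc}\cdot dy_{\oc}/x_{\oc}$ and the bicharacteristic parameters $(z_0^\perp,t_0)$, which is essentially your $(\mathfrak{Z}_\perp,\mathfrak{Z}_\parallel/|Z|)$ decomposition with $\mathfrak{Z}_\parallel=2t_0|Z|$ and $\mathfrak{Z}_\perp=-z_0^\perp$.

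Your equator computation is sketched (``one finds that\dots'', ``the apparent $1/\rho_{\ps}$ singularities cancel''); the paper writes this out in full, and a reader might want to see the cancellation explicitly. Your affine-bundle paragraph matches the paper's treatment. The advantage of the paper's Hamilton-vector-field framing is that the word ``canonical'' is visibly justified before any coordinates are chosen; the advantage of your bicharacteristic framing is that it connects transparently to the later use of $W_\pm$ as the natural cross-section to the flow (Remark~\ref{remark:Wpm-dynamics}).
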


\begin{remark}  \label{remark:Wpm-dynamics}
Before turning to the proof, we remark on the geometric significance of $W_\pm$. We recall the general procedure of symplectic reduction: given a Hamiltonian $p$ (here, the principal symbol of $P$ or $P_0$) on a symplectic manifold (here, the cotangent bundle of $\RR^{n+1}$), there is an induced symplectic structure on the set of null bicharacteristics, i.e. on the characteristic variety $\Sigma$ quotiented by the flow. We can quotient by the flow concretely by intersecting $\Sigma$ with a cross section, that is, a codimension 1 submanifold of $\Sigma$ that is everywhere transverse to the flow. 

Now recall that, due to the nontrapping hypothesis, every flow line of $H_p^{2,0}$, the Hamilton vector field of $p$, converges to $\SR_+$ in the forward direction and $\SR_-$ in the backward direction. Moreover, these radial sets are sources ($\SR_-$) or sinks ($\SR_+$) for the flow, and the Hamilton vector field vanishes simply there. Therefore, when $\SR_\pm$ are blown up inside $\Sigma$, the different flowlines (bicharacteristics) are resolved: there is a one-to-one correspondence between the points of $W_\pm$ lying over $p \in \SR_\pm$, and the flowlines of the (rescaled) Hamilton vector field inside $\Sigma$ that converge to $p$. That is, the $W_\pm$ serve as natural cross-sections to the flow. We should thus expect that there is an induced symplectic structure on $W_\pm$. Since the interior of $W_\pm$ is an $n$-dimensional bundle over the $n$-dimensional radial sets $\SR_\pm$, with a canonical symplectic structure, it is natural to expect that it will turn out to be the cotangent bundle of $\Rpm$, at least away from the boundary of $\SR_\pm$, and we will shortly see that this is the case. What is less obvious is the behaviour near the boundary of $\SR_\pm$. The main point of the lemma is that the \emph{canonical diffeomorphism between $W_\pm^\circ$ and the cotangent bundle over the interior of $\SR_\pm$ extends to the boundary of $\Rpm$ as a diffeomorphism to the one-cusp cotangent bundle}. 
\end{remark}

\begin{proof}
We first work away from the boundary of $\SR_\pm$, that is, in the region where $(\zeta, \tau)$ are finite. We claim that, given a distinguished extension of the radial sets into the interior of ${}^{\ps} T^* \R^{n+1}$ --- we will take the conic extension $\hat \SR_\pm$ as above --- the fibre-interior of $W_\pm$ is \emph{canonically} diffeomorphic to the cotangent bundle of $\SR_\pm$. To emphasize, we will initially work locally over the interior of $\SR_\pm$; the analysis near the boundary of $\SR_\pm$, that is, near fibre-infinity of the compactified phase space, will be carried out later. 

We have already observed above that each fibre of $W_\pm^\circ$ has a natural affine structure, and this is upgraded to a natural linear structure given a distinguished extension of $\SR_\pm$ into the interior (or its 1-jet at $\SR_\pm$). 
Our strategy is to show that every linear function on the fibres of $W_\pm^\circ$ corresponds canonically to a vector field on $\Rpm$. This proves the claim, since a linear function on a vector bundle is equivalent to an element of the dual space. We will show that, over the interior of $\Rpm$, the dual space of the bundle $W_\pm^\circ$ is canonically the tangent bundle of $\Rpm$, which shows that $W_\pm^\circ$ itself is canonically the cotangent bundle of $\Rpm$. 

Recall from the discussion above that linear functions on the fibres of $W_\pm^\circ$ correspond precisely to functions $Y = y/\rho$, where $y$ vanishes at $\Rpm$ and $\rho$ is a boundary defining function. We can take $\rho = 1/t$ in this region, and we can assume without loss of generality that $y$ is homogeneous of degree zero in $(z, t)$, and thus extends smoothly to the spacetime boundary, where $\Rpm$ lives. Notice that $Y$ then is homogeneous of degree 1 in $(z, t)$ and vanishes on $\hat \SR_\pm$. Now we exploit the Lagrangian nature of $\hat \SR_\pm$, as proved in \cite[Section 3]{gell2022propagation}. Indeed, for any Lagrangian submanifold $\Lambda$ of a symplectic manifold, the symplectic form $\omega$ determines an isomorphism between the conormal bundle $N^*_q \Lambda$, and the tangent space $T_q \Lambda$ at each point $q \in \Lambda$. Moreover, the correspondence is given explicitly as follows: if $f$ is a function vanishing at $\Lambda$, then the corresponding vector is the Hamilton vector field $H_f$, which is tangent to $\Lambda$. In our specific situation where $\Lambda = \hat \SR_\pm$, and $f = Y = y/\rho$, we obtain a vector field $H_Y$ tangent to $\hat \SR_\pm$. Moreover, since $Y$ is homogeneous of degree 1 in $(z,t)$, $H_Y$ is homogeneous of degree zero in $(z, t)$ and thus determines a vector field on $\Rpm$. 

While this abstract argument suffices to prove the claim, let us be more explicit. We can take $y = 2(\zeta_j - Z_j)$, where $Z = z/(2t)$. Thus, the corresponding $Y$ is $2t \zeta_j - z_j$. The Hamilton vector field is given by 
\begin{equation}\label{eq:SR tangent vectorfields}
V_j = 2t D_{z_j} - 2 \zeta_j D_\tau + D_{\zeta_j}
\end{equation}
and it is easy to check that, in the local coordinates $Z$ on $\Rpm$, this vector field is precisely $\partial_{Z_j}$. This calculation shows explicitly that the linear functions on $W_\pm^\circ$ correspond diffeomorphically to points in the tangent bundle.

Since $W_\pm^\circ$ is canonically identified with $T^* \SR_\pm$, it has an intrinsic symplectic structure. 
We next show that this intrinsic symplectic structure on $W_\pm$ agrees with the symplectic structure obtained from symplectic reduction. The symplectic form as a cotangent bundle is given by 
$$
\sum_{j=1}^n dc_j \wedge dZ_j ,
$$
where $c_j$ are the dual coordinates to $Z_j$, where the $Z_j$ are taken as local coordinates on $\SR_\pm$ (away from the boundary of $\SR_\pm$). The $c_j$ are precisely the linear functions corresponding to the tangent vector fields $D_{Z_j}$, which are exactly the vector fields \eqref{eq:SR tangent vectorfields} written in the local coordinates $Z_j$ on $\SR_\pm$. Thus $c_j$ is the restriction of $2t \zeta_j - z_j$ to $W_\pm$, while $Z_j = \zeta_j$ on $\Rpm$, and this gives an expression for the symplectic form induced by the structure of $W_\pm$ as a cotangent bundle:
\begin{equation}\label{eq:symplectic form W cotangent bundle}
d(2t \zeta_j - z_j) \wedge d\zeta_j =  dt \wedge d|\zeta|^2 - \sum_{j=1}^n  dz_j \wedge d\zeta_j .
\end{equation}
This is not a convenient expression for the symplectic form however, as the two summands $dt \wedge d|\zeta|^2$ and $- dz \wedge d\zeta$ do not individually have a finite restriction; only the sum of the two does. Instead, we write this symplectic form in more geometrically natural coordinates, using coordinates on $\Sigma$ that parametrize the bicharacteristics. Assuming that we are dealing with the free operator $P_0$, bicharacteristics take the form 
\begin{equation}
z = z_0 + 2s \zeta, \quad t = t_0 + s, \quad \tau = -|\zeta|^2.
\end{equation}
If we assume that $z_0$ is orthogonal to $\zeta$ --- let us then write it $z_0^\perp$ to emphasize this orthogonality --- then this gives unique parameters $(t_0, z_0^\perp)$ for each bicharacteristic. This parametrization is not valid uniformly as $|\zeta| \to \infty$, but it becomes so if we reparametrize, 
replacing $s$ by $\tilde s = s |\zeta|$. (This is equivalent to the rescaling of the Hamilton vector field by a factor of $|\zeta|^{-1}$ in order to have a limit at fibre-infinity, as in \eqref{eq:rescaled Hvf} or  \cite[Section 3, above (3.6)]{gell2022propagation}.) Then we get 
\begin{equation}\label{eq:z0t0}
z = z_0 + 2\tilde s \hat \zeta, \quad t = t_0 + \frac{\tilde s}{|\zeta|}, \quad \tau = -|\zeta|^2,
\end{equation}
which has a limit as $|\zeta| \to \infty$, in which $t$ is constant in the limit and $t_0$ is this fixed value. So the coordinates $(t_0, z_0^\perp)$ are valid everywhere where $\zeta \neq 0$, and are valid uniformly up to fibre-infinity. Using the coordinates $(\zeta, z_0^\perp, t_0, \tilde s)$ on $\Sigma$, we find from \eqref{eq:symplectic form W cotangent bundle} that the symplectic form as a cotangent bundle is given by 
\begin{equation}
\label{eq:symplectic form W cotangent bundle 2} 
\sum_j d\Big( 2(t_0 + \frac{\tilde s}{|\zeta|} ) \zeta_j - (z_0^\perp + 2 \tilde s \hat \zeta)_j  \Big) \wedge d\zeta_j = dt_0 \wedge d|\zeta|^2 - dz_0^\perp \wedge d\zeta. 
\end{equation}
On the other hand, using symplectic reduction, we start with the symplectic form $\omega = d\tau \wedge dt + d\zeta \wedge dz$ on $T^* \RR^{n+1}$, restrict to $\Sigma$ by setting $\tau = -|\zeta|^2$ and then restricting to (or more precisely taking a limit as) $\tilde s \to \infty$. Using the same coordinates $(\zeta, z_0^\perp, t_0, \tilde s)$ as above on $\Sigma$, we find that $\omega$ restricts to 
\begin{equation}\label{eq:sympl red}
 dt \wedge d|\zeta|^2 - dz \wedge d\zeta = 
 d|\zeta|^2 \wedge d(t_0 + \frac{\tilde s}{|\zeta|})
 - d(z_0^\perp + 2\tilde s \hat \zeta) \wedge d\zeta   = dt_0 \wedge d|\zeta|^2 - dz_0^\perp \wedge d\zeta,
\end{equation}
which agrees with \eqref{eq:symplectic form W cotangent bundle 2}. 

Having established this canonical diffeomorphism, we proceed to investigate its behaviour as approaching the boundary of $\SR_\pm$. To do so, we consider the equality of the canonical one form
\begin{equation}\label{eq:can 1form}
   (2t\zeta-z) \cdot d\zeta = \xi_{\oc} \frac{dx_{\oc}}{x_{\oc}^3} + \eta_{\oc} \cdot \frac{dy_{\oc}}{x_{\oc}}.
\end{equation}
This identity allows us to identify the 1c-frequencies $\xioc$, $\etaoc$ in terms of the variables $(z_0^\perp, t_0, \tilde s, \hat \zeta, 1/|\zeta|)$ that are valid on $\Sigma$ near fibre-infinity, that is, for small values of $1/|\zeta|$. 

Without loss of generality, we consider the region where $\zeta_1$ dominates other variables and define
\begin{align}
x_{\oc} = \frac{1}{|\zeta|}, \; y_{\oc} = (\zeta_2/|\zeta|,...,\zeta_n/|\zeta|).
\end{align}
We denote $y_{\oc,j} = \zeta_j/|\zeta|$, $1 \leq j \leq n$. We also use coordinates $z_0' = ((z_0)_2, \dots, (z_0)_n)$ for $z_0^\perp$; since $z_0^\perp$ is by definition orthogonal to $\zeta$, we can compute that the first component of $z_0^{\perp}$ is 
$$
(z_0^\perp)_1 = - \frac{z_0' \cdot \yoc}{\sqrt{1 - |\yoc|^2}}.
$$
Then we can express the LHS of \eqref{eq:can 1form} using \eqref{eq:z0t0} by 
\begin{equation}\begin{aligned}
-(z_0^\perp &+ 2\tilde s \hat \zeta) \cdot d\zeta + (t_0 + \tilde s |\zeta|^{-1}) d|\zeta|^2  \\
&= -z_0^\perp \cdot d\zeta + t_0 d|\zeta|^2 \\
&=  \frac{z_0' \cdot \yoc}{\sqrt{1 - |\yoc|^2}} d\big( \frac{\sqrt{1 - |\yoc|^2}}{\xoc}\big) - z_0' \cdot d\big( \frac{\yoc}{\xoc}\big) - 2 t_0 \frac{d\xoc}{\xoc^3} \\
&= \big(-2t_0 + O(\xoc)\big) \frac{d\xoc}{\xoc^3} - \big(z_0' + \frac{(z_0' \cdot \yoc)\yoc}{1 - |\yoc|^2} \big)\frac{d\yoc}{\xoc}
\end{aligned}
\end{equation}
where the $O(\xoc)$ term is smooth. We see that 
\begin{equation}
\xioc = -2t_0 + O(\xoc), \quad \etaoc = -z_0' - \frac{(z_0' \cdot \yoc)\yoc}{1 - |\yoc|^2}.
\end{equation}
It is clear that the map from $(z_0', t_0)$ to $(\xioc, \etaoc)$ is invertible provided that $|\yoc|$ is small. 
Since we noted above that the coordinates $(t_0, z_0^\perp)$ on the fibre-interior of $W_\pm$ are valid linear coordinates uniformly up to the boundary of $\SR_\pm$, and likewise $(\xioc, \etaoc)$ are valid linear coordinates on ${}^{\oc} T^* \SR_\pm$ uniformly to the boundary of $\SR_\pm$, we see that the canonical diffeomorphism between $({}^{\oc} T^* \SR_\pm)^\circ$ and $(W_\pm)^\circ$ extends to the boundary. It follows that $W_\pm$ is canonically diffeomorphic (as a vector bundle if $\SR_\pm$ is endowed with its conic extension, and as an affine bundle if not) to ${}^{\oc} T^* \SR_\pm = {}^{\oc} T^* \overline{\R^n}$. 
\end{proof}

Using this identification, we can define the classical scattering map $\Sg$ as follows.
As pointed out in Remark~\ref{remark:Wpm-dynamics}, for each point $q \in {}^{\oc} T^* \SR_-$, under the identification with $W_-$ above, there is a unique bicharacteristic that tends to it in the backward direction.
This bicharacteristic will tends to a point $q' \in {}^{\oc} T^* \SR_+$ in the forward direction, again after identified with $W_+$ as above.
Then we define
\begin{equation} \label{eq:def-classical-sc-map}
    \Sg(q) = q'.
\end{equation}
As we will discuss in more detail in Section~\ref{sec: forward and backward sojourn relations}, this flow reaches $W_\pm$ transversely in finite time in both directions if we further renormalize the vector field as in \eqref{eq:further-rescaled-Hp}. In particular, $\Sg$ is a smooth map.

\section{Geometry of the 1c-ps phase space}
\label{sec:1c-ps geometry}
\subsection{The 1c-ps cotangent bundle}
Recall that the Poisson operators $\Poipm$ maps a function $f_\pm$ on $\Rpm$ to the unique solution $u : \RR^{n+1} \to \C$ to $Pu = 0$ with the given final state data $f_\pm$. 
The phase space for 1c-ps Lagrangian distributions (a class of operators that we shall show includes suitably microlocalized Poisson operators) is obtained by blowing up the corner of
\begin{align} \label{eq:calM0-def-product-bundle}
\mathcal{M}_0 =  \overline{^\ps{T^*\RR^{n+1}}} \times \overline{^\oc{T^*\RR^n}} 
\end{align}
at base infinity of $\overline{^\oc{T^*\RR^n}}$ and fiber-infinity of $\overline{^\ps{T^*Y}}$. Here the $\RR^n$ factor represents the interior of either $\Rp$ or $\Rm$.  We refer readers to \cite{melrose1993atiyah}\cite{melrose1994spectral} for more details about blow ups. Concretely, we define
\begin{align} \label{eq: 1c-ps cotangent bundle definition}
\mathcal{M}:= [ \mathcal{M}_0 ; \{ \rho_{\ps} = 0, x_{\oc} =0 \} ], \quad \rhops = \big( \sum_{j,k} g^{jk}(z, t) \zeta_j \zeta_k \big)^{-1/2}, 
\end{align}
and denote the blow down map by 
\begin{align}
\beta_{\oc-\ps}: \mathcal{M} \rightarrow \mathcal{M}_0.
\end{align}
The front face created by this blow-up shall be denoted $\ffocps$. 
The new smooth coordinate on $\ffocps$ introduced by the blow up is 
\begin{equation}\label{eq:sigma}
\sigma = \frac{x_{\oc}}{\rho_{\ps}},
\end{equation}
or its reciprocal. In the interior of $\ffocps$, either $\xoc$ or $\rhops$ can be taken as a boundary defining function. 

The manifolds $\SM_0$ and $\SM$ have codimension 4 corners. However, we shall only be interested in a neighbourhood of a compact subset $K$ of the interior of $\ffocps$; in particular, we shall stay away from all the other boundary hypersurfaces. So, in effect, we are dealing with a manifold with boundary. 

The manifold $\mathcal{M}$ is endowed with a canonical symplectic structure\footnote{In this article, we allow symplectic structures to blow up or degenerate at the boundary} $\omega$ from $\mathcal{M}_0$ by lifting the symplectic form on $\mathcal{M}_0$, which in turn is equipped with the product symplectic structure from its two factors. We are particularly interested in the symplectic/contact structures on $\ffocps$ induced by this symplectic structure in the interior. To prepare for this, we shall specify coordinates to use in a neighbourhood of $K \subset \ffocps$. These will be $\xoc$ (a boundary defining function) and $\yoc$, which are base coordinates on $\RR^n$ near base infinity;  $\xioc$, $\etaoc$, their one-cusp dual coordinates as defined in Section~\ref{sec: 1c PsiDO}; $z$, $t$, Euclidean space and time coordinates;  $\sigma$ as defined in \eqref{eq:sigma}; and fibre coordinates near fibre-infinity, which we take to be $\tilde \tau = \tau \xoc^2$, $\hat \zeta = \zeta / |\zeta|$. We remark that we are also mostly interested in $(z, t)$ near the perturbation of the metric, which by assumption is a compact set in spacetime. On the other hand, $(\zeta, \tau)$ will be near infinity, since $\rhops = 0$ at $\ffocps$. To summarize, our coordinates are
\begin{equation}\label{eq:coordinates near ffocps}
\xoc, \quad \yoc, \quad \xioc, \quad \etaoc, \quad z, \quad t, \quad \sigma, \quad \tilde \tau, \quad \hat \zeta.
\end{equation}

\begin{remark}\label{rem: ffocps coordinates}
Notice that if we define $\tilde \zeta = \xoc \zeta$, then we have an identity $1 = \sigma^{-4} (\tilde \tau^2 + |\tilde \zeta|^4)$, so we need to drop one of the $(\sigma, \tilde \tau, \tilde \zeta)$ variables to have linearly independent coordinates. We choose to drop $|\tilde \zeta|$ and retain $\hat \zeta$, which leads to a valid coordinate system provided that $\zeta$ is a dominant variable out of $(\zeta, \tau)$, in the parabolic sense, i.e. $|\zeta|^2 \geq c |\tau|$ for some $c > 0$ locally. Since we are primarily interested in a neighbourhood of the characteristic variety of $P$, where this is true near fibre-infinity, this choice is justified. Another equally valid choice, since $\tau$ is also dominant in a neighbourhood of the characteristic variety of $P$, is to drop $\tilde \tau$ and retain $\tilde \zeta$; we do this in \eqref{eq:fibresympform} below, for example. 
\end{remark}

\subsection{\texorpdfstring{Symplectic structures and Legendre submanifolds of $\ffocps$}{Symplectic structures and Legendre submanifolds of the 1c-ps front face}}\label{subsec:symplectic structures 1cps}
We begin our discussion by remarking on the invariance properties of the coordinates \eqref{eq:coordinates near ffocps}. Since we have the flat Schr\"odinger operator $D_t + \Delta_0$ at the boundary of spacetime infinity, we consider the symmetries of this operator. These consist of spatial and time translations, spatial rotations and Galilean boosts. First consider the effect such transformations have on the coordinate $Z = z/t$ on the interior of $\Rpm$. It is not hard to check that spatial and time translations act as the identity on $Z$, spatial rotations act by the `same' rotation on $Z$ and Galilean boosts act by translations in $Z$. The effect on the coordinate $\xoc = 1/|Z|$ is to map $\xoc \mapsto \xoc + O(\xoc^2)$. In particular, the differential $d\xoc$ at $\xoc = 0$ is invariant. 

The coordinate $t$ is canonically defined, up to translation, by these symmetries (note spatial rotations and Galilean boosts act trivially on $t$). It follows that $\tau$, its dual coordinate, is canonically defined. Since the differential $d\xoc$ at $\xoc = 0$ is invariantly defined (by definition in the 1c-calculus, see Section~\ref{subsec: 1c bundles}), it follows that $\tilde \tau$ is invariantly defined at $\ffocps$. Moreover, the invariance of $d\xoc$ at $\xoc = 0$ implies that the coordinate $\xioc$ is invariantly defined at $\ffocps$. 
It follows that there is a well-defined projection map 
\begin{equation}\label{eq:pi defn}
\pi : \ffocps \to \RR^3_{t, \tilde \tau, \xioc},
\end{equation}
up to translations in $t$, with $(4n-2)$-dimensional fibres. 

\begin{lmm}\label{lem:symplectic geom 1cps} The symplectic form $\omega$ in the interior of $\SM$, contracted with the vector field $-\xoc^3 \partial_{\xoc}$ (with respect to the coordinate system above), restricts to a smooth 1-form on $\ffocps$, which is the pullback, via $\pi$, of a contact 1-form on $\RR^3_{t, \tilde \tau, \xioc}$. Moreover, on each fibre of $\pi$, the symplectic form induces a symplectic structure by contracting with the vector field $-\xoc^2 \partial_{\xoc}$, restricting to $\ffocps$, and then taking the differential. 
\end{lmm}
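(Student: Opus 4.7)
The plan is to do an explicit computation in the coordinates \eqref{eq:coordinates near ffocps}, using $\tilde\tau = \xoc^2\tau$ and $\tilde\zeta = \xoc\zeta$ as the independent ``dual'' variables in place of $\tau,\zeta$, so that they remain finite on $\ffocps$. Writing the canonical 1-form as
\begin{equation*}
\alpha_0 = \frac{\tilde\tau}{\xoc^2}dt + \frac{\tilde\zeta\cdot dz}{\xoc} + \xi_{\oc}\frac{d\xoc}{\xoc^3} + \frac{\eta_{\oc}\cdot dy_{\oc}}{\xoc},
\end{equation*}
the symplectic form $\omega = d\alpha_0$ is inherited on $\SM$ via pullback. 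I would apply Cartan's identity $\iota_V\,d\alpha_0 = \mathcal{L}_V\alpha_0 - d(\iota_V\alpha_0)$ first with $V = \xoc^3\partial_{\xoc}$. A short computation yields $\iota_V\alpha_0 = \xi_{\oc}$, and expanding $\mathcal{L}_V\alpha_0$ term by term (noting that the $\xi_{\oc}\xoc^{-3}\,d\xoc$ summand contributes zero, since the $V$-derivative of the coefficient exactly cancels with $\xi_{\oc}\xoc^{-3}\mathcal{L}_V d\xoc$), one obtains
\begin{equation*}
\iota_{-V}\omega = d\xi_{\oc} + 2\tilde\tau\,dt + \xoc\bigl(\tilde\zeta\cdot dz + \eta_{\oc}\cdot dy_{\oc}\bigr).
\end{equation*}
This is manifestly smooth up to $\ffocps$, and its restriction there equals $\alpha := d\xi_{\oc} + 2\tilde\tau\,dt$, which depends only on the base coordinates $(t,\tilde\tau,\xi_{\oc})$ of $\pi$. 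Thus it is the pullback $\pi^*\alpha$ of a 1-form on $\R^3$, and $\alpha\wedge d\alpha = 2\,d\xi_{\oc}\wedge d\tilde\tau\wedge dt$ is a volume form, so $\alpha$ is a contact form.

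Next, I would repeat the computation with $W = \xoc^2\partial_{\xoc}$. Since the $\xoc$-prefactors on the singular terms are all off by one power of $\xoc$ from the previous case, the analogous identity gives
\begin{equation*}
\iota_{-W}\omega = \frac{1}{\xoc}\bigl(d\xi_{\oc} + 2\tilde\tau\,dt\bigr) + \tilde\zeta\cdot dz + \eta_{\oc}\cdot dy_{\oc}.
\end{equation*}
The singular leading summand is $\xoc^{-1}\pi^*\alpha$, hence pulls back to zero on any fibre $F$ of $\pi$, since $dt,d\tilde\tau,d\xi_{\oc}$ all vanish along $F$. The remaining two summands are smooth at $\ffocps$, so the restriction of $\iota_{-W}\omega$ to $F$ extends smoothly there as the 1-form $\theta_F := \tilde\zeta\cdot dz + \eta_{\oc}\cdot dy_{\oc}$. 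Its differential is
\begin{equation*}
d\theta_F = \sum_j d\tilde\zeta_j\wedge dz_j + \sum_k d\eta_{\oc,k}\wedge dy_{\oc,k}.
\end{equation*}

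The core step is verifying that $d\theta_F$ is non-degenerate on $F$, which has dimension $4n-2$. The $(y_{\oc},\eta_{\oc})$-summand is the standard symplectic form on a $2(n-1)$-dimensional factor. For the first summand, I would invoke Remark~\ref{rem: ffocps coordinates}: on $F$, where $\tilde\tau$ is fixed, the constraint $\tilde\tau^2 + |\tilde\zeta|^4 = \sigma^4$ combined with $\hat\zeta = \tilde\zeta/|\tilde\zeta|$ identifies $\tilde\zeta$ with $(\sigma^4 - \tilde\tau^2)^{1/4}\hat\zeta$, yielding a local diffeomorphism $(\sigma,\hat\zeta)\mapsto\tilde\zeta$ from $\R_{>0}\times S^{n-1}$ onto $\R^n\setminus\{0\}$ (valid in the region where $\zeta$ is the dominant fibre variable). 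Under this diffeomorphism the first summand of $d\theta_F$ is the pullback of the standard symplectic form on $T^*\R^n$ to the $2n$-dimensional $(z,\sigma,\hat\zeta)$-factor, hence symplectic; combined with the $(y_{\oc},\eta_{\oc})$-form, which spans a complementary $2(n-1)$-dimensional set of directions, this produces a symplectic structure on the full fibre. This verification of non-degeneracy, particularly keeping careful track of the parametrization of the parabolic sphere at fixed $\tilde\tau$, is where I expect the main bookkeeping difficulty to lie.
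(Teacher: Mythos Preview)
Your proof is correct and follows essentially the same route as the paper's. The paper writes out $\omega$ explicitly from the canonical one-form and contracts directly, while you use Cartan's formula $\iota_V d\alpha_0 = \mathcal{L}_V\alpha_0 - d(\iota_V\alpha_0)$; both computations land on the same expressions $d\xi_{\oc}+2\tilde\tau\,dt + O(\xoc)$ and, on a fibre, $\tilde\zeta\cdot dz + \eta_{\oc}\cdot dy_{\oc}$. Your worry about the ``main bookkeeping difficulty'' in the non-degeneracy step is overstated: once you pass to $\tilde\zeta$ as an independent $n$-dimensional coordinate on the fibre (exactly as the paper does, writing $\tilde\zeta=\sigma\hat\zeta$), the form $d\tilde\zeta\wedge dz + d\eta_{\oc}\wedge dy_{\oc}$ is visibly the standard symplectic form on $\mathbb{R}^{2n}\times\mathbb{R}^{2(n-1)}$, and no further parabolic-sphere bookkeeping is needed.
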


\begin{proof}
Using our coordinates \eqref{eq:coordinates near ffocps},  the element of $\mathcal{M}$ at the point $(\xoc, \yoc, \xioc, \etaoc, z, t, \sigma, \tilde \tau, \hat \zeta)$, viewed as a cotangent vector, is 
$$
\xioc \frac{d\xoc}{\xoc^3} + \etaoc \frac{d\yoc}{\xoc} + \frac{\tilde \tau}{\xoc^2} dt + \frac{\sigma \hat\zeta}{\xoc} dz.
$$
This is then also the expression for the canonical one-form on $\mathcal{M}$. The symplectic form is thus given by taking the differential of this one-form: 
\begin{equation}\label{eq:omega1cps}
\omega = d\xioc \wedge \frac{d\xoc}{\xoc^3} + d(\frac{\etaoc}{\xoc}) \wedge d\yoc + d(\frac{\tilde \tau}{\xoc^2}) \wedge dt + d(\frac{\sigma \hat \zeta}{\xoc}) \wedge dz. 
\end{equation}
Contracting with $-\xoc^3 \partial_{\xoc}$ we obtain 
\begin{equation}\label{eq:3dcontactform}
d\xioc + 2 \tilde \tau dt + O(\xoc),
\end{equation}
which restricts to $\{ \xoc = 0 \}$ to the contact form $d\xioc +2 \tilde \tau dt$ on $\RR^3$ pulled back via $\pi$. Furthermore, if we restrict to a fibre, both $t$ and $\xioc$ are fixed, which kills the $(\xoc)^{-3} d\xoc$ terms and means that the contraction with the larger vector field $-\xoc^2 \partial_{\xoc}$ has a smooth limit at $\xoc = 0$. This gives the 1-form (writing $\sigma \hat \zeta = \tilde \zeta$, defined  in Remark~\ref{rem: ffocps coordinates})
\begin{equation}\label{eq:fibre1form}
\alpha = \etaoc \cdot d\yoc + \tilde \zeta \cdot dz.
\end{equation}
Taking the differential gives 
\begin{equation}\label{eq:fibresympform}
d\etaoc \wedge d\yoc + d\tilde \zeta \wedge dz, 
\end{equation}
which is a symplectic form. 
\end{proof}


\begin{defn}[Admissible 1c-ps Lagrangian submanifold and 1c-ps fibred-Legendre submanifold]  \label{defn: admissible 1c-ps Lagrangian submanifold and 1c-ps fibred-Legendre submanifold}
We define an admissible 1c-ps Lagrangian submanifold of $\SM$ to be a $2n+1$-dimensional submanifold $\Lambda$ that is Lagrangian in the interior (the canonical symplectic form $\omega$ vanishes on it), such that 
\begin{itemize}
\item $\Lambda$ meets $\ffocps$ transversally, 
\item the differential $dt$ is non-vanishing on $\Lambda \cap \ffocps$, and 
\item  its closure is disjoint from all other boundary hypersurfaces of $\SM$ (other than $\ffocps$). 
\end{itemize}

We define a fibred-Legendre submanifold $L$ of $\ffocps$ to be the boundary of an admissible 1c-ps Lagrangian submanifold. In other words, there exists $\Lambda$ as above such that $L = \Lambda \cap \ffocps$. 
The justification for the term `fibred-Legendre' comes from the following geometric result. 
\end{defn}

\begin{prop}\label{prop:1c-ps contact symplectic property} Let $L$ be a fibred-Legendre submanifold of $\ffocps$, and let $\pi_L$ be the restriction of $\pi$ to $L$, where $\pi$ is as in \eqref{eq:pi defn}. Then $\pi_L(L)$ is a Legendre submanifold (curve, in this case) of $\RR^3_{t, \tilde \tau, \xioc}$, and the fibres of $\pi_L$ are Lagrangian submanifolds for the symplectic structure on the fibres of $\pi$. 
\end{prop}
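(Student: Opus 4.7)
The plan is to exploit the Lagrangian condition $\omega|_\Lambda = 0$ in the interior together with a regularized form of $\omega$. Regrouping \eqref{eq:omega1cps} by powers of $\xoc$ yields
\begin{equation*}
\xoc^3 \omega \;=\; -\,d\xoc \wedge (d\xioc + 2\tilde\tau\, dt)\;+\;\xoc\, \beta\;+\;\xoc^2\, \tilde\omega,
\end{equation*}
where $\tilde\omega := d\etaoc \wedge d\yoc + d\tilde\zeta \wedge dz$ (with $\tilde\zeta = \sigma\hat\zeta$, as in Remark~\ref{rem: ffocps coordinates}) is a smooth 2-form on $\SM$ whose restriction to each fibre of $\pi$ is the fibre symplectic form \eqref{eq:fibresympform}, and $\beta = d\tilde\tau \wedge dt - \etaoc\, d\xoc \wedge d\yoc - \sigma\hat\zeta\, d\xoc \wedge dz$ is a smooth 2-form whose every term carries either a $d\xoc$ factor or the $d\tilde\tau \wedge dt$ factor. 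Note that $d\xioc + 2\tilde\tau\, dt$ is the pullback by $\pi$ of the standard contact 1-form on $\R^3_{t,\tilde\tau,\xioc}$.

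For the Legendrian claim: $\omega|_\Lambda = 0$ implies $\xoc^3 \omega|_\Lambda = 0$, and passing to the boundary gives $d\xoc \wedge (d\xioc + 2\tilde\tau\, dt)\big|_\Lambda = 0$ at every point of $L$. Since $d\xoc|_\Lambda$ is nowhere zero on $L$ by the transversality of $\Lambda$ to $\ffocps$, the 1-form $(d\xioc + 2\tilde\tau\, dt)|_\Lambda$ must be pointwise proportional to $d\xoc|_\Lambda$ along $L$, and hence pulls back to zero on $TL = \ker(d\xoc|_\Lambda)$. Combined with the non-vanishing of $dt|_L$ (admissibility), which forces $d\pi_L$ to have rank at least one, this shows that $\pi_L(L)$ is a 1-dimensional immersed submanifold of $\R^3$ on which the contact form vanishes, i.e.\ a Legendrian curve. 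In particular, $\xioc$ and $\tilde\tau$ on $L$ are functions of $t$ alone, so the fibres of $\pi_L$ are precisely the level sets $\{t = t_0\} \cap L$, of dimension $2n - 1$.

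For the Lagrangian claim, the transversality and the non-vanishing of $dt|_L$ allow me to choose local coordinates $(\xoc, t, u_3, \ldots, u_{2n+1})$ on $\Lambda$ near a point of $L \cap F$, so that $L = \{\xoc = 0\}$ and the fibre $L \cap F$ is cut out by $\{t = t_0\}$, with $\{\partial_{u_j}\}_{j=3}^{2n+1}$ a frame for $T(L \cap F)$. In this frame $d\xoc(\partial_{u_j}) = dt(\partial_{u_j}) = 0$ identically on $\Lambda$, so every term of $-d\xoc \wedge (d\xioc + 2\tilde\tau\, dt)$ and of $\beta$ evaluates to zero on the pair $(\partial_{u_i}, \partial_{u_j})$ -- each such term carries either a $d\xoc$ factor or a $d\tilde\tau \wedge dt$ factor, the latter vanishing because $dt(\partial_{u_j}) = 0$. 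Therefore, on $\Lambda \cap \{\xoc > 0\}$,
\begin{equation*}
0 \;=\; \omega(\partial_{u_i}, \partial_{u_j})\big|_\Lambda \;=\; \xoc^{-1}\, \tilde\omega(\partial_{u_i}, \partial_{u_j})\big|_\Lambda,
\end{equation*}
and the smoothness of $\tilde\omega$ up to $\xoc = 0$ then yields $\tilde\omega(\partial_{u_i}, \partial_{u_j})|_{L \cap F} = 0$. Hence $L \cap F$ is isotropic for the fibre symplectic form, and being of dimension $2n - 1 = \tfrac{1}{2}\dim F$, it is Lagrangian.

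The main technical obstacle is managing the singular expansion of $\omega$ at $\ffocps$: the symplectic form itself blows up, so no direct restriction to $L$ is available. The coordinate choice on $\Lambda$ neutralizes this by making the frame $\{\partial_{u_j}\}$ automatically annihilate the 1-forms $d\xoc$ and $dt$ that multiply the singular $\xoc^{-3}$ and $\xoc^{-2}$ prefactors, so those prefactors get killed and only the smooth $\tilde\omega$ contribution is felt.
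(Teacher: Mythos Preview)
Your proof is correct and takes a somewhat different route from the paper's.

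For the Lagrangian claim, the paper contracts $\omega$ with a vector field $W = \partial_{\xoc} + \tilde W$ tangent to $\Lambda_{t^*} = \Lambda \cap \{t = t^*\}$, observes that $\xioc = \xioc^* + \xoc\,\Xi_{\oc}$ on $\Lambda_{t^*}$, and computes $\omega(\xoc^2 W, \tilde V)$ to show that the 1-form $\alpha$ of \eqref{eq:fibre1form} restricts to the exact form $-d\Xi_{\oc}$ on each fibre $F$; exactness then forces $d\alpha|_F = 0$. You instead expand $\xoc^3\omega$ in powers of $\xoc$ and attack the 2-form $\tilde\omega$ directly, choosing adapted coordinates on $\Lambda$ so that the fibre directions annihilate both $d\xoc$ and $dt$ identically; this kills the leading and $\beta$ terms and leaves only $\xoc^2\tilde\omega$, whence $\tilde\omega = 0$ on fibres by continuity. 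Your argument is more direct for the proposition as stated; the paper's approach has the advantage of producing the primitive $\Xi_{\oc}$ explicitly, which is reused in the parametrization subsection immediately following (see the discussion around \eqref{eq:Lparam} and the function $\phi_1$).

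One small gap: your Legendrian step jumps from ``$\pi_L^*(d\xioc + 2\tilde\tau\,dt) = 0$ and $dt|_L \neq 0$'' to ``$\pi_L(L)$ is a 1-dimensional immersed submanifold'' without ruling out that $d\pi_L$ has rank $2$ somewhere. The paper closes this by a constant-rank argument: rank $3$ is impossible since the image lies in the contact plane; if rank were $2$ at a point it would be $2$ nearby, and the image would locally be a $2$-dimensional integral submanifold of a contact form on $\R^3$, which cannot happen. Alternatively, and more in the spirit of your argument, the vanishing $d\xioc|_L = -2\tilde\tau|_L\,dt|_L$ together with $dt|_L \neq 0$ shows directly that $\xioc|_L$ (and hence $\tilde\tau|_L$) is locally a function of $t$ alone, so $\pi_L$ factors through the single coordinate $t$ and the image is a smooth curve.
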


\begin{proof} By definition, there is an admissible 1c-ps Lagrangian submanifold $\Lambda$ such that $L = \partial \Lambda = \Lambda \cap \ffocps$. Since $\Lambda$ is Lagrangian, the 1-form $\omega \lrcorner  W$ vanishes on $\Lambda$, for any choice of vector field $W$ tangent to $\Lambda$. We choose $W$ to be a vector field transverse to $L = \partial \Lambda$, in fact, without loss of generality we take $W$ to have $\partial_{\xoc}$ component equal to $1$. This is possible due to the first and third conditions in Definition~\ref{defn: admissible 1c-ps Lagrangian submanifold and 1c-ps fibred-Legendre submanifold}. 

Thus we write $W = \partial_{\xoc} + \tilde W$, where $\tilde W$ has no $\partial_{\xoc}$ component. We compute, using \eqref{eq:3dcontactform}, 
\begin{multline}\label{eq:3dcontactform2}
0 = \omega \lrcorner (-\xoc^3 W) = \omega \lrcorner (-\xoc^3 \partial_{\xoc}) + \omega \lrcorner (-\xoc^3 \tilde W) \\ = 
d\xioc +2 \tilde \tau dt + \beta d\xoc + O(\xoc), \quad \beta \in C^\infty(\Lambda). 
\end{multline}
Restricting to $\xoc = 0$ we find that 
\begin{equation}\label{eq:3dcontactform3}
d\xioc +2 \tilde \tau dt = 0 \text{ on } L. 
\end{equation}

Let $p \in L$ be a point in $L$, and consider the rank of the map $d\pi_L$ at $p$. Clearly, by \eqref{eq:3dcontactform3}, the rank cannot be $3$ at any point of $L$. Suppose that the rank at $p$ is $2$. Then, the rank must be equal to $2$ in a neighbourhood of $p$ --- it cannot jump up to $3$, and it is at least $2$ in a neighbourhood by continuity. Therefore, by the constant rank theorem, $\pi_L$ would map $L$ to a 2-dimensional submanifold (locally) on which the form $d\xioc +2 \tilde \tau dt$ (now viewed as a form on $\RR^3$) vanishes. But this is impossible as this form is a contact form, and hence the maximal dimension of a submanifold on which it vanishes is $1$. It follows that the rank of $\pi_L$ is at most $1$ at every point of $L$.

On the other hand, the second assumption on $\Lambda$ in Definition~\ref{defn: admissible 1c-ps Lagrangian submanifold and 1c-ps fibred-Legendre submanifold} is that $dt$ is non-vanishing on $L$. It follows that $d\pi_L$ has rank at \emph{least} one at each point $p$. Combined with the previous paragraph, we see that the rank of $\pi_L$ is exactly $1$ at each $p \in L$. Applying the constant rank theorem, we find that the image $\pi_L(L)$ is a curve in $\RR^3_{t, \tilde \tau, \xioc}$ on which the contact form $d\xioc -2 \tilde \tau dt$ vanishes, i.e. a Legendre curve. Moreover, the fibres are smooth $(2n-1)$-dimensional submanifolds of the fibres of $\pi$. Let $F$ be an arbitrary fibre of $\pi_L$, given by $L \cap \{ t = t^*, \xioc = \xioc^*, \tilde \tau = {\tilde\tau}^* \}$. 



We claim that the 1-form $\alpha$ given by \eqref{eq:fibre1form}, restricted to any fibre $F$, is the differential of a function on that fibre. That implies that $d\alpha$ restricts to zero on $F$ (since the exterior derivative $d$ and restriction commute) and thus shows that $F$ is a Lagrangian submanifold of the corresponding fibre of $\ffocps$. 

To prove this claim, we consider the submanifold $\Lambda_{t^*} := \Lambda \cap \{ t = t^* \}$. This is a smooth submanifold of dimension $2n$ of $\Lambda$, since $dt$ is nonvanishing on $\Lambda$ at $p$, and hence in a neighbourhood.  Moreover, if we fix $t^*$ and let $\xoc$ vary, then since $d\xoc$ is nonvanishing at $p$, and hence in a neighbourhood, $\Lambda \cap \{ t = t^* \}$ fibres over $[0, \epsilon]_{\xoc}$ with smoothly varying fibre $\Lambda_{t^*, \xoc^*} = \Lambda \cap \{ t = t^* , \xoc = \xoc^* \}$, $\xoc^* \in [0, \epsilon]$, by the constant rank theorem. 

Given a smooth vector field  $V$ tangent to $F \subset \Lambda_{t^*, 0}$, we can extend it to a smooth vector field $\tilde V$ on  $\Lambda_{t^*}$, tangent to $\Lambda_{t^*, \xoc}$ for all sufficiently small $\xoc$. Then, we consider how $\xioc$  varies on $\Lambda_{t^*, \xoc}$ for $\xoc > 0$. We observe that $\xioc$ is constant on $\Lambda_{t^*, 0}$. This follows from \eqref{eq:3dcontactform3}, since $\Lambda_{t^*, 0}$ is contained in $L$ and so $d\xioc + 2 \tilde \tau dt = 0$ on it. Thus since $dt$ vanishes identically on $\Lambda_{t^*, 0}$, so does $d\xioc$, so it is constant, equal to  $\xioc^*$, say. Therefore on $\Lambda_{t^*}$, there is a smooth function $\Xi_{\oc}$ such that we can express 
\begin{equation}\label{eq:xioc-expand}
\xioc = \xioc^* + \xoc \Xi_{\oc}  \  \text{ on } \Lambda_{t^*}. 
\end{equation}

We next choose a vector field $W$ as before, but tangent to $\Lambda_{t^*}$ (and not merely $\Lambda$) with $\partial_{\xoc}$ component equal to $1$. This is possible because of the assumed transversality of $\Lambda$, and hence also $\Lambda_{t^*}$, to $\{ \xoc = 0 \}$. We write $W = \partial_{\xoc} + \tilde W$, where $\tilde W$ has no $\partial_{\xoc}$ component nor $\partial_t$ component. 

Since $\omega$ vanishes on $\Lambda$, both $\tilde V$ and $W$ are tangent to $\Lambda$, and neither $\tilde V$ nor $\tilde W$ have a $\partial_{\xoc}$ or $\partial_t$ component,  we have, using \eqref{eq:xioc-expand} and the formula \eqref{eq:fibre1form}, 
\begin{equation}\begin{gathered}
0 = \omega(\xoc^2 W, \tilde V) =  \omega(\xoc^2 \partial_{\xoc}, \tilde V) + \omega(\xoc^2 \tilde W, \tilde V)  \\
= \Big( -\frac{d\xioc}{\xoc} -  \alpha \Big)(\tilde V) + O(\xoc) \\
= (-d\Xi_{\oc} - \alpha)(\tilde V) + O(\xoc).
\end{gathered}\end{equation}
Taking the limit as $\xoc \to 0$ we find that 
$$
\alpha = -d\Xi_{\oc}  \ \text{ on } F. 
$$
Thus, $d\alpha$ vanishes on $F$ and $F$ has dimension $2n-1$, that is, $F$ is a Lagrangian submanifold of the corresponding fibre of $\ffocps$.

 \end{proof}

\subsection{Parametrization of 1c-ps fibred-Legendre submanifolds}\label{subsec:Parametrization of 1c-ps fibred-Legendre submanifolds}
By a `parametrization' of a 1c-ps fibred Legendre submanifold, we mean a representation of it (locally) as the graph of the differential of a function, or an envelope of such graphs. To explain this we first make the simplifying assumption that 
$(\yoc, z)$ furnish coordinates on the fibres $F$ of $\pi_L$, locally. Since, as we saw above, $dt$ is non-vanishing on $\pi_L(L)$, we can index the fibre $F$ by the value of $t$, i.e. we write $F_{t^*}$ when $t=t^*$ on $F$. (Moreover, since $\pi_L(L)$ is one-dimensional, and $t$ is locally a coordinate on it, we can write $\xioc$ and $\tilde \tau$, restricted to $\pi_L(L)$,  as functions of $t$; in particular, $\xioc = -2\phi_0(t)$ for some smooth function $\phi_0(t)$.)   With this simplifying assumption, the function $\xioc$ restricted to $\Lambda_{t^*}$ can be expressed in terms of the coordinates $(\xoc, \yoc, z)$, and more generally, $\xioc$ restricted to $\Lambda$ can be expressed in terms of the coordinates $(\xoc, \yoc, t, z)$. We write 
$$
\xioc(\xoc, \yoc, t, z) = \Phi(\xoc, \yoc, t, z) = -2\phi_0(t) - \xoc \phi_1(\yoc, t, z) + O(\xoc^2).
$$
Thus $-\phi_1$, evaluated at $t=t^*$, coincides with the function $\Xi_{\oc}$ above. 
We claim that $L$ is given by the graph of the differential of the function 
$$
d\Big( \frac{\phi_0(t) + \xoc \phi_1(\yoc, t, z)}{\xoc^2} \Big) = d \big( \frac{\phi_0(t)}{\xoc^2} \big) + d \big( \frac{\phi_1(\yoc, t, z)}{\xoc} \big),
$$
at $\xoc = 0$, in the sense that 
\begin{equation}\begin{gathered}
L = \Big\{ (\xioc, \yoc, \etaoc; \tilde \tau, \sigma, \hat \zeta, z, t) \mid \xioc = -2 \phi_0(t) , \ \\
\etaoc = d_{\yoc} \phi_1(\yoc, t, z), \ \tilde \tau = d_t \phi_0(t), \ \tilde \zeta = d_z \phi_1(\yoc, t, z) \Big\}.
\end{gathered}\label{eq:Lparam}\end{equation}

We express \eqref{eq:Lparam} more succinctly by saying that 
$$
L = \Big\{ (K, d_{K} \big( \frac{\Phi(K)}{\xoc^2} \big) \mid \xoc = 0 \Big\}, \quad K = (\xoc, \yoc, t, z). 
$$
We verify this claim using Proposition~\ref{prop:1c-ps contact symplectic property}. The first equation $\xioc = -2 \phi_0(t)$ is true by construction. Then, since $d\xioc +2 \tilde \tau dt$ vanishes on $L$, this implies that $\tilde \tau = d_t \phi_0(t)$. Finally, since we know that $\alpha = d\phi_1(\yoc, t, z)$ and we have the expression \eqref{eq:fibre1form} of it, it follows that $\etaoc = d_{\yoc} \phi_1$ and $\sigma \hat \zeta = d_z \phi_1$ on $L$. 

In the general case, where we do not assume that $(\yoc, z)$ furnish coordinates on the fibres $F$, we need to allow additional variables in our parametrizing function:

\begin{defn} \label{defn: 1c-ps parametrization}
Suppose $\Legps$ is an 1c-ps fibred-Legendre submanifold of $\ffocps \subset \mathcal{M}$, $q \in \Legps$ and $(\theta_0, \theta_1)$ is in a non-empty open subset $U$ of $\RR^{k_1+k_2}$, then we say that 
\begin{align}
\Phi_{\oc-\ps} = -2 \frac{ \varphi_0(t,\theta_0) }{x_{\oc}^2} - \frac{\varphi_1(\msf{K},\theta_0,\theta_1)}{x_{\oc}} 
\end{align}
gives a non-degenerate parametrization of $\Legps$ near $q$ if there is a point $q' \in \RR^{2n}_{\yoc, z, t} \times U_{\theta_0, \theta_1}$ such that the differentials
\begin{equation}\label{eq:varphi0}
d_{t,\theta_0} \frac{\partial \varphi_0}{\partial \theta_{0j}}, \quad 1 \leq j \leq k_1,
\end{equation}
and
\begin{equation}\label{eq:varphi1}
d_{\yoc, z, \theta_1} \frac{\partial \varphi_1}{\partial \theta_{1j}}, 1 \leq j \leq k_2,
\end{equation}
are linearly independent at $q' \in \RR^{2n}_{\yoc, z, t} \times U$, such that $\Legps$ is given locally by 
\begin{align}\label{eq:1c-ps param}
\Legps
=\{ (\msf{K},d_{\msf{K}}\Phi_{\oc-\ps}) \mid \; \xoc = 0, (\yoc, z, t,\theta_0,\theta_1) \in C_{\Phi_{\oc-\ps}}\},
\end{align}
where 
\begin{align}  \label{eq: 1c-ps critical set}
C_{\Phi_{\oc-\ps}} =  \{ (\yoc, z, t,\theta_0,\theta_1) \mid d_{\theta_0}\varphi_0 = 0, d_{\theta_1}\varphi_1 = 0 \}
\end{align}
with the point $q' \in C_{\Phi_{\oc-\ps}}$ corresponding under \eqref{eq:1c-ps param} to $q \in L$. We remark that either $\theta_0$ or $\theta_1$, or both, may be absent, in which case the linear independence conditions for the derivatives in \eqref{eq:varphi0}, \eqref{eq:varphi1} are dispensed with, as well as stationarity with respect to $\theta_0$ and/or $\theta_1$ in \eqref{eq: 1c-ps critical set}. 
\end{defn}

\begin{prop}\label{prop: 1c-ps nondeg param}
Every 1c-ps fibred-Legendre submanifold $L$ has a local non-degenerate parametrization near any point $q \in L$.
\end{prop}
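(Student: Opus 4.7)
The strategy mirrors the decomposition furnished by Proposition~\ref{prop:1c-ps contact symplectic property}: $L$ fibres via $\pi_L$ over a one-dimensional Legendre curve $\pi_L(L) \subset \RR^3_{t,\tilde\tau,\xioc}$, with fibres $F_{t^*}=L\cap\{t=t^*\}$ that are Lagrangian in the corresponding fibres of $\pi$. Accordingly, I would assemble $\Phi_{\oc-\ps}$ out of two pieces of orders $\xoc^{-2}$ and $\xoc^{-1}$: the $\varphi_0$ piece will capture the Legendre curve (contributing to $\xioc$ and $\tilde\tau$ upon taking $d_{\msf{K}}$ at $\xoc=0$), while the $\varphi_1$ piece will capture the Lagrangian fibres (contributing to $\etaoc$ and $\tilde\zeta$).

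For the Legendre curve, the admissibility condition $dt|_L\ne 0$ from Definition~\ref{defn: admissible 1c-ps Lagrangian submanifold and 1c-ps fibred-Legendre submanifold} ensures that $t$ restricts to a local coordinate on $\pi_L(L)$ near the reference point. The Legendre relation \eqref{eq:3dcontactform3} then forces $\xioc|_L$ and $\tilde\tau|_L$ to be smooth functions of $t$ satisfying $\tilde\tau=-\tfrac{1}{2}(\xioc)'$. I would take $\varphi_0(t):=\xioc|_L/4$, so that a direct computation yields $\xioc=4\varphi_0$ and $\tilde\tau=-2\varphi_0'$ at $\xoc=0$. No auxiliary $\theta_0$ variable is needed, making the first non-degeneracy condition in Definition~\ref{defn: 1c-ps parametrization} and the $d_{\theta_0}\varphi_0=0$ clause in \eqref{eq: 1c-ps critical set} both vacuous.

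For the Lagrangian fibres, I would invoke H\"ormander's classical parametrization theorem. Fixing $t^*$, each $F_{t^*}$ is a Lagrangian submanifold of a cotangent-bundle-type symplectic manifold (with form \eqref{eq:fibresympform} over the base $\RR^{2n-1}_{(\yoc,z)}$), so H\"ormander's theorem produces a non-degenerate phase function $\psi_{t^*}(\yoc,z,\theta_1)$, possibly with auxiliary parameters $\theta_1$, parametrizing $F_{t^*}$ near the reference point. To obtain joint smoothness in $t$, I would apply the parameter-dependent form of H\"ormander's theorem to the smoothly varying family, viewing $\bigcup_{t^*} F_{t^*}$ as a single $2n$-dimensional submanifold and treating $t$ as an additional base variable. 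This produces $\varphi_1=\varphi_1(\yoc,z,t,\theta_1)$ smooth in all arguments, with $\{d_{\yoc,z,\theta_1}\partial_{\theta_{1j}}\varphi_1\}$ linearly independent at the reference point, fulfilling the second non-degeneracy condition in Definition~\ref{defn: 1c-ps parametrization}.

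Finally, I would set $\Phi_{\oc-\ps}=-2\varphi_0(t)/\xoc^2-\varphi_1(\yoc,z,t,\theta_1)/\xoc$ and verify \eqref{eq:1c-ps param} by straightforward differentiation at $\xoc=0$: the $d\xoc/\xoc^3$ and $dt/\xoc^2$ components recover the Legendre curve, while the $d\yoc/\xoc$ and $dz/\xoc$ components at $\theta_1$-critical points recover the primitive identity $\alpha=-d\varphi_1|_{F_{t^*}}$ established in the proof of Proposition~\ref{prop:1c-ps contact symplectic property}, so that $\etaoc=-\partial_{\yoc}\varphi_1$ and $\tilde\zeta=-\partial_z\varphi_1$ as required. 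The principal obstacle is in the third step, namely ensuring the joint smoothness of $\varphi_1$ in $t$ and $\theta_1$; this is resolved by the parameter-dependent version of H\"ormander's parametrization theorem for a smoothly varying family of Lagrangians, and is the only nontrivial ingredient beyond the standard cotangent-bundle Lagrangian theory.
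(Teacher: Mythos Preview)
Your proposal is correct and follows essentially the same strategy as the paper: both arguments use Proposition~\ref{prop:1c-ps contact symplectic property} to separate the Legendre-curve part (handled by $\varphi_0(t)$ with no $\theta_0$ needed, since $dt\neq 0$ on $L$) from the Lagrangian-fibre part (handled by a H\"ormander-type phase $\varphi_1$). The paper's proof is simply more explicit about the second step: rather than invoking a ``parameter-dependent H\"ormander theorem'' abstractly, it chooses a coordinate splitting $(\yoc',\yoc'',z',z'')$ once at the reference point (citing \cite[Theorem~21.2.17]{hormander2007analysis}) and then writes down the phase function $\varphi_1$ by the concrete formula of \cite[Proposition~21.2.18]{hormander2007analysis}, from which smooth dependence on $t$ is automatic since the functions $X,Y'',Z''$ are restrictions to $L$ of smooth coordinate functions. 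One minor point of exposition: your appeal to ``the primitive identity $\alpha=-d\varphi_1|_{F_{t^*}}$'' conflates the function $\Xi_{\oc}$ on $F$ (from the proof of Proposition~\ref{prop:1c-ps contact symplectic property}) with the phase function $\varphi_1$ on the parametrizing space; these are related but not the same object, and it is the H\"ormander construction itself that furnishes the correct $\varphi_1$.
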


\begin{proof}
In the case where $(\yoc, z)$ furnish coordinates on the fibres $F$, we have seen above that there is a parametrization of $L$ with no additional variables $\theta_0, \theta_1$. 

In general, one cannot assume that $(\yoc, z)$ furnish coordinates on the fibres $F$. It is, however, possible to choose coordinates $(\yoc, z) = (\yoc', \yoc'', z', z'')$ (after a linear change of variables), where $\yoc = (\yoc', \yoc'')$ is a splitting of the coordinates into two groups, so that, with $\tilde \zeta = \xoc \zeta$ as before, and writing the `dual' variables $(\etaoc', \etaoc'', \tilde\zeta', \tilde \zeta'')$ (these are not exactly dual variables because of the scaling in $\xoc$), the functions
$$
\yoc', \ \etaoc'', \ z', \ \tilde\zeta''
$$
furnish coordinates locally on the fibres of $\pi_L$. We omit the proof, as it is essentially contained in \cite[Theorem 21.2.17]{hormander2007analysis}. Therefore, $\xoc, \ \yoc', \ \etaoc'', \ z', \ \tilde\zeta''$ furnish coordinates locally on $\Lagps$, the ambient Lagrangian submanifold for $\Legps$.

We write $\xioc = - 2 \phi_0(t)$ as before on $\pi_{\Legps}(\Legps)$; thus we have $\xioc = - 2\phi_0(t) - \xoc \phi_1$ on $\Lambda$, for a suitable function $\phi_1$. 
We write the functions $\yoc'' = Y''(\yoc', \etaoc'', z', \zetaoc'')$ and $z'' = Z''(\yoc', \etaoc'', z', \zetaoc'')$ on $F$, and $\phi_1(0, \yoc', \etaoc'', z', \zetaoc'') = X(\yoc', \etaoc'', z', \zetaoc'')$. Then 
following the argument in \cite[Prop. 21.2.18]{hormander2007analysis}, we find that the function
\begin{equation}\label{eq:Phi-param-general}
\Phi(\msf{K}, \etaoc'', \zetaoc'') = \frac{\phi_0(t)}{\xoc^2} + \frac{ X(\yoc', \etaoc'', z', \zetaoc'') + (\yoc'' - Y'') \cdot \etaoc'' + (z'' - Z'') \cdot \zetaoc'' }{\xoc} 
\end{equation}
provides a local parametrization of $\Legps$, in the sense that 
\begin{equation}\label{eq:Legps param}
\Legps = \Big\{ (\msf{K}, d_{\msf{K}} \Phi(\msf{K}, \theta_1) ) \mid \xoc = 0, d_{\theta_1} \Phi = 0 \Big\}, \quad \msf{K} = (\xoc, \yoc, t, z), \ \theta_1 = (\etaoc'', \zetaoc'') .
\end{equation}
\end{proof}

\begin{remark} We proved the existence of a parametrization with the $\theta_0$ variables absent, so the reader might wonder why we bothered to include them. The answer is that although one can always parametrize a given Legendrian without the $\theta_0$ variables, in studying the composition of operators, such variables naturally appear, so it is better to include them from the outset.
\end{remark}

\begin{remark} One can wonder whether one should think of the phase function $\Phi$ as parametrizing not just $\Legps$, but the whole Lagrangian $\Lagps$. Indeed, one can parametrize $\Lagps$; to do so, then the function $X$ in \eqref{eq:Phi-param-general} would need to depend on $\xoc$, that is, we would need the full function $\phi_1$ and not just its restriction to $\xoc = 0$, as in \eqref{eq:Phi-param-general}. However, the $O(\xoc)$ part of $\phi_1$ would only contribute a smooth factor to the exponential and is more conveniently viewed as part of the amplitude, rather than the phase function. 

This highlights a difference between the Lagrangian $\Lagps$ and the Lagrangians of the original H\"ormander theory of Lagrangian distributions \cite{FIO1}: our Lagrangian $\Lagps$ is not conic, and therefore is not completely determined by its boundary $\Legps$. 
It is only the Legendrian $\Legps$ that is of microlocal significance; thus, the higher terms in the Taylor series of $\phi$, for example, are arbitrary and have no importance in the study of microlocal propagation. 

By contrast, there is a one-to-one correspondence between H\"ormander's Lagrangian submanifolds and their boundary at fibre-infinity, so it makes no difference whether one thinks of them as conic Lagrangian submanifolds or Legendre submanifolds of the induced contact structure at fibre-infinity. Here, due to the lack of such a one-to-one correspondence, we prefer to focus on the fibred-Legendre submanifold $L$, which is where the microlocal analysis (propagation) truly takes place. A similar choice was made in \cite{melrose1996scattering}, presumably for the same reason. 
\end{remark}

\begin{remark} There is a converse to Proposition~\ref{prop:1c-ps contact symplectic property}. That is, if a $2n$-dimensional subset $\Legps$ of $\ffocps$ has the properties deduced in Proposition~\ref{prop:1c-ps contact symplectic property}, then it is a 1c-ps fibred Legendre submanifold, i.e. has a (non-unique) extension to a $(2n+1)$-dimensional Lagrangian submanifold $\Lagps$. To obtain it, we can parametrize $\Legps$, and then define $\Lagps$ by \eqref{eq:Legps param} discarding the restriction to $\xoc = 0$. This requires knowing the linear term in the Taylor series of the function $\xioc$, but we observe that it is determined, up to a constant depending only on $t$, by the function $\Xi_{\oc}$ such that $\alpha = d\Xi_{\oc}$ on the fibres of $\pi_L$. The indeterminacy up to a constant reflects the non-uniqueness of the extension, and is to be expected. 
\end{remark}

\subsection{The forward and backward sojourn relations}
\label{sec: forward and backward sojourn relations}
It is shown in \cite[Section~3]{gell2022propagation} that the flow of $\msf{H}_p^{2,0}$ has a global source-sink structure with source denoted by $\mathcal{R}_-$ and sink denoted by $\mathcal{R}_+$; recall these radial sets $\Rpm$ are defined as the zeroes of the Hamilton vector field within the characteristic variety $\Sigma$, and are given explicitly by \eqref{eq:radial defn}. 

In this subsection we define and analyze the \emph{forward and backward sojourn relations} which we view as subsets of $\SM$. To define these,  recall that $W_\pm$ were defined in 
Section~\ref{subsec: 1c arises} to be the front face when  $\SR = \SR_+ \cup \SR_-$ is blown up inside $\Sigma$; $W_\pm$ was shown to naturally isomorphic to the one-cusp cotangent bundle of $\Rpm$. Earlier, in Section~\ref{sec: 1c PsiDO}, we found an expression for the rescaled Hamilton vector field $H_p^{2,0}$ in local coordinates near $\Rpm$. Away from the equatorial region, where we used coordinates $x_{\ps}, \tilde w, \zeta, \tau$, the Hamilton vector field took the form 
\begin{equation}
\frac{(-\sgn t)}{\ang{\zeta}}  \Big( \tilde w \cdot \partial_{\tilde w} + x_{\ps} \partial_{x_{\ps}} \Big),
\end{equation}
as in \eqref{eq:Hp polar}, while in the equatorial region, where we used coordinates $(\tilde s, \tilde v_j, x_{\ps}, \rho_{\ps}, \omega_j, \sigma)$, it takes the form 
\begin{equation}
-2\tilde s \partial_{\tilde s} -2  \sum_{j \geq 2} \tilde v_j \partial_{\tilde v_j} - 2  x_{\ps} \partial_{x_{\ps}},
\end{equation}
as in \eqref{eq:Hvf Rplus equator}. When we blow up $\Rpm$, the new boundary defining function $\rho$ could be taken to be $\sqrt{|\tilde w|^2 + x_{\ps}^2}$ in the first case and $\sqrt{\tilde s^2 + |\tilde v|^2 + x_{\ps}^2}$ in the second case. The remaining coordinates can be taken to be projective coordinates, i.e. suitable ratios of the coordinates participating in the blowup. 
In either case, the lifted  Hamilton vector field will take the form  
$$
\rho \partial_\rho
$$
in the new coordinates. We now further renormalize the Hamilton vector field $H_p^{2,0}$ on $[\Sigma; \SR]$ by dividing to be 
\begin{equation} \label{eq:further-rescaled-Hp}
    \rho^{-1} H_p^{2,0},
\end{equation}
where now $\rho$ is taken to be a boundary defining function for both $W_+$ and $W_-$ (the two front faces of this blowup). 

Our calculations show that $ H_p^{2,0}$ is a smooth vector field tangent to the boundary of $[\Sigma; \SR]$ except at $W_\pm$, where it is transverse: inward-pointing at $\Rm$ and outward-pointing at $\Rp$. Using the non-trapping assumption we see that each point of $W_\pm$ gives rise to a smooth integral curve of  $H_p^{2,0}$, that travels from $W_-$ to $W_+$ in finite parameter time $s$. Let $q_-$ be a point of $W_-$, and let  $\gamma_{q_-}(s)$ be the integral curve of $H_p^{2,0}$ emanating from $q_-$ at time $s=0$, and arriving at $W_+$ at time $T(q_-) > 0$. Similarly, let $q_+$ be a point of $W_+$, and let $\mu_{q_+}(s)$ be the integral curve of $H_p^{2,0}$ emanating from $q_+$ at time $s=0$, and arriving at time $T'(q_+) < 0$. 

\begin{defn}\label{defn:sojoun relations}
The forward sojourn relation is the subset of $\SM_0 =  \overline{^\ps{T^*\RR^{n+1}}} \times \overline{^\oc T^*{\Rm}} $ defined by 
\begin{equation}\label{eq:FSR def}
\FSR = \{ (\gamma_{q_-}(s), q_-) \mid q_- \in W_- , \ s \in [0, T(q_-)] \}. 
\end{equation}
Similarly, the backward sojourn relation is defined by 
\begin{equation}\label{eq:BSR def}
\BSR = \{ (\mu_{q_+}(s), q_+) \mid q_+ \in W_+ , \ s \in [T'(q_+), 0] \}. 
\end{equation}
\end{defn}

\begin{prop} The forward and backward sojourn relations are smooth Lagrangian submanifolds in the interior of $\SM_0$ for the symplectic form $\omega_{\ps} - \omega_{\oc}$. Moreover, in the case of the free Schr\"odinger operator $P_0$, this relations $\overline{(\Lambda_0)_\pm'}$ are the same for both choices of sign, and agree with the canonical relation of the free Poisson operator as computed in \eqref{eq: Poisson zero}. 
\end{prop}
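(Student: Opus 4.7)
My plan is to recognize $\FSR$ (and analogously $\BSR$) as the graph of the backward‐limit map on the characteristic variety, and then read off the Lagrangian property as a direct consequence of symplectic reduction. Concretely, define $\pi_- : \Sigma \to W_-$ to be the map that sends each $q$ in the interior of $\Sigma$ to the backward limit, in $[\Sigma; \mathcal{R}]$, of the bicharacteristic through $q$ under the renormalized Hamilton flow \eqref{eq:further-rescaled-Hp}. The nontrapping hypothesis, together with the transversality of this renormalized flow to $W_\pm$ discussed in Section~\ref{sec: forward and backward sojourn relations}, implies that $\pi_-$ is smooth, and the definition \eqref{eq:FSR def} can be rewritten in the interior of $\mathcal{M}_0$ simply as
\begin{equation*}
\FSR = \{ (q, \pi_-(q)) : q \in \Sigma \}.
\end{equation*}
This exhibits $\FSR$ as the graph of a smooth map, hence a smooth embedded submanifold of dimension $\dim \Sigma = 2n+1$, which is exactly half the dimension of $\mathcal{M}_0$.

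To verify the Lagrangian condition $(\omega_{\ps} - \omega_{\oc})|_{\FSR} = 0$, every tangent vector to $\FSR$ at $(q, \pi_-(q))$ has the form $(V, d\pi_-(V))$ with $V \in T_q \Sigma$, so the condition reduces to the single identity $\omega_{\ps}|_{\Sigma} = \pi_-^* \omega_{\oc}$. This is precisely the content of symplectic reduction: $\Sigma$ is coisotropic of codimension $1$, its null foliation is generated by $H_p$, and by nontrapping the manifold $W_-$ is a global smooth cross‑section to this foliation on the blowup, hence symplectomorphic to the symplectic quotient. Under the canonical identification $W_- \cong \overline{^{\oc} T^* \Radm}$ supplied by the lemma of Section~\ref{subsec: 1c arises}, the reduced symplectic form agrees with $\omega_{\oc}$, so the required identity holds. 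The same argument applied to the forward flow from $W_+$ gives the corresponding statement for $\BSR$.

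For the free case $P = P_0$, direct computation takes over. Every bicharacteristic of $H_{p_0}$ has the form $(z_0 + 2s\zeta_0,\ t_0 + s,\ \zeta_0,\ -|\zeta_0|^2)$, $s \in \R$, and along it the quantities $\zeta$, $\tau = -|\zeta|^2$, and $-z + 2t\zeta$ are all constant. Using the identification of $W_\pm$ with $\overline{^{\oc} T^* \R^n_Z}$ worked out in Section~\ref{subsec: 1c arises}, both the backward limit $q_- \in W_-$ and the forward limit $q_+ \in W_+$ of this bicharacteristic correspond to the same point $(Z, \mathfrak{Z})$ with $Z = \zeta_0$ and $\mathfrak{Z} = -z_0 + 2t_0 \zeta_0$. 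Therefore $\overline{(\Lambda_0)_-'}$ and $\overline{(\Lambda_0)_+'}$ coincide and are both cut out by $\tau = -|\zeta|^2$, $\zeta = Z$, $\mathfrak{Z} = -z + 2tZ$, which is exactly the canonical relation \eqref{eq:canrel} of $\Poiz$ read off from \eqref{eq: Poisson zero}.

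The only real obstacle is ensuring that the symplectic reduction picture remains valid up to and including the blown-up boundary $W_\pm$; equivalently, that the identification of the reduced form with $\omega_{\oc}$ persists smoothly in a full neighbourhood of $W_\pm$ in $[\Sigma; \mathcal{R}]$, not just on its interior. This has, however, already been established in Section~\ref{subsec: 1c arises}, so the argument amounts to organizing the pieces. A mild auxiliary check is that replacing $H_p$ by the renormalized vector field of \eqref{eq:further-rescaled-Hp} does not affect the Lagrangian property — this is immediate since reparametrizing a foliation does not alter its leaves, and $\FSR$ depends only on the leaf structure, not on the speed along leaves.
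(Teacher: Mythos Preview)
Your argument is correct and takes a genuinely different route from the paper. The paper first verifies the free case by direct comparison with \eqref{eq:canrel}, and then for the perturbed operator argues that $\FSR$ agrees with the free sojourn relation in a neighbourhood of $W_-$, picks an interior cross-section $T$ there (which is isotropic of dimension $2n$, contained in $\{p=0\}$), and obtains the rest of $\FSR$ as the $H_p$-flowout of $T$, which is automatically Lagrangian. Your approach instead recasts $\FSR$ as the graph of the quotient map $\pi_-:\Sigma\to W_-$ and reduces the Lagrangian condition to the single identity $\pi_-^*\omega_{\oc}=\omega_{\ps}|_\Sigma$, which is symplectic reduction. This is cleaner and more conceptual; the paper's flowout argument is more in the standard FIO toolkit.

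One point you should make explicit: when you invoke the lemma of Section~\ref{subsec: 1c arises} for ``the reduced symplectic form agrees with $\omega_{\oc}$'', that computation (equations \eqref{eq:symplectic form W cotangent bundle 2}--\eqref{eq:sympl red}) is carried out using the \emph{free} bicharacteristics. To transfer it to the perturbed $P$ you need the observation that, because the perturbation $P-P_0$ is compactly supported in spacetime, the flow near $W_-$ coincides with the free flow; hence the reduced form on $W_-$ (computed as the limit of $\omega_{\ps}|_{T_\epsilon}$ for transversals $T_\epsilon$ approaching $W_-$) is unchanged by the perturbation. This is exactly the step the paper isolates before invoking flowout, and it is what makes your citation of Section~\ref{subsec: 1c arises} legitimate in the general case. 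With that sentence added, your argument is complete.
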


\begin{proof}
Smoothness follows from the smoothness of $H_p^{2,0}$ and its non-vanishing everywhere on $[\Sigma; \SR]$. In the free case $p = p_0$, the Lagrangian property clearly follows if we show the equality with the canonical relation \eqref{eq: Poisson zero}, so we consider a point $(z, t, \zeta, -|\zeta|^2; Z, \mathfrak{Z})$ in this canonical relation; we may assume that $|Z| < \infty$ to show the Lagrangian property. We recall from the discussion in Section~\ref{subsec: 1c arises} that the 1c-frequency coordinate $\mathfrak{Z}$ is equal to the limiting value of $2t\zeta - z$ along any curve that tends to the corresponding point of $W_\pm$, where we have used the canonical identification of $W_\pm$ (the interior of the front face of the blowup $[\Sigma; \Rpm]$) with the cotangent bundle of $\Rpm$. We may choose this curve to be a bicharacteristic, in which case $2t\zeta - z$ is constant along the curve. 
Thus this bicharacteristic is of the form $(-\mathfrak{Z} + 2t \zeta, t, \zeta, -|\zeta|^2)$, and to reach the limiting point $Z \in \Rpm$, we require that $\zeta = Z$. Now if we relabel $-\mathfrak{Z} + 2t \zeta = z$, the bicharacteristic $(z, t, Z, -|Z|^2)$ corresponds to the point $(Z, 2tZ - z) \in W_\pm$. Now comparing with \eqref{eq: Poisson zero} we see that this means that  $(z, t, \zeta, -|\zeta|^2; Z, \mathfrak{Z})$ is in the canonical relation corresponding to the free Poisson operator if and only if $(z, t, \zeta, -|\zeta|^2)$ is on the bicharacteristic that corresponds to $(Z, \mathfrak{Z})$. 

To prove the Lagrangian property in the general case, observe that the forward sojourn relation agrees with the free forward sojourn relation in a neighbourhood of $W_-$. We can then take a cross section $T$ to the flow, lying over the interior of spacetime, and view the forward sojourn relation as the flowout from $T$ by $H_p$. Since $T$ is isotropic of dimension $2n$, one less than half the dimension of the total space, and is contained in $\{ p = 0 \}$, the flowout from $T$ by $H_p$ is Lagrangian. 
\end{proof}

We also define the Lagrangians $\overline{\Lambda_\pm}$ to be the forward/backward sojourn relations with $q_\pm$ replaced by $-q_\pm$, that is, with the 1-cusp fibre coordinate replaced by its negative. This is clearly Lagrangian in the interior of $\SM_0$ with respect to the \emph{sum} of the symplectic forms $\omega_{\ps} + \omega_{\oc}$. 

Our next task is to show that suitably microlocalized versions of $\overline{\Lambda_\pm}$ are admissible Lagrangian submanifolds in the sense of Definition~\ref{defn: admissible 1c-ps Lagrangian submanifold and 1c-ps fibred-Legendre submanifold}. Let $U_\pm \subset W_\pm \sim \overline{ ^{\oc}T^* \R^n} $ be open sets disjoint from fibre-infinity, and let $G_\pm \subset \overline{ ^{\ps}T^* \R^{n+1}}$ be open sets disjoint from spacetime infinity. Consider the microlocalized Lagrangians 
\begin{equation}\label{eq:microlocalized sojourn relns}
\Lambda_\pm = \beta_{\oc-\ps}^*\big(\{ (\gamma_{q_\pm}(s), -q_{\pm}) \in \overline{\Lambda_\pm} \mid q_{\pm} \in U_\pm, \ \gamma_{q_\pm}(s) \in G_\pm  \}\big)
\end{equation}
which we now view as being in $\SM$ rather than $\SM_0$, that is, we take these sets to be the closure, in $\SM$, of their interiors lifted to $\SM$ from $\SM_0$ via the blowdown map $\beta_{\oc-\ps}$. 

Obviously $\Lambda_\pm$ depend on the choice of $U_\pm$ and $G_\pm$ but we do not indicate this in the notation, regarding these choices of open sets as fixed.

\begin{prop}\label{prop:microlocalized FBSR} 
The microlocalized Lagrangians $\Lambda_\pm$ are admissible Lagrangian submanifolds in the sense of Definition~\ref{defn: admissible 1c-ps Lagrangian submanifold and 1c-ps fibred-Legendre submanifold}.
\end{prop}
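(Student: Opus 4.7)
The plan is to verify each of the three admissibility conditions in Definition~\ref{defn: admissible 1c-ps Lagrangian submanifold and 1c-ps fibred-Legendre submanifold} one at a time, using the explicit parametrization of $\Lambda_\pm$ by pairs $(s, q_\pm)$ in $(0, T(q_-))\times U_-$ (forward) or $(T'(q_+), 0)\times U_+$ (backward) provided in Definition~\ref{defn:sojoun relations} and \eqref{eq:microlocalized sojourn relns}. A dimension count gives $\dim \Lambda_\pm = 2n+1$, since $\dim U_\pm = 2n$ under the identification $W_\pm \cong \overline{{}^{\oc}T^*\R^n}$, and $\Lambda_\pm$ is Lagrangian in the interior by the preceding proposition (after noting that $\beta_{\oc-\ps}$ is a diffeomorphism over the interior, so the Lagrangian property is preserved).

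First I would establish the disjointness from boundary hypersurfaces of $\SM$ other than $\ffocps$. Among the four old boundary hypersurfaces of $\SM_0$, the microlocalization $q_\pm \in U_\pm$ away from 1c fibre-infinity immediately rules out the lift of $\{\rho_{\oc}=0\}$, and the microlocalization $\gamma_{q_\pm}(s) \in G_\pm$ away from spacetime infinity rules out the lift of $\{x_{\ps}=0\}$. The remaining hypersurfaces are the lifts of $\{\rho_{\ps}=0\}$ and $\{x_{\oc}=0\}$, corresponding to $\sigma = 0$ and $\sigma = \infty$ at the two boundary components of $\ffocps$. I would show we land in the \emph{interior} of $\ffocps$ by verifying that along any bicharacteristic emanating from a point $q_\pm$ approaching the equator of $\SR_\pm$, the defining functions $\rho_{\ps}$ and $x_{\oc}$ tend to zero at the same rate. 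In the free region (and by compact support of the metric perturbation, this is all that matters near $\ffocps$), the bicharacteristic preserves the spatial frequency $|\zeta|$, which is identified with $|Z| = x_{\oc}^{-1}$; hence $\rho_{\ps} \sim |\zeta|^{-1} \sim x_{\oc}$, i.e.\ $\sigma$ has a finite nonzero limit.

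Next I would verify transversality to $\ffocps$ and non-vanishing of $dt$ on $\Lambda_\pm \cap \ffocps$. Points of $\Lambda_\pm \cap \ffocps$ correspond precisely to $(s, q_\pm)$ with $q_\pm$ at the equator $\{x_{\oc}=0\}$ of $W_\pm$, giving a smooth $2n$-dimensional submanifold, which is the correct codimension. For transversality, I would let the $x_{\oc}$-component of $q_\pm$ increase from zero: this increases $x_{\oc}$ on $\Lambda_\pm$ and simultaneously moves the bicharacteristic away from ps-fibre-infinity (by the $\rho_{\ps} \sim x_{\oc}$ relation), producing a tangent vector to $\Lambda_\pm$ with non-trivial component in the direction transverse to $\ffocps$ in $\SM$. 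For $dt \neq 0$, I would use the computation in Section~\ref{subsec: 1c arises} showing that the $\xi_{\oc}$-component of $q_\pm$ is (up to a nonzero factor and smooth $O(\xoc)$ corrections) the time coordinate $t_0$ of the bicharacteristic, while at $\ffocps$ the rescaled flow moves only in $z$, so $t$ restricted to $\gamma_{q_\pm}(s)$ equals $t_0$; thus varying $\xi_{\oc}$ on $q_\pm$ at fixed $s$ produces a nonzero change in $t$.

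The main obstacle will be the careful bookkeeping in the blown-up space $\SM$ near $\ffocps$: concretely, the asymptotic $\rho_{\ps}\sim x_{\oc}$ along bicharacteristics approaching the equator, in coordinates that are uniformly valid up to $\ffocps$. In the free case this is a direct calculation from $\rho_{\ps} = (1+|\zeta|^4+\tau^2)^{-1/4}$ together with $\zeta = Z$ and $\tau = -|Z|^2$ on the bicharacteristic, giving $\rho_{\ps} = (1+2|Z|^4)^{-1/4}$ and $x_{\oc} = 1/|Z|$, hence $\sigma \to 2^{1/4}$. The general perturbed case reduces to the free one because the metric and potential perturbations are compactly supported in spacetime and $\ffocps$ lies at spacetime infinity, so the perturbed bicharacteristics agree with free ones in a neighborhood of $\Lambda_\pm \cap \ffocps$.
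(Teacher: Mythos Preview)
Your overall strategy tracks the paper's proof, but there is a genuine error in the reduction to the free case. You write that ``$\ffocps$ lies at spacetime infinity, so the perturbed bicharacteristics agree with free ones in a neighborhood of $\Lambda_\pm \cap \ffocps$.'' This is false: $\ffocps$ is the front face of the blowup of $\{\rho_{\ps}=0\}\cap\{x_{\oc}=0\}$, i.e.\ of \emph{ps-fibre} infinity crossed with \emph{1c-base} infinity. The spacetime coordinates $(z,t)$ are completely unconstrained at $\ffocps$; after the microlocalization to $G_\pm$ they lie in a compact subset of $\R^{n+1}$ that will in general contain the support of the metric perturbation. Bicharacteristics emanating from points $q_\pm$ arbitrarily close to the equator of $W_\pm$ still pass through the perturbed region (provided their 1c-frequencies $(\xioc,\etaoc)$ are bounded, which is exactly what $U_\pm$ enforces), and along such bicharacteristics $|\zeta|$ is \emph{not} preserved. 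Hence your argument that $\rho_{\ps}\sim|\zeta|^{-1}\sim x_{\oc}$ breaks down precisely where it is needed.

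The paper closes this gap differently: it first verifies the explicit parametrization \eqref{eq:Lambda form near Rminus} of $\Lambda_-$ in a neighbourhood of $\SR_-$ (which \emph{is} at spacetime infinity, so the free computation applies and gives $\sigma\equiv 1$ there), and then flows out by the lifted vector field $H_p^{2,0}$ on $\SM$. The crucial structural fact is that this lift is smooth, tangent to $\ffocps$, and has $\sigma$-component of the form $a\,\sigma\partial_\sigma$ with $a$ smooth; since the flow crosses $G_\pm$ in finite parameter time, $\log\sigma$ changes only by a bounded amount, so $\sigma$ remains in a compact subinterval of $(0,\infty)$. This is what replaces your ``$|\zeta|$ is preserved'' claim in the perturbed region. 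Your arguments for transversality and for $dt\neq 0$ are essentially correct (and match the paper's), but the $\sigma$-boundedness step needs this flow argument rather than a reduction to the free case.
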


\begin{proof} We have already shown that $\overline{\Lambda_\pm} \supset \Lambda_\pm$ are smooth Lagrangian submanifolds in the interior of $\SM_0$. It remains to check smoothness at the boundary, and to verify that $\Lambda_\pm$ satisfy the three conditions in Definition~\ref{defn: admissible 1c-ps Lagrangian submanifold and 1c-ps fibred-Legendre submanifold}. We treat the forward sojourn relation; the argument for the backward sojourn relation is similar. 

Due to the microlocalization to open sets $U_-$ and $G_-$, $\Lambda_-$ does not meet the boundary hypersurfaces at $\rho_{\oc} = 0$ or at $x_{\ps} = 0$. It remains to consider the boundaries at $\rho_{\ps} = 0$ or $\xoc = 0$. There are three such boundary hypersurfaces, since the codimension two hypersurface $\{ \rho_{\ps} = 0, \xoc = 0 \}$ was blown up to create $\ffocps \subset \SM$. 

We can parametrize the sojourn relation by coordinates on $W_-$, together with the flow parameter. We first consider the case that the starting point on $W_-$ is near the boundary of $\Rm$. In that case we use coordinates $\xoc, \yoc = \yoc(\omega), \xioc, \etaoc$ for $W_-$ and $\tilde s$ for the flow of the vector field $H_p^{2,0}$. Here $\omega$ is a point in $S^{n-1}$, and $\yoc$ are local coordinates on $S^{n-1}$ in a patch containing $\omega$. Abusing notation somewhat, we also view its `dual' variable $\etaoc$ as being in the hyperplane of $\R^n$ orthogonal to $\omega$. We note that due to the restriction to $U_-$, we only need to consider a bounded set in the $\xioc, \etaoc$ variables. On $\overline{{}^{\ps} T^* \R^{n+1}}$ we can use coordinates $(z, t, \rhops = |\zeta|^{-1}, \hat \zeta, \tau' = \tau/|\zeta|^2)$ which are valid in a neighbourhood of $\Sigma$ near fibre-infinity. We decompose $z = z^\perp + z^\parallel$ relative to the direction $\omega$. 
Then on $\SM$ we have coordinates $(\xoc, \yoc(\omega), \xioc, \etaoc, \sigma = \xoc/\rhops, z^\perp, z^\parallel, t, \hat \zeta, \tau')$. 

We first analyze $\overline{\Lambda_-}$ in a neighbourhood of $\Rm$, even though that is not contained in $\Lambda_-$ due to the open set $G_-$ not extending to spacetime infinity. In this region $\Lambda_-$ coincides with $\Lambda_-(P_0)$ for the free operator $P_0$, which we computed explicitly in Section~\ref{subsec: 1c arises}. In particular, in our coordinates it takes the form 
\begin{multline}\label{eq:Lambda form near Rminus}
(\xoc, \omega, \xioc, \etaoc, \tilde s) \mapsto (\xoc, \yoc(\omega), \xioc, \etaoc; \\ \sigma = 1, z^\perp = \etaoc, z^\parallel = 2\tilde s \omega, t =  -2\xioc + \tilde s \xoc \sigma^{-1}, \hat \zeta = \omega, \tau' = -1). 
\end{multline}
The image of this map is clearly a smooth submanifold that meets $\ffocps$ transversally, on which $dt \neq 0$, and, since $\sigma \equiv 1$, it is disjoint from all other boundary hypersurfaces of $\SM$. Of course, the explicit form \eqref{eq:Lambda form near Rminus} is no longer valid once the flowout reaches the perturbation. To deal with the perturbation, we again  view $\Lambda_-$ as the flowout from a cross-section $T$ as above by the vector field $H_p^{2,0}$. This vector field, on the ps-cotangent bundle $\overline{{}^{\ps} T^* \R^{n+1}}$, is smooth and tangent to the boundary. Lifted to $\SM$ it is smooth and tangent to $\ffocps$, with a component in the $\sigma$ direction (in the above coordinates) of the form $a \sigma \partial_\sigma$, where $a$ is smooth. Since the flow of $H_p^{2,0}$ crosses the open set $G_-$ in finite parameter time, this means that the coordinate $\sigma$ remains in a bounded set $[C^{-1}, C]$, and therefore the flowout remains disjoint from all boundary hypersurfaces of $\SM$ other than $\ffocps$. Finally, it is easy to compute that the Lie derivative of $dt$ under the flow of $H_p^{2,0}$ is $d\rhops$, which shows that $dt$ does not vanish under the flowout of this vector field (as the $-2 d\xioc$ component of $dt$ in the coordinates used in \eqref{eq:Lambda form near Rminus} is unchanged). 

We next consider the case that the starting point on $W_-$ is away from the boundary of $\Rm$. In this case, due to the localization $U_-$, we start in the interior of $W_-$, and propagate into the interior of $\overline{{}^{\ps} T^* \R^{n+1}}$. The microlocalized Lagrangian $\Lambda_-$ is therefore away from all boundary hypersurfaces of $\SM$, so the first and third conditions of Definition~\ref{defn: admissible 1c-ps Lagrangian submanifold and 1c-ps fibred-Legendre submanifold} are vacuously satisfied. Also, for these integral curves, $t$ itself can be taken as a parameter for the flow, so $dt$ is nonvanishing. This completes the proof of Proposition~\ref{prop:microlocalized FBSR}. 
\end{proof}

\begin{remark}\label{rem:phase function leading term}
In the case of the microlocalized forward and backward sojourn relations $\Lambda_\pm$, we can choose a more explicit form of parametrizing function. This arises from the observation that the Legendrian curve associated with these Lagrangians (as in Proposition~\ref{prop:1c-ps contact symplectic property}) coincides with that for the \emph{free} forward and backward sojourn relations. Indeed, the coordinates $\xioc$, $t$ and $\tilde \tau$ are all constant under the Hamiltonian flow at fibre-infinity, so we can evaluate them at any point along the flow. However, near spacetime infinity, the flow coincides with the free flow. We have already calculated for the free forward and backward sojourn relations that this Legendrian curve is given by 
\begin{equation}
    \xioc = 2 t, \quad \tilde \tau = -1.
\end{equation}
This therefore holds even for the perturbed flow. 
It follows that the $O(\xoc^{-2})$ part of the parametrizing function can be taken to be $-t/\xoc^2$. Thus we can always take our non-degenerate parametrization to take the form 
\begin{equation}
    \Phi_\pm = -\frac{t}{\xoc^2} + \frac{\phi_\pm(\msf{K},\theta_\pm)}{\xoc}, \quad \msf{K} = (\xoc, \yoc, t, z),
\end{equation}
with no $\theta_0$ variables and an explicit first term. 
\end{remark}

For completeness, we also give the definition of the forward and backward bicharacteristic relations, analogous to Definition~\ref{defn:sojoun relations}. 

\begin{defn}\label{defn:FBR} The forward and backward bicharacteristic relations are the subsets of $\overline{{}^{\ps} T^* \R^{n+1}} \times \overline{{}^{\ps} T^* \R^{n+1}}$ defined by 
\begin{equation}\label{eq:FBR def}
\FBR = \{ (\gamma_{q_-}(s), \gamma_{q_-}(s')) \mid q_- \in W_- , \ s, s' \in (0, T(q_-)), \ s \geq s' \}. 
\end{equation}
and 
\begin{equation}\label{eq:BBR def}
\BBR = \{ (\gamma_{q_-}(s), \gamma_{q_-}(s')) \mid q_- \in W_- , \ s, s' \in (0, T(q_-)), \ s \leq s' \}. 
\end{equation}
That is, the relation comprises pairs of points along the same bicharacteristic, with the first element being forward, respectively backward, of the second, with respect to Hamilton flow. 
\end{defn}

\section{1c-ps Fourier Integral Operators}
\label{sec: 1c-ps FIO}
In this section we define a class of $\oc$-$\ps$ Fourier Integral Operators (FIOs) that contains suitably microlocalized Poisson operators, for a Schr\"odinger operator $P$ as specified in the Introduction. 
This is a class of operators whose Schwartz kernels are Legendre distributions associated to fibred-Legendre submanifolds of $\mathcal{M}$ as in Definition~\ref{defn: admissible 1c-ps Lagrangian submanifold and 1c-ps fibred-Legendre submanifold}.

\subsection{Definition of 1c-ps Legendre distributions and Fourier Integral Operators}

\begin{defn} \label{defn: 1c-ps FIO}
Let $\Legps$ be a fibred-Legendre submanifold of $\SM$ and $\Lagps$ be its Lagrangian extension in Definition~\ref{defn: admissible 1c-ps Lagrangian submanifold and 1c-ps fibred-Legendre submanifold}. 
A 1c-ps Legendre distribution associated to  $\Legps$
of order $m$ is a distributional half-density that can be written (modulo $\mathcal{S}(\R^{n+1} \times \R^n)$) as a finite sum of oscillatory integrals of the form 
\begin{multline} \label{eq: 1c-ps FIO definition}
u(\msf{K}) =  (2\pi)^{- ( \frac{2n+1}{4} ) - \frac{k_0+k_1}{2} }
\Big(\int e^{ i( \frac{ \varphi_0(t,\theta_0) }{x_{\oc}^2} + \frac{\varphi_1(\msf{K}',\theta_0,\theta_1)}{x_{\oc}} )} 
x_{\oc}^{-(m+\frac{2k_0+k_1}{2})-\frac{1}{4}}
\\  \times a(\msf{K},\theta_0,\theta_1) d\theta_0d\theta_1 \Big)
|dtdz|^{1/2}|\frac{dx_{\oc}d\yoc}{x_{\oc}^{n+2}}|^{1/2}, \quad \msf{K} = (\xoc, \yoc, t, z), \quad \msf{K}' = (\yoc, t, z),
\end{multline}
where $\Phi_{\oc-\ps} = \frac{ \varphi_0(t,\theta_0) }{x_{\oc}^2} + \frac{\varphi_1(\msf{K},\theta_0,\theta_1)}{x_{\oc}}$ is a local non-degenerate parametrization of $\Legps$ (see Definition~\ref{defn: 1c-ps parametrization}), with $\theta_0 \in \R^{k_0}$, $\theta_1 \in \R^{k_1}$, and 
 $a \in C_c^\infty([0,\epsilon)_{\xoc} \times \R^{n-1}_{\yoc} \times \R^{n+1}_{t,z} \times \R^{k_1 + k_2}_{\theta_0, \theta_1})$ is assumed to be supported in the region where $\Phi_{\oc-\ps}$ parametrizes $\Legps$. 
 The set of such Legendre distributions is denoted $I_{\oc-\ps}^m(\R^{n+1} \times \R^n, \Lagps; \Omega^{1/2})$, where $\Omega^{1/2}$ is the half-density bundle whose typical section is as in the expression above.
 A linear operator $A$, mapping half-densities on $\R^n$ to half-densities on $\R^{n+1}$ is called a 1c-ps Fourier Integral Operator of order $m$ associated to $\Legps$ if its Schwartz kernel is a Legendre distribution of order $m$ associated to $\Legps$. 
\end{defn}

\begin{remark}
We comment on the order convention in Definition~\ref{defn: 1c-ps FIO}, i.e. to explain the choice of exponent $-(m+\frac{2k_0+k_1}{2})-\frac{1}{4}$ of $\xoc$. The explanation for the term $-(k_0 + k_1/2)$ is quite standard; it ensures that the order is independent of which parametrizing function is used to represent it, cf. Proposition~\ref{prop:invariance under parametrization change}. It can be understood as follows: if one introduces additional integrated variables, then one could perform nondegenerate stationary phase in these variables and thereby gain a decay factor of $\xoc$ for each $\theta_0$-variable and $\xoc^{1/2}$ for each $\theta_1$-variable. 

More interesting is the explanation for how the `$-1/4$' in the exponent of $\xoc$ arises. The order convention for standard Lagrangian distributions is chosen so that, first, the distributions become more singular as the order increases; second, that the Lagrangian distributions of a fixed order live in a fixed Sobolev space whose (Sobolev) order does not depend on the geometry of the Lagrangian (such as the rank of the projection to the base, etc); third, if the Lagrangian distribution happens to be the Schwartz kernel of a pseudodifferential operator, then the order agrees with the order as a pseudodifferential operator. Because of the second condition, it makes sense to talk about the $L^2$-threshold order, that is, that order $m$ for which Lagrangian distributions are $L^2$ if they have order strictly less than $m$. Then, because of the third condition, in dimension $2n$ this order should be $-n/2$ since pseudodifferential Schwartz kernels on $\R^n$ are $L^2$ whenever their order is $< -n/2$, and $-n/2$ is the threshold for this property. More generally, in dimension $N$ we take the $L^2$-threshold order for Lagrangian distributions to be $-N/4$. 

We follow these order conventions in spirit, but they need to be suitably interpreted in both the ps- and 1c-calculus due to the anisotropy. A simple calculation involving scaling shows that the threshold orders for 1c-pseudodifferential operators having $L^2$ Schwartz kernels in dimension $n$ are $(-n/2, -(n+1)/2)$, where the anomalous spatial threshold order $-(n+1)/2$ is due to the 1c-density blowing up faster as $\xoc \to 0$ compared to a scattering density. Similarly, the $L^2$-threshold orders for ps-pseudodifferential operators in dimension $n+1$ are $(-(n+2)/2, -(n+1)/2)$ due to the parabolic scaling in frequency. Since the relevant orders for 1c-ps Fourier integral operators are the \emph{spatial} 1c order and the \emph{differential} ps order, the `effective' dimensions are actually the scaling dimensions $(n+1, n+2)$ as opposed to the manifold dimensions $(n, n+1)$. \emph{We have defined the order convention in Definition~\ref{defn: 1c-ps FIO}  so that the $L^2$-threshold order is $-N/4$ where $N = (n+1) + (n+2) = 2n+3$.} As a straightforward check, when there are no integrated variables $(\theta_0, \theta_1)$, and if $m = -(2n+3)/4$ then the overall power of $\xoc$ is $(n+1)/2$ which is at the threshold for square-integrability due to the denominator $\xoc^{(n+2)/2}$ in the half-density factor. 

This order convention leads to the expected numerology in Theorem~\ref{thm: 1c-ps composition to 1c-1c}, that is, under composition the orders add with an additional $e/2$ where $e=1$ is the excess in the clean composition of the canonical relations of the two factors $A_+^*$ and $A_-$. 
\end{remark}

From now on, we will specify the density bundle in which those operator kernels or symbols are taking values when we define them for the first time or when we want to emphasize the sub-principal part due to the choice of this bundle factor, but abbreviate them otherwise. 

\begin{example} \label{example:microlocalized-free-Poisson}
Consider the localized free Poisson operator $\chi \Poi_0$, where $\chi = \chi(z,t)$ is a multiplication operator with compact support in $\R^{n+1}$. We wish to view this as an operator from half-densities to half-densities. To do this, we multiply by $|dz \, dt \, dZ|^{1/2}$, i.e. we take it to have Schwartz kernel
\begin{equation}\label{eq:Poisson half density}
    (2\pi)^{-n} e^{i(z \cdot Z - t |Z|^2)} |dz \, dt \, dZ|^{1/2}, \quad (z, t) \in \R^{n+1}, \quad Z \in \R^n. 
\end{equation}
(One might wonder whether the half-density factor in $Z$ should be the Euclidean (scattering) half-density $|dZ|^{1/2}$ or a 1-cusp half-density, which in terms of the Euclidean variable $Z$ would take the form $|\ang{Z} dZ|^{1/2}$. That the Euclidean choice is the correct one may be seen by recalling the formula from \cite{gell2022propagation}:
\begin{equation}
    \Poi_0 \Poi_0^* = i (2\pi)^{n} \Big( (P_0)_+^{-1} - (P_0)_-^{-1} \Big), 
\end{equation}
where $P_0$ is the free Schr\"odinger operator. This identity determines the half-density factor in the $Z$ variables, which must be as in \eqref{eq:Poisson half density}.)

We claim that this operator $\chi \Poiz$, viewed as a half-density as above, is in $I_{\oc-\ps}^{-3/4}(\R^{n+1} \times \R^n, (\Lambda_0)_\pm; \Omega_{\oc-\ps}^{1/2})$. To see this, we note that the phase function has the required form as per Definition~\ref{defn: admissible 1c-ps Lagrangian submanifold and 1c-ps fibred-Legendre submanifold}. We use coordinates near $|Z| = \infty$ of the form $\xoc' = 1/|Z|$, $y'_j = y'_{\oc, j} = Z_j/|Z|$ for $2 \leq j \leq n$ where we assume that $Z_1$ is a dominant variable. In these coordinates, the phase function is 
\begin{equation}
 \Phi = \frac{z \cdot (y_1, \dots, y_{n-1}, \sqrt{1 - |y|^2})}{\xoc} - \frac{t}{(\xoc)^{2}} 
\end{equation}
and we note that this is a non-degenerate phase function in the sense of Definition~\ref{defn: admissible 1c-ps Lagrangian submanifold and 1c-ps fibred-Legendre submanifold} (with no phase variables). It parametrizes the Lagrangian submanifold $(\Lambda_0)_\pm$ associated to the free backward or forward sojourn relation (which coincide in the free case). The compactly supported cutoff $\chi$ ensures that the Lagrangian does not (on the support of the amplitude) meet any boundary of $\SM$ other than $\ffocps$, so that the third condition in Definition~\ref{defn: admissible 1c-ps Lagrangian submanifold and 1c-ps fibred-Legendre submanifold} is satisfied.  
\end{example}

\begin{prop}\label{prop:invariance under parametrization change} Assume that $u$ in \eqref{eq: 1c-ps FIO definition} has essential support in an open set $U \subset \Legps$, and that $\tilde \Phi_{\oc-\ps}$ is another parametrization of $\Legps$ in $U$. Then $u$ can be written, modulo $\mathcal{S}(\R^{n+1} \times \R^n)$, as as oscillatory integral with respect to the parametrizing function $\tilde \Phi_{\oc-\ps}$. 
\end{prop}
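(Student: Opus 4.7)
The plan is to follow the classical strategy for proving invariance of Lagrangian/Legendrian distributions under change of parametrizing phase function (as in H\"ormander \cite[Sections~21.2]{hormander2007analysis}), adapted to the two-scale anisotropy of the 1c-ps setting.

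First, I would use a partition of unity on $U \subset \Legps$ to reduce to the case in which the amplitude $a$ in \eqref{eq: 1c-ps FIO definition} is supported in a small neighbourhood of a single point $q' \in C_{\Phi_{\oc-\ps}}$ corresponding to $q \in \Legps$. By Proposition~\ref{prop: 1c-ps nondeg param}, both $\Phi_{\oc-\ps}$ and $\tilde\Phi_{\oc-\ps}$ give local non-degenerate parametrizations of $\Legps$ near $q$; denote their numbers of phase variables by $(k_0, k_1)$ and $(\tilde k_0, \tilde k_1)$ respectively.

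Next, I would establish a 1c-ps analogue of H\"ormander's equivalence theorem \cite[Theorem~21.2.16]{hormander2007analysis}: any two non-degenerate parametrizations of $\Legps$ at $q$ can be transformed into one another by a finite sequence of elementary moves, namely (i)~an invertible change among the $\theta_0$-variables, (ii)~an invertible change among the $\theta_1$-variables (possibly depending on $\theta_0$ and the base variables), (iii)~adjoining a non-degenerate quadratic form in new $\theta_0$-variables to $\varphi_0$, and (iv)~adjoining a non-degenerate quadratic form in new $\theta_1$-variables to $\varphi_1$. The key is that the structure decouples between the two scales: the $\theta_0$-variables govern the projection to the Legendre curve $\pi_{\Legps}(\Legps) \subset \RR^3_{t,\tilde\tau,\xioc}$ through $\varphi_0$, while the $\theta_1$-variables parametrize the symplectic fibres of $\pi_{\Legps}$ through $\varphi_1$; Proposition~\ref{prop:1c-ps contact symplectic property} shows that these two data are geometrically independent. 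Consequently the $\theta_0$-reduction reduces to the classical equivalence of Legendre parametrizations of a curve in a contact $3$-manifold, and the $\theta_1$-reduction is the standard equivalence of Lagrangian parametrizations inside a single symplectic fibre. The non-degeneracy hypotheses \eqref{eq:varphi0} and \eqref{eq:varphi1} ensure that the Hessians at the critical set have the rank required by the Morse lemma with parameters at each scale.

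Given two parametrizations related by the moves above, the corresponding oscillatory integrals are converted to one another by stationary phase in the newly adjoined $\theta_0$- or $\theta_1$-variables (moves (iii) and (iv)) and by smooth changes of integration variable absorbed into the amplitude (moves (i) and (ii)). The crucial bookkeeping concerns powers of $\xoc$: the Hessian of $\Phi_{\oc-\ps}$ in a $\theta_0$-variable is of size $\xoc^{-2}$, so each $\theta_0$-stationary-phase integration yields a factor of $\xoc$; the Hessian in a $\theta_1$-variable is of size $\xoc^{-1}$, so each $\theta_1$-integration yields $\xoc^{1/2}$. Adjoining $\delta k_0$ new $\theta_0$-variables and $\delta k_1$ new $\theta_1$-variables therefore shifts the overall $\xoc$-exponent by $\delta k_0 + \delta k_1/2$, which is exactly compensated by the corresponding shift in the order convention $-(m + \frac{2k_0+k_1}{2}) - \frac{1}{4}$ of Definition~\ref{defn: 1c-ps FIO}; the normalizing constant $(2\pi)^{-(k_0+k_1)/2}$ matches similarly, each stationary-phase integration producing $(2\pi)^{1/2}$ together with a unit-modulus phase from the Hessian signature that is absorbed into the principal symbol.

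The main obstacle will be verifying the 1c-ps H\"ormander equivalence theorem, in particular confirming that the Morse normalization never needs to mix $\theta_0$- and $\theta_1$-variables. I expect this to follow from the structural fact that $\varphi_0$ depends only on $(t, \theta_0)$, so that the critical equation $d_{\theta_0}\varphi_0 = 0$ is independent of $(\yoc, z, \theta_1)$: the $\theta_0$-reduction can therefore be executed first, and the remaining $\theta_1$-reduction is then carried out at the $\xoc^{-1}$ scale with the reduced $\theta_0$'s treated as smooth parameters. Any residual discrepancy between the two phase functions after this procedure is $O(\xoc^0)$ and contributes a smooth, non-vanishing exponential factor that can be absorbed into the amplitude without changing the order, completing the argument.
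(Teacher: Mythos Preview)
Your overall framework (equalize phase-variable counts by adjoining quadratics, track the $\xoc$-powers, then invoke an equivalence of phase functions) matches the paper's strategy, and your $\xoc$-bookkeeping is correct. But the specific claim that moves (i)--(iv) suffice is false, and the failure is exactly at the point you flag. After matching $\varphi_0$'s via a $\theta_0$-only change, the two $\varphi_1$'s need not be related by any $\theta_1$-change, because $\varphi_1$ carries $\theta_0$-dependence off the critical set. Concretely: take $k_0=1$, $k_1=0$, $\varphi_0=-t+\theta_0^2$, and compare $\varphi_1=f(\yoc,z)+\theta_0\,g(\yoc,z)$ with $\tilde\varphi_1=f+\theta_0\,h$, $g\neq h$. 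Both parametrize the same $L$ (the linear-in-$\theta_0$ terms vanish on $\{\theta_0=0\}$), yet with no $\theta_1$ available moves (ii) and (iv) are empty, and any move-(i) change preserving $\varphi_0$ is $\theta_0\mapsto\pm\theta_0$, which cannot convert $g$ to $h$. The residual $(g-h)\theta_0/\xoc$ is $O(\xoc^{-1})$, not $O(\xoc^0)$, so it cannot be absorbed into the amplitude as you hope in your last paragraph.

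The paper's fix (Proposition~\ref{prop:1c-ps-phase-equivalence}, argued in detail for the 1c--1c analogue in Lemma~\ref{lemma: phase equivalence, same parameter number, signature}) is to allow a \emph{coupled} change $\theta_0\mapsto\theta_0+\xoc\cdot(\text{smooth in all variables, including }\theta_1,\yoc,z)$: this leaves $\varphi_0/\xoc^2$ intact to leading order while contributing at the $\varphi_1/\xoc$ level exactly the $\partial_{\theta_0}\varphi_0$-proportional terms needed to cancel the $b^{00}(\partial_{\theta_0}\varphi_0)^2$-type remainder in the second-order expansion of the phase difference; in the example above, $\theta_0\mapsto\theta_0+\tfrac12\xoc(h-g)$ does the job. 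You could alternatively salvage your decoupled route by first eliminating \emph{all} $\theta_0$'s via stationary phase---this is legitimate in the 1c--ps setting because the admissibility condition $dt\neq0$ on $L$ forces $d^2_{\theta_0\theta_0}\varphi_0$ to be nondegenerate (by the argument of Proposition~\ref{prop: N-rank Hessian = rank drop of the projection})---after which $\varphi_0$ is uniquely determined by $L$ and only a classical single-scale equivalence for $\varphi_1$ remains.
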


\begin{proof} The key step in this proof, following H\"ormander \cite{FIO1}, is to prove equivalence of phase functions in Proposition~\ref{prop:1c-ps-phase-equivalence}, which roughly says that when the signature of Hessians of two phase functions are the same, then one can apply a change of variables to reduce one to the other.

The proof of the Proposition, given equivalence of phase functions in Proposition~\ref{prop:1c-ps-phase-equivalence}, is standard. Given two different parametrizations, we first add additional quadratic factors $Q_0(\theta_0', \theta_0')$, $Q_1(\theta_1', \theta_1')$ to each function $\varphi_0, \varphi_1$ so as to make the total number of $\theta_i$-factors equal. This can be done in such a way as to preserve the signature conditions \eqref{eq: signature assumptions-main-prop}. Adding such quadratic factors to the phase function does not change the expression \eqref{eq: 1c-ps FIO definition}, due to the way that the powers of $\xoc$ and $2\pi$ depend on the number $(k_0, k_1)$ of $\theta$-variables. We then apply equivalence of phase functions to deduce that an expression for $u$ with respect to the first phase function (modified as above) can be transformed, via a change of variables, to an expression with respect to the (modified) second phase function. 
\end{proof}

\subsection{Principal symbol}
In this section we give a symbol calculus of 1c-ps Lagrangian distributions. 
We first define its principal symbol.
Suppose $\lambda = (\lambda_1,...,\lambda_{n_X+n_Y-1})$ are local coordinates of $\Legps \cap \partial \mathcal{M}$, such that $(x_{\oc},\lambda,\partial_{{\theta}_0}\varphi_0,\partial_{\theta_1}\varphi_1)$ gives a coordinate system of 
$\mathcal{M} \times \R^{N_0}_{\theta_0} \times \R^{N_1}_{\theta_1}$ near the critical set $C_{\Phi_{\oc-\ps}}$ in (\ref{eq: 1c-ps critical set}).
Then the symbol of $A \in I^{m}_{\oc-\ps}(\R^{n+1} \times \R^n, \Lagps)$ written as in (\ref{eq: 1c-ps FIO definition}) with respect to this coordinate system and this parametrization is a half density
\begin{align} \label{eq: 1c-ps principal symbol, half density form}
a(0,\lambda,0,0) |\det \frac{\partial (\lambda,\partial_{\theta_0} \varphi_0,\partial_{\theta_1}\varphi_1)}{\partial (\msf{K}',\theta_0,\theta_1)}|^{-1/2}
|d\lambda|^{1/2} \otimes |d\xoc|^{-m-n/2-5/4}.
\end{align}
This expression is designed to be invariant under changes of parametrization and scaling of the boundary defining function $\xoc$ (hence the inclusion of the  factor $|d\xoc|^{-m-n/2-5/4}$).
As emphasized by H\"ormander, we need to view \eqref{eq: 1c-ps principal symbol, half density form} as a section of the Maslov bundle in order to make it invariant under changes of parametrization with different signatures \eqref{eq: signature assumptions}; this bundle accounts for the different powers of $e^{i\pi/4}$ that arise in applying the stationary phase formula to different phase functions. 

We thus define
\begin{align} \label{eq: S[m]bundle, 1c-ps case}
S^{[m]}_{\oc-\ps}(\Legps):=  |N^*(\ffocps)|^{-m-\frac{2n+5}{4} } \otimes M(\Legps) \otimes E(\Legps),
\end{align}
where $M(\Legps)$ is the Maslov bundle used in \cite[Section~3.2,3.3]{FIO1} and originates from \cite{maslov1972asymptotic};
and $|N^*(\ffocps)|^{-m-\frac{2n+5}{4} }$
models a section that is homogeneous of degree $-m-\frac{2n+5}{4}$ in $x_{\oc}$. \footnote{This degree can be computed in the case without any extra parameters $\theta_0,\theta_1$, in which case \eqref{eq: 1c-ps FIO definition} has the same homogeneity in $\xoc$ as
$x_{\oc}^{-m-\frac{3}{4}} |\frac{dx_{\oc}d\yoc}{x_{\oc}^{n+2}}|^{1/2}$.}
Here $E(\Legps)$ is the line bundle arises in \cite[Section~3.1]{hassell2001resolvent}, whose transition coefficient depends on the choice of the boundary defining function (or more precisely, the zeroth and first order jet of it).

Using Proposition~\ref{prop:invariance under parametrization change} and track its proof, one can see that \eqref{eq: 1c-ps principal symbol, half density form} is invariant under change of parametrizations and is associated to the element in $I^m_{\oc-\ps}(\R^{n+1} \times \R^n ,\Lagps; \Omega^{1/2})$ itself,
thus the principal symbol is a map 
\begin{align} \label{eq: invariant, symbol map, 1c-ps}
\begin{split}
\sigma^m_{\oc-\ps}: \quad   I^m_{\oc-\ps}(\R^{n+1} \times \R^n ,\Lagps; \Omega^{1/2}) 
\rightarrow 
 C^\infty(\Legps \cap \partial \mathcal{M}; 
\Omega^{1/2}(\Legps) \otimes S_{\oc-\ps}^{[m]}(\Legps)).
\end{split}
\end{align}

For $A \in I^m_{\oc-\ps}(\R^{n+1} \times \R^n ,\Lagps; \Omega^{1/2})$, we say it is elliptic at $q \in \Legps$ if the amplitude in \eqref{eq: 1c-ps principal symbol, half density form} is non-vanishing at the point that is sent to $q$ under the parametrization map in Definition~\ref{defn: 1c-ps parametrization} This is equivalent to requiring the principal symbol to be invertible near $q$ as the section of $\Omega^{1/2}(\Legps) \otimes S_{\oc-\ps}^{[m]}$.

The fact that the symbol map in \eqref{eq: invariant, symbol map, 1c-ps} captures leading order singularity can be summarized as following statement: 
\begin{prop}
The principal symbol map $\sigma^m_{\oc-\ps}$ gives rise to following short exact sequence:
\begin{align}
\begin{split}
0 & \rightarrow I^{m-1}_{\oc-\ps}(\R^{n+1} \times \R^n,\Lagps)
\rightarrow I^{m}_{\oc-\ps}(\R^{n+1} \times \R^n, \Lagps)
\\ & \xrightarrow{\sigma^m_{\oc-\ps}} C^\infty(\Legps \cap \partial \mathcal{M}; 
\Omega^{1/2}(L) \otimes S_{\oc-\ps}^{[m]}(\Legps)) \rightarrow 0.
\end{split}
\end{align}
\end{prop}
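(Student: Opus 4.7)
The plan is to prove the three parts of the short exact sequence separately: the inclusion $I^{m-1}_{\oc-\ps} \hookrightarrow I^{m}_{\oc-\ps}$ lands in $\ker(\sigma^m_{\oc-\ps})$, the kernel is no larger than $I^{m-1}_{\oc-\ps}$, and $\sigma^m_{\oc-\ps}$ is surjective onto the symbol space. The first inclusion is essentially formal: an element $u \in I^{m-1}_{\oc-\ps}$ written as in \eqref{eq: 1c-ps FIO definition} with amplitude $a$ can be rewritten as an element of $I^m_{\oc-\ps}$ with amplitude $\xoc \cdot a$, since the prefactor $\xoc^{-(m+\frac{2k_0+k_1}{2})-\frac{1}{4}}$ gains exactly one power of $\xoc$ when $m$ is decreased by one. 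The leading coefficient evaluated at $\xoc = 0$ is then identically zero, so the symbol computed by \eqref{eq: 1c-ps principal symbol, half density form} vanishes.

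Next, I would show $\ker(\sigma^m_{\oc-\ps}) \subset I^{m-1}_{\oc-\ps}$. Working in a single coordinate patch where $A \in I^m_{\oc-\ps}$ is a single oscillatory integral with phase $\Phi_{\oc-\ps}$ and amplitude $a$, the vanishing of the principal symbol means $a$ vanishes on $\{\xoc = 0\} \cap C_{\Phi_{\oc-\ps}}$. Since by non-degeneracy the functions $(\xoc, \lambda, \partial_{\theta_0}\varphi_0, \partial_{\theta_1}\varphi_1)$ form a coordinate system near $C_{\Phi_{\oc-\ps}}$, Taylor expansion gives
\begin{equation}
a = \xoc \, b_0 + \sum_j (\partial_{\theta_{0,j}}\varphi_0) \, b_j + \sum_k (\partial_{\theta_{1,k}}\varphi_1) \, c_k
\end{equation}
for smooth $b_0, b_j, c_k$ with the same support properties as $a$. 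The first term, absorbed into a new amplitude $b_0$, directly realises $A$ (modulo the rest) as an element of $I^{m-1}_{\oc-\ps}$ using the same parametrization. The remaining two sums are handled by integration by parts in $\theta_0$ and $\theta_1$ respectively: since $\partial_{\theta_{0,j}} e^{i\Phi_{\oc-\ps}} = i\xoc^{-2}(\partial_{\theta_{0,j}}\varphi_0) \, e^{i\Phi_{\oc-\ps}}$ and $\partial_{\theta_{1,k}} e^{i\Phi_{\oc-\ps}} = i\xoc^{-1}(\partial_{\theta_{1,k}}\varphi_1) \, e^{i\Phi_{\oc-\ps}}$, each integration by parts trades a factor of $(\partial_\theta \varphi)$ for, respectively, $\xoc^2$ or $\xoc$ together with a $\theta$-derivative of the remaining amplitude. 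A direct accounting against the exponents of $\xoc$ and $2\pi$ in \eqref{eq: 1c-ps FIO definition} shows this lowers the effective order by one, placing the output in $I^{m-1}_{\oc-\ps}$. A partition of unity on $\Legps$ together with Proposition~\ref{prop:invariance under parametrization change} globalizes the argument.

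For surjectivity, given a section $s$ of $\Omega^{1/2}(\Legps \cap \partial\mathcal{M}) \otimes S^{[m]}_{\oc-\ps}(\Legps)$, I cover $\Legps \cap \partial\mathcal{M}$ by open sets $U_\alpha$ each admitting a non-degenerate parametrization $\Phi^\alpha_{\oc-\ps}$ via Proposition~\ref{prop: 1c-ps nondeg param}, take a subordinate partition of unity $\{\chi_\alpha\}$ on $\Legps \cap \partial\mathcal{M}$, and solve the symbol equation \eqref{eq: 1c-ps principal symbol, half density form} in each patch for an amplitude $a_\alpha$ whose restriction to $\{\xoc = 0\} \cap C_{\Phi^\alpha_{\oc-\ps}}$ matches $\chi_\alpha s$ (the determinant factor is non-vanishing precisely because $\Phi^\alpha_{\oc-\ps}$ is non-degenerate). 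Extending $a_\alpha$ smoothly off the critical set as a compactly supported function, quantising via \eqref{eq: 1c-ps FIO definition}, and summing over $\alpha$ produces $A \in I^m_{\oc-\ps}$ with $\sigma^m_{\oc-\ps}(A) = s$. The Maslov factor $M(\Legps)$ and the line bundle $E(\Legps)$ in \eqref{eq: S[m]bundle, 1c-ps case} ensure the local symbol data patch together into the global section $s$.

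The main obstacle will be the clean bookkeeping of $\xoc$-powers in the integration-by-parts step used to reduce the kernel of $\sigma^m_{\oc-\ps}$ into $I^{m-1}_{\oc-\ps}$. The anisotropy of the 1c-ps calculus --- where $\theta_0$-variables carry weight $\xoc^{-2}$ and $\theta_1$-variables weight $\xoc^{-1}$ --- means the two classes of phase variables must be treated differently, and one must verify that the decay gained at $\xoc = 0$ exactly offsets the change in the normalising prefactor in Definition~\ref{defn: 1c-ps FIO}. This mirrors the analogous computation in H\"ormander \cite{FIO1} and Melrose--Zworski \cite{melrose1996scattering}, with the modifications dictated by the parabolic and one-cusp scalings already encoded in the order convention, so the argument is structurally routine but notationally delicate.
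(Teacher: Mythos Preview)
Your approach is essentially the same as the paper's: exactness at the outer nodes is declared to follow from the definition, and exactness in the middle is Lemma~\ref{lemma: vanishing amplitude->lower order}, proved by decomposing the amplitude along the defining functions of $C_{\Phi_{\oc-\ps}}$ and integrating by parts.

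There is one slip to correct. Since $\varphi_1$ also depends on $\theta_0$, the identity you wrote for the $\theta_0$-derivative of the exponential is wrong; the correct formula is
\[
\partial_{\theta_{0,j}} e^{i\Phi_{\oc-\ps}} = i\,\xoc^{-2}\bigl(\partial_{\theta_{0,j}}\varphi_0 + \xoc\,\partial_{\theta_{0,j}}\varphi_1\bigr)\, e^{i\Phi_{\oc-\ps}}.
\]
This is exactly why the paper decomposes the amplitude using $\partial_{\theta_0}\varphi_0 + \xoc\,\partial_{\theta_0}\varphi_1$ rather than $\partial_{\theta_0}\varphi_0$ alone. Your version is easily repaired: write $\partial_{\theta_0}\varphi_0 = (\partial_{\theta_0}\varphi_0 + \xoc\,\partial_{\theta_0}\varphi_1) - \xoc\,\partial_{\theta_0}\varphi_1$ and absorb the second piece into the $\xoc\, b_0$ term. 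Either way the conclusion stands.
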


The exactness at the left and right component follows from definition. The exactness in the middle follows from following lemma:

\begin{lmm}  \label{lemma: vanishing amplitude->lower order}
For $A \in I^m_{\oc-\ps}(\R^{n+1} \times \R^n, \Lagps)$, if $\sigma^m_{\oc-\ps}(A) = 0$, then $A \in I^{m-1}_{\oc-\ps}(\R^{n+1} \times \R^n, \Lagps)$.
\end{lmm}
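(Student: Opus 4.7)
The plan is to mimic the standard proof for H\"ormander's Lagrangian distributions (cf.\ \cite[Prop.~25.1.5]{hormander2007analysis}), tracking the non-standard powers of $\xoc$ that arise from the 1c-ps order convention. Work locally near a point of $\Legps$, so that by Proposition~\ref{prop:invariance under parametrization change} we may fix one non-degenerate parametrizing function $\Phi_{\oc-\ps} = \varphi_0(t,\theta_0)/\xoc^2 + \varphi_1(\msf{K}',\theta_0,\theta_1)/\xoc$ and represent $A$ as in \eqref{eq: 1c-ps FIO definition} with amplitude $a(\msf{K},\theta_0,\theta_1)$.

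First I would use the fact that, by non-degeneracy, $(\xoc,\lambda,\partial_{\theta_0}\varphi_0,\partial_{\theta_1}\varphi_1)$ is a local coordinate system on $\R^{n+1}\times\R^n\times\R^{k_0+k_1}$ near $C_{\Phi_{\oc-\ps}}$. The hypothesis $\sigma^m_{\oc-\ps}(A)=0$ together with the defining formula \eqref{eq: 1c-ps principal symbol, half density form} says precisely that $a(0,\lambda,0,0)=0$ on the corresponding set. A Taylor expansion in these coordinates then yields a decomposition
\begin{equation*}
a = \xoc\, b + \sum_{j=1}^{k_0} (\partial_{\theta_{0j}}\varphi_0)\, c_j + \sum_{k=1}^{k_1} (\partial_{\theta_{1k}}\varphi_1)\, d_k,
\end{equation*}
with $b,c_j,d_k$ smooth and compactly supported. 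The first term contributes a factor of $\xoc$ to the integrand in \eqref{eq: 1c-ps FIO definition}; by the exponent convention $-(m+(2k_0+k_1)/2)-1/4$, this extra $\xoc$ precisely drops the order to $m-1$, so this piece lies in $I^{m-1}_{\oc-\ps}(\R^{n+1}\times\R^n,\Lagps)$.

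Next I would dispose of the remaining two sums by integration by parts in $\theta_0$ and $\theta_1$. Since $\varphi_0$ is independent of $\theta_1$, we have $\partial_{\theta_{1k}}\Phi_{\oc-\ps} = \partial_{\theta_{1k}}\varphi_1/\xoc$, so
\begin{equation*}
(\partial_{\theta_{1k}}\varphi_1)\, e^{i\Phi_{\oc-\ps}} = -i\xoc\, \partial_{\theta_{1k}} e^{i\Phi_{\oc-\ps}}.
\end{equation*}
Integrating by parts in $\theta_{1k}$ moves the derivative onto $d_k$ and the half-density factor (both smooth and compactly supported in $\theta_1$, so there is no boundary term) and inserts an extra $\xoc$ into the amplitude, again yielding an element of $I^{m-1}_{\oc-\ps}$. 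For the $\theta_0$-terms,
\begin{equation*}
\partial_{\theta_{0j}}\Phi_{\oc-\ps} = \frac{\partial_{\theta_{0j}}\varphi_0}{\xoc^2} + \frac{\partial_{\theta_{0j}}\varphi_1}{\xoc}
\end{equation*}
gives
\begin{equation*}
(\partial_{\theta_{0j}}\varphi_0)\, e^{i\Phi_{\oc-\ps}} = -i\xoc^2\, \partial_{\theta_{0j}} e^{i\Phi_{\oc-\ps}} - \xoc\, (\partial_{\theta_{0j}}\varphi_1)\, e^{i\Phi_{\oc-\ps}}.
\end{equation*}
Integration by parts in $\theta_{0j}$ turns the first piece into an amplitude with two extra powers of $\xoc$ (hence in $I^{m-2}_{\oc-\ps}\subset I^{m-1}_{\oc-\ps}$), while the second piece already carries an explicit $\xoc$ factor and so is in $I^{m-1}_{\oc-\ps}$ directly. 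Summing the three contributions gives $A \in I^{m-1}_{\oc-\ps}(\R^{n+1}\times\R^n,\Lagps)$.

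The only subtlety I foresee is bookkeeping rather than substance: one must verify that the scaling in the $\xoc$-exponent of the amplitude in \eqref{eq: 1c-ps FIO definition}, combined with the half-density factors (the $\xoc^{-(n+2)/2}$ in the 1c-half-density), produces the expected shift $m\to m-1$ after picking up one extra $\xoc$. Because the powers of $\xoc$, $\theta_0$-count and $\theta_1$-count have all been normalized in Definition~\ref{defn: 1c-ps FIO} to be invariant under addition of trivial quadratic phase variables, this is only a matter of matching exponents. Once this is checked, the lemma, and with it exactness of the symbol sequence in the middle, follows.
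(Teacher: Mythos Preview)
Your proposal is correct and follows essentially the same route as the paper: decompose the amplitude using the defining functions of $C_{\Phi_{\oc-\ps}}$ and integrate by parts in $\theta_0,\theta_1$, gaining one (resp.\ two) powers of $\xoc$. The only cosmetic differences are that the paper groups the $\theta_0$-factor as $\partial_{\theta_0}\varphi_0+\xoc\,\partial_{\theta_0}\varphi_1=\xoc^2\,\partial_{\theta_0}\Phi_{\oc-\ps}$ (so the correction term you split off is absorbed from the start), and that you include an explicit $\xoc\,b$ term in the Taylor expansion, which is in fact the more careful bookkeeping since the symbol vanishes only at $\xoc=0$ on $C_{\Phi_{\oc-\ps}}$.
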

\begin{proof}
We prove the result in a fixed parametrization. Hence the condition effectively says that the amplitude vanishes on $C_{\Phi_{\oc-\ps}}$, which means that we may write
\begin{align*}
a = a_0 \cdot(\partial_{\theta_0}\varphi_0+x_{\oc}\partial_{\theta_0}\varphi_1)
+ a_1 \cdot \partial_{\theta_1}\varphi_1.
\end{align*}
Hence 
\begin{align*}
A = \int e^{ i( \frac{\varphi_0(t,\theta)}{x_{\oc}^2} + \frac{\varphi_1(t,z,y_{\oc},\theta)}{x_{\oc}} )} x_{\oc}^{-(m+\frac{2N_0+N_1}{2}+\frac{3}{4})}(a_0 \cdot(\partial_{\theta_0}\varphi_0+x_{\oc}\partial_{\theta_0}\varphi_1)
+ a_1 \cdot \partial_{\theta_1}\varphi_1) d\theta_0d\theta_1
\end{align*}

Noticing that 
\begin{align*}
(a_0 \cdot(\partial_{\theta_0}\varphi_0 +x_{\oc}\partial_{\theta_0}\varphi_1)e^{ i( \frac{\varphi_0(t,\theta)}{x_{\oc}^2} + \frac{\varphi_1(t,z,y_{\oc},\theta)}{x_{\oc}} )} = x_{\oc}^2a_0 \cdot \partial_{\theta_0}e^{ i( \frac{\varphi_0(t,\theta)}{x_{\oc}^2} + \frac{\varphi_1(t,z,y_{\oc},\theta)}{x_{\oc}} )},
\end{align*}
and
\begin{align*}
(a_1 \cdot \partial_{\theta_1}\varphi_1) e^{ i( \frac{\varphi_0(t,\theta)}{x_{\oc}^2} + \frac{\varphi_1(t,z,y_{\oc},\theta)}{x_{\oc}})} 
=x_{\oc} a_1 \cdot \partial_{\theta_1}e^{ i( \frac{\varphi_0(t,\theta)}{x_{\oc}^2} + \frac{\varphi_1(t,z,y_{\oc},\theta)}{x_{\oc}})},
\end{align*}
an integration by parts gives
\begin{align*}
A = \int e^{ i( \frac{\varphi_0(t,\theta)}{x_{\oc}^2} + \frac{\varphi_1(t,z,y_{\oc},\theta)}{x_{\oc}} )} x_{\oc}^{-(m-1+\frac{2N_0+N_1}{2}+\frac{3}{4})}
 (x_{\oc}\mathrm{div}_{\theta_0}a_0 + \mathrm{div}_{\theta_1}a_1) d\theta_0d\theta_1,
\end{align*}
where 
\begin{align*}
\mathrm{div}_{\theta_0}a_0 = \sum_{j} \partial_{(\theta_0)_j}(a_0)_j,
\mathrm{div}_{\theta_1}a_1 = \sum_{j} \partial_{(\theta_1)_j}(a_1)_j.
\end{align*}
Thus $A \in I^{m-1}_{\oc-\ps}(\R^{n+1} \times \R^n, \Lagps)$.
\end{proof}

\subsection{Composition with pseudodifferential operators}
\label{subsec:PsiDO-FIO-composition-1cps}

Now we characterize the composition of 1c-ps Lagrangian distributions from left with parabolic scattering pseudodifferential operators. Our first composition theorem is:

\begin{thm} \label{thm: PsiDO- 1c-ps FIO composition}
Suppose $Q \in \Psi_{\ps}^{m',0}(Y)$ has parabolically homogeneous principal symbol at fibre-infinity. If $A \in I_{\oc-\ps}^{m}(\R^{n+1} \times \R^n, \Lagps; \Omega^{1/2})$ 
then we have  
\begin{equation}
QA \in I^{m+m'}_{\oc-\ps}(\R^{n+1} \times \R^n,\Lagps),
\end{equation}
with principal symbol 
\begin{align}  \label{eq: QA principal symbol, non-vanishing q, density version}
\sigma^{m+m'}_{\oc-\ps}(QA) = \sigma^{m'}_{\ps}(Q)|_{\Legps} \otimes \sigma^m_{\oc-\ps}(A)
\end{align}
where we lift the principal symbol of $Q$ to $\SM$, and view as a section of $|N^*(\ffocps)|^{-m'}$ over $\Legps$. 
\end{thm}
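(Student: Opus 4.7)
The plan is to compute $QA$ as an oscillatory integral using a local non-degenerate parametrization of $\Legps$, rescale in $(t',z',\tau,\zeta)$ to uncover the parabolic structure, and then apply stationary phase to identify the result as a 1c-ps Legendre distribution with the stated principal symbol. This follows the classical strategy for PsiDO--FIO composition, adapted to the anisotropy of the parabolic scattering and 1-cusp calculi.

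Locally parametrize $\Legps$ by $\Phi_{\oc-\ps} = \varphi_0(t,\theta_0)/\xoc^2 + \varphi_1(\msf{K}',\theta_0,\theta_1)/\xoc$ as in Definition~\ref{defn: 1c-ps parametrization}, and write $A$ as in \eqref{eq: 1c-ps FIO definition}. Composing with $Q$, written as a left ps-quantization, yields
\begin{equation*}
QA = c_0\int e^{i[(t-t')\tau + (z-z')\cdot\zeta + \Phi_{\oc-\ps}(\xoc,y_\oc,t',z',\theta)]} q(t,z,\tau,\zeta)\,\xoc^{-\alpha}\,a\, dt'\,dz'\,d\tau\,d\zeta\,d\theta,
\end{equation*}
where $\alpha = m + (2k_0+k_1)/2 + 1/4$ and half-density factors are suppressed. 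The key move is the anisotropic rescaling $\tau = \tilde\tau/\xoc^2$, $\zeta = \tilde\zeta/\xoc$, $t' = t + \xoc^2 s$, $z' = z + \xoc r$, under which Jacobians cancel, parabolic homogeneity gives $q(t,z,\tilde\tau/\xoc^2,\tilde\zeta/\xoc) = \xoc^{-m'}q_0(t,z,\tilde\tau,\tilde\zeta) + O(\xoc^{-m'+1})$, and Taylor expansion of $\Phi_{\oc-\ps}$ about $(t,z)$ converts the phase into
\begin{equation*}
\Phi_{\oc-\ps}(\xoc,y_\oc,t,z,\theta) + \bigl(s\,\partial_t\varphi_0 + r\cdot\partial_z\varphi_1 - s\tilde\tau - r\cdot\tilde\zeta\bigr) + \xoc\, R(\xoc,s,r,\msf{K},\theta),
\end{equation*}
with $R$ smooth. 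Stationary phase in $(s,r,\tilde\tau,\tilde\zeta)$ — nondegenerate, with the leading Hessian the standard symplectic pairing of signature zero, hence contributing $(2\pi)^{n+1}$ (cancelling the $(2\pi)^{-(n+1)}$ from $Q$) and no Maslov factor — pins the critical values at $\tilde\tau = \partial_t\varphi_0$ and $\tilde\zeta = \partial_z\varphi_1$, producing
\begin{equation*}
QA \equiv \int e^{i\Phi_{\oc-\ps}(\msf{K},\theta)}\, \xoc^{-(\alpha+m')}\, q_0(t,z,\partial_t\varphi_0,\partial_z\varphi_1)\, a(\msf{K},\theta)\, d\theta
\end{equation*}
modulo elements of $I^{m+m'-1}_{\oc-\ps}$. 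Since $\alpha + m' = (m+m') + (2k_0+k_1)/2 + 1/4$, this is an element of $I^{m+m'}_{\oc-\ps}(\R^{n+1}\times\R^n,\Lagps)$ with the same parametrization.

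For the principal symbol identification, observe from Section~\ref{subsec:Parametrization of 1c-ps fibred-Legendre submanifolds} that on $\Legps$ at $\xoc = 0$ the ps-boundary coordinates are $\tilde\tau = \partial_t\varphi_0$ and $\sigma\hat\zeta = \partial_z\varphi_1$, so $q_0(t,z,\partial_t\varphi_0,\partial_z\varphi_1)$ is precisely $\sigma^{m'}_{\ps}(Q)$ pulled back to $\Legps$ via the blowdown map $\beta_{\oc-\ps}$, with the $\xoc^{-m'}$ factor supplying the natural section of $|N^*(\ffocps)|^{-m'}$ in \eqref{eq: S[m]bundle, 1c-ps case}. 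This yields \eqref{eq: QA principal symbol, non-vanishing q, density version}. The main obstacle is the careful bookkeeping of $\xoc$-powers, $(2\pi)$-normalizations, and the half-density and Maslov contributions across the anisotropic rescaling; the Maslov factor is unchanged because the auxiliary stationary phase has signature zero, and the half-density factors match once one tracks that the substitution $t' = t + \xoc^2 s$, $z' = z + \xoc r$ introduces exactly the $\xoc^{n+2}$ needed to convert the Euclidean density $dt'\,dz'$ into the $\ps$-density $\xoc^{-(n+2)}dt'dz'$ paired against the 1c-density on $A$. Beyond this bookkeeping, no essentially new ideas are required.
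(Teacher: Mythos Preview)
Your outline follows the same overall strategy as the paper --- write $QA$ as an oscillatory integral, rescale to expose the parabolic structure, and extract the leading amplitude --- but there is a genuine gap in the stationary phase step. After you rescale both the frequencies $(\tau,\zeta)\to(\tilde\tau,\tilde\zeta)$ \emph{and} the base variables $(t',z')\to(t+\xoc^2 s,\,z+\xoc r)$, the auxiliary phase becomes $s(\partial_t\varphi_0-\tilde\tau)+r\cdot(\partial_z\varphi_1-\tilde\zeta)+\xoc R$ with \emph{no large parameter} in front. The stationary phase lemma is an asymptotic expansion in a large parameter, so it does not apply as stated. What you are implicitly invoking is closer to Fourier inversion (integrating $e^{-is\tilde\tau}$ over $s$ to produce a delta function), but that is not straightforward either: the amplitude is not compactly supported in $(s,r)$ (the $(t',z')$-support of $a$ becomes $|s|\lesssim\xoc^{-2}$, $|r|\lesssim\xoc^{-1}$), and $q$ gives only symbolic, not compactly supported, control in $(\tilde\tau,\tilde\zeta)$.

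The paper handles this differently and more cleanly. It rescales \emph{only} the frequency variables, $\tilde\tau=\xoc^2\tau$, $\tilde\zeta=\xoc\zeta$, keeping $(t',z')$ intact. It first inserts a cutoff $\chi(\tilde\tau)\chi(|\tilde\zeta|)$ and shows by non-stationary phase in $(t',z')$ that the complementary piece is residual; this makes all auxiliary integration variables range over a compact set. Crucially, the phase then retains explicit factors $\xoc^{-2}$ in the $(t',\tilde\tau)$ block and $\xoc^{-1}$ in the $(z',\tilde\zeta)$ block, so genuine stationary phase applies --- carried out in two stages reflecting the parabolic anisotropy --- and yields exactly the critical values $\tilde\tau=\partial_t\varphi_0$, $\tilde\zeta=\partial_z\varphi_1$ that you identified. (The paper also first reduces via phase equivalence to a parametrization with no $\theta_0$-variables and $\varphi_0=-t$, which streamlines the two-stage computation.) Your endpoint and symbol identification are correct; the fix is simply to drop the $(t',z')$ rescaling so that the large parameters $\xoc^{-2},\xoc^{-1}$ remain visible.
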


\begin{proof} See Appendix~\ref{app:proofs}.
\end{proof}

In the next result, we consider the compositions of the form $PA$ when $P \in \Psi_{\ps}^{m', 0}$ has principal symbol vanishing on $\Legps$. 
Also, we specialize to the case with $\Lambda = \Lambda_\pm$ defined in \eqref{eq:microlocalized sojourn relns}.
Those requirements are satisfied when $P$ is our Schr\"odinger operator and $A$ is a microlocalized Poisson operator. 


Denote the full (left) symbol of $P$ by $p_{\full}$, and define
\begin{align*}
r=p_{\full}-p_{\mathrm{hom}}.
\end{align*}
As in \cite[Equation~(5.2.8)]{FIOII}, the invariant sub-principal symbol of $P$ is defined to be
\begin{align*}
p_{\sub} = r - (2i)^{-1} \sum_j \frac{\partial^2 p_{\full} }{\partial {z_j} \partial \zeta_j},
\end{align*}
which is independent of the choice of coordinates modulo $S^{m'-2,-1}_{\ps}-$terms.

\begin{thm} \label{thm: vanishing principal symbol product}
Suppose $P \in \Psi_{\ps}^{m',0}(Y)$, and its parabolically homogeneous principal symbol $p_{\hom}$ vanishes identically on the projection of $\Legps$ in $\leftidx{^\ps}T^*Y$. In addition, let $p$ be its left full symbol and assume that $\tilde{p}(t,z,\tilde{\tau},\tilde{\zeta})=x_{\oc}^{m'}p(t,z,\tau,\zeta)$ is smooth.\footnote{This in particular is satisfied by all differential operators.} 
If $A \in I_{\oc-\ps}^{m}(\R^{n+1} \times \R^n,\Lambda_\pm; \Omega^{1/2})$, then we have 
\begin{align*}
PA \in I^{m+m'-1}_{\oc-\ps}(\R^{n+1} \times \R^n,\Lambda_\pm; \Omega^{1/2}).
\end{align*}
The principal symbol is of $PA$ is as follows: let 
\begin{align*}
 \sigma^m_{\oc-\ps}(A) = \textbf{a} \otimes |dx_{\oc}|^{-m-\frac{2n+5}{4}},
\end{align*}
with $\textbf{a}$ being a section of $\Omega^{1/2}(\Legps) \otimes M(\Legps)$, then
\begin{align} \label{eq: 1c-ps vanishing principal composition, principal symbol} 
\begin{split}
\sigma_{\oc-\ps}^{m+m'-1}(PA) = & (-i\mathscr{L}_{ \rHp} + i (\frac{m'-1}{2} + m + \frac{2n+5}{4} )(x_{\oc}^{-1} \rHp x_{\oc}) + p_{\sub})\textbf{a}
\\& \otimes |dx_{\oc}|^{-m-m'+1-\frac{2n+5}{4}}.
\end{split}
\end{align}
\end{thm}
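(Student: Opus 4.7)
The plan is to compute $PA$ directly from the oscillatory integral representation of $A$, identify the result as a 1c-ps Legendre distribution of order $m+m'-1$, and extract its principal symbol. I work locally in a non-degenerate parametrization of $\Lambda_\pm$ of the form guaranteed by Remark~\ref{rem:phase function leading term}, namely $\Phi = -t/x_{\oc}^2 + \phi(\msf{K},\theta)/x_{\oc}$ (so no $\theta_0$-variables), with amplitude $x_{\oc}^{-(m+k/2)-1/4}\,a(\msf{K},\theta)$ (writing $k$ for $k_1$) and critical set $C_\Phi = \{d_\theta\phi = 0\}$.

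First I would apply $P = q_{L,\ps}(p)$ to the oscillatory integral. Using the smoothness of $\tilde p(t,z,\tilde\tau,\tilde\zeta) = x_{\oc}^{m'}\,p(t,z,\tau,\zeta)$ in the parabolic rescaled fibre variables $(\tilde\tau,\tilde\zeta) = (x_{\oc}^2 \tau,\,x_{\oc}\zeta)$, one can pull $P$ inside the integral to produce a new oscillatory integral with the same phase $\Phi$ and amplitude given by the standard asymptotic expansion
\begin{equation*}
e^{-i\Phi}\,P(e^{i\Phi}a) \;\sim\; \sum_{\alpha,\beta} \frac{1}{\alpha!\beta!}\,(\partial_\tau^\alpha \partial_\zeta^\beta p)(t,z,d_t\Phi,d_z\Phi)\, D_t^\alpha D_z^\beta a,
\end{equation*}
organised by decreasing powers of $x_{\oc}$. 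The leading term is $p(t,z,d_t\Phi,d_z\Phi)\,a$.

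The hypothesis that $p_{\hom}$ vanishes on $\pi(\Legps)$ says precisely that $p(t,z,d_t\Phi,d_z\Phi)$ vanishes on the critical set $C_\Phi$, so by Lemma~\ref{lemma: vanishing amplitude->lower order} the leading contribution already lies in $I^{m+m'-1}_{\oc-\ps}$; this is the promised order reduction. What remains to compute is the next-order contribution, which comes from (i) the first-order terms $\partial_\tau p,\,\partial_\zeta p$ evaluated at $(d_t\Phi,d_z\Phi)$, (ii) the invariant subprincipal correction from the mixed $z_j\zeta_j$-derivative, and (iii) derivatives hitting the $x_{\oc}$-powers in the amplitude and half-density. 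The $\partial_\tau p,\partial_\zeta p$ terms, after integration by parts in $(t,z,\theta)$ and restriction to $C_\Phi$, reassemble under the parametrization into the action of $H_p$ along the lift of $\Legps$; after accounting for the rescaling that defines $\rHp = \rho_{\ps}^{m'-1}x_{\ps}^{-1}H_p$ (with $x_{\ps}$ harmless since the amplitude is spacetime-compact), this is the Lie derivative $\mathscr{L}_{\rHp}$, well-defined because $\rHp$ is tangent to $\Legps$ (as $\Legps \subset \Sigma(P)$ is Lagrangian and $p_{\hom}$ vanishes there). The subprincipal pickup reproduces $p_{\sub}$, and the weight derivatives combine into the scalar coefficient $(\frac{m'-1}{2}+m+\frac{2n+5}{4})(x_{\oc}^{-1}\rHp x_{\oc})$, the constant being the total power of $x_{\oc}$ that must be moved past $\rHp$ (namely $\frac{m'-1}{2}$ from the fibre rescaling, $m$ from the overall order, and $\frac{2n+5}{4}$ from the half-density normalisation). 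Packaging these and invoking the invariance of the symbol map \eqref{eq: invariant, symbol map, 1c-ps} produces \eqref{eq: 1c-ps vanishing principal composition, principal symbol}, independently of parametrization.

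The main obstacle is making the asymptotic expansion of $e^{-i\Phi}P(e^{i\Phi}a)$ genuinely uniform in $x_{\oc}$ up to $\ffocps$. Because $d\Phi$ involves the singular factor $x_{\oc}^{-2}$, the argument of $p$ reaches fibre-infinity of $\overline{{}^{\ps}T^*\R^{n+1}}$, and only the smoothness of $\tilde p$ in the parabolic coordinates makes each successive term of the expansion an element of the correct 1c-ps symbol class; an isotropic expansion would not close. Equivalently, one can perform stationary phase in the $(\tau,\zeta)$-variables of the quantization integral around the critical point $(d_t\Phi,d_z\Phi)$, and the anisotropic parabolic scaling must be tracked through each step. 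On $\ffocps$ this is made tractable by the fact that $\rho_{\ps}$ and $x_{\oc}$ are equivalent boundary defining functions, with $\sigma = x_{\oc}/\rho_{\ps}$ smooth and strictly positive on the relevant neighbourhood, so that the rescalings defining $\rHp$ and the $1$c half-density combine compatibly. The remaining bookkeeping then parallels the transport-equation calculation in \cite[Section~5.2]{FIOII}.
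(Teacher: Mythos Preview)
Your outline is on the right track and matches the paper's overall strategy: write $PA$ as an oscillatory integral, use the vanishing of $p_{\hom}$ on $\Legps$ to drop an order, then identify the next-order amplitude as a transport operator acting on the symbol of $A$. Where your sketch and the paper's proof differ is in how this ``reassembly into $\mathscr{L}_{H_p}$'' is actually carried out, and this is precisely where the work lies.

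The paper does \emph{not} attempt to justify the formal expansion $e^{-i\Phi}P(e^{i\Phi}a)\sim\sum\frac{1}{\alpha!\beta!}(\partial_\tau^\alpha\partial_\zeta^\beta p)D_t^\alpha D_z^\beta a$ in a general parametrization. Instead it first puts the phase into a special normal form (Lemmas~\ref{lemma: full rank projection} and \ref{lemma: normal form of phase function}): after a linear change in $z$ one can take $\Phi = -t/x_{\oc}^2 + (z''\cdot\tilde\zeta'' - \tilde\varphi_1(t,z',\tilde\zeta'',x_{\oc},y_{\oc}))/x_{\oc}$, so that $(t,z',\tilde\zeta'',x_{\oc},y_{\oc})$ are honest coordinates on $\Lambda_\pm$. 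With this normal form, the composition integral is treated by \emph{staged} stationary phase: first in $(t',\tilde\tau)$ with large parameter $x_{\oc}^{-2}$ (the critical point gives $t'=t$, $\tilde\tau=-1+O(x_{\oc})$, Hessian of determinant $-1$, so this contributes only at order $x_{\oc}^2$), then in $(z_2',\tilde\zeta_1')$ with large parameter $x_{\oc}^{-1}$. This two-stage procedure is exactly how the parabolic anisotropy you flag as the ``main obstacle'' is handled; a single isotropic stationary phase would not separate orders correctly.

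The second stationary phase is then computed to two terms, and the point is that in the normal-form coordinates the Hessian is $\bigl(\begin{smallmatrix}-\partial^2_{z'z'}\tilde\varphi_1 & -I\\ -I & 0\end{smallmatrix}\bigr)$ with inverse $\bigl(\begin{smallmatrix}0 & -I\\ -I & \partial^2_{z'z'}\tilde\varphi_1\end{smallmatrix}\bigr)$, so the $\langle H^{-1}D,D\rangle$ operator has a very simple block structure and the $g$-remainder (phase minus quadratic approximation) is independent of $\tilde\zeta_1'$, killing most cross-terms. This is what makes the bookkeeping tractable. The resulting collection of second-derivative terms of $\tilde p$ is then matched, via Proposition~\ref{prop:Hp-restriction}, to the divergence of $H_p$ restricted to $\Legps$ in the coordinates $(z',\tilde\zeta'')$; that proposition is the precise statement of your ``reassemble under the parametrization into the action of $H_p$'' and its proof is a separate chain-rule computation using the vanishing of $\tilde p$ on $\Lambda_\pm$.

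So: your plan is correct but the step you describe in one sentence (``reassemble \dots into the action of $H_p$'') is the bulk of the argument, and in a general parametrization it is not transparent. The paper buys tractability by first passing to the normal form of Lemma~\ref{lemma: normal form of phase function}, where both the stationary-phase Hessian and the coordinate expression of $H_p|_{\Legps}$ are explicit enough to be compared by hand.
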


\begin{proof} See Appendix~\ref{app:proofs}.
\end{proof}

We now consider composition between 1c-ps Lagrangian distributions from right with 1c-pseudodifferential operators.

\begin{thm} \label{thm: PsiDO- 1c-ps FIO composition-2}
Suppose $Q' \in \Psi_{\oc}^{-\infty,m'}(\R^n)$. If $A \in I_{\oc-\ps}^{m}(\R^{n+1} \times \R^n, \Lagps)$ 
then we have  
\begin{equation}
AQ' \in I^{m+m'}_{\oc-\ps}(\R^{n+1} \times \R^n,\Lagps),
\end{equation}
with principal symbol 
\begin{align}  \label{eq: AQ' principal symbol, non-vanishing q, density version}
\sigma^{m+m'}_{\oc-\ps}(AQ') =   \sigma^m_{\oc-\ps}(A)
\otimes \sigma^{m'}_{\oc}(Q')|_{\Legps}
\end{align}
where we lift the principal symbol of $Q'$ to $\SM$, and view as a section of $|N^*(\ffocps)|^{-m'}$ over $\Legps$. 
\end{thm}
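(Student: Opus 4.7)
The plan mirrors the proof of Theorem~\ref{thm: PsiDO- 1c-ps FIO composition}, with the composition now happening on the 1c side. First I would localize using a partition of unity on $\Legps$, reducing to the case where $A$ is represented by a single oscillatory integral of the form \eqref{eq: 1c-ps FIO definition} with non-degenerate phase $\Phi_{\oc-\ps} = \varphi_0(t,\theta_0)/(\xoc')^2 + \varphi_1(\xoc', \yoc', t, z, \theta_0, \theta_1)/\xoc'$, and write $Q'$ via its 1c left quantization with symbol $q' \in S^{-\infty, m'}_{\oc}$; note that $q'$ is Schwartz in the 1c-fibre variables $(\xioc, \etaoc)$. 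The Schwartz kernel of $AQ'$ is obtained by integrating the product of the two oscillatory integrals over the middle 1c variables $(\xoc', \yoc')$ against the 1c-density $(\xoc')^{-(n+2)} d\xoc' d\yoc'$, producing a single oscillatory integral in the enlarged set of phase variables $(\theta_0, \theta_1, \xoc', \yoc', \xioc, \etaoc)$ with combined phase
\begin{align*}
\Psi = \Phi_{\oc-\ps}(\xoc', \yoc', t, z, \theta_0, \theta_1) + \xioc \, \frac{\xoc' - \xoc''}{(\xoc')^3} + \etaoc \cdot \frac{\yoc' - \yoc''}{\xoc'}.
\end{align*}
The rapid decay of $q'$ in $(\xioc, \etaoc)$ ensures absolute convergence.

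Next I would apply stationary phase in the $2n$ new variables $(\xoc', \yoc', \xioc, \etaoc)$. The critical equations $\partial_{\xioc} \Psi = 0$ and $\partial_{\etaoc} \Psi = 0$ force $\xoc' = \xoc''$ and $\yoc' = \yoc''$, while $\partial_{\xoc'} \Psi = 0$ and $\partial_{\yoc'} \Psi = 0$ then determine $(\xioc, \etaoc)$ as precisely the 1c-fibre coordinates of $\Legps$ at the image point, given by $\xioc = (\xoc'')^3 \partial_{\xoc'} \Phi_{\oc-\ps}|_{\xoc' = \xoc''}$ and $\etaoc = \xoc'' \partial_{\yoc'} \Phi_{\oc-\ps}|_{\yoc' = \yoc''}$. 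After the rescaling $\xoc' = \xoc'' + (\xoc'')^3 u$, $\yoc' = \yoc'' + \xoc'' v$, the $2n \times 2n$ Hessian in $(u, v, \xioc, \etaoc)$ is of standard off-diagonal form with $|\det H|^{-1/2} = (\xoc'')^{n+2}$ and signature zero. The resulting stationary-phase factor $(2\pi)^n (\xoc'')^{n+2}$ exactly cancels the $(\xoc')^{-(n+2)}$ 1c-density and the $(2\pi)^{-n}$ from the 1c quantization of $Q'$.

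The upshot is a reduced oscillatory integral of the form \eqref{eq: 1c-ps FIO definition} with the same phase $\Phi_{\oc-\ps}$ parametrizing $\Legps$ and amplitude $a \cdot q'|_{\Legps}$ at leading order. Since $q' \sim (\xoc'')^{-m'}$, the $\xoc''$-exponent balances to $-(m+m') - (2k_0+k_1)/2 - 1/4$, confirming that $AQ' \in I^{m+m'}_{\oc-\ps}(\R^{n+1} \times \R^n, \Lagps)$, and the principal symbol formula \eqref{eq: AQ' principal symbol, non-vanishing q, density version} follows from the leading term of the stationary phase expansion. Equivalently, $\Psi$ can be viewed directly as a new non-degenerate parametrization of $\Legps$ with $(k_0, k_1 + 2n)$ phase variables, and Proposition~\ref{prop:invariance under parametrization change} then identifies the composition with the expected 1c-ps Lagrangian distribution. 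The main obstacle is the careful bookkeeping of $\xoc''$-powers, Maslov factors, and half-density factors, which follows the pattern of the proof of Theorem~\ref{thm: PsiDO- 1c-ps FIO composition} in Appendix~\ref{app:proofs}, with the modifications appropriate to composition on the 1c side.
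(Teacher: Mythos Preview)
Your proposal is correct and follows essentially the same approach as the paper. The paper's own proof is a two-sentence sketch noting that the argument is analogous to that of Theorem~\ref{thm: PsiDO- 1c-ps FIO composition} (carried out in Appendix~\ref{app:proofs}), with the extra integrated variables now being the middle 1c variables $(\xoc',\yoc')$ of $Q'$, and that a stationary phase argument in those variables returns an oscillatory integral of the form \eqref{eq: 1c-ps FIO definition} with order $m+m'$; you have simply written out these details. One small presentational point: when you say ``the $2n\times 2n$ Hessian in $(u,v,\xioc,\etaoc)$ is of standard off-diagonal form with $|\det H|^{-1/2}=(\xoc'')^{n+2}$'', the determinant value you quote is that of the Hessian in the \emph{unrescaled} variables $(\xoc',\yoc',\xioc,\etaoc)$ (equivalently, the Jacobian of your rescaling produces the $(\xoc'')^{n+2}$), whereas in the rescaled variables the Hessian has unit determinant; the bookkeeping is correct either way.
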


\begin{proof} 
The proof is completely analogous to the proof of Theorem~\ref{thm: PsiDO- 1c-ps FIO composition}, which is in Appendix~\ref{app:proofs}.
The only difference is that now the extra parameters in the composition are those left variables of $Q'$ (which are right variables of $A$ as well). 
Then we apply the stationary phase argument to them to see that it is again of the form \eqref{eq: 1c-ps FIO definition} with $m+m'$ being its order.

\end{proof}

We now discuss the wavefront set of $\oc-\ps$ Legendre distributions.
Let $\mathcal{M}$ be as in \eqref{eq: 1c-ps cotangent bundle definition}, and denote the composition of the blow-down map $\beta_{\oc-\ps}$ with projections from $\mathcal{M}_0$ by
\begin{equation}
    \pi_{\oc}: \mathcal{M} \to \overline{{}^\oc{T^*\RR^n}},
    \quad \pi_{\ps}: \mathcal{M} \to \overline{{}^\ps{T^*\RR^{n+1}}}.
\end{equation}

Then for 
$A \in \mathcal{S}'(\R^{n+1} \times \R^n)$, $\WF'_{\oc-\ps}(A) \subset 
\pi_{\oc}^{-1}(\partial \, \overline{{}^\oc{T^*\RR^n}}) \cap \pi_{\ps}^{-1}(\partial \,\overline{{}^\ps{T^*\RR^{n+1}}})$ is defined as follows.
Let $\msf{p} \in \pi_{\oc}^{-1}(\partial \, \overline{{}^\oc{T^*\RR^n}}) \cap \pi_{\ps}^{-1}(\partial \,\overline{{}^\ps{T^*\RR^{n+1}}})$, then we say $\mathsf{p} \notin \WF'_{\oc-\ps}(A)$ if and only if there exists $Q \in \Psi_{\ps}^{0,0}(\R^{n+1})$ that is elliptic at $\pi_{\ps}(\mathsf{p})$ and $Q' \in \Psi_{\oc}^{0,0}$ that is elliptic at $\pi_{\oc}(\msf{p})$, such that 
\begin{align} \label{eq:def-WF-1c-ps-1}
    QAQ' \in \mathcal{S}(\R^{n+1} \times \R^n).
\end{align}
Notice that $A$ by definition already have infinite order of decay in $(t,z)$ and infinite order of regularity in $Z \in \R^n$ except for potential growth in $x_{\oc}^{-1}$, so what we concern below is the infinite order regularity in $(t,z)$ and infinite order decay in $x_{\oc}$.

Notice that $\WF'_{\oc-\ps}(A)$ only depends on its projection to $\overline{{}^\oc{T^*\RR^n}}, \overline{{}^\ps{T^*\RR^{n+1}}}$, so effectively it can be identified with its image under $\beta_{\oc-\ps}$, which is a closed subset of $\partial \mathcal{M}_0$.
In this way, it becomes a product and one can see its analogy with standard operator wavefront sets.

\begin{prop} \label{prop:general-1cps-FIO-WF}
    For $A \in I_{\oc-\ps}^{m}(\R^{n+1} \times \R^n, \Lagps)$ with $\Legps = \partial \Lagps$, we have
    \begin{equation} \label{eq:1c-ps-FIO-WF-general}
        \WF_{\oc-\ps}'(A) \subset \pi_{\oc}^{-1}(\pi_{\oc}(\Legps)) \cap \pi_{\ps}^{-1}(\pi_{\ps}(\Legps)).
    \end{equation}
For $A$ above and $f \in \mathcal{S}'(\R^n)$, we have
    \begin{equation} \label{eq:1c-ps-FIO-WF-general-2}
        \WF_{\ps}(Af) \subset (\WF'_{\oc-\ps}(A))' \circ \WF_{\oc}(f),
    \end{equation}
    where $'$ means switching the sign of momentum on the right variable.
\end{prop}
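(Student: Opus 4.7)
The plan is to prove \eqref{eq:1c-ps-FIO-WF-general} directly via the composition theorems of Section~\ref{subsec:PsiDO-FIO-composition-1cps} combined with an iterative symbol-reduction argument, and to deduce \eqref{eq:1c-ps-FIO-WF-general-2} by the standard microlocal wavefront-set calculus.

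First I would establish \eqref{eq:1c-ps-FIO-WF-general}. Suppose $\mathsf{p}$ lies outside $\pi_{\oc}^{-1}(\pi_{\oc}(\Legps)) \cap \pi_{\ps}^{-1}(\pi_{\ps}(\Legps))$; then either $\pi_{\ps}(\mathsf{p}) \notin \pi_{\ps}(\Legps)$ or $\pi_{\oc}(\mathsf{p}) \notin \pi_{\oc}(\Legps)$, and by the symmetry between Theorems~\ref{thm: PsiDO- 1c-ps FIO composition} and~\ref{thm: PsiDO- 1c-ps FIO composition-2} I may assume the first. Choose $Q \in \Psi_{\ps}^{0,0}(\R^{n+1})$ elliptic at $\pi_{\ps}(\mathsf{p})$ with $\WF'_{\ps}(Q)$ disjoint from $\pi_{\ps}(\Legps)$, and set $Q' = \mathrm{Id}$; the goal is $QA \in \mathcal{S}(\R^{n+1} \times \R^n)$. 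Theorem~\ref{thm: PsiDO- 1c-ps FIO composition} gives $QA \in I^{m}_{\oc-\ps}(\Lagps)$ with
\begin{equation*}
\sigma^{m}_{\oc-\ps}(QA) = \sigma_{\ps}^{0}(Q)|_{\Legps} \otimes \sigma^{m}_{\oc-\ps}(A) = 0,
\end{equation*}
so by exactness of the symbol sequence $QA \in I^{m-1}_{\oc-\ps}(\Lagps)$.

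Next I would iterate this reduction. Since $\WF'_{\ps}(Q)$ is entirely disjoint from $\pi_{\ps}(\Legps)$, the full symbol expansion of $Q$ admits representatives vanishing identically near $\pi_{\ps}(\Legps)$, so each subsequent application of Lemma~\ref{lemma: vanishing amplitude->lower order} drops the order by one, and a Borel-style asymptotic summation produces an oscillatory-integral representative of $QA$ whose amplitude vanishes to infinite order on the critical set $C_{\Phi_{\oc-\ps}}$ of \eqref{eq: 1c-ps critical set}. The final step is a non-stationary phase argument: the $\xoc^{-2}$ and $\xoc^{-1}$ scaled phase factors of Definition~\ref{defn: 1c-ps FIO} are uniformly non-stationary away from $C_{\Phi_{\oc-\ps}}$, so integration by parts in the $\theta$-variables converts this infinite-order amplitude vanishing into rapid decay in $\xoc$. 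Combined with the compactly supported cutoffs in the remaining variables, this yields $QA \in \mathcal{S}(\R^{n+1} \times \R^n)$, establishing \eqref{eq:1c-ps-FIO-WF-general}.

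For \eqref{eq:1c-ps-FIO-WF-general-2}, I would argue by a microlocal partition of unity. Given $\mathsf{q}_0$ outside $(\WF'_{\oc-\ps}(A))' \circ \WF_{\oc}(f)$, for every $\mathsf{r} \in \WF_{\oc}(f)$ the pair $(\mathsf{q}_0, \mathsf{r}')$, with $'$ the $\oc$-momentum sign flip, lies outside $\WF'_{\oc-\ps}(A)$. By definition of $\WF'_{\oc-\ps}(A)$ and \eqref{eq:1c-ps-FIO-WF-general}, there exist $Q_{\mathsf{r}}, Q'_{\mathsf{r}}$ with $Q_{\mathsf{r}}$ elliptic at $\mathsf{q}_0$, $Q'_{\mathsf{r}}$ elliptic at $\mathsf{r}$ (after the sign flip), and $Q_{\mathsf{r}} A Q'_{\mathsf{r}} \in \mathcal{S}$. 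Compactness of $\WF_{\oc}(f)$ gives a finite subcover; writing $\mathrm{Id} = \sum Q'_{i} + R$ with $\WF'_{\oc}(R)$ disjoint from $\WF_{\oc}(f)$ (so $Rf \in \mathcal{S}$) and picking a single $Q$ elliptic at $\mathsf{q}_0$ with microsupport contained in that of every $Q_{\mathsf{r}_i}$ reduces $QAf$ to finitely many Schwartz contributions $Q A Q'_{i} f$ together with a remainder $QARf$ handled by the boundedness \eqref{eq:1c boundedness} of 1c-pseudodifferential operators together with the first inclusion. This gives $QAf \in \mathcal{S}$ and hence $\mathsf{q}_0 \notin \WF_{\ps}(Af)$.

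\emph{Main obstacle.} The hardest step is the conversion of the residual bound $QA \in \bigcap_{k} I^{m-k}_{\oc-\ps}(\Lagps)$ into honest Schwartz class: while each order-reduction step is controlled by Lemma~\ref{lemma: vanishing amplitude->lower order}, the non-stationary phase endgame must carefully handle the anisotropic $\xoc^{-2}, \xoc^{-1}$ scaling of the phase together with the singular half-density factor $|dx_{\oc}d\yoc/x_{\oc}^{n+2}|^{1/2}$ in Definition~\ref{defn: 1c-ps FIO}. Once that is in hand, the microlocal partition-of-unity argument for \eqref{eq:1c-ps-FIO-WF-general-2} follows standard lines.
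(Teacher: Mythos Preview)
Your approach to \eqref{eq:1c-ps-FIO-WF-general} is workable but circuitous compared to the paper's. The paper writes out the oscillatory integral for $Q_{\ps}AQ_{\oc}$ directly, splits off the region with large rescaled frequencies by non-stationary phase in $(t',z')$, and on the remaining compact region uses that one of $q_{\ps}$ or $q_{\oc}$ vanishes to infinite order near the critical set, so repeated integration by parts yields arbitrary $\xoc$-decay. Your detour through Theorem~\ref{thm: PsiDO- 1c-ps FIO composition} plus iterated use of Lemma~\ref{lemma: vanishing amplitude->lower order} is not fully justified by the tools available: the paper only develops a \emph{principal} symbol, so after the first reduction $QA\in I^{m-1}$ you have no formula for $\sigma^{m-1}(QA)$, and you must go back to the explicit amplitude produced in the proof of Theorem~\ref{thm: PsiDO- 1c-ps FIO composition} anyway to see that it still vanishes on $C_{\Phi}$. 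Once you do that, you are effectively running the paper's direct argument.

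The genuine gap is in your treatment of \eqref{eq:1c-ps-FIO-WF-general-2}, specifically the residual term $QARf$ with $Rf\in\mathcal{S}(\R^n)$. Your appeal to ``the boundedness \eqref{eq:1c boundedness} of 1c-pseudodifferential operators together with the first inclusion'' does not establish what is needed. Equation~\eqref{eq:1c boundedness} concerns 1c-operators on $\R^n$, not $A$, and \eqref{eq:1c-ps-FIO-WF-general} only bounds $\WF'_{\oc-\ps}(A)$ --- it says nothing about $A$ applied to a Schwartz function. What must be shown is that $A:\mathcal{S}(\R^n)\to\mathcal{S}(\R^{n+1})$, and this is not a formal consequence of the wavefront-set inclusion; the paper proves it directly from the oscillatory-integral form by observing that the amplitude is compactly supported in $(t,z)$ and that each $(t,z)$-derivative brings down only a finite power of $\xoc^{-1}$, which is absorbed by the $O(\xoc^\infty)$ input. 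The remark following the proposition in the paper emphasises exactly this point: the Schwartz-to-Schwartz property is special to elements of $I^m_{\oc-\ps}$ and fails for general tempered kernels, so it cannot be extracted from wavefront-set bookkeeping alone. Your ``main obstacle'' is actually the easier half; the step you wave through is the one that needs a direct argument.
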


\begin{proof}
Suppose $A$ is given by the oscillatory integral in \eqref{eq: 1c-ps FIO definition} (since we can ignore the Schwartz term for the argument). 

By definition, for $q \in \pi_{\oc}^{-1}(\partial \, \overline{{}^\oc{T^*\RR^n}}) \cap \pi_{\ps}^{-1}(\partial \,\overline{{}^\ps{T^*\RR^{n+1}}})$ such that $q \notin \pi_{\oc}^{-1}(\pi_{\oc}(\Legps)) \cap \pi_{\ps}^{-1}(\pi_{\ps}(\Legps))$, we can find $Q_{\ps} \in \Psi_{\ps}^{0,0}(\R^{n+1})$ that is elliptic at $\pi_{\ps}(q)$ and $Q_{\oc} \in \Psi_{\oc}^{0,0}(\R^n)$ that is elliptic at $\pi_{\oc}(q)$ while
\begin{equation} \label{eq:WF-disjoint-4}
   \pi_{\ps}^{-1}(\WF'_{\ps}(Q_{\ps})) \cap 
   \pi_{\oc}^{-1}(\WF'_{\oc}(Q_{\oc})) \cap L = \emptyset.
\end{equation}

Then the kernel of $Q_{\ps}AQ_{\oc}$ is given by
\begin{align} \label{eq:QP0(Id-Q)-osc}
\begin{split}
Q_{\ps} A Q_{\oc} = &
(2\pi)^{-3n+1} \int
e^{i(t-t')\tau + (z-z') \cdot \zeta}
e^{ i( \frac{ \varphi_0(t',\theta_0) }{x_{\oc}^2} + \frac{\varphi_1(\msf{K}',\theta_0,\theta_1)}{x_{\oc}} )} 
x_{\oc}^{-(m+\frac{2k_0+k_1}{2})-\frac{1}{4}} 
e^{ i \xi_{\oc}\frac{x_{\oc}-x_{\oc}'}{x_{\oc}^3}+\eta_{\oc} \cdot \frac{y_{\oc}-y_{\oc}'}{x_{\oc}}}
\\   &  a(x_{\oc},y_{\oc},t',z',\theta_0,\theta_1) 
q_{\ps}(t,z,\tau,\zeta)
q_{\oc}(x_{\oc},y_{\oc},\xi_{\oc},\eta_{\oc}) d\theta_0d\theta_1 \frac{dx_{\oc}dy_{\oc}}{(x_{\oc})^{n+2}}dt'dz'd\tau d\zeta.
\end{split}
\end{align}

Let $\tilde{\tau},\tilde{\zeta}$ be as in \eqref{eq: Defition, tilde tau, tilde zeta 2} and we use the same decomposition as in \eqref{eq: PA decomposition}.
Then the contribution from the part with $\tilde{\tau},|\tilde{\zeta}| \gg 1$ is Schwartz by non-stationary argument in $(t',z')$. 
Now over the region $\tilde{\tau},|\tilde{\zeta}| \sim 1$, \eqref{eq:WF-disjoint-4} shows that near the critical set of the phase, which corresponds to $L$ under the parametrization map, either 
\begin{itemize}
    \item The full symbol of $Q$ vanishes to infinite order in $\rho_{\ps}$; 
    or 
    \item The full symbol of $Q'$ vanishes to infinite order in $x_{\oc}$.
\end{itemize}
Since this oscillatory integral, modulo a Schwartz term, is integrating over a region that is compact in terms of rescaled variables like $\tilde{\tau},\tilde{\zeta}$ in \eqref{eq: Defition, tilde tau, tilde zeta 2},
so we can decompose it into three parts: first a term with amplitude that is supported away from critical points of the phase, and a finite sum so that in each term one of the two situations above happens on the entire support of the amplitude.
In either case\footnote{The treatment to the first term with amplitude supported away from the critical points is the same as the proof of Lemma~\ref{lemma: vanishing amplitude->lower order}.}, we can integrate by parts to gain arbitrarily large order of decay in $x_{\oc}$, which in addition gives regularity in $(t,z,x_{\oc}',y_{\oc}')$ as well since differentiating in them loses at most polynomial growth in $x_{\oc}^{-1}$.

In addition, since the amplitude in $A$ is compactly supported in $(t',z')$, so via a non-stationary phase argument in $\tilde{\tau},\tilde{\zeta}$, we know that when $(t,z) \to \infty$ (which means $|(t-t')| \gtrsim |t|$ or $|z-z'| \gtrsim |z|$ when $(t',z')$ is in the support of $a(x_{\oc},y_{\oc},t',z',\theta_0,\theta_1)$), the kernel have arbitrary order of decay in $\la (t,z) \ra^{-1}$ as well.
This shows $q \notin \WF_{\oc-\ps}'(A)$ and proves \eqref{eq:1c-ps-FIO-WF-general}.

Now we turn to prove \eqref{eq:1c-ps-FIO-WF-general-2}.
We only need to show that for $q_{\ps} \in \partial \overline{{}^{\ps}T^*\R^{n+1}}$ such that 
\begin{equation} \label{eq:qps-condition}
    q_{\ps} \notin \WF_{\oc-\ps}(A) \circ \WF_{\oc}(f),
\end{equation}
we can find $Q_{\ps} \in \Psi_{\ps}^{0,0}$ such that $QAf \in \mathcal{S}(\R^{n+1})$.
We decompose $f$ as follows. Let
\begin{equation}
    \Id = Q_1 + Q_2, \; Q_i \in \Psi_{\oc}^{0,0}(\R^n)
\end{equation}
be a microlocal partition of unity such that $q_{\ps} \notin \WF'_{\oc-\ps}(A) \circ \WF'_{\ps}(Q_1)$ and $\WF_{\ps}(f) \cap \WF'_{\ps}(Q_2) = \emptyset$.
In particular, the second condition is equivalent to requiring $Q_1$ to be microlocally identity on $\WF_{\ps}(f)$.
This is possible because of \eqref{eq:qps-condition} and Urysohn's lemma (applied to construct symbols of $Q_i$).

For the part $Q_2f$ we have $Q_2f \in \mathcal{S}(\R^n)$ and we observe that $A$ sends Schwartz functions to Schwartz functions for the following reason. Decay in $(t,z)$ is straightforward by the support condition on its amplitude, and the regularity in $(t,z)$ is because differentiating in them only introduces a factor that is a finite power of $x_{\oc}^{-1}$, which can be absorbed by our Schwartz (hence $O(x_{\oc}^\infty)$) function.

For the $Q_1$-term, the proof above for \eqref{eq:1c-ps-FIO-WF-general} applies to $Q_{\ps}AQ_1$ since now \eqref{eq:WF-disjoint-4} is satisfied with $Q_1$ in place of $Q_{\oc}$. Consequently, we have $Q_{\ps}AQ_1 \in \mathcal{S}(\R^{n+1} \times \R^n)$, which concludes the proof.

\end{proof}

\begin{remark}
Here we used the fact that $A$ sends $\mathcal{S}(\R^n)$ to $\mathcal{S}(\R^{n+1})$, which is not satisfied by general elements in $\mathcal{S}'(\R^{n+1} \times \R^n)$.
In particular, this is not true for the Poisson operator $\Poiz$ without microlocalizers: $\Poiz f$ will have $\ps$-wavefront set present at $\mathcal{R}_\pm$ even if $f \in \mathcal{S}(\R^n)$, which corresponds to the typical oscillation and decay of solutions to the Sch\"odinger equation.
As in \cite[Section~4]{FIO1}, such Schwartz to Schwartz property, or more directly mapping properties of wavefront sets like \eqref{eq:1c-ps-FIO-WF-general-2} could only hold if one can trade off the regularity or decay of the left and right variables. In the setting of \cite[Section~4]{FIO1}, H\"ormander required the canonical relation to be closed, conic and avoid zero sections so that momentum in the left and right variables are comparable. 
In the current setting, as pointed out in \cite[Equation~(7.7)]{gell2022propagation}\cite[Equation~(5.33)]{hassell2024final}, one can trade off between the module regularity in $(t,z)$ and the $\oc$-regularity in $x_{\oc},y_{\oc}$. However, operators defining this module regularity are precisely characteristic at $\mathcal{R}_\pm$ and $\Poiz$ (of course, this discussion applies to $\Poipm$ as well) and our canonical relation hits the zero section of ${}^{\oc}T^*\R^n$ in the right variable at those places and fails to have Schwartz to Schwartz property microlocally there.
\end{remark}

\subsection{Generalized inverses and their properties}
In this and the next subsections, we turn to the specific case where $P$ is as in \eqref{eq:def-Schrodinger-op}. 

We first discuss inverses of the time dependent Sch\"odinger equation in this subsection and discuss their properties. These almost follows from main results of \cite{gell2022propagation} with minor adaptations.
We first briefly recall the set up of microlocal analysis, in particular the non-elliptic Fredholm theory for the Sch\"odinger equation in \cite{gell2022propagation}.

We choose variable decay orders $\mathsf{s}_\pm$ at fibre-infinity (i.e. the differential order) and $\mathsf{r}_\pm$ at spacetime infinity (the decay order), which are smooth on the compactified phase space $C^\infty(\overline{^{\ps}T^*\R^{n+1}})$, such that:
\begin{itemize}
    \item $\mathsf{r}_\pm < - \frac{1}{2}$ at $\mathcal{R}_\pm$.
    \item $\mathsf{r}_\pm > - \frac{1}{2}$ at $\mathcal{R}_\mp$.
    \item $\mathsf{r}_+$ and $\mathsf{s}_+$ are monotone nonincreasing along the (rescaled) $H_p$-flow in $\overline{^{\ps}T^*\R^{n+1}}$.
    \item $\mathsf{r}_-$ and $\mathsf{s}_-$ are monotone nondecreasing along the (rescaled) $H_p$-flow in $\overline{^{\ps}T^*\R^{n+1}}$.
\end{itemize}
Setting 
\begin{equation}
    \mathcal{X}^{\mathsf{s}_\pm,\mathsf{r}_\pm} = 
    \{ u \in H_{\ps}^{\mathsf{s}_\pm,\mathsf{r}_\pm}(\R^{n+1}) : \, Pu \in H_{\ps}^{\mathsf{s}_\pm-1,\msf{r}_\pm +1}. \},
\end{equation}
then \cite[Theorem~1.1]{gell2022propagation} asserts that\footnote{Actually \cite{gell2022propagation} only proved this result for constant $\mathsf{s}$, but the adaptation to variable differential order is straightforward.}
\begin{equation}
    \mathcal{X}^{\mathsf{s}_\pm,\mathsf{r}_\pm} \to H_{\ps}^{\mathsf{s}_\pm-1,\msf{r}_\pm +1}
\end{equation}
is invertible with bounded inverse, which means we have
\begin{equation} \label{eq:LS-main}
    \| u \|_{H_{\ps}^{\mathsf{s}_\pm,\mathsf{r}_\pm}(\R^{n+1})} \leq C \| Pu \|_{H_{\ps}^{\mathsf{s}_\pm-1,\msf{r}_\pm +1}}.
\end{equation}
Let 
\begin{equation}\label{eq:ps inverses}
P_{\pm}^{-1}: \;    H_{\ps}^{\mathsf{s}_\pm-1,\msf{r}_\pm +1} \to  \mathcal{X}^{\mathsf{s}_\pm,\mathsf{r}_\pm}.
\end{equation}
be the corresponding (generalized) inverses, then we have the following characterization of their outputs, phrased in terms of the forward/backward bicharacteristic relations $\FBR, \BBR$ from Definition~\ref{defn:FBR}. 

\begin{lmm} \label{lemma:non-forward-related-composition}  
For $f \in \mathcal{S}'(\R^{n+1})$ such that $\WF_{\ps}(f) \cap \Rm = \emptyset$, we have
\begin{equation} \label{eq:outgoing-WF-bound}
    \WF_{\ps}(P_+^{-1}f) \subset \WF_{\ps}(f) \cup \FBR \circ \WF_{\ps}(f) \cup \SR_+.
\end{equation}
Similar result holds with $\pm$ sign are switched and $\FBR$ replaced by $\BBR$.

In particular, suppose $Q_1,Q_2 \in \Psi_{\ps}^{0,0}(\R^{n+1})$ satisfy: the projections of $\WF'_{\ps}(Q_1), \, \WF'_{\ps}(Q_2)$ to $\R^{n+1}_{t,z}$ are compact and both $\WF'_{\ps}(Q_2)$ and $\FBR \circ \WF'_{\ps}(Q_2)$ are disjoint from $\WF'_{\ps}(Q_1)$, then the Schwartz kernel of $Q_1P_{+}^{-1}Q_2$ is in $\mathcal{S}(\R^{n+1} \times \R^{n+1})$.
\end{lmm}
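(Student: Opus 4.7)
\medskip

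The plan is to deduce both assertions from the microlocal machinery of \cite{gell2022propagation} used to construct $P_+^{-1}$. Recall that the defining property of $P_+^{-1}$ is that it maps into spaces with decay order $\msf{r}_+<-1/2$ at $\SR_+$ and $\msf{r}_+>-1/2$ at $\SR_-$, which in the non-elliptic Fredholm framework encodes an \emph{outgoing/radiation condition} at $\SR_+$ and a \emph{vanishing condition} at $\SR_-$. Thus microlocally, $P_+^{-1}$ respects the source-to-sink nature of the Hamilton flow: singularities may accumulate at the sink $\SR_+$ but may not spontaneously appear at the source $\SR_-$.

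For the wavefront bound \eqref{eq:outgoing-WF-bound}, let $u=P_+^{-1}f$ and let $q\in\partial\overline{{}^{\ps}T^*\R^{n+1}}$ lie outside the claimed set. I would separate cases. If $q\notin\Char(P)$, then elliptic regularity for the $\ps$-calculus gives $q\notin\WF_{\ps}(u)$, since $Pu=f$ and $q\notin\WF_{\ps}(f)$. If $q\in\SR_-$, then by hypothesis $q\notin\WF_{\ps}(f)$, and I would invoke the \emph{backward} radial point estimate at the source $\SR_-$ (valid because $\msf{r}_+>-1/2$ there) to conclude $q\notin\WF_{\ps}(u)$. Finally, if $q\in\Char^\circ(P)\setminus(\SR_+\cup\SR_-)$, then $q\notin\FBR\circ\WF_{\ps}(f)$ means no bicharacteristic through $q$ hits $\WF_{\ps}(f)$ in the backward direction before reaching $\SR_-$. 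Applying standard propagation of singularities for $P$ backwards along the bicharacteristic through $q$, and using that the backward endpoint of this bicharacteristic lies in $\SR_-$ (by the non-trapping source-sink structure) where we have just shown $u$ to be regular, and that $\WF_{\ps}(f)$ is avoided along the way, I conclude $q\notin\WF_{\ps}(u)$. This handles all cases.

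For the second assertion, I would chain the first assertion with microlocal calculus. Since the spatial projection of $\WF_{\ps}'(Q_2)$ is compact and $\SR_\pm$ live over spacetime infinity $\{x_{\ps}=0\}$, $\WF_{\ps}'(Q_2)$ is automatically disjoint from $\SR_-$. Hence for any tempered distribution $v$, $\WF_{\ps}(Q_2 v)\subset\WF_{\ps}'(Q_2)$ is disjoint from $\SR_-$, so the previous part applies to give
\[
\WF_{\ps}(P_+^{-1}Q_2 v)\subset \WF_{\ps}'(Q_2)\cup\bigl(\FBR\circ\WF_{\ps}'(Q_2)\bigr)\cup\SR_+.
\]
By the disjointness hypotheses on $(Q_1,Q_2)$ and the fact that the compact spatial projection of $\WF_{\ps}'(Q_1)$ also forces $\WF_{\ps}'(Q_1)\cap\SR_+=\emptyset$, the set above is disjoint from $\WF_{\ps}'(Q_1)$. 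Microlocality of pseudodifferential operators then gives $\WF_{\ps}(Q_1 P_+^{-1}Q_2 v)=\emptyset$, so $Q_1 P_+^{-1}Q_2 v\in\mathcal{S}(\R^{n+1})$. Since this holds for all $v\in\mathcal{S}'(\R^{n+1})$ with uniform continuity estimates (coming from \eqref{eq:LS-main} and $\ps$-boundedness of $Q_1,Q_2$), the Schwartz kernel theorem yields that the kernel of $Q_1 P_+^{-1}Q_2$ belongs to $\mathcal{S}(\R^{n+1}\times\R^{n+1})$.

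The main technical obstacle will be the radial-point analysis at $\SR_-$: one must verify that the variable-order threshold condition $\msf{r}_+>-1/2$ is genuinely in force for the \emph{backward} (i.e.\ regularity-gaining) direction at the source, and that the propagation statements used—especially their formulation with variable orders—match the ones established in \cite{gell2022propagation}. Everything else is an essentially routine assembly of off-characteristic ellipticity, propagation along bicharacteristics, and microlocality of pseudodifferential cutoffs.
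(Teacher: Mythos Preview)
Your proposal is correct and reaches the same conclusion, but your route differs from the paper's. The paper does not separate into cases (elliptic region, source $\SR_-$, interior characteristic set); instead, given any $Q_1$ whose operator wavefront set is disjoint from the claimed right-hand side, it directly constructs variable orders $\msf{s}_+,\msf{r}_+$ that are arbitrarily large on $\WF'_{\ps}(Q_1)$, sufficiently negative on $\WF_{\ps}(f)$, and still satisfy the monotonicity-along-flow and threshold conditions at $\SR_\pm$. The disjointness hypothesis is exactly what makes such a choice possible. Then the mapping property \eqref{eq:ps inverses} of $P_+^{-1}$ between these variable-order spaces immediately yields $Q_1 P_+^{-1}f\in H_{\ps}^{S,R}$ for all $S,R$, hence Schwartz. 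This single invocation of the Fredholm package hides your case analysis inside the construction of the orders: elliptic regularity, the above-threshold source estimate at $\SR_-$, and propagation along bicharacteristics are precisely the ingredients that make \eqref{eq:LS-main} hold in the first place. Your approach makes the underlying microlocal mechanisms explicit and would transfer more readily to settings where the variable-order machinery is not already assembled; the paper's is shorter because that machinery is imported wholesale from \cite{gell2022propagation}. For the second assertion the two arguments are essentially the same.
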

\begin{proof}
We only prove \eqref{eq:outgoing-WF-bound} since the case with $\pm$ sign switched and $\FBR$ replaced by $\BBR$ can be shown in the same way.
To prove \eqref{eq:outgoing-WF-bound}, we only need show that for $f \in \mathcal{S}'(\R^{n+1})$ such that $\WF_{\ps}(f) \cap \Rm = \emptyset$, and any $Q_1 \in \Psi_{\ps}^{0,0}(\R^{n+1})$ such that
\begin{equation} \label{eq:Q1-WF-condition}
\WF'_{\ps}(Q_1) \cap \big( \WF_{\ps}(f) \cup \FBR \circ \WF_{\ps}(f) \cup \SR_+ \big) = \emptyset,
\end{equation}
we have
\begin{equation}
   Q_1P_+^{-1}f \in \mathcal{S}(\R^{n+1}). 
\end{equation}
This follows if for every $S, R$ we have 
\begin{equation}\label{eq:QPinvf}
   Q_1P_+^{-1}f \in H_{\ps}^{S, R}(\R^{n+1}). 
\end{equation}
We show this by choosing variable orders $\mathsf{s}_+$ and $\mathsf{r}_+$ such that $\mathsf{s}_+ \geq S$ and $\mathsf{r}_+ \geq R$ on $\WF_{\ps}'(Q_1) \cup \WF_{\ps}'(G) \cup \WF_{\ps}'(B)$, while $\mathsf{s}_+ \leq s_0$ and $\mathsf{r}_+ \leq r_0$ on $\WF_{\ps}(f)$, where $(s_0, r_0)$ are chosen sufficiently negative so that $f \in H_{\ps}^{s_0-1, r_0+1}(\R^{n+1})$. This is possible due to the condition \eqref{eq:Q1-WF-condition} and the condition $\WF_{\ps}(f) \cap \Rm = \emptyset$, allowing the monotonicity and threshold conditions above to be satisfied. 
By construction,  $f \in H_{\ps}^{\mathsf{s}_+-1,\msf{r}_+ +1}$, hence by \eqref{eq:ps inverses}, we have $P_+^{-1} f \in H_{\ps}^{\mathsf{s}_+,\msf{r}_+}$. Then since $\mathsf{s}_+ = S$ and $\mathsf{r}_+ = R$ on $\WF_{\ps}'(Q_1)$, we have \eqref{eq:QPinvf}. The final claim follows from  \eqref{eq:outgoing-WF-bound} and the fact that $\WF_{\ps}(Q_2f) \subset \WF'_{\ps}(Q_2)$.

\end{proof}

Then we have the following corollary when $\WF'_{\ps}(f) = \emptyset$.

\begin{coro}
\label{coro:propagator WF mapping} 
Let $f \in \mathcal{S}(\R^{n+1})$, then
\begin{equation} \label{eq:propagator WF bound}
    \WF_{\ps}(P_\pm^{-1}f) \subset \mathcal{R}_\pm.
\end{equation}
\end{coro}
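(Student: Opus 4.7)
The plan is to derive this as an immediate consequence of Lemma~\ref{lemma:non-forward-related-composition}. First I would observe that since $f \in \mathcal{S}(\R^{n+1})$, its parabolic scattering wavefront set is empty: $\WF_{\ps}(f) = \emptyset$. In particular, the hypothesis $\WF_{\ps}(f) \cap \SR_- = \emptyset$ of the lemma is trivially satisfied (and likewise with the roles of $\pm$ reversed).

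Next, I would apply \eqref{eq:outgoing-WF-bound} directly. With $\WF_{\ps}(f) = \emptyset$, the composition $\FBR \circ \WF_{\ps}(f)$ is also empty, since composing any relation with the empty set yields the empty set. Therefore
\begin{equation*}
\WF_{\ps}(P_+^{-1} f) \subset \emptyset \cup (\FBR \circ \emptyset) \cup \SR_+ = \SR_+.
\end{equation*}
The analogous conclusion $\WF_{\ps}(P_-^{-1} f) \subset \SR_-$ follows by applying the mirror version of Lemma~\ref{lemma:non-forward-related-composition} stated there (with the backward bicharacteristic relation $\BBR$ in place of $\FBR$, and the roles of $\pm$ reversed).

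There is no genuine obstacle here, as the corollary is essentially an immediate specialization of Lemma~\ref{lemma:non-forward-related-composition} to Schwartz data. The only minor point worth noting is that one uses the convention that composition of a relation with the empty set produces the empty set, which ensures that the only surviving term on the right-hand side of \eqref{eq:outgoing-WF-bound} is the radial set itself.
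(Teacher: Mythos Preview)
Your proposal is correct and matches the paper's approach exactly: the paper introduces this corollary with the sentence ``Then we have the following corollary when $\WF_{\ps}(f) = \emptyset$,'' making clear it is the specialization of Lemma~\ref{lemma:non-forward-related-composition} to Schwartz data, just as you argue.
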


\subsection{Forward and backward Poisson operators}
Let $\Poipm$ be the Poisson operators for the operator $P$, which we recall are the operators mapping asymptotic data $f_\pm$ to the global solution $u$ to $Pu = 0$ with the asymptotic \eqref{eq:u expansion} as $t \to \pm \infty$. We also recall that for the free operator $P_0$, we have $\Poim = \Poip = \Poi_0$ where $\Poi_0$ is given explicitly by \eqref{eq: Poisson zero}. 
With the symbolic calculus established, we can now construct the (microlocalized) Poisson operators as fibred-Legendre distributions. 

Let $P$ be as in \eqref{eq:def-Schrodinger-op}, then $P \in \Psi_{\ps}^{2,0}(Y)$ has parabolically homogeneous principal symbol
\begin{align}
p_{\mathrm{hom}} = \tau + |\xi|_{g^{-1}}^2.
\end{align}
In particular, it fits into the framework in Section~\ref{subsec:PsiDO-FIO-composition-1cps}.

Recall the open sets $U_\pm$ and $G_{\pm}$ defined above Proposition~\ref{prop:microlocalized FBSR}.  We choose a one-cusp pseudodifferential operator $Q_{\oc} \in \Psi_{\oc}^{0,0}(\R^n)$ such that $\WF'(Q_{\oc})$ is contained in $U_-$, in particular is disjoint from fibre-infinity in ${}^{\oc} T^* \R^n$, and a parabolic scattering pseudodifferential operator $Q_{\ps} \in \Psi_{\ps}^{0,0}(\R^{n+1})$ such that $\WF'(Q_{\ps})$ is contained in $G_-$, in particular is disjoint from spacetime infinity. We further assume that 
$Q_{\oc}$ is microlocally equal to the identity on all points in ${}^{\oc} T^* \R^n$ whose corresponding bicharacteristics meet the perturbation, i.e. meet $\WF'_{\ps}(P - P_0)$ (here we make use of the canonical identification between ${}^{\oc} T^* \R^n$ and the front face of the blowup $[\Sigma; \Rm]$, as described in Section~\ref{subsec: 1c arises}), while $Q_{\ps} \in \Psi_{\ps}^{0,0}(\R^{n+1})$ is microlocally equal to the identity on $\WF'(P - P_0)$. 

We first demonstrate that whenever we microlocalize from both sides of $\Poi_0$ by operators that are not connected via the sojourn relation, then its contribution is Schwartz.
 
\begin{prop} \label{prop:Poiz-membership}
Let $Q_{\ps} \in \Psi_{\ps}^{0,0}(\R^{n+1})$ and suppose the projection of $\WF'_{\ps}(Q_{\ps})$ in $\R^{n+1}$ is a bounded set, then
\begin{equation}  \label{eq:localized-Poiz-membership}
    Q_{\ps} \Poiz \in I_{\oc-\ps}^{-3/4}(\R^{n+1} \times \R^n , \Lambda_0).
\end{equation}

Here $\Lambda_{0} = (\Lambda_0)_\pm$ is the microlocalized Lagrangian for the free Schr\"odinger operator $P_0$, as in \eqref{eq:microlocalized sojourn relns} (observe that $(\Lambda_{0})_- = (\Lambda_{0})_+$ for the free operator). 

Moreover, if $Q_{\oc}$ as above is such that $WF'_{\oc}(\Id - Q_{\oc})$ is sufficiently close to fibre-infinity in ${}^{\oc} T^* \R^n$, then 
\begin{equation} \label{eq:5.5-3}
    Q_{\ps}\Poiz(\Id-Q_{\oc})f \in \mathcal{S}(\R^{n+1})
\end{equation}
for all $f \in \mathcal{S}'(\R^n)$: equivalently, $Q_{\ps}\Poiz(\Id-Q_{\oc})$ has a Schwartz kernel in $\mathcal{S}(\R^{n+1} \times \R^n)$. 

\end{prop}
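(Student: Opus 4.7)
For part 1, the plan is to first reduce to the case that the full symbol $q_{\ps}$ of $Q_{\ps}$ has compact support in spacetime. Write $Q_{\ps} = \chi Q_{\ps} + (1-\chi)Q_{\ps}$ with $\chi \in C_c^\infty(\R^{n+1})$ identically $1$ on a neighborhood of the spacetime projection of $\WF'_{\ps}(Q_{\ps})$. Since $(1-\chi)Q_{\ps}$ has empty $\ps$-operator wavefront set, its symbol is Schwartz on $\overline{^{\ps}T^*\R^{n+1}}$. A direct calculation then expresses $((1-\chi)Q_{\ps}\Poiz)(t,z;Z)$ as a Fourier transform of a Schwartz function in the spacetime dual variables, evaluated at $(-Z,|Z|^2)$, which is Schwartz in $(t,z,Z)$. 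With $q_{\ps}$ now compactly supported in $(t,z)$, integrating out $(z',t')$ first in the convolution of kernels gives the exact identity
\begin{equation*}
(Q_{\ps}\Poiz)(t,z;Z) = (2\pi)^{-n} e^{i(z\cdot Z - t|Z|^2)} q_{\ps}(t,z,-|Z|^2,Z).
\end{equation*}
In the 1c coordinates $\xoc = 1/|Z|$, $\yoc_j = Z_j/|Z|$ near $|Z|=\infty$ with $Z_1$ dominant, the phase is the non-degenerate parametrization of $\Lambda_0$ from Example~\ref{example:microlocalized-free-Poisson} with no $\theta_0,\theta_1$ variables. Since $S^{0,0}_{\ps}$ consists of functions smooth up to the boundary of $\overline{^{\ps}T^*\R^{n+1}}$, and since the locus $\{\zeta=Z,\tau=-|Z|^2\}$ lifts smoothly into ps fibre-infinity as $|Z|\to\infty$, the amplitude $q_{\ps}(t,z,-|Z|^2,Z)$ lifts to a smooth bounded function on $\SM$ near $\ffocps$. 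Converting the Lebesgue half-density $|dZ|^{1/2}$ to the 1c half-density $|d\xoc\,d\yoc/\xoc^{n+2}|^{1/2}$ introduces a factor $\xoc^{1/2}$. Matching powers of $\xoc$ against Definition~\ref{defn: 1c-ps FIO} with $k_0=k_1=0$ forces the exponent equation $\tfrac{1}{2} = -m-\tfrac{1}{4}$, so $m = -3/4$.

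For part 2, the geometric input is that the 1c-fibre variable on $\Lambda_0$ stays bounded over bounded spacetime. Using the identity $\mathfrak{Z}\cdot dZ = \xioc\,d\xoc/\xoc^3 + \etaoc\cdot d\yoc/\xoc$ with $\mathfrak{Z}=2tZ-z$, one computes on $\Lambda_0 \cap \ffocps$ (in the $Z_1$-dominant chart, $\hat Z = (\sqrt{1-|\yoc|^2},\yoc)$):
\begin{equation*}
\xioc = -2t + \frac{z\cdot\hat Z}{|Z|}\Big|_{\xoc=0} = -2t,\qquad \etaoc_j = -z_j + \frac{z_1\,\yoc_j}{\sqrt{1-|\yoc|^2}}\qquad (j\geq 2),
\end{equation*}
so whenever $|t|,|z|$ are bounded on the support of $q_{\ps}(t,z,-|Z|^2,Z)$, the coordinates $(\xioc,\etaoc)$ remain in a fixed compact set. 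Combining this with Proposition~\ref{prop:general-1cps-FIO-WF} applied to $Q_{\ps}\Poiz$ shows that $\pi_{\oc}(\WF'_{\oc-\ps}(Q_{\ps}\Poiz))$ is contained in a compact subset of $\overline{^{\oc}T^*\R^n}$ disjoint from 1c fibre-infinity. Therefore, provided $\WF'_{\oc}(\Id-Q_{\oc})$ is contained in a sufficiently small neighborhood of 1c fibre-infinity, we have $\pi_{\oc}(\WF'_{\oc-\ps}(Q_{\ps}\Poiz)) \cap \WF'_{\oc}(\Id-Q_{\oc}) = \emptyset$.

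To conclude, it suffices to show $\WF'_{\oc-\ps}(Q_{\ps}\Poiz(\Id-Q_{\oc})) = \emptyset$, since by the definition of $\WF'_{\oc-\ps}$ and a partition of unity in the product phase space this is equivalent to the kernel being in $\mathcal{S}(\R^{n+1}\times\R^n)$. Fix a boundary point $\msf{p}$ of the product phase space. If $\pi_{\oc}(\msf{p})\notin\WF'_{\oc}(\Id-Q_{\oc})$, pick $Q'\in\Psi^{0,0}_{\oc}$ elliptic at $\pi_{\oc}(\msf{p})$ and microsupported away from $\WF'_{\oc}(\Id-Q_{\oc})$, so that $(\Id-Q_{\oc})Q'\in\Psi^{-\infty,-\infty}_{\oc}$ has Schwartz kernel; then any $Q$ elliptic at $\pi_{\ps}(\msf{p})$ yields $QQ_{\ps}\Poiz(\Id-Q_{\oc})Q'\in\mathcal{S}$. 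Otherwise $\pi_{\oc}(\msf{p})\in\WF'_{\oc}(\Id-Q_{\oc})$, and by the disjointness established above $\msf{p}\notin\WF'_{\oc-\ps}(Q_{\ps}\Poiz)$; Proposition~\ref{prop:general-1cps-FIO-WF} (together with the $L^2$-boundedness \eqref{eq:1c boundedness} of $\Id-Q_{\oc}$) then supplies microlocalizers making $QQ_{\ps}\Poiz(\Id-Q_{\oc})Q'$ Schwartz. The main obstacle is the order bookkeeping in part 1 — getting the half-density conversion and the powers of $\xoc$ from the definition to match — and, in part 2, verifying that the naive composition argument is legitimate despite the fact that $\Id-Q_{\oc}$ is not of infinite differential order, which is where the explicit microlocalization on the 1c-side above is needed to replace an appeal to Theorem~\ref{thm: PsiDO- 1c-ps FIO composition-2}.
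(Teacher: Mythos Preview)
Your argument is essentially correct and, for part~1, takes a more direct route than the paper. The paper writes $Q_{\ps}\Poiz = Q_{\ps}\chi\Poiz + Q_{\ps}(1-\chi)\Poiz$, invokes Example~\ref{example:microlocalized-free-Poisson} for $\chi\Poiz$, and then applies the black-box composition Theorem~\ref{thm: PsiDO- 1c-ps FIO composition} to bring $Q_{\ps}$ in; the residual piece is handled by noting $Q_{\ps}(1-\chi)\in\Psi_{\ps}^{-\infty,-\infty}$ and that $\Poiz$ maps tempered to tempered. You instead compute the kernel of $\chi Q_{\ps}\Poiz$ explicitly via the exact identity $(Q_{\ps}\Poiz)(t,z;Z)=(2\pi)^{-n}e^{i(z\cdot Z-t|Z|^2)}q_{\ps}(t,z,-|Z|^2,Z)$ and read off the membership and order directly. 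This is cleaner and avoids the composition theorem; the order bookkeeping you give (half-density conversion producing $\xoc^{1/2}$, hence $m=-3/4$) is correct. For part~2, both approaches feed into Proposition~\ref{prop:general-1cps-FIO-WF}; the paper simply observes that bicharacteristics with large 1c-frequency stay near spacetime infinity, while you compute $(\xioc,\etaoc)=(-2t,\,-z_j + z_1\yoc_j/\sqrt{1-|\yoc|^2})$ explicitly to reach the same disjointness. Your final case analysis showing $\WF'_{\oc-\ps}=\emptyset$ is more elaborate than necessary (the paper just quotes \eqref{eq:1c-ps-FIO-WF-general-2}), but it is correct.

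One inaccuracy to fix: you assert that ``$S^{0,0}_{\ps}$ consists of functions smooth up to the boundary of $\overline{^{\ps}T^*\R^{n+1}}$''. This is false for the general (conormal) symbol class; it holds only for \emph{classical} symbols $S^{0,0}_{\ps,\cl}$. Your explicit-kernel argument needs $q_{\ps}(t,z,-\xoc^{-2},\hat Z/\xoc)$ to be smooth down to $\xoc=0$, which is exactly classicality at fibre-infinity. The paper's route has the same hidden hypothesis (Theorem~\ref{thm: PsiDO- 1c-ps FIO composition} assumes a parabolically homogeneous principal symbol), so this is a shared implicit assumption rather than a gap unique to your proof; just state it.
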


\begin{proof}
In the case that $Q_{\ps}$ is a cut-off function $\chi(t,z) \in C_c^\infty(\R^{n+1})$, this has been discussed already in Example~\ref{example:microlocalized-free-Poisson}. 
For general $Q_{\ps}$ as in the proposition, we choose $\chi \in C_c^\infty(\R^{n+1})$ that is identically one on $\WF_{\ps}'(Q_{\ps})$, then we know $\chi \Poiz \in I_{\oc-\ps}^{-3/4}(\R^{n+1} \times \R^n , \overline{\Lambda_{0, -}'})$ from Example~\ref{example:microlocalized-free-Poisson}.
Then by Theorem~\ref{thm: PsiDO- 1c-ps FIO composition} we know $Q_{\ps} \chi \Poiz \in  I_{\oc-\ps}^{-3/4}(\R^{n+1} \times \R^n , \overline{\Lambda_{0, -}'})$ and it remains to show $Q_{\ps} (1-\chi) \Poiz \in  I_{\oc-\ps}^{-3/4}(\R^{n+1} \times \R^n , \overline{\Lambda_{0, -}'})$.
This term is in fact in $\mathcal{S}(\R^{n+1} \times \R^n)$.
This is because we know $\Poiz$ sends $\mathcal{S}'(\R^n)$ to $\mathcal{S}'(\R^{n+1})$ by \cite[Proposition~7.5]{gell2022propagation}; on the other hand, $Q_{\ps} (1-\chi) \Poiz$ sends $\mathcal{S}'(\R^n)$ to $\mathcal{S}(\R^{n+1})$ since $Q_{\ps} (1-\chi) \in \Psi_{\ps}^{-\infty,-\infty}(\R^{n+1})$. So $Q_{\ps} (1-\chi) \Poiz $ maps $\mathcal{S}'(\R^n)$ to $\mathcal{S}(\R^{n+1})$, which is equivalent to $Q_{\ps} (1-\chi) \Poiz \in \mathcal{S}(\R^{n+1} \times \R^n)$.  

To prove \eqref{eq:5.5-3} we combine \eqref{eq:localized-Poiz-membership} and Proposition~\ref{prop:general-1cps-FIO-WF}, and note that the composition of the sojourn relation $\Lambda_0'$ with $\WF'_{\oc}(\Id - Q_{\oc})$ will be empty if the latter is sufficiently close to fibre-infinity --- indeed, bicharacteristics with initial condition at 1c-fibre-infinity travel along the spacetime boundary.
\end{proof}

\begin{prop} \label{prop: Possion parametrix, K,-}
Suppose $Q_{\oc}$ is as above and $\tilde{Q}_{\ps} \in \Psi_{\ps}^{0,0}(\R^{n+1})$ satisfies:
\begin{multline}\label{eq:tilde Qps cond}
\text{no point of $\WF_{\ps}'(\tilde Q_{\ps})$ is forward, with respect to the Hamilton flow of $p$,}
\\ \text{of any point of $\WF_{\ps}'(P-P_0)$.} 
\end{multline}

Then there exists $K_- \in I^{-3/4}_{\oc-\ps}(\R^{n+1} \times \R^n, \Lambda_-)$  that is microlocally trivial near $\Rm$ and such that 
\begin{align} \label{eq: parametrix, infinity order}
PK_- = [P,\tilde{Q}_{\ps}]\mathcal{P}_0 Q_{\oc} \text{ modulo } \Schw(\R^{n+1} \times \R^n) 
\end{align}
microlocally away from $\Rp$. 

A similar statement holds with $-$ replaced by $+$.
\end{prop}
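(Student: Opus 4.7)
The strategy is an iterative symbolic parametrix construction in the 1c-ps calculus: I would build $K_-$ as an asymptotic sum of 1c-ps Lagrangian distributions on $\Lambda_-$, each obtained by solving a first-order transport ODE along the bicharacteristic foliation of $\Lambda_-$. First I would analyze the right-hand side. Since $P_0\Poiz = 0$, we have
$$A_0 := [P,\tilde Q_{\ps}]\Poiz Q_{\oc} = P\tilde Q_{\ps}\Poiz Q_{\oc} - \tilde Q_{\ps}(P-P_0)\Poiz Q_{\oc}.$$
By \eqref{eq:tilde Qps cond}, $\WF'_{\ps}(\tilde Q_{\ps})$ is disjoint from any bicharacteristic segment strictly forward of $\WF'_{\ps}(P-P_0)$; combined with the compact spacetime support of $P-P_0$ and the wavefront considerations of Lemma~\ref{lemma:non-forward-related-composition}, this restricts the second term to a neighbourhood of the perturbation, where it can be absorbed into the microlocal Schwartz remainder once we impose zero initial data at $\Rm$ below. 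For the first term, Proposition~\ref{prop:Poiz-membership} gives $\tilde Q_{\ps}\Poiz Q_{\oc} \in I^{-3/4}_{\oc-\ps}(\R^{n+1}\times\R^n,(\Lambda_0)_-)$; since $\WF'_{\ps}(\tilde Q_{\ps})$ is backward of the perturbation, on its microsupport $(\Lambda_0)_- = \Lambda_-$, so applying Theorem~\ref{thm: vanishing principal symbol product} yields $A_0 \in I^{1/4}_{\oc-\ps}(\R^{n+1}\times\R^n,\Lambda_-)$ modulo this Schwartz error.

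Next I would construct the leading term $K_-^{(0)} \in I^{-3/4}_{\oc-\ps}(\R^{n+1}\times\R^n,\Lambda_-)$ matching $A_0$ at the principal symbol level. By Theorem~\ref{thm: vanishing principal symbol product} this amounts to solving
$$\bigl(-i\mathscr{L}_{H_p^{2,0}} + c\bigr)\,\sigma_{\oc-\ps}^{-3/4}(K_-^{(0)}) \;=\; \sigma_{\oc-\ps}^{1/4}(A_0)$$
on $\Lambda_-$, for a smooth zeroth-order coefficient $c$ built from $p_{\sub}$, the rescaling $\xoc^{-1}H_p^{2,0}\xoc$, and the half-density data. This is a linear first-order ODE along the bicharacteristic foliation of $\Lambda_-$; by Proposition~\ref{prop:microlocalized FBSR} and the further rescaling \eqref{eq:further-rescaled-Hp}, its leaves are integral curves of a smooth, non-vanishing, boundary-transverse vector field traversing $\Lambda_-$ from $\Rm$ to $\Rp$ in finite parameter time. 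I would prescribe zero initial data at $\Rm$ and integrate forward; the resulting solution is smooth on $\Lambda_-$, vanishes near $\Rm$ (since $\sigma_{\oc-\ps}^{1/4}(A_0)$ does, by the microlocalization of $\tilde Q_{\ps}$), and extends smoothly up to $\Rp$. Quantizing it via a non-degenerate parametrization of $\Lambda_-$ (Proposition~\ref{prop: 1c-ps nondeg param} and Remark~\ref{rem:phase function leading term}) and applying Lemma~\ref{lemma: vanishing amplitude->lower order} yields $PK_-^{(0)} - A_0 \in I^{-3/4}_{\oc-\ps}(\R^{n+1}\times\R^n,\Lambda_-)$ modulo the Schwartz error.

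I would then iterate: at each stage the principal symbol of the current error sources the same transport ODE with zero initial data at $\Rm$, producing a correction $K_-^{(j+1)} \in I^{-3/4-(j+1)}_{\oc-\ps}$ that drops the error by one order. Asymptotic summation in the 1c-ps calculus (standard given the uniform conormal structure near $\ffocps$) yields $K_- \sim \sum_j K_-^{(j)}$ with $PK_- - A_0 \in I^{-\infty}_{\oc-\ps}\subset \mathcal{S}(\R^{n+1}\times\R^n)$. The zero initial conditions give the microlocal triviality of $K_-$ near $\Rm$, and Proposition~\ref{prop:general-1cps-FIO-WF}, combined with the earlier wavefront analysis of the $\tilde Q_{\ps}(P-P_0)\Poiz Q_{\oc}$ term, upgrades the symbolic identity to \eqref{eq: parametrix, infinity order} modulo $\mathcal{S}$ microlocally away from $\Rp$. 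The analogous construction, integrating backward from $\Rp$ with zero initial data there, gives the $+$ case.

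The main obstacle will be confirming that the transport ODE remains a regular linear ODE with smooth coefficients at the endpoints of the bicharacteristics. The rescaled Hamilton vector field $H_p^{2,0}$ vanishes at both $\Rpm$; it only becomes non-vanishing and transverse after the further blow-up rescaling of \eqref{eq:further-rescaled-Hp}. One must verify that the coefficient $c$ in the transport equation transforms compatibly with this rescaling --- this is precisely what the order convention in Definition~\ref{defn: 1c-ps FIO} (in particular the anomalous $-\tfrac14$ shift in the $\xoc$-exponent tied to the $L^2$-threshold) is designed to ensure --- so that the ODE on the closed leaf admits a unique smooth solution given smooth initial data. Once this local regularity is established, the remainder of the argument is routine symbolic calculus.
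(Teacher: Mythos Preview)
Your overall strategy---build $K_-$ as an asymptotic sum $\sum_j K_{-,j}$ with $K_{-,j}\in I^{-3/4-j}_{\oc-\ps}(\Lambda_-)$, solve a first-order transport equation along the $H_p^{2,0}$-flow with zero initial data at $\Rm$, and iterate---is exactly the paper's. The iterative part and the appeal to Theorem~\ref{thm: vanishing principal symbol product} are fine.

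There is, however, a genuine gap in your preliminary step. You invoke Proposition~\ref{prop:Poiz-membership} to place $\tilde Q_{\ps}\Poiz Q_{\oc}$ in $I^{-3/4}_{\oc-\ps}((\Lambda_0)_-)$, but that proposition requires the spacetime projection of $\WF'_{\ps}(\tilde Q_{\ps})$ to be bounded. This is not part of the hypothesis \eqref{eq:tilde Qps cond}, and it fails in the intended application (Proposition~\ref{prop:microlocalized Poisson are 1c-ps FIOs}), where $\tilde Q_{\ps}$ is microlocally the identity near $\Rm$ and hence has wavefront set reaching spacetime infinity. Your algebraic decomposition $A_0 = P\tilde Q_{\ps}\Poiz Q_{\oc} - \tilde Q_{\ps}(P-P_0)\Poiz Q_{\oc}$ therefore does not lead to the conclusion $A_0\in I^{1/4}(\Lambda_-)$. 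The paper avoids this by decomposing the \emph{commutator} instead: write $[P,\tilde Q_{\ps}] = Q_1 + Q_2$ with $Q_1$ having compact spacetime projection and $Q_2$ supported outside a large spacetime ball. Then $Q_2\Poiz Q_{\oc}$ is Schwartz because any bicharacteristic meeting $\WF'_{\ps}(Q_2)$ has large 1c-frequency at its endpoint and so misses $\WF'(Q_{\oc})$; for $Q_1$, Proposition~\ref{prop:Poiz-membership} now legitimately applies, and since $\WF'_{\ps}(Q_1)$ is (by \eqref{eq:tilde Qps cond}) backward of the perturbation, $\Lambda_0$ coincides with $\Lambda_-$ there.

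Your ``main obstacle'' paragraph is also mis-diagnosed. The transport solution does \emph{not} extend smoothly to $\Rp$: the paper explicitly notes that it becomes singular there (the vector field $H_p^{2,0}$ vanishes at $\Rp$, so the ODE is Fuchsian). This is harmless precisely because the statement only asks for \eqref{eq: parametrix, infinity order} microlocally away from $\Rp$; one simply stops short. The $-\tfrac14$ shift in the order convention of Definition~\ref{defn: 1c-ps FIO} is about $L^2$-thresholds, not about regularity of transport equations at radial points.
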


\begin{proof}
We claim that $[P,\tilde{Q}_{\ps}]$ can be decomposed as
\begin{equation}
    [P,\tilde{Q}_{\ps}] = Q_1+Q_2,
\end{equation}
where the projection of $Q_1$ to $\R^{n+1}$ is compact and $Q_2$ satisfies
\begin{equation}
    \pi_{\ps}^{-1}(\WF'_{\ps}(Q_2)) \cap \big( \Lambda_0' \circ \WF'(Q_{\oc}) \big) = \emptyset.
\end{equation}
Indeed, it suffices to ensure that the projection to spacetime of $\WF'_{\ps}(Q_2)$ is disjoint from a large ball; then any bicharacteristic that meets $\WF'_{\ps}(Q_2)$ must have initial point with large 1c-frequency, hence is disjoint from $\WF'(Q_{\oc})$. 
By \eqref{eq:localized-Poiz-membership} and Proposition~\ref{prop:general-1cps-FIO-WF}, then, the contribution from $Q_2\Poiz Q_{\oc}$ is Schwartz and we may replace \eqref{eq: parametrix, infinity order} by the equation $PK_- = Q_1\Poiz Q_{\oc} \mod \mathcal{S}(\R^{n+1} \times \R^n)$ below.

We construct $K_-$ as an asymptotic sum
\begin{align}
K_- = \sum_{j=0}^\infty K_{-,j}, 
\quad K_j  \in I^{-j -\frac{3}{4} }_{\oc-\ps}(\R^{n+1} \times \R^n, \Lambda_-),
\end{align}
such that
\begin{align} \label{eq: parametrix property, up to N}
\Big( P \sum_{j=0}^N K_{-,j} - Q_1\mathcal{P}_0 Q_{\oc} \Big) \in I^{-\frac{3}{4}-N}_{\oc-\ps}(\R^{n+1} \times \R^n, \Lambda_-).
\end{align}

By definition, the projection of $\WF_{\ps}'(Q_1)$ to $\R^{n+1}_{t,z}$ is compact and $Q_1\mathcal{P}_0 Q_{\oc} \in I^{\frac{1}{4}}_{\oc-\ps}(\R^{n+1} \times \R^n, \Lambda_-)$.
Here the order is follows from applying Theorem~\ref{thm: PsiDO- 1c-ps FIO composition} and noticing that $\mathcal{P}_0$ is in $I^{-\frac{3}{4}}_{\oc-\ps}(\R^{n+1} \times \R^n, \Lambda_-)$ after localization on both sides.
Here we have used the fact the support of derivatives of $\tilde{q}_{\ps}$ is disjoint from the metric perturbation,
so $\Lambda_-$ coincides, locally, with the \emph{free} forward sojourn relation obtained from the free operator $P_0$ on $\WF_{\ps}'(Q_1)$, deviating from it only once with bicharacteristics meet the perturbation $P - P_0$. (It is here the our assumption of compact support of the perturbation $P - P_0$ is very convenient.)

In order to satisfy \eqref{eq: parametrix property, up to N} with $N=0$, we only need 
\begin{equation} \label{eq:5.4-6}
    \sigma^{1/4}(PK_{-,0})
    = \sigma^{1/4}(Q_1\mathcal{P}_0 Q_{\oc}).
\end{equation}

Since $p$ vanishes on $\Lambda_-$, hence applying Theorem \ref{thm: vanishing principal symbol product} (with $m'=2,m=-\frac{3}{4}$) shows that
 $PK_{-,0} \in I^{\frac{1}{4}}_{\oc-\ps}(\R^{n+1} \times \R^n, \Lambda_-)$ and \eqref{eq:5.4-6} is a transport equation along $\msf{H}^{2,0}_p-$flow with respect to $\sigma^{-3/4}_{\oc-\ps}(K_{-,0})$: 
\begin{align} \label{eq: transport ODE, a-0}
(-i\mathscr{L}_{ \msf{H}_p^{2,0} } 
+i (\frac{n}{2}+1)(x_{\oc}^{-1} \msf{H}_p^{2,0} x_{\oc}) 
+ p_{\sub})(a_{-,0}) = \sigma^{1/4}(Q_1\mathcal{P}_0 Q_{\oc}),
\end{align}
where $a_{-,0}$ is the amplitude of $K_{-,0}$.
This ODE admits a unique smooth solution that is zero in a neighbourhood of $\Rm$ since the support of the right hand side is away from $\beta_{\oc-\ps}^{-1}(\mathcal{R}_{-} \times \overline{^{\oc}T^*X})$; that is, we solve in the forward direction of Hamilton flow. The solution will become singular at $\Rp$ but this does not concern us as we only need to construct a solution microlocally away from $\Rp$. 

More generally, suppose $K_{-,0},...,K_{-,N}$ has been constructed, satisfying (\ref{eq: parametrix property, up to N}), then the equation
\begin{align*}
\sigma_{\oc-\ps}^{-\frac{3}{4}-N}(PK_{-,N+1}) = - \sigma_{\oc-\ps}^{-\frac{3}{4}-N}(P \sum_{j=0}^N K_{-,j} - Q_1\mathcal{P}_0 Q_{\oc})
\end{align*}
is a transport equation along $\msf{H}_p^{2,0}-$flow with respect to
$a_{-,N+1}$, the amplitude of $\sigma_{\oc-\ps}^{-\frac{3}{4}-N-1}(K_{-,N+1})$ after fixing the trivialization:
\begin{align}
(-i\mathscr{L}_{ \msf{H}_p^{2,0} } + i(\frac{n}{2}+1-N)(x_{\oc}^{-1} \msf{H}_p^{2,0} x_{\oc}) + p_{\sub}) a_{-,N+1} = c_{N+1},
\end{align}
with $c_{N+1} = - \sigma_{\oc-\ps}^{-\frac{3}{4}-N}(P \sum_{j=0}^N K_{-,j} - Q_1\mathcal{P}_0 Q_{\oc} )$, which admits a unique smooth solution that vanishes near $\Rm$.


The distribution $K_{+}$ is constructed similarly, using the backward flow of $\msf{H}^{2,0}_p$. 

\end{proof}

Next we show that microlocalized $\Poipm$ falls in our 1c-ps FIO class by justifying that we can view $K_\pm$ as `parametrices' for the operator $Q_{\ps} \Poipm Q_{\oc}$ in the perturbation region, where $P - P_0$ is non-vanishing.  

\begin{prop}\label{prop:microlocalized Poisson are 1c-ps FIOs} 
Let $Q_{\oc}$ and $Q_{\ps}$ be as above. Then $Q_{\ps} \Poipm Q_{\oc}$ are fibred-Legendre distributions associated to the microlocalized Lagrangians $\Lambda_\pm$. That is, these operators are 1c-ps FIOs associated to the Lagrangians $\Lambda_\pm$.
\end{prop}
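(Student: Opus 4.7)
The plan is to handle the $-$ case; the $+$ case follows by a symmetric argument using $K_+$ in place of $K_-$ and the backward inverse $P_-^{-1}$ in place of $P_+^{-1}$. The idea is to exhibit an explicit 1c-ps fibred-Legendre distribution $E_-$ that agrees with $Q_{\ps}\Poim Q_{\oc}$ modulo a Schwartz kernel, and to certify this agreement using the Fredholm framework of \cite{gell2022propagation} together with the propagation of wavefront set in Lemma~\ref{lemma:non-forward-related-composition}.

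First we set
\[
E_- := \tilde Q_{\ps}\,\mathcal{P}_0\,Q_{\oc} - K_-,
\]
where $K_-$ is the parametrix of Proposition~\ref{prop: Possion parametrix, K,-} associated to a choice of $\tilde Q_{\ps}\in\Psi_{\ps}^{0,0}(\R^{n+1})$ that, in addition to satisfying \eqref{eq:tilde Qps cond}, is microlocally the identity near $\mathcal{R}_-$, has $\WF_{\ps}'(\tilde Q_{\ps})$ disjoint from both $\WF_{\ps}'(P-P_0)$ and $\mathcal{R}_+$, and has bounded spacetime projection. Such a $\tilde Q_{\ps}$ exists because $\mathcal{R}_\pm$ live over spacetime infinity while $P-P_0$ is compactly supported in spacetime. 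Proposition~\ref{prop:Poiz-membership} places $\tilde Q_{\ps}\mathcal{P}_0$ in $I^{-3/4}_{\oc-\ps}(\R^{n+1}\times\R^n,\Lambda_0)$, and since the microsupport of $Q_{\oc}$ lies strictly away from $\oc$-fibre-infinity, composing on the right by $Q_{\oc}$ preserves this class (a variant of Theorem~\ref{thm: PsiDO- 1c-ps FIO composition-2}). Because $\WF_{\ps}'(\tilde Q_{\ps})$ is strictly behind the perturbation, each bicharacteristic meeting its microsupport is still a free bicharacteristic there; on this microsupport $\Lambda_0$ and $\Lambda_-$ coincide, so we may regard $\tilde Q_{\ps}\mathcal{P}_0 Q_{\oc}\in I^{-3/4}_{\oc-\ps}(\R^{n+1}\times\R^n,\Lambda_-)$. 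Combined with $K_-\in I^{-3/4}_{\oc-\ps}(\R^{n+1}\times\R^n,\Lambda_-)$ furnished by Proposition~\ref{prop: Possion parametrix, K,-}, this gives $E_-\in I^{-3/4}_{\oc-\ps}(\R^{n+1}\times\R^n,\Lambda_-)$.

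Next we show $Q_{\ps}(\Poim Q_{\oc}-E_-)\in\mathcal{S}(\R^{n+1}\times\R^n)$. Using $P_0\mathcal{P}_0=0$, we write
\[
PE_- = [P,\tilde Q_{\ps}]\mathcal{P}_0 Q_{\oc} + \tilde Q_{\ps}(P-P_0)\mathcal{P}_0 Q_{\oc} - PK_-.
\]
The middle term is a residual operator in $\Psi_{\ps}^{-\infty,-\infty}$ applied to $\mathcal{P}_0 Q_{\oc}$, hence has Schwartz kernel, while the first and third terms combine to a Schwartz kernel microlocally away from $\mathcal{R}_+$ by \eqref{eq: parametrix, infinity order}; since $\WF'_{\ps}(Q_{\ps})\subset G_-$ is disjoint from $\mathcal{R}_+$, we get $Q_{\ps}\,PE_-\in\mathcal{S}$. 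Moreover, the facts that $\tilde Q_{\ps}$ is microlocally the identity near $\mathcal{R}_-$ and that $K_-$ is microlocally trivial there imply $E_-$ and $\Poim Q_{\oc}$ generate the same leading 1-cusp asymptotic at $t\to-\infty$. Hence $u:=\Poim Q_{\oc} f - E_- f$ solves $Pu=-PE_-f$ in the Sobolev space on which the forward inverse $P_+^{-1}$ of \eqref{eq:ps inverses} is defined, so $u=-P_+^{-1}(PE_- f)$. Since $\WF_{\ps}(PE_- f)\subset\mathcal{R}_+$ and $\mathcal{R}_+$ is a sink for the Hamilton flow, Lemma~\ref{lemma:non-forward-related-composition} yields $\WF_{\ps}(u)\subset\mathcal{R}_+$, and composing with $Q_{\ps}$ from the left kills this wavefront to give $Q_{\ps}(\Poim Q_{\oc}-E_-)\in\mathcal{S}$. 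Finally, Theorem~\ref{thm: PsiDO- 1c-ps FIO composition} shows that $Q_{\ps}E_-$ still lies in $I^{-3/4}_{\oc-\ps}(\R^{n+1}\times\R^n,\Lambda_-)$, which is the desired conclusion.

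The main technical obstacle is making precise the sense in which $E_-$ and $\Poim Q_{\oc}$ have matching asymptotic data at $\mathcal{R}_-$, so that the difference $u$ genuinely belongs to the function space on which $P_+^{-1}$ is the unique solver with trivial data at $t\to-\infty$. This relies on confirming that $K_-$ vanishes to all orders in the 1-cusp sense near $\mathcal{R}_-$ (so that $u$ is outgoing there), and on invoking the variable-order threshold and propagation estimates from \cite{gell2022propagation} to rule out a hidden incoming contribution at $\mathcal{R}_-$. Once this is in hand, the remaining steps are standard applications of the 1c-ps symbolic calculus built in Sections~\ref{sec: 1c-ps FIO} and \ref{subsec:PsiDO-FIO-composition-1cps}.
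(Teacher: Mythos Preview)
Your approach is close in spirit to the paper's, but there is a genuine inconsistency in your choice of $\tilde Q_{\ps}$: you require it to be microlocally the identity near $\mathcal{R}_-$ \emph{and} to have bounded spacetime projection. Since $\mathcal{R}_-$ lives entirely over spacetime infinity (the southern hemisphere of $\partial\overline{\R^{n+1}}$), these two conditions are incompatible. Once you drop the bounded-projection requirement (as you must), your invocation of Proposition~\ref{prop:Poiz-membership} to place $\tilde Q_{\ps}\mathcal{P}_0 Q_{\oc}$ in $I^{-3/4}_{\oc-\ps}(\R^{n+1}\times\R^n,\Lambda_-)$ no longer applies: that proposition assumes the $\ps$-localizer has bounded spacetime projection, and indeed the very definition of an admissible 1c-ps Lagrangian (Definition~\ref{defn: admissible 1c-ps Lagrangian submanifold and 1c-ps fibred-Legendre submanifold}) demands disjointness from spacetime infinity. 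Thus $E_-$ as defined is \emph{not} a 1c-ps FIO in the paper's sense.

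This is repairable: you only ever need $Q_{\ps}E_-\in I^{-3/4}_{\oc-\ps}$, and since $Q_{\ps}$ itself has bounded spacetime projection, both $Q_{\ps}\tilde Q_{\ps}\mathcal{P}_0 Q_{\oc}$ and $Q_{\ps}K_-$ are 1c-ps FIOs by Proposition~\ref{prop:Poiz-membership} and Theorem~\ref{thm: PsiDO- 1c-ps FIO composition}. The remaining obstacle you flag---verifying that $u=\Poim Q_{\oc}f-E_-f$ lies in the domain $\mathcal{X}^{\mathsf{s}_+,\mathsf{r}_+}$---is bypassed in the paper by a more algebraic route. Rather than building a global $E_-$ and comparing asymptotics, the paper writes the exact identity $(\Id-\tilde Q_{\ps})\mathcal{P}_- = -P_+^{-1}[P,\tilde Q_{\ps}]\mathcal{P}_-$ (equation~\eqref{eq:minus-Poisson-rep-1}), substitutes $\Poim = \mathcal{P}_0 - P_+^{-1}(P-P_0)\mathcal{P}_0$, and peels off the resulting terms one by one using Lemma~\ref{lemma:non-forward-related-composition}. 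What survives is $Q_{\ps}P_+^{-1}[P,\tilde Q_{\ps}]\mathcal{P}_0 Q_{\oc}$, which equals $Q_{\ps}K_-$ modulo Schwartz by the defining property~\eqref{eq: parametrix, infinity order} together with $K_-$ being microlocally trivial near $\mathcal{R}_-$ (this is the identity~\eqref{eq:parametrix-coro}). This organization never requires $E_-$ itself to be a 1c-ps FIO and never asks you to match asymptotic data at $\mathcal{R}_-$ directly.
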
 

\begin{proof}
We only prove the conclusion for $Q_{\ps} \Poim Q_{\oc}$ since the conclusion for $Q_{\ps} \Poip Q_{\oc}$ can be proved in the same way.

First we notice that, because $K_-$ is microlocally trivial near $\Rm$ by construction, and satisfies 
\eqref{eq: parametrix, infinity order} microlocally away from $\Rp$, we have 
\begin{equation} \label{eq:parametrix-coro}
    Q_{\ps}K_- = Q_{\ps}P_{+}^{-1}(  [P,\tilde{Q}_{\ps}]\mathcal{P}_0 Q_{\oc} ) \mod \mathcal{S}(\R^{n+1} \times \R^n)
\end{equation}
for any $Q_{\ps}$ microlocally trivial near $\Rp$. 

Next, choose $\tilde Q_{\ps}$ to be microlocally equal to the identity near $\Rm$ so that \eqref{eq:tilde Qps cond} is satisfied. 
Similar reasoning shows that we can express 
\begin{align} \label{eq:minus-Poisson-rep-1}
\mathcal{P}_- = \tilde{Q}_{\ps} \mathcal{P}_- 
- P_+^{-1}[P,\tilde{Q}_{\ps}]\mathcal{P}_-.
\end{align}
Indeed, $(\Id - \tilde{Q}_{\ps}) \mathcal{P}_-$ is microlocally trivial near $\Rm$ due to the choice of $\tilde Q_{\ps}$, and satisfies
$$
P \big( (\Id - \tilde{Q}_{\ps}) \mathcal{P}_- \big) = [P,\tilde{Q}_{\ps}]\mathcal{P}_-,
$$
so we can apply $P_+^{-1}$ on the left to obtain \eqref{eq:minus-Poisson-rep-1}. 

Multiplying both sides of \eqref{eq:minus-Poisson-rep-1} by $Q_{\ps}$ and $Q_{\oc}$ from the left and right, we have 
\begin{align} \label{eq:5-4-01}
Q_{\ps}\mathcal{P}_-Q_{\oc}
= Q_{\ps} \big( \tilde{Q}_{\ps} \mathcal{P}_- 
- P_+^{-1}[P,\tilde{Q}_{\ps}]\mathcal{P}_- \big) Q_{\oc}.
\end{align}

We can arrange $\WF_{\ps}'(Q_{\ps})$ and $\WF_{\ps}'(\tilde{Q}_{\ps})$ to be disjoint, which makes the first term on the right hand side above microlocally trivial (i.e., whose kernel is Schwartz).
Now we consider the second term.
Recall the expression (see \cite[Definition~6.4, Proposition~7.5]{gell2022propagation}) for the perturbed Poisson operator:
\begin{align} \label{eq:Poim-expression}
\mathcal{P}_\mp = \mathcal{P}_0 - P_\pm^{-1}(P-P_0)\mathcal{P}_0,
\end{align}
the second term of the right hand side of \eqref{eq:5-4-01} can be rewritten as 
\begin{align*}
-Q_{\ps}P_+^{-1}[P,\tilde{Q}_{\ps}](\mathcal{P}_0 - P_+^{-1}(P-P_0)\mathcal{P}_0 ) Q_{\oc}.
\end{align*}

For the term
\begin{align} \label{eq:5-5-3}
Q_{\ps}P_+^{-1}[P,\tilde{Q}_{\ps}]P_+^{-1}(P-P_0)\mathcal{P}_0,
\end{align}
by property \eqref{eq:tilde Qps cond} of $\tilde Q_{\ps}$ and Lemma~\ref{lemma:non-forward-related-composition}, we know the kernel of $[P,\tilde{Q}_{\ps}]P_+^{-1}(P-P_0)$ is a Schwartz function in $\mathcal{S}(\R^{n+1} \times \R^{n+1})$.
By the mapping property of $\mathcal{P}_0$ in \cite[Proposition~7.5]{gell2022propagation}, we know $\mathcal{P}_0$ sends a tempered distribution on $\R^n$ to a tempered distribution on $\R^{n+1}$.
So the composition $[P,\tilde{Q}_{\ps}]P_+^{-1}(P-P_0)\mathcal{P}_0$ sends tempered distributions (on $\R^n$) to Schwartz functions (on $\R^{n+1}$).
Then by Corollary~\ref{coro:propagator WF mapping}, we know the
$\ps$-wavefront set of the output of $P_+^{-1}[P,\tilde{Q}_{\ps}]P_+^{-1}(P-P_0)\mathcal{P}_0$ is contained in $\mathcal{R}_+$, which is disjoint from $\WF{\ps}'(Q_{\ps})$ and the final output of $Q_{\ps}P_+^{-1}[P,\tilde{Q}_{\ps}]P_+^{-1}(P-P_0)\mathcal{P}_0$ is Schwartz. Consequently, the Schwartz kernel of \eqref{eq:5-5-3} is in $\mathcal{S}(\R^{n+1} \times \R^n)$.

We are left with the `main term' $Q_{\ps}P_+^{-1}[P,\tilde{Q}_{\ps}]\mathcal{P}_0 Q_{\oc}$. By \eqref{eq:parametrix-coro}, and Propositions~\ref{prop: Possion parametrix, K,-} and \ref{thm: PsiDO- 1c-ps FIO composition}, this is in 
$I^{-3/4}_{\oc-\ps}(\R^{n+1} \times \R^n, \Lambda_-)$, which completes the proof. 
\end{proof}

Finally we recall \cite[Corollary~7.6]{gell2022propagation} here, for use in Section~\ref{sec: scattering map}:
\begin{prop}\label{prop:Poisson adjoint mapping} 
The adjoint of the Poisson operator, $\Poipm^*$, maps Schwartz functions to Schwartz functions. \end{prop}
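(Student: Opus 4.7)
The plan is to start from the representation \eqref{eq:Poim-expression} of the perturbed Poisson operator, which on taking adjoints yields
\begin{equation*}
\mathcal{P}_\mp^* = \mathcal{P}_0^* - \mathcal{P}_0^* (P-P_0)^* (P_\pm^{-1})^*.
\end{equation*}
I would then show that both $\mathcal{P}_0^*$ and the second composite send $\mathcal{S}(\R^{n+1})$ into $\mathcal{S}(\R^n)$.

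For the first term, the explicit formula \eqref{eq: Poisson zero} gives
\begin{equation*}
\mathcal{P}_0^* u(Z) = (2\pi)^{-n}\int e^{-i(z\cdot Z - t|Z|^2)} u(z,t) \, dz\, dt,
\end{equation*}
which can be recognized (up to a constant) as the restriction of the full spacetime Fourier transform $\widehat{u}(\zeta, \tau)$ to the paraboloid $\tau = -|\zeta|^2$, evaluated at $\zeta = Z$. Since $\widehat{u}\in \mathcal{S}(\R^{n+1})$ and derivatives in $Z$ produce at worst polynomial factors via the chain rule, this restriction lies in $\mathcal{S}(\R^n)$.

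For the second composite, I would invoke the Fredholm theory recalled around \eqref{eq:LS-main} applied to the formal adjoint $P^*$, which has the same ps-calculus structure as $P$ but with the roles of $\mathcal{R}_\pm$ interchanged (so $(P_\pm^{-1})^* = (P^*)_\mp^{-1}$ after suitably pairing variable orders). For $u\in\mathcal{S}(\R^{n+1})$, propagation of singularities combined with the radial point estimates confines the ps-wavefront set of $v := (P^*)_\mp^{-1} u$ to the radial set of $P^*$ corresponding to the chosen sign; since that radial set sits over spacetime infinity and $(P-P_0)^*$ has coefficients compactly supported in spacetime, the product $(P-P_0)^* v$ is smooth in the interior and compactly supported, i.e.\ in $C^\infty_c(\R^{n+1})$. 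Finally, $\mathcal{P}_0^*$ applied to any $w \in C^\infty_c(\R^{n+1})$ is Schwartz by a standard non-stationary phase argument: the phase $\phi(z,t;Z) = -z\cdot Z + t|Z|^2$ has $d_{(z,t)}\phi = (-Z,|Z|^2)$, which is nonvanishing for $Z\neq 0$ with $|d_{(z,t)}\phi|\geq c\langle Z\rangle$, and iterated integration by parts against the vector field dual to multiplication by $e^{i\phi}$ yields arbitrary polynomial decay in $Z$, while smoothness is immediate from differentiation under the compactly supported integral.

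The main technical obstacle is the wavefront-set bound on $(P^*)_\mp^{-1} u$: one has to verify that $P^*$ literally satisfies the non-elliptic Fredholm framework of \cite{gell2022propagation} with the source/sink roles of the radial sets swapped, and that Schwartz input sits in $H_{\ps}^{s-1,r+1}$ for any admissible variable orders, so that the inverse output's wavefront set reduces to the relevant radial set alone. Once this is in place, the chain of inclusions is routine and the proof concludes.
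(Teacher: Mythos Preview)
The paper does not prove this proposition; it simply recalls it from \cite[Corollary~7.6]{gell2022propagation}, so there is no argument to compare against.

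Your direct proof is correct and is essentially how one would reprove the cited result. One small clarification: the principal symbol of $P^*$ is the complex conjugate of that of $P$, hence equal to it (it is real), so the characteristic set, Hamilton flow, and radial sets $\mathcal{R}_\pm$ are \emph{identical} for $P$ and $P^*$; the source/sink dynamics are not swapped. What duality does is swap the threshold inequalities in the variable orders, so that $(P_\pm^{-1})^* = (P^*)_\mp^{-1}$ as you write. With that understood, the analogue of Corollary~\ref{coro:propagator WF mapping} applies verbatim to $P^*$, giving $\WF_{\ps}\big((P^*)_\mp^{-1}u\big)\subset\mathcal{R}_\mp$ for $u\in\mathcal{S}(\R^{n+1})$. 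Since $\mathcal{R}_\mp$ lives over spacetime infinity, $(P^*)_\mp^{-1}u$ is smooth over the (compact) support of the coefficients of $(P-P_0)^*$, so $(P-P_0)^*(P^*)_\mp^{-1}u\in C^\infty_c(\R^{n+1})$, and your Fourier-restriction argument for $\mathcal{P}_0^*$ finishes the job. The separate non-stationary-phase argument for $C^\infty_c$ input is redundant once you have $\mathcal{P}_0^*:\mathcal{S}\to\mathcal{S}$.
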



\section{Geometry of the 1c-1c phase space}\label{sec:1c-1c geometry}

\subsection{The b-double space and the lifted 1c-1c cotangent bundle}

Let $X$ be a manifold with boundary $\partial X$ (in our case, $X=\overline{\R^n}$), the b-double space introduced by Melrose \cite{melrose1993atiyah}
is defined to be
\begin{align} \label{eq: defn  b-double space}
X_b^2 : = [ X \times X ; \partial X \times \partial X ],
\end{align}
and the blow down map $X_b^2 \rightarrow X^2$ is denoted $\beta_b$.
Then we define the b-lifted 1c-1c cotangent bundle to be
\begin{align} \label{eq:def-lifted-ococ-bundle}
\ococb:= \beta_b^*(\leftidx{^{\oc}}{T^*X} \times \leftidx{^{\oc}}{T^*X}),
\end{align}
where the right hand side is viewed pulling back a vector bundle over $X^2$ to $X_b^2$.
And denote the lift of $\partial X \times X, X \times \partial X, \partial X \times \partial X$ under $\beta_b$ by $\mathrm{lb}$ (`left boundary'), $\mathrm{rb}$ (`right boundary') and $\mathrm{bf}$ (`b-face') respectively. We denote the part of the bundle $\ococb$ lying over $\mathrm{bf}$ by $\ffococ$. 

It has more complicated fibered 1-cusp structure if we are concerned with the behaviour near the left and right boundaries; see \cite[Section~3]{hassell1999spectral} for the analogue for the scattering calculus. But away from these faces,  it has a relatively simple characterization.
Let $(x_{\oc,1}, y_1;x_{\oc,2}, y_2)$ be coordinates on $X^2$ when each of them are near the boundary. Let 
\begin{align*}
x_{\oc} = x_{\oc,1}, \sigma = \frac{x_{\oc,1}}{x_{\oc,2}},
\end{align*}
then $\msf{X}:=(x_{\oc},\sigma, y_1, y_2)$ forms a local coordinate system of $X_b^2$ on the region $\{ C^{-1} \leq \sigma \leq C \}$ for a fixed $C$. On each of the left and right factors of $^{\oc}{T^*X}$ we have coordinates $(\xoc, \yoc, \xioc, \etaoc)$. When lifted to $X^2_b$ by the left and right projections we will write these $(\xoco, \yoco, \xioco, \etaoco)$ and $(\xoct, \yoct, \xioct, \etaoct)$ respectively. An arbitrary element of  $\ococb$ is given by 
\begin{equation}\label{eq:1c-1c vector}
\xioco \frac{d\xoco}{\xoco^3} + \etaoco \frac{d\yoco}{\xoco} + \xioct \frac{d(\xoco/\sigma)}{(\xoco/\sigma)^3} + \etaoct \frac{d\yoct}{(\xoco/\sigma)},
\end{equation}
since $\xoct = \xoco/\sigma$. This is therefore the expression for the canonical one-form in these coordinates. The symplectic form is the differential of this, and takes the form 
\begin{equation}\label{eq:omega 1c1c}
\omega_{\oc-\oc} = d\xioco \wedge \frac{d\xoco}{\xoco^3} + d \big(\frac{\etaoco}{\xoco} \big) \wedge  d\yoco + d \big(\frac{\sigma^3 \xioct}{\xoco^3}\big) \wedge  d \big( \frac{\xoco}{\sigma} \big) + d \big( \frac{\sigma \etaoct }{\xoco} \big) \wedge d\yoct. 
\end{equation}

Notice that \eqref{eq:1c-1c vector} can be written as
\begin{align}
\label{eq:1c1c-1form-0}
\begin{split}
( \xioco + \sigma^2 \xioct) \frac{d\xoco}{\xoco^3} - (\sigma \xioct) \frac{d\sigma}{\xoco^2} + \etaoco \frac{d\yoco}{\xoco} + \sigma \etaoct \frac{d\yoct}{\xoco},
\end{split}
\end{align}
which we can further written as
\begin{equation} \label{eq:1c1c-1form-1}
 \nu_1 \frac{d\xoco}{\xoco^3} + \nu_2 \frac{d\sigma}{\xoco^2} + \etaoco \frac{d\yoco}{\xoco} +  \tilde{\etaoct} \frac{d\yoct}{\xoco},
\end{equation}
where coefficients above are defined via setting \eqref{eq:1c1c-1form-0} to be equal to \eqref{eq:1c1c-1form-0}. 

\subsection{\texorpdfstring{Symplectic structures and Legendrian submanifolds of the $\oc-\oc$ phase space}{Symplectic structures and Legendrian submanifolds of the 1c-1c phase space}}\label{subsec: symplectic structures 1c 1c}
The symplectic structure of $\ococb$, and the way it behaves at the front face $\ffococ$, is very similar to that of the $\oc-\ps$ phase space discussed in Section~\ref{sec:1c-ps geometry}. Recall from the discussion in Section~\ref{subsec:symplectic structures 1cps} that the coordinates $\xioco$, $\xioct$ and $\sigma$ are all well-defined (independent of choices) at $\ffococ$. We define a projection map with $(4n-4)$-dimensional fibres:
\begin{equation}\label{eq:pi 1c defn}
\pi_{\oc} : \ffococ \to \RR^3_{\xioco, \xioct, \sigma}
\end{equation}
by mapping to the coordinates $(\xioco, \xioct, \sigma)$ and forgetting the other coordinates.  

\begin{lmm}\label{lem:symplectic geom 1c1c} The symplectic form $\omega_{\oc-\oc}$ in the interior of $\ococb$, contracted with the vector field $-\xoc^3 \partial_{\xoc}$ (with respect to the coordinate system above), restricts to a smooth 1-form on $\ffococ$, which is the pullback, via $\pi_{\oc}$, of a contact 1-form on $\RR^3_{\xioco, \xioct, \sigma}$. Moreover, on each fibre of $\pi_{\oc}$, the symplectic form induces a symplectic structure by contracting with the vector field $-\xoc^2 \partial_{\xoc}$, restricting to $\ffococ$, and then taking the differential. 
\end{lmm}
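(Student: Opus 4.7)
The plan is to follow the argument of Lemma~\ref{lem:symplectic geom 1cps} closely, since the geometry of $\ffococ$ is structurally analogous to that of $\ffocps$. The whole computation reduces to applying Cartan's formula $\iota_W d\alpha = \mathcal{L}_W \alpha - d(\iota_W \alpha)$ to the canonical 1-form $\alpha$ in the form \eqref{eq:1c1c-1form-1}, where $\nu_1 = \xioco + \sigma^2 \xioct$, $\nu_2 = -\sigma\xioct$, and $\tilde\etaoct = \sigma\etaoct$, with the two vector fields $V = -\xoco^3 \partial_{\xoco}$ and $V' = -\xoco^2 \partial_{\xoco}$. Both $V$ and $V'$ annihilate $\sigma$, $\yoco$, $\yoct$ and all the fibre coordinates, so only the Lie derivatives of $\tfrac{d\xoco}{\xoco^3}$, $\tfrac{d\sigma}{\xoco^2}$, $\tfrac{d\yoco}{\xoco}$, $\tfrac{d\yoct}{\xoco}$ enter the computation.

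For the contraction with $V$, term-by-term calculation gives $\mathcal{L}_V(\tfrac{d\xoco}{\xoco^3}) = 0$, $\mathcal{L}_V(\tfrac{d\sigma}{\xoco^2}) = 2\,d\sigma$, and $\mathcal{L}_V(\tfrac{d\yoco}{\xoco}) = \xoco\,d\yoco$ (similarly for $\yoct$), which yields
\[
\mathcal{L}_V \alpha = -2\sigma\xioct\,d\sigma + \xoco\etaoco\,d\yoco + \xoco\sigma\etaoct\,d\yoct.
\]
Combined with $-d(\iota_V\alpha) = d\nu_1 = d\xioco + 2\sigma\xioct\,d\sigma + \sigma^2 d\xioct$, the $d\sigma$ terms cancel and I expect
\[
\iota_V \omega_{\oc-\oc} = d\xioco + \sigma^2 d\xioct + \xoco\big(\etaoco\,d\yoco + \sigma\etaoct\,d\yoct\big).
\]
Restricting to $\xoco = 0$ gives $\pi_{\oc}^*\alpha_3$ with $\alpha_3 = d\xioco + \sigma^2 d\xioct$, and a short computation yields $\alpha_3 \wedge d\alpha_3 = 2\sigma\,d\xioco \wedge d\sigma \wedge d\xioct$, which is a volume form on $\RR^3_{\xioco,\xioct,\sigma}$ because $\sigma$ is a positive ratio of boundary defining functions bounded away from zero. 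Hence $\alpha_3$ is contact, establishing the first claim.

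For the second claim I would run the analogous calculation with $V' = -\xoco^2\partial_{\xoco}$, which I expect to produce
\[
\iota_{V'}\omega_{\oc-\oc} = \tfrac{1}{\xoco}\big(d\xioco + \sigma^2 d\xioct\big) + \etaoco\,d\yoco + \sigma\etaoct\,d\yoct.
\]
The main obstacle here is that this 1-form is singular at $\xoco = 0$ on the ambient space and so does not directly restrict to $\ffococ$. The key observation resolving this is that the singular piece is precisely $\xoco^{-1}\,\pi_{\oc}^*\alpha_3$, hence pulls back to zero on any fibre of $\pi_{\oc}$, where $\xioco$, $\xioct$, $\sigma$ are constant. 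The restriction to a fibre is therefore the smooth 1-form $\etaoco\,d\yoco + \sigma\etaoct\,d\yoct$, whose differential (using $d\sigma = 0$ on the fibre) is $d\etaoco \wedge d\yoco + \sigma\,d\etaoct \wedge d\yoct$. This is block-diagonal with the standard symplectic form on $(\yoco,\etaoco)$ and $\sigma$ times the standard symplectic form on $(\yoct,\etaoct)$; since $\sigma > 0$ it is non-degenerate, and the rank $4(n-1)$ matches the fibre dimension $\dim\ffococ - 3 = 4n-4$, confirming that the induced 2-form is symplectic on each fibre of $\pi_{\oc}$.
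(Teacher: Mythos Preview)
Your proof is correct and takes essentially the same approach as the paper's. Both arguments arrive at $d\xioco + \sigma^2 d\xioct + O(\xoco)$ for the first contraction and the fibre 1-form $\etaoco\,d\yoco + \sigma\etaoct\,d\yoct$ for the second; you organize the computation via Cartan's formula on the canonical 1-form \eqref{eq:1c1c-1form-1} rather than directly contracting the explicit 2-form \eqref{eq:omega 1c1c}, and you add an explicit check that $\alpha_3\wedge d\alpha_3$ is a volume form (the paper merely asserts the contact property), but the substance is the same.
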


\begin{proof} The proof is almost identical to the proof of Lemma~\ref{lem:symplectic geom 1cps}, so we shall be brief. 
A straightforward calculation shows that contracting \eqref{eq:omega 1c1c} with $-\xoc^3 \partial_{\xoc}$ gives  
\begin{equation}\label{eq:3dcontactform-3}
d\xioco  +  \sigma^2 d \xioct  + O(\xoc),
\end{equation}
which restricts to $\{ \xoc = 0 \}$ to the contact form 
\begin{equation}\label{eq:contact form 1c-1c}
d\xioco  +  \sigma^2 d \xioct
\end{equation}
 on $\RR^3$ pulled back via $\pi$. Furthermore, if we restrict to a fibre, then $\xioco$, $\xioct$ and $\sigma$ are all fixed, which kills the $(\xoco)^{-3} d\xoco$ terms in \eqref{eq:omega 1c1c} and means that contraction with the larger vector field $-\xoc^2 \partial_{\xoc}$ has a smooth limit at $\xoc = 0$. This gives the 1-form 
\begin{equation}\label{eq:fibre1form 1c}
\alpha_{\oc} = \etaoco \cdot d\yoco + \sigma \etaoct \cdot d\yoct. 
\end{equation}
Taking the differential gives (remembering $\sigma$ is constant on the fibre)
\begin{equation}\label{eq:fibresympform-1c1c}
d\etaoco \wedge d\yoco + \sigma d\etaoct \wedge d\yoct,
\end{equation}
on each fibre, which is a symplectic form. 
\end{proof}

We continue our discussion parallel to that of Section~\ref{subsec:symplectic structures 1cps}. The next task is to define Lagrangian and fibred-Legendre submanifolds.

\begin{defn}[Admissible 1c-1c Lagrangian submanifold and 1c-1c fibred-Legendre submanifold]  \label{defn: admissible 1c-1c Lagrangian submanifold and 1c-1c fibred-Legendre submanifold}
We define an admissible $\oc-\oc$ Lagrangian submanifold of $\ococb$ to be a $2n$-dimensional submanifold $\Uplambda$ that is Lagrangian in the interior (that, is the canonical symplectic form $\omega_{\oc-\oc}$ from \eqref{eq:omega 1c1c} vanishes on it), such that 
\begin{itemize}
\item $\Uplambda$ meets $\ffococ$ transversally, 
\item the differential $d\xioco$ is nonvanishing on $\Uplambda \cap \ffococ$, and 
\item  its closure is disjoint from all other boundary hypersurfaces of $\ococb$ (other than $\ffococ$). 
\end{itemize}

We define a fibred-Legendre submanifold $\SL$ of $\ffococ$ to be the boundary of an admissible 1c-1c Lagrangian submanifold. In other words, there exists $\Uplambda$ as above such that $\SL = \Uplambda \cap \ffococ$. 
\end{defn}

\begin{prop}\label{prop:1c-1c contact symplectic property} Let $\SL$ be a fibred-Legendre submanifold of $\ffococ$, and let $\pi_{\SL}$ be the restriction of $\pi_{\oc}$ to $\SL$, where $\pi_{\oc}$ is as in \eqref{eq:pi 1c defn}. Then $\pi_{\SL}(\SL)$ is a Legendre curve of $\RR^3_{\xioco, \xioct, \sigma}$, and the fibres of $\pi_{\SL}$ are Lagrangian submanifolds for the symplectic structure on the fibres of $\pi_{\oc}$. 
\end{prop}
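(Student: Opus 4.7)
The plan is to adapt the proof of Proposition \ref{prop:1c-ps contact symplectic property} to the 1c-1c setting, replacing the three-dimensional contact form $d\xioc + 2\tilde\tau\,dt$ by $d\xioco + \sigma^2 d\xioct$ from Lemma \ref{lem:symplectic geom 1c1c} and the nonvanishing condition on $dt$ by the nonvanishing condition on $d\xioco$. The argument splits into two independent parts, exactly paralleling the 1c-ps case: first, establishing that $\pi_{\SL}(\SL)$ is a Legendre curve in $\R^3_{\xioco,\xioct,\sigma}$; second, establishing that each fibre $F$ of $\pi_{\SL}$ is Lagrangian in the corresponding fibre of $\pi_{\oc}$.

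For the first part, I would pick a smooth vector field $W$ on $\Uplambda$ that is transverse to $\SL = \partial \Uplambda$ and whose $\partial_{\xoc}$-component equals $1$; this is possible by the transversality condition in Definition \ref{defn: admissible 1c-1c Lagrangian submanifold and 1c-1c fibred-Legendre submanifold}. Writing $W = \partial_{\xoc} + \tilde W$ and using Lagrangianity of $\Uplambda$, the contraction $\omega_{\oc-\oc}\lrcorner(-\xoc^3 W)$ vanishes identically on $\Uplambda$. By Lemma \ref{lem:symplectic geom 1c1c} this contraction has the form $d\xioco + \sigma^2 d\xioct + \beta\,d\xoc + O(\xoc)$ for some $\beta \in C^\infty(\Uplambda)$, and restricting to $\xoc = 0$ gives the pullback identity
\begin{equation*}
d\xioco + \sigma^2 d\xioct = 0 \text{ on } \SL.
\end{equation*}
Now the rank of $d\pi_{\SL}$ cannot be $3$ at any point of $\SL$, since the above identity says the pullback of the contact form on $\R^3$ vanishes on $\SL$. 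A rank of $2$ cannot occur either: since the rank is lower semicontinuous, it would remain $2$ in a neighbourhood, so by the constant rank theorem $\pi_{\SL}$ would map $\SL$ locally onto a $2$-dimensional submanifold of $\R^3_{\xioco,\xioct,\sigma}$ on which the contact form $d\xioco + \sigma^2 d\xioct$ vanished---impossible. The second hypothesis of Definition \ref{defn: admissible 1c-1c Lagrangian submanifold and 1c-1c fibred-Legendre submanifold} asserts that $d\xioco$ is nonvanishing on $\SL$, so the rank is at least $1$, and therefore exactly $1$ everywhere. Applying the constant rank theorem, $\pi_{\SL}(\SL)$ is a smooth curve on which the contact form vanishes, that is, a Legendre curve.

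For the second part, fix a point $q \in \SL$ with $\pi_{\SL}(q) = (\xioco^*, \xioct^*, \sigma^*)$, and let $F$ denote its fibre. Since $d\xioco$ is nonvanishing at $q$ on $\SL$, hence also on $\Uplambda$ in a neighbourhood of $q$, the level set $\Uplambda_{\xioco^*} := \Uplambda \cap \{\xioco = \xioco^*\}$ is a smooth codimension-one submanifold of $\Uplambda$, and slicing further by $\xoc$ gives a smooth family $\Uplambda_{\xioco^*,\xoc^*} = \Uplambda_{\xioco^*} \cap \{\xoc = \xoc^*\}$ with $\Uplambda_{\xioco^*,0} \supset F$. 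Because $\xioco$ is constant on $\SL \cap \{\xioco = \xioco^*\}$ and in particular on $F$, we may write $\xioco = \xioco^* + \xoc \Xi_{\oc}$ on $\Uplambda_{\xioco^*}$ for some smooth $\Xi_{\oc}$. Given a tangent vector $V$ to $F$, extend it to a vector field $\tilde V$ tangent to the slices $\Uplambda_{\xioco^*,\xoc^*}$; choose $W = \partial_{\xoc} + \tilde W$ as before but now tangent to $\Uplambda_{\xioco^*}$, so neither $\tilde V$ nor $\tilde W$ has a $\partial_{\xoc}$-component. Computing $0 = \omega_{\oc-\oc}(\xoc^2 W, \tilde V)$ using the explicit form \eqref{eq:omega 1c1c} and the expansion of $\xioco$, the leading term at $\xoc = 0$ yields $(-d\Xi_{\oc} - \alpha_{\oc})(\tilde V) = 0$, where $\alpha_{\oc}$ is the fibre $1$-form \eqref{eq:fibre1form 1c}. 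Hence $\alpha_{\oc} = -d\Xi_{\oc}$ on $F$, so $d\alpha_{\oc}$ vanishes on $F$; since $\dim F = 2n-2$ is half the dimension of the ambient fibre of $\pi_{\oc}$, $F$ is Lagrangian.

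The step that requires the most care is verifying that the slicing construction in the second part goes through cleanly: one must check that $\sigma$ enters the formula for $\omega_{\oc-\oc}$ in a way compatible with the weighted 1-form $\alpha_{\oc} = \etaoco\cdot d\yoco + \sigma\,\etaoct \cdot d\yoct$, and that the cross term $d(\sigma^3 \xioct/\xoco^3) \wedge d(\xoco/\sigma)$ in \eqref{eq:omega 1c1c} produces no unwanted contributions in the limit $\xoc \to 0$ once restricted to a fibre on which $\xioct$, $\sigma$, and $\xioco$ are all fixed. Apart from this bookkeeping, the argument is structurally identical to the 1c-ps case.
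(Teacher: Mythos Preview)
Your first part (showing $\pi_{\SL}(\SL)$ is a Legendre curve) is correct and matches the paper's argument essentially verbatim.

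The second part has a genuine error. You slice by $\xioco = \xioco^*$, defining $\Uplambda_{\xioco^*} := \Uplambda \cap \{\xioco = \xioco^*\}$, and then write ``$\xioco = \xioco^* + \xoc\,\Xi_{\oc}$ on $\Uplambda_{\xioco^*}$''. But on $\Uplambda_{\xioco^*}$ the coordinate $\xioco$ is \emph{identically} $\xioco^*$ by definition, so $\Xi_{\oc} \equiv 0$ and your conclusion $\alpha_{\oc} = -d\Xi_{\oc}$ collapses to $\alpha_{\oc} = 0$ on $F$, which is false in general. You have confused the slicing variable with the variable to be Taylor-expanded. In the 1c-ps proof the slice is by $t = t^*$ and the expansion is of the \emph{different} variable $\xioc$; the direct analogue here is to slice by one of $\xioco,\xioct,\sigma$ and expand another.

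The paper slices by $\xioct = \xioct^*$ (noting that $d\xioct \neq 0$ on $\SL$ follows from $d\xioco \neq 0$ together with the vanishing of $d\xioco + \sigma^2 d\xioct$ and $\sigma \in (0,\infty)$), and then expands the combination $\xioco + \sigma^2 \xioct = -2\phi_0(\xioct) - \xoco\,\phi_1$ on $\Uplambda_{\xioct^*}$. This combination is convenient because it is exactly the coefficient of $d\xoco/\xoco^3$ in the canonical one-form, so the contraction $\omega_{\oc-\oc}(\xoco^2\partial_{\xoco}, \tilde V)$ produces $-\frac{d\xioco + \sigma^2 d\xioct}{\xoco} - \alpha_{\oc}$ directly; one then uses the contact identity $2\,d\phi_0 = -2\sigma\xioct\,d\sigma$ on $\SL$ to reduce to $\alpha_{\oc} = d\phi_1$ on $F$. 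Your route can be repaired by keeping the slice $\xioco = \xioco^*$ but expanding $\xioct = \xioct^* + \xoco\,\Theta$ instead: then $d\xioco(\tilde V) = 0$ exactly and $\sigma^2 d\xioct(\tilde V) = (\sigma^*)^2 \xoco\,d\Theta(\tilde V) + O(\xoco^2)$, giving $\alpha_{\oc} = -(\sigma^*)^2\,d\Theta$ on $F$, which is exact since $\sigma^*$ is constant on $F$.
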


\begin{proof} We omit most of the proof, as it is closely analogous to the proof of Proposition~\ref{prop:1c-ps contact symplectic property}. In particular, the proof that $\SL$ fibres over a Legendre curve in $\RR^3$ is essentially identical, and thus omitted.  However, for later use, we repeat the latter part of the proof. 

We claim that the 1-form $\alpha_{\oc}$ given by \eqref{eq:fibre1form 1c}, restricted to any fibre $F$ of $\SL$, is the differential of a function on that fibre. That implies that $d\alpha_{\oc}$ restricts to zero on $F$ (since the exterior derivative $d$ and restriction commute) and thus shows that $F$ is a Lagrangian submanifold of the corresponding fibre of $\ffococ$. 

To prove this claim, we consider the submanifold $\Uplambda_{\xioct^*} := \Uplambda \cap \{ \xioct = \xioct^* \}$. This is a smooth submanifold of dimension $2n-1$ of $\Uplambda$, since $d\xioco$ is nonvanishing at $\partial \Uplambda$ by assumption, and hence, since we have the vanishing of the contact form \eqref{eq:contact form 1c-1c}, and $\sigma \in (0, \infty)$ using the third condition in Definition~\ref{defn: admissible 1c-1c Lagrangian submanifold and 1c-1c fibred-Legendre submanifold}, the differential $d\xioct$ is also nonvanishing at $\partial \Uplambda$.  Moreover, if we fix $\xioct^*$ and let $\xoco$ vary, then since $d\xoco$ is also nonvanishing at $\partial \Uplambda$, and hence in a neighbourhood, $\Uplambda \cap \{ \xioct = \xioct^* \}$ fibres over $[0, \epsilon]_{\xoco}$ with smoothly varying fibre $\Uplambda_{\xioct^*, \xoco^*} = \Uplambda \cap \{ \xioct = \xioct^* , \xoco = \xoco^* \}$, $\xoct^* \in [0, \epsilon]$, by the constant rank theorem. 

Given a smooth vector field  $V$ tangent to $F = \Uplambda_{\xioct^*, 0}$, we can extend it to a smooth vector field $\tilde V$ on  $\Uplambda_{\xioct^*}$, tangent to $\Uplambda_{\xioct^*, \xoco}$ for all sufficiently small $\xoco$, i.e. $\tilde V$ has no $\partial_{\xoco}$ nor $\partial_{\xioct}$ component. Then, we consider how $\xioco + \sigma^2 \xioct$ (which is the coefficient of $d\xoco/\xoco^3$ in \eqref{eq:1c-1c vector})  varies on $\Uplambda$ for $\xoco > 0$. Since $\xioco$ and $\sigma$ depend only on $\xioct$ at $\partial \Uplambda$, there are smooth functions $\phi_0 = \phi_0(\xioct)$ and $\phi_1$ such that we can express 
\begin{equation}\label{eq:xioc-expand-1c}
\xioco + \sigma^2 \xioct = -2 \phi_0(\xioct) - \xoco \phi_1  \  \text{ on } \Uplambda_{\xioct^*}. 
\end{equation}
(The strange coefficients of $-2$ and $-1$ are chosen to agree with the computation in the next subsection.) 
Notice that, since $d\xioco + \sigma^2 d\xioct$ vanishes on the Legendre curve at the base of the fibration $\pi_{\SL}$, and $\xioco + \sigma^2 \xioct = -2 \phi_0(\xioct)$ on this Legendre curve, we have 
\begin{equation}\label{eq:1-form identity base}
2d\phi_0 = -2\sigma \xioct d\sigma \text{ on } \SL. 
\end{equation}

We next choose a vector field $W$ as before,  tangent to $\Uplambda_{\xioct^*}$ (and not merely $\Uplambda$) with $\partial_{\xoco}$ component equal to $1$. We write $W = \partial_{\xoco} + \tilde W$, where $\tilde W$ has no $\partial_{\xoco}$ component nor $\partial_{\xioct}$ component. We calculate,  using \eqref{eq:xioc-expand-1c} to obtain the fourth line and \eqref{eq:1-form identity base} for the fifth, and noticing that $d\sigma$ and $d\xioct$ are $O(\xoco)$ restricted to any fibre $F$ of $\SL$, 
\begin{equation}\begin{gathered}
0 = \omega_{\oc-\oc}(\xoco^2 W, \tilde V) =  \omega_{\oc-\oc}(\xoco^2 \partial_{\xoco}, \tilde V) + \omega(\xoco^2 \tilde W, \tilde V)  \\
= \Big( -\frac{d\xioco + \sigma^2 d\xioct}{\xoco} -  \alpha_{\oc} \Big)(\tilde V) + O(\xoco) \\
= \Big( -\frac{d(\xioco + \sigma^2 \xioct) - 2 \sigma \xioct d\sigma}{\xoco} -  \alpha_{\oc} \Big)(\tilde V) + O(\xoco) \\
= \Big( -\frac{-2d\phi_0 - 2 \sigma \xioct d\sigma - \xoco d\phi_1}{\xoco} -  \alpha_{\oc} \Big)(\tilde V) + O(\xoco) \\
= (d\phi_1 - \alpha_{\oc})(\tilde V) + O(\xoco).
\end{gathered}\end{equation}
Taking the limit as $\xoco \to 0$ we find that 
$$
\alpha_{\oc} = d\phi_1 \ \text{ on } F. 
$$
Thus, $d\alpha_{\oc}$ vanishes on $F$ and $F$ has dimension $2n-2$, that is, $F$ is a Lagrangian submanifold of the corresponding fibre of $\ffococ$. 
\end{proof}

\subsection{Parametrization of 1c-1c fibred-Legendre submanifolds}
Analogously to Section~\ref{subsec:Parametrization of 1c-ps fibred-Legendre submanifolds}, we want to represent a given  1c-1c fibred Legendre submanifold as the graph of the differential of a function, or an envelope of such graphs. We first treat the simplest case of a graph, where $(\yoco, \yoct, \sigma)$ furnish coordinates on $\SL$, locally. This means that $(\xoco, \yoco, \yoct, \sigma)$ furnish coordinates on $\Uplambda$ locally. Thus $\xioco + \sigma^2 \xioct$ restricted to $\Uplambda$ can be expressed in terms of these coordinates, similarly to \eqref{eq:xioc-expand-1c}, except that we are using $\sigma$ as a coordinate along the base Legendre curve instead of $\xioct$: 
$$
\xioco + \sigma^2 \xioct = -2\phi_0(\sigma) - \xoco \phi_1(\yoco, \yoct, \sigma) + O(\xoc^2).
$$
We claim that $L$ is given by the graph of the differential of the function 
$$
d\frac{\Phi(\xoco,\yoco, \yoct, \sigma)}{\xoco^2} :=  d\Big( \frac{\phi_0(\sigma) + \xoc \phi_1(\yoco, \yoct, \sigma)}{\xoc^2} \Big) = d \big( \frac{\phi_0(\sigma)}{\xoc^2} \big) + d \big( \frac{\phi_1(\yoco, \yoct, \sigma)}{\xoc} \big),
$$
at $\xoco = 0$. To check this, we use \eqref{eq:1c1c-1form-0}.
Thus we need to check that, on $\Uplambda$, 
\begin{equation}\label{eq:to check}\begin{gathered}
\xioco + \sigma^2 \xioct = -2 \phi_0(\sigma) + \xoco \phi_1(\yoco, \yoct, \sigma) + O(\xoco^2), \\
- \sigma \xioct = \phi_0'(\sigma)   + O(\xoco), \\
\etaoco = d_{\yoco} \phi_1, \\
\sigma \etaoct = d_{\yoct} \phi_1.
\end{gathered}\end{equation}
The first equation holds by construction. To verify the second, we take the identity $\xioco + \sigma^2 \xioct = -2\phi_0(\sigma) + O(\xoco)$ and take the differential. We obtain, using the vanishing of \eqref{eq:contact form 1c-1c} at $\SL$, 
\begin{equation}\label{eq:base var}
-2 d\phi_0 = d (\xioco + \sigma^2 \xioct) = 2\sigma \xioct d\sigma   \ \text{ on } \SL,
\end{equation}
which implies the second line of \eqref{eq:to check}. The third and fourth lines follow from the identity $\alpha_{\oc} = d\phi_1$ which was obtained above.

In the general case, where we do not assume that $(\yoc, z)$ furnish coordinates on the fibres $F$, we need to allow additional variables in our parametrizing function:

\begin{defn} \label{defn: 1c-1c parametrization}
Suppose $\Legps$ is an 1c-1c fibred-Legendre submanifold of $\ffococ \subset \mathcal{M}$, $q \in \SL$ and $(\theta_0, \theta_1)$ is in a non-empty open subset $U$ of $\RR^{k_1+k_2}$, then we say that 
\begin{align}\label{eq:Phi 1c-1c param}
\Phi_{\oc-\oc}(\msf{K}, \theta_0, \theta_1) =  \frac{ \varphi_0(\sigma,\theta_0) }{x_{\oc}^2} + \frac{\varphi_1(\msf{K},\theta_0,\theta_1)}{x_{\oc}} , \quad \msf{K} = (\xoco, \sigma, \yoco, \yoct), 
\end{align}
gives a non-degenerate parametrization of $\SL$ near $q$ if there is a point $q' \in \RR^{2n-1}_{\sigma, \yoco, \yoct} \times U_{\theta_0, \theta_1}$ such that the differentials
\begin{equation}\label{eq:varphi0 1c}
d_{\sigma,\theta_0} \frac{\partial \varphi_0}{\partial \theta_{0j}}, \quad 1 \leq j \leq k_1,
\end{equation}
and
\begin{equation}\label{eq:varphi1 1c}
d_{\sigma, \yoco, \yoct, \theta_1} \frac{\partial \varphi_1}{\partial \theta_{1j}}, 1 \leq j \leq k_2,
\end{equation}
are linearly independent at $q'$, such that $\SL$ is given locally by 
\begin{align}\label{eq:1c-1c param}
\SL
=\{ (\msf{K},d_{\msf{K}}\Phi_{\oc-\oc}(\msf{K},\theta_0,\theta_1)) \mid \; \xoc = 0, (\msf{K},\theta_0,\theta_1) \in C_{\Phi_{\oc-\oc}}\},
\end{align}
where 
\begin{align}  \label{eq: 1c-1c critical set}
C_{\Phi_{\oc-\oc}} =  \{ (\msf{K},\theta_0,\theta_1) \mid d_{\theta_0}\varphi_0 = 0, d_{\theta_1}\varphi_1 = 0 \}
\end{align}
with the point $q' \in C_{\Phi_{\oc-\oc}}$ corresponding under \eqref{eq:1c-1c param} to $q \in L$. We remark that either $\theta_0$ or $\theta_1$, or both, may be absent, in which case the linear independence conditions for the derivatives in \eqref{eq:varphi0 1c}, \eqref{eq:varphi1 1c} are dispensed with, as well as stationarity with respect to $\theta_0$ and/or $\theta_1$ in \eqref{eq: 1c-1c critical set}. 
\end{defn}

\begin{prop}
Every 1c-1c fibred-Legendre submanifold $L$ has a local non-degenerate parametrization near any point $q \in L$.
\end{prop}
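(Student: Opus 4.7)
The plan is to mirror the proof of Proposition~\ref{prop: 1c-ps nondeg param}, with the main new ingredient being the $\sigma$-factor in the fibre symplectic form \eqref{eq:fibresympform-1c1c}.

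We first handle the case in which $(\sigma, \yoco, \yoct)$ furnish local coordinates on $\SL$ near $q$. In this case, the construction carried out immediately before \eqref{eq:to check} already produces the parametrization
$$
\Phi_{\oc-\oc}(\xoco, \sigma, \yoco, \yoct) = \frac{\phi_0(\sigma)}{\xoco^2} + \frac{\phi_1(\yoco, \yoct, \sigma)}{\xoco}
$$
with no auxiliary phase variables, where $\phi_0$ and $\phi_1$ are read off from the expansion \eqref{eq:xioc-expand-1c} using $\sigma$ in place of $\xioct$ as the parameter along the base Legendre curve (valid whenever $d\sigma\ne 0$, and in particular always achievable after a permutation of the roles of $\sigma,\xioct$ in the degenerate case, see below). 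The verification that this meets Definition~\ref{defn: 1c-1c parametrization} is exactly the chain of identities \eqref{eq:to check}.

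In general, we cannot assume $(\sigma, \yoco, \yoct)$ are coordinates on $\SL$. To find suitable coordinates, apply H\"ormander's splitting lemma (\cite[Thm.~21.2.17]{hormander2007analysis}) to the Lagrangian fibre $F = \pi_{\SL}^{-1}(p) \subset \pi_{\oc}^{-1}(p)$. The fibre symplectic form \eqref{eq:fibresympform-1c1c} is $d\etaoco \wedge d\yoco + \sigma\, d\etaoct \wedge d\yoct$; since $\sigma$ is constant on each fibre and bounded away from $0$ and $\infty$ near $q$ by the third clause of Definition~\ref{defn: admissible 1c-1c Lagrangian submanifold and 1c-1c fibred-Legendre submanifold}, the substitution $\widetilde{\etaoct} := \sigma\etaoct$ puts it in standard Darboux form. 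We obtain, after a linear change of variables in each of the $\yoco$ and $\yoct$ groups, a splitting $\yoco = (\yoco', \yoco'')$ and $\yoct = (\yoct', \yoct'')$ such that $(\sigma, \yoco', \etaoco'', \yoct', \widetilde{\etaoct}'')$ are local coordinates on $\SL$ near $q$, and we write $\yoco'' = Y_1''$, $\yoct'' = Y_2''$, $\phi_1|_{\xoco=0} = X$ as smooth functions of these. Following the template \eqref{eq:Phi-param-general}, we set $\theta_1 = (\etaoco'', \widetilde{\etaoct}'')$, take no $\theta_0$ variables, and define
$$
\Phi_{\oc-\oc}(\msf{K}, \theta_1) = \frac{\phi_0(\sigma)}{\xoco^2} + \frac{X + (\yoco'' - Y_1'')\cdot \etaoco'' + (\yoct'' - Y_2'')\cdot \widetilde{\etaoct}''}{\xoco}.
$$
The critical set $d_{\theta_1}\varphi_1 = 0$ forces $\yoco'' = Y_1''$ and $\yoct'' = Y_2''$; together with the identities \eqref{eq:to check} (applied at $\xoco = 0$ after these substitutions) and Proposition~\ref{prop:1c-1c contact symplectic property}, this recovers all defining equations of $\SL$. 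Non-degeneracy holds because $d\yoco''_j$ appears in $d\partial_{\etaoco''_j}\varphi_1$ and $d\yoct''_j$ in $d\partial_{\widetilde{\etaoct}''_j}\varphi_1$, and these differentials are independent of the remaining coordinates.

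The main obstacle, relative to the 1c-ps analogue, is managing the $\sigma$-dependence in the fibre symplectic form, which is dispatched by the rescaling $\etaoct \mapsto \sigma\etaoct$. The residual degenerate case $d\sigma = 0$ at $q$ along the base Legendre curve can be handled by introducing a single $\theta_0$ phase variable implementing a Legendre transformation exchanging $\sigma$ with its conjugate variable coming from $\xioct$, entirely parallel to the standard caustic treatment for Lagrangian distributions.
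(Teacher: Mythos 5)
Your construction is essentially the paper's construction in the regime where it works, but it has a genuine gap exactly where the paper's proof does extra work. Your main case uses $\sigma$ as the coordinate along the base Legendre curve (both to write $\phi_0(\sigma)$ and to complete the fibre coordinates $(\yoco',\etaoco'',\yoct',\widetilde{\etaoct}'')$ to coordinates on $\SL$), which requires $d\sigma\neq 0$ on $\pi_{\SL}(\SL)$ at $q$. Admissibility (Definition~\ref{defn: admissible 1c-1c Lagrangian submanifold and 1c-1c fibred-Legendre submanifold}) only guarantees $d\xioco\neq 0$, hence $d\xioct\neq0$ via the contact form \eqref{eq:contact form 1c-1c}, but says nothing about $d\sigma$. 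Moreover the case $d\sigma=0$ is not a ``residual degenerate case'': for the Legendrians that actually matter in this paper --- the 1c-conormal bundle of the diagonal (phase \eqref{eq: phase function, 1c conormal bundle}) and $\beta_b^*(\mathrm{Gr}(\Sg))'$, where stationarity in $t$ forces $\sigma\equiv1$ along the whole base curve --- one has $d\sigma\equiv 0$, so your main case never applies there. Your last sentence gestures at the fix (a Legendre transform in the $(\sigma,\xioct)$ pair via one $\theta_0$ variable), but you neither write the phase nor verify the nondegeneracy condition \eqref{eq:varphi0 1c} or the identification \eqref{eq:1c-1c param}, so as it stands the proposition is not proved near such points.

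The paper removes the case split by always taking $\theta_0=\xioct$ (the invariantly nonvanishing coordinate on the Legendre curve) and setting
\begin{equation*}
\varphi_0(\sigma,\xioct)=\phi_0(\xioct)-\tfrac12\bigl(\sigma^2-\Sigma^2(\xioct)\bigr)\xioct,
\end{equation*}
where $\sigma=\Sigma(\xioct)$ along the curve, together with $\varphi_1$ of the standard $(\,y-Y)\cdot\eta$ form in the fibre variables. One then checks that $d_{\xioct}\varphi_0=0$ forces $\sigma=\Sigma(\xioct)$ (using the vanishing of $d\xioco+\sigma^2 d\xioct$ and positivity of $\sigma$), and nondegeneracy in the $\theta_0$ slot is automatic because $d_{\sigma}\partial_{\xioct}\varphi_0$ contains the term $-\sigma\,d\sigma\neq0$; this works uniformly whether or not $d\sigma$ vanishes on the curve. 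If you carry out your sketched Legendre transform you will arrive at essentially this phase, but you must exhibit it and verify \eqref{eq:varphi0 1c} and \eqref{eq:1c-1c param} to close the argument; alternatively, adopt the uniform $\theta_0=\xioct$ construction from the start and drop the case distinction. Your rescaling $\widetilde{\etaoct}=\sigma\etaoct$ to put \eqref{eq:fibresympform-1c1c} in Darboux form before applying the splitting lemma is fine and is a harmless cosmetic variant of the paper's treatment of the fibre variables.
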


\begin{proof}
In the case where $(\sigma, \yoco, \yoct)$ furnish coordinates on $\SL$, we have seen above that there is a parametrization of $\SL$ with no additional variables $\theta_0, \theta_1$. 

In general, one cannot assume that $(\sigma, \yoco, \yoct)$ furnish coordinates on $\SL$. It is, however, possible to choose coordinates $(\yoco, \yoct) = (\yoco', \yoco'', \yoct', \yoct'')$ (after a linear change of variables), where $\yoco = (\yoco', \yoco'')$ is a splitting of the coordinates into two groups, so that, writing the corresponding `dual' variables $(\etaoco', \etaoco'', \etaoct', \etaoct'')$ (these are not exactly dual variables because of the scaling in $\xoco$), the functions
$$
 \yoco'', \ \etaoco', \ \yoct'', \ \etaoct'
$$
furnish coordinates locally on each fibre of $\SL$  \cite[Theorem 21.2.17]{hormander2007analysis}. It follows that 
$$
\xioct, \  \yoco'', \ \etaoco', \ \yoct'', \ \etaoct'
$$
furnish coordinates locally on $\SL$. 

We write $\xioco + \sigma^2 \xioct = - 2 \phi_0(\xioct) - \xoco \phi_1(\xioct, \yoco'', \etaoco', \yoct'', \etaoct' ) + O(\xoco^2)$ as before on $\SL$. We write the functions $\sigma = \Sigma(\xioct)$, $\yoco' = Y'_1(\xioct, \yoco'', \etaoco', \yoct'', \etaoct' )$ and $\yoct' = Y'_2(\xioct, \yoco'', \etaoco', \yoct'', \etaoct' )$ on $\SL$. Then 
following the argument in \cite[Prop. 21.2.18]{hormander2007analysis}, we claim that the function
\begin{multline}\label{eq:Phi-param-general-1c1c}
\Phi(\msf{K}, \xioct, \etaoco', \etaoct') = \frac{\varphi_0}{\xoco^2} + \frac{\varphi_1}{\xoco}, \quad \msf{K} = (\xoco, \sigma, \yoco, \yoct), \\
\varphi_0(\sigma, \xioct) = \phi_0(\xioct) - \frac1{2} (\sigma^2 - \Sigma^2(\xioct)) \xioct,  \\ 
\varphi_1(\msf{K}, \xioct, \etaoco', \etaoct') = \phi_1(\xioct, \yoco'', \etaoco', \yoct'', \etaoct' ) + (\yoco' - Y'_1) \cdot \etaoco' +  (\yoct' - Y'_2) \cdot \etaoct'
\end{multline}
provides a local parametrization of $\SL$, in the sense that 
\begin{equation}\label{eq:Leg1c param}
\SL = \Big\{ (\msf{K}, d_{\msf{K}} \Phi(\msf{K}, \xioct, \etaoco', \etaoct') \mid \xoc = 0, d_{\xioct} \varphi_0 = 0, \quad d_{\etaoco', \etaoct'} \varphi_1 = 0 \Big\}  .
\end{equation}

Let us verify that the condition $d_{\xioct} \varphi_0 = 0$ implies that $\sigma = \Sigma(\xioct)$. On the Legendre curve at the base of the fibration $\pi_{\SL}$, we write $\xioco = \xioco(\xioct)$ and $\sigma = \Sigma(\xioct)$. Then, in terms of these functions, 
$$
\varphi_0(\sigma, \xioct) = - \frac1{2} \Big( \xioco(\xioct) + \Sigma^2 \xioct + (\sigma^2 - \Sigma^2) \xioct \Big). 
$$
Differentiating in $\xioct$ we find that 
$$
d_{\xioct} \varphi_0(\sigma, \xioct) = - \frac1{2} \Big( \frac{ d\xioco}{d \xioct}  + \Sigma^2  + (\sigma^2 - \Sigma^2)  \Big). 
$$
The vanishing of the contact 1-form implies that the first two terms on the RHS cancel, showing that 
$$
d_{\xioct} \varphi_0(\sigma, \xioct) = 0 \implies \sigma^2 = \Sigma^2.
$$
Then, since both $\sigma$ and $\Sigma$ are positive, this implies that $\sigma = \Sigma$. With $\xioct$ fixed, it is not hard to show that $\varphi_1$ parametrizes the fibre $F_{\xioct}$ indexed by $\xioct$; this is a standard calculation. This completes the proof. 
\end{proof}

We will also need to consider clean parametrizations of a fibred-Legendre submanifold. 

\begin{defn}\label{defn: 1c-1c parametrization clean}
Suppose $\Legps$ is an 1c-1c fibred-Legendre submanifold of $\ffococ \subset \mathcal{M}$, $q \in \SL$ and $(\ococparaone, \ococparatwo)$ is in a non-empty open subset $U$ of $\RR^{k_0+k_1}$. We say that 
\begin{align}\label{eq:Phi 1c-1c param clean}
\Phi_{\oc-\oc}(\msf{K}, \ococparaone, \ococparatwo) =  \frac{ \varphi_0(\sigma,\ococparaone) }{x_{\oc,1}^2} + \frac{\varphi_1(\msf{K},\ococparaone,\ococparatwo)}{x_{\oc,1}} , \quad \msf{K} = (\xoco, \sigma, \yoco, \yoct), 
\end{align} 
gives a clean parametrization of $\SL$ near $q$ with excess $e$ if there is a point $q' = (\msf{K}, \ococparaone', \ococparatwo')$ such that the differentials
\begin{equation}\label{eq:varphi0 1c clean}
d_{\sigma,\ococparaone} \frac{\partial \varphi_0}{\partial \theta_{0j}}, \quad 1 \leq j \leq k_0,
\end{equation}
are linearly independent at $(\sigma', \ococparaone')$, and the differentials 
\begin{equation}\label{eq:varphi1 1c clean}
d_{\yoco, \yoct, \ococparatwo} \frac{\partial \varphi_1}{\partial \theta_{1j}}, 1 \leq j \leq k_1,
\end{equation}
have a fixed rank $k_1 - e$ near $q'$, such that $\SL$ is given locally by 
\begin{align}\label{eq:1c-1c param clean}
\SL
=\{ (\msf{K},d_{\msf{K}}\Phi_{\oc-\oc}(\msf{K},\ococparaone,\ococparatwo)) \mid \; \xoc = 0, (\msf{K},\ococparaone,\ococparatwo) \in C_{\Phi_{\oc-\oc}}\},
\end{align}
where 
\begin{align}  \label{eq: 1c-1c critical set clean}
C_{\Phi_{\oc-\oc}} =  \{ (\msf{K},\ococparaone,\ococparatwo) \mid d_{\ococparaone}\varphi_0 = 0, d_{\ococparatwo}\varphi_1 = 0 \}.
\end{align}
As before, either $\ococparaone$ or $\ococparatwo$ may be absent, in which case conditions for the derivatives in \eqref{eq:varphi0 1c clean}, resp. \eqref{eq:varphi1 1c clean} are dispensed with, as well as stationarity with respect to $\ococparaone$, resp. $\ococparatwo$ in \eqref{eq: 1c-1c critical set}. 
\end{defn}

\begin{remark}
The set $C_{\Phi_{\oc-\oc}}$ is a submanifold of dimension $n+e$ due to the rank condition above, and the map from $C_{\Phi_{\oc-\oc}}$ to $\SL$ determined by \eqref{eq:1c-1c param clean} is a smooth fibration with $e$-dimensional fibres. A non-degenerate parametrization is a clean parametrization with $e= 0$. Clean parametrizations will show up `naturally' when we compose 1c-ps FIOs, as we shall see in the next section. 
We could allow for constant rank rather than linear independence in the $\ococparaone$-derivatives \eqref{eq:varphi0 1c clean} as for the $\ococparatwo$-derivatives, but for simplicity we shall not do so here, as we only encounter the drop of rank in the $\ococparatwo$-derivatives in this article. 
\end{remark}

\section{1c-1c Fourier integral operators}
\label{sec: 1c-1c Lagrangian distribution}

\subsection{Definition of 1c-1c fibred-Legendre distributions and Fourier integral operators}\label{subsec:defn 1c-1c distributions}
\begin{defn} \label{def:1c-1c-FIO}
Let $\Uplambda$ be an admissible Lagrangian submanifold of $\ococb$ as in Definition~\ref{defn: admissible 1c-1c Lagrangian submanifold and 1c-1c fibred-Legendre submanifold}, with boundary $\SL$. 
We define 
$I^{m}_{\oc-\oc}(X_b^2, \SL; \Omega_{\oc-\oc}^{1/2})$, i.e., the space of 1c-1c fibred-Legendre distributions of order $m$, to be the space of operators with Schwartz kernel given (modulo a Schwartz function) by a finite sum of terms of the form 
\begin{align} \label{eq: 1c-1c FIO, local form}
\begin{split}
 & (2\pi)^{-\frac{n+(k_0+k_1-e)}{2}} \Big(\int 
e^{i\Phi_{\oc-\oc}(\msf{K},v,w)} 
 a(\msf{K},v,w)
\\& x_{\oc,1}^{-m-\frac{2k_0+(k_1-e)}{2}+ \frac{n+1}{2}} dvdw \Big) 
|\frac{d\sigma dy_{\oc,2}}{x_{\oc,1}^{n+1}}|^{1/2}|\frac{dx_{\oc,1}dy_{\oc,1}}{x_{\oc,1}^{n+2}}|^{1/2},
\end{split}
\end{align}
where $\Phi_{\oc-\oc}$ is a phase function of the form \eqref{eq:Phi 1c-1c param clean} locally parametrizing $\mathcal{L}$ non-degenerately in the sense of Definition \ref{defn: 1c-1c parametrization}.
Moreover,  $a \in C^\infty_c([0,\infty)_{x_{\oc}} \times [C^{-1},C]_{\sigma} \times \mathbb{S}^{n-1} \times \mathbb{S}^{n-1} \times \R^{k_0} \times \R^{k_1})$, where $v \in \R^{k_0},w \in \R^{k_1}$. 
\end{defn}

\begin{remark}
    As demonstrated by Proposition~\ref{prop: phase equivalence, general parameter number, signature} below, the class of operators won't be enlarged if we allow clean phase functions in \eqref{eq: 1c-1c FIO, local form}.
    See also the proof of Proposition~\ref{prop: phase equivalence, general parameter number, signature} at the end of Appendix~\ref{app: phase equivalence}.
\end{remark}

The density factor $|\frac{d\sigma dy_{\oc,2}}{x_{\oc}^{n+1}}|^{1/2}$ here equals  
$|\frac{d(\sigma x_{\oc}) dy_{\oc,2}}{x_{\oc}^{n+2}}|^{1/2}$ to the leading order, which is the 1-cusp density and $\Omega_{\oc-\oc}^{1/2}$ denotes the density bundle with typical section as in the expression above.
One can view this as the product of 1-cusp half-densities on each factors lifted to $X_b^2$.

As usual, the space of fibred-Legendre distributions depends only on the fibred-Legendre submanifold $\SL$ (or more precisely, the 1-jet of its Lagrangian extension $\Uplambda$) and not on the particular choice of phase functions. This follows from the proposition below.

\begin{prop} \label{prop: phase equivalence, general parameter number, signature}
Let $\Phi_{\oc-\oc},\tilde{\Phi}_{\oc-\oc}$ be two clean parametrizations of $\mathcal{L}_{\oc}$ near $q \in \mathcal{L}_{\oc}$. Then any local expression
\begin{align*}
 \int e^{i \Phi_{\oc-\oc} } 
x_{\oc,1}^{-m - \frac{2k_0+(k_1-e)}{2} +\frac{n+1}{2} }
a(x_{\oc,1},\sigma,y_{\oc},y_{\oc}'',v,w)dvdw,
\end{align*}
with $a$ smooth, supported in the region where the parametrizations of $\Phi_{\oc-\oc},\tilde{\Phi}_{\oc-\oc}$ are both valid, can also be written as
\begin{align*}
A = u_0 + \int e^{i \tilde{\Phi}_{\oc-\oc} } 
x_{\oc,1}^{-m - \frac{2\tilde{k}_0+(\tilde{k}_1-e)}{2} +\frac{n+1}{2} }
\tilde{a}(x_{\oc,1},\sigma,y_{\oc},y_{\oc}'',\tilde{v},\tilde{w})d\tilde{v}d\tilde{w},
\end{align*}
with $u_0$ being Schwartz, $\tilde{a}$ smooth.
\end{prop}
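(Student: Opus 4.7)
The plan is to follow the standard Hörmander-style strategy used for Proposition~\ref{prop:invariance under parametrization change} in the 1c-ps case, but with two new complications: the split two-tier structure of the phase ($\varphi_0/\xoco^2 + \varphi_1/\xoco$), and the presence of a non-trivial excess $e$. First I would reduce the clean case to the non-degenerate case. By the constant rank theorem applied to \eqref{eq:varphi1 1c clean}, on $C_{\Phi_{\oc-\oc}}$ one can split the $\ococparatwo$ variables (locally, after a smooth change of variables) into $(\ococparatwo', \ococparatwo'')$ with $\dim \ococparatwo'' = e$, such that $d_{\ococparatwo'} \varphi_1$ is a non-degenerate local coordinate on the $\ococparatwo$-factor transverse to the critical set, and the restriction of $\varphi_1$ to the $\ococparatwo''$-directions is stationary to infinite order along $C_{\Phi_{\oc-\oc}}$. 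A preliminary Taylor expansion in $\ococparatwo''$ around the fibre of the clean map to $\SL$ then lets me write, modulo a term with faster decay in $\xoco$ absorbed into a residual term, the amplitude times $e^{i\varphi_1/\xoco}$ in a form amenable to Morse-lemma reduction in $\ococparatwo''$. In this way the $e$ extra variables are traded for the factor $\xoco^{e/2}$, recovering precisely the shift $k_1 \mapsto k_1 - e$ in the exponent of $\xoco$ in \eqref{eq: 1c-1c FIO, local form}. After this reduction, both $\Phi_{\oc-\oc}$ and $\tilde\Phi_{\oc-\oc}$ may be assumed non-degenerate.

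Next I would handle the non-degenerate case. As in the $\oc$-$\ps$ setting, one first makes the number of phase variables match by adding quadratic forms $Q_0(\theta_0', \theta_0')/\xoco^2 + Q_1(\theta_1', \theta_1')/\xoco$ with appropriate signatures; the factors arising from stationary phase in these dummy variables are precisely balanced by the change in the $\xoco$-exponent dictated by the powers of $k_0$ and $k_1$ in \eqref{eq: 1c-1c FIO, local form}. The heart of the argument is then a two-tier analogue of Hörmander's equivalence of phase functions (the 1c-1c counterpart of Proposition~\ref{prop:1c-ps-phase-equivalence}). The base-level statement is that, since $\varphi_0$ and $\tilde\varphi_0$ both parametrize the same Legendre curve in $\RR^3_{\xioco, \xioct, \sigma}$ via the contact form \eqref{eq:contact form 1c-1c}, with the same signature, one may, by a smooth diffeomorphism in $(\sigma, \theta_0)$-variables, transform $\varphi_0$ into $\tilde\varphi_0$ up to a function of $\xioct$ alone (which is constant on each fibre of $\pi_{\SL}$).

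The fibre-level statement is the analogous equivalence for the restrictions $\varphi_1|_{\xioct = \mathrm{const}}$ and $\tilde\varphi_1|_{\xioct = \mathrm{const}}$, both of which parametrize the same Lagrangian fibre for the symplectic form \eqref{eq:fibresympform-1c1c}; this is the classical Hörmander equivalence, applied with $\xioct$ as a smooth parameter. Combining these with a Borel-type construction in the $\xoco$-direction (choosing the higher Taylor coefficients in $\xoco$ of the diffeomorphism to successively cancel the error at each order, which can be done because the non-degeneracy of $\varphi_0$ and $\varphi_1$ gives the necessary implicit-function solvability order by order) yields a smooth change of variables in $(\xoco, \sigma, \yoco, \yoct, \theta_0, \theta_1)$ transforming $\Phi_{\oc-\oc}$ into $\tilde\Phi_{\oc-\oc}$ up to a term that vanishes to infinite order at $\xoco = 0$ on the critical set; this residue contributes a Schwartz error.

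The main obstacle I anticipate is the interaction between the two tiers: in carrying out the order-by-order Borel construction, one must verify that the solvability at each step for $\varphi_1$ does not destroy the normalization already achieved for $\varphi_0$. This requires that the $\theta_0$-variables in $\tilde\varphi_0$ can be chosen independently of the diffeomorphism adjusting $\varphi_1$, which should follow from the fact that $\varphi_0$ depends only on $(\sigma, \theta_0)$ and is independent of $(\yoco, \yoct, \theta_1)$; hence the change of variables decouples into a base piece in $(\sigma, \theta_0)$ and a fibred piece in $(\yoco, \yoct, \theta_1)$ parametrized by $\xioct$, allowing them to be constructed sequentially. Once this decoupling is established, the Jacobian and Maslov bookkeeping proceed exactly as in the $\oc$-$\ps$ proof, confirming the invariance of \eqref{eq: 1c-1c FIO, local form} under change of parametrization.
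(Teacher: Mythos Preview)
Your overall architecture matches the paper's: reduce clean to non-degenerate, then equalize the number of phase variables by adjoining quadratic forms, then prove a two-tier equivalence-of-phase-functions lemma handling $\varphi_0$ first and $\varphi_1$ second. Your decoupling observation at the end is exactly how the paper organizes Lemma~\ref{lemma: phase equivalence, same parameter number, signature}: first a change in $(\sigma,v)$ alone to match $\varphi_0$ (via the Melrose--Zworski/H\"ormander argument), then a change of the form \eqref{eq: tilde v,w change} to match $\varphi_1$, solved by a path argument and the implicit function theorem.

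There is, however, a genuine confusion in your clean-to-non-degenerate step. On $C_{\Phi_{\oc-\oc}}$ the Hessian of $\varphi_1$ in the $w''$-directions is \emph{zero}, since by construction $\partial_{w''}$ spans the kernel of $d^2_{ww}\varphi_1$; so there is no Morse-lemma reduction available in $w''$, and the claim that $\varphi_1$ is ``stationary to infinite order'' in $w''$ is not generally true off the critical set. Correspondingly, no factor $x_{\oc}^{e/2}$ is produced: the exponent in \eqref{eq: 1c-1c FIO, local form} already carries $(k_1-e)$, precisely so that integrating out the compactly supported $w''$-variables (which costs no power of $x_{\oc}$) leaves the non-degenerate formula with $k_1-e$ phase variables unchanged. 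The paper's route here is simpler and is what you should do: view the clean phase as a family of non-degenerate phases parametrized by $w''$ (the splitting \eqref{eq:ococpara-splitting}), apply the non-degenerate equivalence with $w''$ carried as a smooth parameter, and then absorb the remaining $dw''$-integration into the amplitude.

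Finally, your Borel-in-$x_{\oc}$ construction is more than you need. The paper's Lemma~\ref{lemma: phase equivalence, same parameter number, signature} produces an \emph{exact} change of variables $\tilde\Phi_{\oc-\oc}(\mathsf X,\tilde V,\tilde W)=\Phi_{\oc-\oc}(\mathsf X,v,w)$, not merely equality modulo $O(x_{\oc}^\infty)$: once $\varphi_0$ is matched, the discrepancy $\psi-\varphi$ is $O(x_{\oc}\,|d_v\varphi_0|^2+|d_w\varphi_1|^2)$ and is removed by solving a single matrix equation \eqref{eq: matrix equation, A,B,C} along a path of non-degenerate phases (using the signature hypothesis). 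This avoids any order-by-order argument and the attendant worry about cross-tier interaction.
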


\begin{proof} See Appendix~\ref{app: phase equivalence}. 
\end{proof}

\subsection{Principal symbol}
In order to define the symbol calculus of 1c-1c Lagrangian distributions, we need to define the principal symbol of $A \in I^m_{\oc-\oc}(X_b^2,\Uplambda)$.

For $A$ in the form (\ref{eq: 1c-1c FIO, local form}), let $\msf{X}'=(\sigma,y_{\oc},y_{\oc}'')$, and
let $\lambda$ be a set of functions of $(\msf{X}',v,w)$ such that $(x_{\oc},\lambda,d_{v}\varphi_0+x_{\oc}d_v\varphi_1,d_w\varphi_1)$ form a local coordinate system near $C_{\Phi}$.
When $e=0$ (that is, when the parametrization is non-degenerate), we define the principal symbol of $A$ to be the half density on $\mathcal{L}_{\oc} \cap \partial X_b^2$ given by:
\begin{align} \label{defn: 1c-1c principal symbol, non-degenerate}
\sigma_{\oc-\oc}^m(A) = a(0,\msf{X}',v,w)|_{C_{\Phi_{\oc-\oc}}} | \frac{\partial(d_v\varphi_0,d_{w}\varphi_1,\lambda)}{\partial(\msf{X}',v,w)}|^{-1/2} |d\lambda|^{1/2}.
\end{align}


For general $e \geq 0$, let $\Phi_{\oc-\oc}$ be as in \eqref{eq:Phi 1c-1c param clean}, then there is a (local) splitting of $\ococparatwo$:
\begin{equation} \label{eq:ococpara-splitting}
   \ococparatwo = (\ococparatwo',\ococparatwo''), \; \ococparatwo' \in \R^{k_1-e} , \ococparatwo'' \in \R^e
\end{equation}
such that for each fixed $\ococparatwo''$, $\Phi_{\oc-\oc}$ gives a non-degenerate parametrization of $\Uplambda$, which means that there is a fibration of $C_{\Phi_{\oc-\oc}}$ (the critical set of $\Phi_{\oc-\oc}$) over $\Uplambda$ with $e$-dimensional fiber parametrized by $\ococparatwo''$.
From another perspective, we have a family of non-degenerate phase functions parametrized by $w''$, in the same way as in \cite[Equation~(7.6), Lemma~7.2]{duistermaat-guillemin1975spectrum}, integrating the amplitude along such fiber parametrized by $w''$ gives our half-density as in \eqref{defn: 1c-1c principal symbol, non-degenerate} when we parametrize using non-degenerate phase functions. More precisely, in this setting we have:
\begin{align} \label{defn: 1c-1c principal symbol, clean}
\sigma_{\oc-\oc}^m(A) = \int_{C_q} a(0,\msf{X}',v,w) 
d\ococparatwo''
| \frac{\partial(d_v\varphi_0,d_{w'}\varphi_1,\lambda)}{\partial(\msf{X}',v,w')}|^{-1/2} |d\lambda|^{1/2}.
\end{align}
Here $q \in \Uplambda$ and $C_q$ is the fiber in $C_{\Phi_{\oc-\oc}}$ over $q$.
Then this is invariant (except for Maslov factors and $E(\mathcal{L}_{\oc})$ factors) in the leading order, which can be shown in the same way as in \cite[Section~25.1]{hormander2009analysis}. 
Notice that here we are only concerning the clean parametrization with excess in $\varphi_1$, hence the parabolic scaling between $\varphi_0$ and $\varphi_1$ part is not involved and the proof for invariance under change of coordinates in the homogeneous case, after converting to polar coordinates, almost applies here directly.

In order to make this concept of principal symbol invariant under change of parametrizations and scaling of boundary defining functions, we introduce
\begin{align}
S_{\oc-\oc}^{[m]}(\mathcal{L}_{\oc}):= |N^*\partial X_b^2|^{- m - \frac{n+1}{2} }
\otimes M(\mathcal{L}_{\oc}) \otimes E(\mathcal{L}_{\oc}),
\end{align}
where $M(\mathcal{L}_{\oc})$ is the Maslov bundle used in \cite[Section~3.2,3.3]{FIO1}, which in turn comes from \cite{maslov1972asymptotic}; and $|N^*\partial X_b^2|^{-m-\frac{n+1}{2}}$, 
models a section that is homogeneous of degree $-m-\frac{n+1}{2}$ in $x_{\oc}$.
The order $-m-\frac{n+1}{2}$ comes from considering the homogeneous degree of:
\begin{align*}
  x_{\oc}^{-m + \frac{n+1}{2} }|\frac{dx_{\oc}dy_{\oc,1}}{x_{\oc}^{n+2}}|^{1/2}
|\frac{d\sigma dy_{\oc,2}}{x_{\oc}^{n+1}}|^{1/2},
\end{align*}
while the factor $x^{- \frac{2k_1+k_2}{2} }$ is encoded in $| \frac{\partial(d_v\varphi_0,d_w\varphi_1,\lambda)}{\partial(\msf{X},v,w)}|^{-1/2}$ in an invariant manner. Similar to the 1c-ps setting, the line bundle $E(\mathcal{L}_{\oc})$ here is introduced to make this definition invariant under change of boundary defining functions, see \cite[Section~3.1]{hassell2001resolvent} for more details.

Combining discussions above, we can view the symbol map as
\begin{align} \label{eq: invariant, symbol map, 1c-1c}
\begin{split}
\sigma^m_{\oc-\oc}: \quad  I^m_{\oc-\oc}(X_b^2,\Uplambda;\Omega_{\oc-\oc}^{1/2}) 
\rightarrow 
 C^\infty(\mathcal{L}_{\oc} \cap \partial X_b^2; 
\Omega^{1/2}(\mathcal{L}_{\oc}) \otimes S_{\oc-\oc}^{[m]}(\mathcal{L}_{\oc})).
\end{split}
\end{align}

We say $A \in I^m_{\oc-\oc}(X_b^2,\Uplambda;\Omega_{\oc-\oc}^{1/2})$ is elliptic at $q \in \mathcal{L}_{\oc}$ if the amplitude on right hand side of \eqref{defn: 1c-1c principal symbol, non-degenerate} or \eqref{defn: 1c-1c principal symbol, clean} is non-vanishing at the point that is sent to $q$ under the parametrization map in Definition~\ref{defn: 1c-1c parametrization clean}. This is equivalent to requiring the principal symbol to be invertible near $q$ as the section of $\Omega^{1/2}(\mathcal{L}_{\oc}) \otimes S_{\oc-\oc}^{[m]}(\mathcal{L}_{\oc})$.

The fact that this symbol map captures leading order singularity can be summarized as following statement, which follows from the definition.
\begin{prop}
The principal symbol map $\sigma^m_{\oc-\oc}$ gives rise to following short exact sequence:
\begin{align}
\begin{split}
0 & \rightarrow I^{m-1}_{\oc-\oc}(X_b^2,\mathcal{L}_{\oc})
\rightarrow I^{m}_{\oc-\oc}(X_b^2,\mathcal{L}_{\oc})
\\ & \xrightarrow{\sigma^m_{\oc-\oc}} C^\infty(\mathcal{L}_{\oc} \cap \partial X_b^2; 
\Omega^{1/2}(\mathcal{L}_{\oc}) \otimes S_{\oc-\oc}^{[m]}(\mathcal{L}_{\oc})) \rightarrow 0.
\end{split}
\end{align}
\end{prop}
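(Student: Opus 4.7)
The plan is to prove the three exactness claims in turn, following closely the pattern of standard Lagrangian calculus and the 1c-ps adaptation already established in Lemma~\ref{lemma: vanishing amplitude->lower order}. Exactness on the left, i.e., $I^{m-1}_{\oc-\oc}(X_b^2, \mathcal{L}_{\oc}) \subset \ker \sigma^m_{\oc-\oc}$, is immediate from inspection of the order convention in Definition~\ref{def:1c-1c-FIO}: any $A \in I^{m-1}_{\oc-\oc}$ can be written in the form \eqref{eq: 1c-1c FIO, local form} with the exponent of $\xoco$ increased by one relative to the order-$m$ form, so that its amplitude carries an extra factor of $\xoco$ that vanishes upon restriction to $\{\xoco = 0\}$. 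By the formula \eqref{defn: 1c-1c principal symbol, non-degenerate}, this restriction is precisely what $\sigma^m_{\oc-\oc}$ reads off, giving $\sigma^m_{\oc-\oc}(A) = 0$.

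For surjectivity of $\sigma^m_{\oc-\oc}$, given a section $s$ of $\Omega^{1/2}(\mathcal{L}_{\oc}) \otimes S^{[m]}_{\oc-\oc}(\mathcal{L}_{\oc})$, I would cover $\mathcal{L}_{\oc} \cap \partial X_b^2$ by open sets on each of which $\mathcal{L}_{\oc}$ admits a non-degenerate local parametrization $\Phi_{\oc-\oc}$ as in Definition~\ref{defn: 1c-1c parametrization}. Formula \eqref{defn: 1c-1c principal symbol, non-degenerate} expresses the symbol locally as the amplitude restricted to $C_{\Phi_{\oc-\oc}}$ multiplied by a nonvanishing Jacobian factor; one can therefore pick an amplitude $a$ on each patch whose restriction to the critical set realizes the prescribed section, then glue via a subordinate partition of unity on $\mathcal{L}_{\oc} \cap \partial X_b^2$. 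The Maslov and $E(\mathcal{L}_{\oc})$ factors in $S^{[m]}_{\oc-\oc}$ ensure compatibility on overlaps in the standard manner.

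The substantive step is exactness in the middle. After a microlocal partition of unity and the use of Proposition~\ref{prop: phase equivalence, general parameter number, signature} to match parametrizations, reduce to a single non-degenerate $\Phi_{\oc-\oc}$. The hypothesis $\sigma^m_{\oc-\oc}(A) = 0$ together with \eqref{defn: 1c-1c principal symbol, non-degenerate} imply that the amplitude $a$ vanishes on the critical set $C_{\Phi_{\oc-\oc}}$, which by non-degeneracy is transversely cut out by $d_v \varphi_0 + \xoco\, d_v \varphi_1 = 0$ and $d_w \varphi_1 = 0$. We may therefore write
\begin{align*}
a = a_0 \cdot (d_v \varphi_0 + \xoco\, d_v \varphi_1) + a_1 \cdot d_w \varphi_1,
\end{align*}
with $a_0, a_1$ smooth. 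Integration by parts using the identities
\begin{align*}
(d_v \varphi_0 + \xoco\, d_v \varphi_1)\, e^{i\Phi_{\oc-\oc}} = -i\, \xoco^2\, \partial_v e^{i\Phi_{\oc-\oc}}, \qquad (d_w \varphi_1)\, e^{i\Phi_{\oc-\oc}} = -i\, \xoco\, \partial_w e^{i\Phi_{\oc-\oc}},
\end{align*}
converts the oscillatory integral into the same form with $\xoco$-exponent increased by one and new amplitude $\xoco\, \mathrm{div}_v a_0 + \mathrm{div}_w a_1$ (up to signs). By \eqref{eq: 1c-1c FIO, local form} this is precisely the $\xoco$-exponent of an element of $I^{m-1}_{\oc-\oc}$, so $A \in I^{m-1}_{\oc-\oc}$ as required.

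The main obstacle I foresee is purely bookkeeping: confirming that the half-density, Maslov, and $E(\mathcal{L}_{\oc})$ bundle factors track correctly through the integration-by-parts so that the order reduction lands exactly at $m-1$ rather than at $m-2$ or elsewhere. Since the mechanism and numerology are identical to the 1c-ps case that has already been handled in Lemma~\ref{lemma: vanishing amplitude->lower order}, no new ideas are required beyond a straightforward 1c-1c translation of the $\xoco$-exponent arithmetic adapted to the normalization $-m - \frac{2k_0 + k_1}{2} + \frac{n+1}{2}$ of Definition~\ref{def:1c-1c-FIO}.
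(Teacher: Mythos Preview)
Your proposal is correct and takes essentially the same approach as the paper: the paper dismisses exactness at the ends as following from the definition and refers the middle exactness to an argument ``similar to Lemma~\ref{lemma: vanishing amplitude->lower order}'', which is precisely the integration-by-parts you spell out. Your treatment is more detailed than the paper's one-sentence proof, and your identities carry the correct factors of $-i$ that the paper's Lemma~\ref{lemma: vanishing amplitude->lower order} omits (harmlessly).
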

The exactness at the first and last (non-zero) component follows from definition, while the exactness in the middle follows from an argument similar to Lemma \ref{lemma: vanishing amplitude->lower order}.

Now we discuss the ellipticity. Let $q$ be the image of $(0,\msf{X}',v,w)$ as in \eqref{eq:1c-1c param}, 
then we say that $A \in I^{m}_{\oc-\oc}$ is elliptic at $q$ if $\sigma^m_{\oc-\oc}(A)$ is invertible at $q$ in the sense that there exists $B \in \Psi_{\oc}^{-m}$ such that
\begin{align}
\sigma^0_{\oc-\oc}(BA) =
|\frac{\partial(d_v\varphi_0,d_w\varphi_1,\lambda)}{\partial(\msf{X}',v,w)}|^{-1/2} |d\lambda|^{1/2},
\end{align}
where coordinates are as in \eqref{defn: 1c-1c principal symbol, non-degenerate}, and this is equivalent to $a(0,\msf{X}',v,w) \neq 0$ in \eqref{defn: 1c-1c principal symbol, non-degenerate}. In addition, this property is independent of the choice of the parametrization.

\subsection{Composition with one-cusp pseudodifferential operators}

Now we start to discuss the composition.
Let $\mathcal{L}_i = C_i' \cap \ffococ$, $i=1,2$ be admissible 1c-1c Legendre submanifolds with $C_i'$ being the corresponding Lagrangian submanifolds.
Suppose $C_1,C_2$ are lifts of canonical relations in $\leftidx{^{\oc}}{T^*X} \times \leftidx{^{\oc}}{T^*X}$ to $\ococb$, and $C_2 \times C_1$ 
intersect the lift to $\ococb \times \ococb$ of the diagonal in the second and third components of
\begin{align*}
\leftidx{^{\oc}}{T^*X} \times \leftidx{^{\oc}}{T^*X} \times \leftidx{^{\oc}}{T^*X} \times \leftidx{^{\oc}}{T^*X}
\end{align*}
transversally. 

Recall from \cite[Section~4.2]{FIO1} (see \cite[Theorem~21.6.7]{hormander2007analysis} for the clean intersection case) that there is a natural bilinear map
\begin{align}  \label{eq: symbol product, bundle maps}
\begin{split}
& \Omega^{1/2}(\mathcal{L}_2) \otimes S^{[m_2]}(\mathcal{L}_2)
 \times \Omega^{1/2}(\mathcal{L}_1) \otimes S^{[m_1]}(\mathcal{L}_1) 
 \\ & \rightarrow  \Omega^{1/2}((C_2 \circ C_1)' \cap \ffococ) \otimes S^{[m_2+m_1]}( (C_2 \circ C_1) ' \cap \ffococ).
\end{split}
\end{align}
Comparing the power of $|N^*\partial X_b^2|$ in $S^{[m_i]}-$factors and that in the $S^{[m_1+m_2]}-$factor, there is an extra $-\frac{n+1}{2}-$power of $|N^*\partial X_b^2|$, this is encoded in the half density bundle part. See also the counting of orders at the end of proof Proposition \ref{prop: 1c-1c transversal composition}  below.
Denoting this bilinear map by `$\times$', we have:
\begin{prop}  
\label{prop: 1c-1c transversal composition}
Suppose $C_1 \times C_2$ satisfies the transversal intersection condition above, and $A_1 \in I^{m_1}_{\oc-\oc}(X^2_b,C_1'), A_2 \in I^{m_2}_{\oc-\oc}(X^2_b,C_2')$, then we have
\begin{align*}
A_2A_1 \in I^{m_1+m_2}_{\oc-\oc}(X_b^2, (C_2 \circ C_1)').
\end{align*}
And when they have $a_1,a_2$ as their principal symbol respectively, then $A_2A_1$ has principal symbol
\begin{align*}
a_2 \times a_1.
\end{align*}
\end{prop}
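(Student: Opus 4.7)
The plan is to follow the standard strategy for FIO composition (cf. the transversal case in \cite[Theorem 21.2.19]{hormander2007analysis}), adapted to the 1c-1c anisotropic setting. First, I would cover $\mathcal{L}_i = C_i' \cap \ffococ$ by coordinate patches and parametrize $A_1, A_2$ locally by non-degenerate phase functions $\Phi_i$ of the form \eqref{eq:Phi 1c-1c param clean}, with phase variables $(v_i, w_i)$. Writing superscripts $(1), (2), (3)$ for the two input copies and the output copy of $X_b^2$, the Schwartz kernel of $A_2 A_1$ is obtained by integrating the product of these oscillatory integrals over the middle copy of $X$. The middle identifications $\xoco^{(1)} = \xoct^{(2)} =: x_{\mathrm{mid}}$ and $\yoco^{(1)} = \yoct^{(2)} =: y_{\mathrm{mid}}$, together with $\xoco^{(3)} = \xoco^{(2)}$, $\xoct^{(3)} = \xoct^{(1)}$ and $\sigma^{(3)} = \sigma^{(1)} \sigma^{(2)}$, convert the 1-cusp integration over the middle variable into a smooth integration over $(\sigma^{(1)}, y_{\mathrm{mid}})$ at the level of $\ffococ$.

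Second, I would check that the combined phase $\Phi_{\mathrm{tot}} = \Phi_1 + \Phi_2$, after collecting powers of $\xoco^{(3)}$, takes the form $\Psi_0/(\xoco^{(3)})^2 + \Psi_1/\xoco^{(3)}$ with
\[
\Psi_0 = (\sigma^{(2)})^2 \varphi_0^{(1)}(\sigma^{(1)}, v_1) + \varphi_0^{(2)}(\sigma^{(2)}, v_2), \quad \sigma^{(2)} = \sigma^{(3)}/\sigma^{(1)},
\]
and $\Psi_1$ a corresponding combination of $\sigma^{(2)} \varphi_1^{(1)}$ and $\varphi_1^{(2)}$. This is again of the form \eqref{eq:Phi 1c-1c param clean}, with new $v$-variables $(\sigma^{(1)}, v_1, v_2)$ and new $w$-variables $(y_{\mathrm{mid}}, w_1, w_2)$. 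Linear independence of the fibre derivatives of $\Phi_{\mathrm{tot}}$, as required in Definition~\ref{defn: 1c-1c parametrization}, is equivalent, by the standard argument, to the transversality of $C_2 \times C_1$ with the middle diagonal. The parabolic scaling decouples the $\varphi_0$- and $\varphi_1$-non-degeneracy checks, so the standard transversality criterion may be applied to each in turn. A dimension count confirms that $(C_2 \circ C_1)' \cap \ffococ$ has the expected dimension $2n-1$, so $\Phi_{\mathrm{tot}}$ gives a non-degenerate local parametrization of it.

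Third, the principal symbol is computed by stationary phase. Integration over the original $(v_i, w_i)$ reproduces the half-densities $\sigma^{m_i}_{\oc-\oc}(A_i)$ from \eqref{defn: 1c-1c principal symbol, non-degenerate}, restricted to the composition, while integration over the new phase variables $(\sigma^{(1)}, y_{\mathrm{mid}})$ corresponds precisely to the fibre integration in the bilinear symbol product \eqref{eq: symbol product, bundle maps}. Careful power counting of $\xoco^{(3)}$, using $\xoco^{(1)} = \xoco^{(3)}/\sigma^{(2)}$ and the gain of $(\xoco^{(3)})^{n/2}$ from stationary phase in the $n$ new variables, together with the conversion factor coming from the middle 1-cusp density $x_{\mathrm{mid}}^{-(n+2)}\,dx_{\mathrm{mid}}\,dy_{\mathrm{mid}}$, yields the expected order $m_1 + m_2$ and the correct $-(m_1+m_2)-(n+1)/2$ power of $|N^*\partial X_b^2|$ in $S^{[m_1+m_2]}_{\oc-\oc}$. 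The Maslov contributions compose by the standard rule, as the anisotropic scaling does not affect the signature combinatorics.

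The main obstacle will be bookkeeping. The anisotropic scaling between the $\varphi_0$ and $\varphi_1$ parts means that the transversality argument is best executed in two stages, one for each `scale', and the density factors (in particular the different $(\xoc)^{-(n+2)}$ and $(\xoc)^{-(n+1)}$ powers appearing in the 1c-1c half-density of \eqref{eq: 1c-1c FIO, local form}) must be tracked carefully through the change of variables from $x_{\mathrm{mid}}$ to $\sigma^{(1)}$. Once these are verified, the membership $A_2 A_1 \in I^{m_1+m_2}_{\oc-\oc}(X_b^2, (C_2 \circ C_1)')$ follows directly from the form of $\Phi_{\mathrm{tot}}$, and the symbol formula $\sigma^{m_1+m_2}_{\oc-\oc}(A_2 A_1) = a_2 \times a_1$ is the content of the leading-order stationary phase calculation.
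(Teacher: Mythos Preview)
Your proposal is correct and follows essentially the same route as the paper: write the composition as an oscillatory integral with combined phase $\Phi_1+\Phi_2$, identify the middle variables $(\sigma^{(1)},y_{\mathrm{mid}})$ as new phase variables (of $v$- and $w$-type respectively, exactly as you say), invoke the standard transversality argument from \cite[Theorem~4.2.2]{FIO1} applied separately to the $\varphi_0$- and $\varphi_1$-parts, and then count powers of $x_{\oc}$ to read off the order. The paper does not actually perform stationary phase to reduce the number of phase variables for the membership statement---it simply recognizes the resulting expression as being of the form \eqref{eq: 1c-1c FIO, local form} with enlarged $(k_0,k_1)$---so your remark about a ``gain of $(\xoco^{(3)})^{n/2}$ from stationary phase'' is better phrased as the bookkeeping identity $-(2(k_{0,1}+k_{0,2}+1)+(k_{1,1}+k_{1,2}+n-1))/2$ matching the exponent after absorbing the middle 1-cusp density; otherwise the arguments coincide.
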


\begin{proof}
Suppose $C_1'$ is locally parametrized by
\begin{align*}
\Phi_1 =  \frac{\varphi_{11}(v,\frac{x_{\oc}}{x_{\oc}'})}{x_{\oc}^2}
+ \frac{\varphi_{12}(x_{\oc},\frac{x_{\oc}}{x_{\oc}'},y_{\oc,1},y_{\oc,1}',v,w)}{x_{\oc}},
\end{align*}
and $C_2'$ is parametrized by
\begin{align*}
\Phi_2 = \frac{\varphi_{21}(v'',\frac{x_{\oc}''}{x_{\oc}'})}{(x_{\oc}'')^2} + \frac{\varphi_{22}(x_{\oc}'',\frac{x_{\oc}''}{x_{\oc}'},y_{\oc,1}'',y_{\oc,1}',v'',w'')}{x_{\oc}''}.
\end{align*}

Then we have:
\begin{align*}
A_2A_1
=  & \int e^{ i\big(\frac{\varphi_{11}(v,\frac{x_{\oc}}{x_{\oc}'})}{x_{\oc}^2} 
+ \frac{\varphi_{21}(v'',\frac{x_{\oc}''}{x_{\oc}'})}{(x_{\oc}'')^2}
+ \frac{\varphi_{12}(x_{\oc},\frac{x_{\oc}}{x_{\oc}'},y_{\oc,1},y_{\oc,1}',v,w)}{x_{\oc}}  + \frac{\varphi_{22}(x_{\oc}'',\frac{x_{\oc}''}{x_{\oc}'},y_{\oc,1}'',y_{\oc,1}',v'',w'')}{x_{\oc}''} \big)} 
 \\ & x_{\oc}^{-(m_1+m_2) - \frac{2(k_{11}+k_{21})+k_{12}+k_{22}}{2} + (n+1) }
 a_1(x_{\oc},\frac{x_{\oc}}{x_{\oc}'},y_{\oc,1},y_{\oc,1}',v,w) 
 \\& a_2(x_{\oc}'',\frac{x_{\oc}''}{x_{\oc}'},y_{\oc,1}'',y_{\oc,1}',v'',w'') dvdwdv''dw'' \frac{dx_{\oc}'dy_{\oc,1}'}{(x'_{\oc})^{n+2}}
 \\ & | \frac{dx_{\oc} dy_{\oc,1}}{x_{\oc}^{n+2}} |^{1/2} | \frac{dx_{\oc}'' dy_{\oc,1}''}{(x_{\oc}'')^{n+2}} |^{1/2}
\end{align*}
Let $\sigma' = \frac{x_{\oc}'}{x_{\oc}}$, then the leading order part can be written as
\begin{align*}
A_2A_1
=  & \int e^{ i\big(\frac{\varphi_{11}(v,\frac{x_{\oc}}{x_{\oc}'})}{x_{\oc}^2} 
+ \frac{\varphi_{21}(v'',\frac{x_{\oc}''}{x_{\oc}'})}{(x_{\oc}'')^2}
+ \frac{\varphi_{12}(x_{\oc},\frac{x_{\oc}}{x_{\oc}'},y_{\oc,1},y_{\oc,1}',v,w)}{x_{\oc}}  + \frac{\varphi_{22}(x_{\oc}'',\frac{x_{\oc}''}{x_{\oc}'},y_{\oc,1}'',y_{\oc,1}',v'',w'')}{x_{\oc}''} \big)} 
 \\ & x_{\oc}^{-(m_1+m_2) - \frac{2(k_{11}+k_{21}+1)+k_{12}+k_{22}+(n-1)}{2} + \frac{n+1}{2} }
 a_1(x_{\oc},\frac{x_{\oc}}{x_{\oc}'},y_{\oc,1},y_{\oc,1}',v,w) 
 \\& a_2(x_{\oc}'',\frac{x_{\oc}''}{x_{\oc}'},y_{\oc,1}'',y_{\oc,1}',v'',w'') dvdwdv''dw'' (\sigma')^{-(n+1)}d\sigma'dy_{\oc,1}'
 \\ & | \frac{dx_{\oc} dy_{\oc,1}}{x_{\oc}^{n+2}} |^{1/2} | \frac{dx_{\oc}'' dy_{\oc,1}''}{(x_{\oc}'')^{n+2}} |^{1/2},
\end{align*}
which gives an element of $I^{m_1+m_2}_{\oc-\oc}(X_b^2, (C_2 \circ C_1)')$ if the phase function parametrizes $(C_2 \circ C_1)'$.

Since
\begin{align*}
|\partial_{y_{\oc,1}'}\varphi_2| \leq C
\end{align*}
for some constant $C$,
hence on the critical point of 
\begin{align*}\varphi_{12}(x_{\oc},\frac{x_{\oc}}{x_{\oc}'},y_{\oc,1},y_{\oc,1}',v,w) +
\varphi_{22}(x_{\oc}'',\frac{x_{\oc}''}{x_{\oc}'},y_{\oc,1}'',y_{\oc,1}',v'',w'') ,
\end{align*}
we have
\begin{align*}
|\frac{x_{\oc}}{x_{\oc}''}| \leq C
\end{align*}
for some constant $C$. In fact we can even assume that on the support of $a$ and $\bar{a}$ factors, both of  $\frac{x_{\oc}}{x_{\oc}'},\frac{x_{\oc}''}{x_{\oc}'}$ are close to 1. And consequently $\frac{x_{\oc}}{x_{\oc}''}$
is close to 1.

For variables in $\Phi_1+\Phi_2$, we rewrite as 
\begin{align*}
\frac{x_{\oc}}{x'_{\oc}} = \frac{x_{\oc}}{x''_{\oc}} \cdot \frac{x''_{\oc}}{x'_{\oc}},
\end{align*}
and let $\bar{\sigma}=\frac{x_{\oc}}{x''_{\oc}}$ be the coordinate on the b-face while viewing $\frac{x''_{\oc}}{x'_{\oc}}$ as a parameter.

Then $\Phi_1+\Phi_2$ is parametrizing $(C_1 \circ C_2)'$, 
with $\frac{x''_{\oc}}{x'_{\oc}},y_{\oc}'$ viewed as parameters now,
and the amplitude has compact support in all variables.
In addition, applying the proof of \cite[Theorem~4.2.2]{FIO1} to $(\varphi_{11}+\varphi_{21})$ and $(\varphi_{12}+\varphi_{22})$ respectively, we know that $\Phi$ is non-degenerate if $\Phi_1,\Phi_2$ are so and the transversal intersection condition is satisfied.

Since on the support of the amplitude, all of $x_{\oc},x_{\oc}',x_{\oc}''$ are comparable to each other, 
The power of $x_{\oc}$ is matched because:
\begin{align*}
 & x_{\oc}^{-m_1 - \frac{2k_{11}+k_{12}}{2} + \frac{n+1}{2}}
(x_{\oc}'')^{-m_2 - \frac{2k_{21}+k_{22}}{2} + \frac{n+1}{2} }
(x_{\oc}')^{-(n+2)}dx_{\oc}' dy_{\oc}'
\\ \approx &  x_{\oc}^{-(m_1+m_2) + \frac{2(k_{11}+k_{21}+1)+(k_{12}+k_{22}+n-1)}{2} + \frac{n+1}{2} }    d(\frac{x''_{\oc}}{x'_{\oc}})dy_{\oc}'
\end{align*}
where $\approx$ means they are equal up to a smooth factor that is uniformly bounded from above by a constant and below away from,
hence $A_2A_1 \in I^{m_1+m_2}_{\oc-\oc}(X_b^2,(C_2 \circ C_1)')$. And the result about the principal symbol follows from how (\ref{eq: symbol product, bundle maps}) is defined in \cite[Section~4.2]{FIO1}.
This completes the proof.
\end{proof}



\subsection{1c-1c operator wavefront set}
Before stating the propagation of singularities of the scattering map in terms of the refined definition of the wavefront set in \eqref{eq: definition of 1c WF, both decay and smoothness, with order}, we first show that the wavefront set of an 1c-1c Lagrangian distribution is contained in the Lagrangian submanifold that it is associated to, as in \cite[Proposition~2.5.7]{FIO1} for Lagrangian distributions on manifolds without boundary in terms of (classical) wavefront sets.

Let $\pi_{\oc,1},\pi_{1c,2}$ be the projection from $\ococb$ to $\overline{{}^{\oc}T^*\R^n}$ as the left and right factor before the b-blow up.
Then for $A \in \mathcal{S}'(\R^n \times \R^n)$, we define its 1c-1c (operator) wavefront set $\WF'_{\oc-\oc}(A) \subset  \partial(\ococb))$ by: for $\msf{q} \in \partial(\ococb))$, we say $\msf{q} \notin \WF'_{\oc-\oc}(A)$ if and only if there are $Q,Q' \in \Psi_{\oc}^{0,0}$ that are elliptic at $\pi_{\oc,1}(\msf{q}),\pi_{\oc,2}(\msf{q})$ respectively such that
\begin{equation}
QAQ' \in \mathcal{S}(\R^n \times \R^n).
\end{equation}

\begin{prop} 
\label{prop:1c-1c-WF}
Let $A \in I_{\oc-\oc}^{m}(X_b^2,\Uplambda)$ with $\SL = \partial \Uplambda$, then
\begin{align}
\WF'_{\oc-\oc}(A) \subset \pi_{\oc,1}^{-1}(\pi_{\oc,1}(\SL)) \cap \pi_{\oc,2}^{-1}(\pi_{\oc,2}(\SL)). 
\end{align}
\end{prop}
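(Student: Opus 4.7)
The plan is to follow essentially the same strategy used for the $\oc-\ps$ operator wavefront set in Proposition~\ref{prop:general-1cps-FIO-WF}, adapted to the $\oc-\oc$ setting where one writes $A$ as an oscillatory integral in the local form \eqref{eq: 1c-1c FIO, local form} and flanks it by two $1$-cusp pseudodifferential operators.

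First I would fix $\msf{q} \in \partial\ococb$ with $\msf{q} \notin \pi_{\oc,1}^{-1}(\pi_{\oc,1}(\SL)) \cap \pi_{\oc,2}^{-1}(\pi_{\oc,2}(\SL))$. By a standard application of Urysohn's lemma at the symbol level, one can find $Q_1, Q_2 \in \Psi_{\oc}^{0,0}(\R^n)$ with $Q_1$ elliptic at $\pi_{\oc,1}(\msf{q})$ and $Q_2$ elliptic at $\pi_{\oc,2}(\msf{q})$, such that
\begin{equation}
\pi_{\oc,1}^{-1}(\WF'_{\oc}(Q_1)) \cap \pi_{\oc,2}^{-1}(\WF'_{\oc}(Q_2)) \cap \SL = \emptyset.
\end{equation}
It then suffices to show that $Q_1 A Q_2$ has Schwartz kernel.

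Writing $A$ via \eqref{eq: 1c-1c FIO, local form} (modulo a Schwartz error, which causes no trouble since Schwartz functions are preserved under composition with any $\Psi_{\oc}^{0,0}$ operator by \eqref{eq:1c boundedness} applied to all Sobolev orders), the Schwartz kernel of $Q_1 A Q_2$ becomes an oscillatory integral in the left and right $\oc$-quantization variables $(\xi_{\oc,1},\eta_{\oc,1})$ and $(\xi_{\oc,2},\eta_{\oc,2})$ together with the phase variables $(v,w)$ of $\Phi_{\oc-\oc}$. As in the $\oc-\ps$ case, one splits the integration region into a part where the rescaled 1c-frequencies are very large -- handled by a non-stationary phase argument in the base variables $(\yoco,\yoct,\sigma)$ exactly as in the passage after \eqref{eq:QP0(Id-Q)-osc} -- and the bounded part, where the phase functions remain smooth and the disjointness condition above implies that, near the critical set of the combined phase, the full symbol of either $Q_1$ or $Q_2$ vanishes to infinite order in $\xoc$. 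A finite microlocal partition of unity on the support of the amplitude reduces matters to these two cases.

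In either case, repeated integration by parts in $x_{\oc,1}$ (respectively $x_{\oc,2}$), using that the remaining factor of the symbol is $O(x_{\oc}^{\infty})$ there, gains arbitrary order of decay in $\xoc$; because derivatives in the base variables $(\yoco, \yoct, \sigma)$ and in $(\xoco, \xoct)$ only produce at worst polynomial growth in $x_{\oc}^{-1}$ that is absorbed by the improved decay, this simultaneously gives arbitrary regularity. Combined with the compact support of $a$ in $(\msf{X}',v,w)$ -- which, via non-stationary phase in the quantization frequencies, produces arbitrary decay as the spatial variables tend to infinity -- this shows $Q_1 A Q_2 \in \mathcal{S}(\R^n \times \R^n)$, proving $\msf{q} \notin \WF'_{\oc-\oc}(A)$.

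The main obstacle I anticipate is purely bookkeeping: because the $\oc$-calculus has the anomalous scaling $x_{\oc}^3\partial_{x_{\oc}}$, each integration by parts in $x_{\oc}$ produces factors with mixed $\xoc$ and $\rho_{\oc}$ weights, so one has to verify that each step truly improves both the decay at $\xoc = 0$ and the decay at fibre-infinity. This is essentially the same calculation already performed in the proof of Proposition~\ref{prop:general-1cps-FIO-WF}, with the ps-scaling replaced by a second copy of the $\oc$-scaling, so no genuinely new technique is required; the estimates follow the same template as in the $\oc-\ps$ argument and in the standard treatment of the operator wavefront set in \cite[Section~2.5]{FIO1}.
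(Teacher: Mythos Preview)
Your proposal is correct and follows essentially the same non-stationary phase strategy as the paper's proof. The only organizational difference is that the paper microlocalizes with a single operator $B$ on one side (choosing left or right according to which projection of $q_0$ falls off $\pi_{\oc,i}(\SL)$, handling the other case by the adjoint), whereas you flank by $Q_1$ and $Q_2$ simultaneously and then split by a partition of unity on the critical set; both routes arrive at the same integration-by-parts estimate in the $(x_{\oc},y_{\oc},v,w)$ variables, and your version is the more literal transcription of the $\oc$-$\ps$ argument in Proposition~\ref{prop:general-1cps-FIO-WF}.
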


\begin{proof}
First we show that there are no wavefront set at the fiber infinity. One way to see this is notice that since
\begin{align*}
\varphi_1(\sigma,v), \; \varphi_2(\msf{X},v,w),
\end{align*}
are smooth functions over a compact region, hence uniformly bounded, thus under actions of
\begin{align*}
x_{\oc}^3\partial_{x_{\oc}},x_{\oc}\partial_{y_{\oc,1}},
x_{\oc}^2\partial_\sigma ,x_{\oc}\partial_{y_{\oc,2}},
\end{align*}
we know (\ref{eq: 1c-1c FIO, local form}) is smooth. Next we show that it vanishes to infinite order in $x_{\oc}$ when microlocalized away from $\mathcal{L}_{\oc}$.

Now we show that there is no wavefront set over $\mathrm{bf},\mathrm{lb},\mathrm{rb}$.
For $q_0 in \partial(\ococb))$ such that $ q_0 \notin \pi_{\oc,1}^{-1}(\pi_{\oc,1}(\SL)) \cap \pi_{\oc,2}^{-1}(\pi_{\oc,2}(\SL))$, we have either
\begin{align} \label{eq: off Lagrangian, condition 1}
d_{v,w}\Phi \neq 0 ,\text{ or } d_{x_{\oc},y_{\oc,1}}\Phi \neq \tau_{\oc,1}\frac{dx_{\oc}}{x_{\oc}^3}+\mu_{\oc} \cdot \frac{dy_{\oc,1}}{x_{\oc}};
\end{align}
or
\begin{align} \label{eq: off Lagrangian, condition 2}
d_{v,w}\Phi \neq 0 ,\text{ or } d_{\sigma,y_{\oc,2}}\Phi \neq \tau_{\oc,2}\frac{d\sigma}{x_{\oc}^2}+\mu_{\oc,2} \cdot \frac{dy_{\oc,2}}{x_{\oc}},
\end{align}
at $q_0$ for every tuple of parameters $(v,w)$. Because if one of the condition is satisfied at certain $(v,w)$, then the same condition holds near by. 
So we can further localize in $(v,w)$ and only consider the case where one of the situation holds globally.

We consider the first case to show $A=O(x_{\oc}^N)$ locally there for any $N$.
If it were the second condition does not hold, then notice that using $x_{\oc,1}$ or $x_{\oc,2}$ as $x_{\oc}$ is an artificial choice. We can interchange $(x_{\oc,1},y_{\oc,1})$ and $(x_{\oc,2},y_{\oc,2})$
and run the same argument; or consider $A^*$ and apply the same argument. 


Recall the definition how 1-cusp pseudodifferential operators acts in \cite[Equation~(2.5)]{zachos2022inverting} (and the formula on page 20 there),
microlocalize by $B$ with symbol $b$ means:
\begin{align*}
 (2\pi)^{-n} \int e^{i \frac{\tilde{x}_{\oc}-x_{\oc} }{\tilde{x}_{\oc}^3}\tau_{\oc}
  + \frac{\tilde{{Z}}_1-y_{\oc,1}}{ \tilde{x}_{\oc} } \cdot \mu_{\oc} }
  b(\tilde{x}_{\oc},\tilde{{Z}}_1,\tau_{\oc},\mu_{\oc}) \chi_1(\frac{\tilde{x}_{\oc}}{x_{\oc}})A(\msf{X}) d\tau_{\oc}d\mu_{\oc} \frac{dx_{\oc}dy_{\oc,1}}{x_{\oc}^{n+2}},
\end{align*}
where $\chi_1 \in C_c^\infty(\R)$ is supported near 1 (we will determine its support later), and inserting this cut-off does not affect the result since this effectively is multiplying the Schwartz kernel of a 1-cusp pseudodifferential operator by $\chi_1(\frac{\tilde{x}_{\oc}}{x_{\oc}})$, which does not change its pseudodifferential properties, and the fact that it is elliptic at $\pi_1(q_0)$.

Combining with (\ref{eq: 1c-1c FIO, local form}), we have
\begin{align}  \label{eq: localized A, expression}
\begin{split}
BA = 
 & (2\pi)^{-\frac{n_X+n_X+2(k_1+k_2)}{4} - n} \tilde{x}_{\oc}^{-(n+2)}\int 
 e^{i \frac{\tilde{x}_{\oc}-x_{\oc} }{\tilde{x}_{\oc}^3}\tau_{\oc}
  + \frac{\tilde{{Z}}_1-y_{\oc,1}}{ \tilde{x}_{\oc} } \cdot \mu_{\oc} }
  b(\tilde{x}_{\oc},\tilde{{Z}}_1,\tau_{\oc},\mu_{\oc}) 
  \chi_1(\frac{\tilde{x}_{\oc}}{x_{\oc}})
\\ &
e^{i( \frac{\varphi_1(\sigma,v)}{x_{\oc}^2} + \frac{\varphi_2(\msf{X},v,w)}{x_{\oc}} )} a(\msf{X},v,w)
\\& 
x_{\oc}^{-m - \frac{2k_1+k_2}{2} + \frac{n+1}{2} } (\frac{\tilde{x}_{\oc}}{x_{\oc}})^{n+2}dw_1dw_2dw d\tau_{\oc}d\mu_{\oc} dx_{\oc}dy_{\oc,1}.
\end{split}
\end{align}

Since
\begin{align}
d_{x_{\oc},y_{\oc,1}}(\frac{\varphi_{1}(\sigma,v,w)}{x_{\oc}^2}+\frac{\varphi_{2}(\msf{X},v,w)}{x_{\oc}})= (-2\varphi_1- x_{\oc}\partial_x\varphi_2) \frac{dx_{\oc}}{x^3} + \frac{ \partial_{y_{\oc,1}}\varphi_2 \cdot dy_{\oc,1}}{x_{\oc}}.
\end{align}

Since we only require that $B$ is elliptic at $\pi_1(q_0)$, so on the support of $b$ we may assume that for a constant $c>0$:
\begin{align*}
|\tau_{\oc}+2 \varphi_{1}(\sigma,v,w)+x_{\oc} \partial_{x_{\oc}}\varphi_{2}(\msf{X},v,w)|^2
+ |\mu_{\oc} - d_{\hat{{Z}_1}}\varphi_{2}(x_{\oc},y_{\oc,1},w_1,w)|^2  \geq c>0,
\end{align*}
whenever
\begin{align*}
d_{v,w}(\frac{\varphi_{1}(\sigma,v,w)}{x_{\oc}^2}+\frac{\varphi_{2}(\msf{X},v,w)}{x_{\oc}}) = 0.
\end{align*}
A direct computation shows that the first condition above means that the phase function has no critical point when differentiated with resect to $x_{\oc},y_{\oc,1}$, assuming that $\frac{\tilde{x}_{\oc}}{x_{\oc}}$ is close enough to 1, which is satisfied on $\supp \chi_1$.

This means that the oscillatory integral in (\ref{eq: localized A, expression}) as $\frac{1}{\tilde{x}_{\oc}} \rightarrow \infty$, there is no critical point with respect to $(x_{\oc},y_{\oc,1},v,w)$. 
This means it has infinite order decay with respect to $|(\frac{1}{\tilde{x}_{\oc}^2},\frac{1}{\tilde{x}_{\oc}})|$.
Although the large parameter in front of different parts has different powers of $\frac{1}{\tilde{x}_{\oc}}$, arbitrary polynomial order decay with respect to any one of them is equivalent to arbitrary order decay with respect to $\tilde{x}_{\oc}$.
\end{proof}


\section{Composition of 1c-ps operators}\label{sec:composition}

\subsection{Geometric setup}
In this section, we consider compositions of the form $A_+^*A_-,$,
where $A_+ \in I_{\oc-\ps}^{m_2}(\R^{n+1} \times \R^n,\Lambda_+)$, and $A_- \in I_{\oc-\ps}^{m_1}(\R^{n+1} \times \R^n,\Lambda_-)$. Recall that the scattering map can be represented in this way through Poisson operators --- see \eqref{eq:S formula}. 

We first consider the composition of the two canonical relations associated with $\Lambda_\pm$. We will denote by $C_+$ and $C_-$ the canonical relations associated with $\Lambda_+$ and $\Lambda_-$, obtained as usual by negating the frequency variable in the right factor. We also let $C_+^*$ and $\Lambda_+^*$ denote the canonical relation and Lagrangian corresponding to the adjoint operator, in the sense that
\begin{equation}
    (q,-q') \in \Lambda \Leftrightarrow (q', -q) \in \Lambda^* \Leftrightarrow (q, q') \in C \Leftrightarrow (q', q) \in C^*
\end{equation}
where the minus sign indicates negation in the fibre variables. 

Typically one looks at the product of the canonical relations in the product phase space, which in this case would be $\ocphase \times \psphase \times \psphase \times \ocphase$, intersected with the `partial diagonal' in the middle two factors, which in this case would be $\ocphase \times \Diag(\psphase) \times \ocphase$, and then projects to the outer factors, in this case $\ocphase \times \ocphase$. However, in our case, we already had to resolve the geometry for the 1c-ps phase space by blowing up $\{ \xoc = \rhops = 0 \}$, creating the resolved space $\SM$ --- see \eqref{eq: 1c-ps cotangent bundle definition}. That suggests replacing the overall phase space $\ocphase \times \psphase \times \psphase \times \ocphase$ by $\SM^* \times \SM$, where the star indicates that the 1c-coordinates are now the left coordinates, and the ps-coordinates the right coordinates (consistent with taking the adjoint of the operator).  In addition, the partial diagonal is only a p-submanifold if we blow up $\{ \rhops = \rhops' = 0 \}$, where the prime indicates a coordinate in the right factor of $\SM^* \times \SM$. Thus the analysis of the composition of canonical relations is done on the space 
\begin{equation}\label{eq:SM2*}
\SM^{2, *} = [ \SM^* \times \SM; \{ \rhops = \rhops' = 0 \}].
\end{equation}

We observe that there is a map (a b-fibration, in Melrose's terminology) from $\SM$ to $\ocphase$, given by the blowdown to $\SM_0$ followed by the projection to the right factor. Thus, there is a b-fibration 
\begin{equation}
    \SM^* \times \SM \to \ocphase \times \ocphase. 
\end{equation}
If we blow up the corner $\{\xoc = x_{\oc}' = 0 \}$ in $\ocphase \times \ocphase$, creating the 1c-1c phase space, then a b-fibration can be restored by blowing up those boundary hypersurfaces in $\SM^* \times \SM$ that map to this corner. Because we are only working near the hypersurfaces where $\xoc = x_{\oc}' = 0$ and $\rhops = \rhops' = 0$, this means that the blowup \eqref{eq:SM2*} is such that there is a b-fibration, that we will call $F$, to the 1c-1c phase space $(\ocphase)^2_b$.

\begin{lmm} \label{lemma:clean-composition-lagrangian}
Let $C_+^* * C_-$ denote the lift of $C_+^* \times C_- \subset \SM^*$ to $\SM^{2,*}$, and let $\Delta_{\ps}$ denote the partial diagonal $\ocphase \times \Diag(\psphase) \times \ocphase$ lifted to the same space. Then $C_+^* * C_-$ intersects $\Delta_{\ps}$ cleanly with excess $1$, in the sense that the intersection $\SI$ is a submanifold of both, and at every point of the intersection, the tangent space $T \SI$ is equal to the intersection $T(C_+^* * C_-) \cap T\Delta_{\ps}$. Moreover, the dimension of $\SI$ is $2n+1$ which is $1$ (the excess) greater than would be expected from a transverse intersection of two submanifolds of the same dimensions. 

The map $F$ restricts to $(C_+^* * C_-) \cap \Delta_{\ps}$ to a fibration to $\beta_b^* \mathrm{Gr} (\Sg)$, the graph of the classical scattering map defined in \eqref{eq:def-classical-sc-map}, lifted to the 1c-1c phase space $[\ocphase \times \ocphase; \{ \xoc = \xoc' = 0 \}]$ via the blowdown map $\beta_b$ for this blowup. The fibres are one-dimensional and can be identified with the bicharacteristic that is identified by the point in $\mathrm{Gr}(\Sg)$. 
\end{lmm}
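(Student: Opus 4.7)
The plan is to identify $\SI$ set-theoretically using the global source-sink structure of the Hamilton flow, then verify cleanliness by a direct tangent-space computation, and finally read off the fibration to $\beta_b^* \mathrm{Gr}(\Sg)$. In the interior of the phase spaces, each step is essentially transparent; the real work is ensuring uniform smoothness across the boundary hypersurfaces introduced by the blowups.

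First I would identify $\SI$. By Definition~\ref{defn:sojoun relations}, points of $C_-$ have the form $(\gamma_{q_-}(s), q_-)$ and points of $C_+^*$ have the form $(q_+, \mu_{q_+}(s'))$, so the constraint imposed by $\Delta_{\ps}$ is $\gamma_{q_-}(s) = \mu_{q_+}(s')$. Because each bicharacteristic is uniquely determined by any one of its points, the two bicharacteristics must coincide, forcing $q_+ = \Sg(q_-)$ and $s' = s - T(q_-)$. Thus
\begin{equation*}
\SI = \{ ((\Sg(q_-), \gamma_{q_-}(s)),\, (\gamma_{q_-}(s), q_-)) : q_- \in W_-,\ 0 \le s \le T(q_-) \},
\end{equation*}
which is smooth of dimension $2n+1$, parametrised by $(q_-, s)$.

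Next I would verify cleanliness. A tangent vector in $T(C_+^* * C_-)$ decomposes as $(V_+, W_+; W_-, V_-)$ with $(V_+, W_+) \in TC_+^*$, $(W_-, V_-) \in TC_-$, and lies in $T\Delta_{\ps}$ precisely when $W_+ = W_-$. Expressed through the parametrisations of $C_\pm$, this matching condition reads $d\gamma_{(q_-, s)}(\delta q_-, \delta s) = d\mu_{(q_+, s')}(\delta q_+, \delta s')$. At a point of $\SI$ the bicharacteristic-parametrisations $(q_+, s') \mapsto \mu_{q_+}(s')$ and $(q_-, s) \mapsto \gamma_{q_-}(s)$ are local diffeomorphisms from their $(2n+1)$-dimensional domains onto an open subset of $\Sigma$, so the matching equation has the unique solution $(\delta q_+, \delta s') = (d\Sg(\delta q_-),\, \delta s - dT(\delta q_-))$. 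Hence $T(C_+^* * C_-) \cap T\Delta_{\ps}$ has dimension $2n+1 = \dim \SI$, proving cleanliness, and a dimension count $\dim(C_+^* * C_-) + \dim \Delta_{\ps} - \dim(\SM^* \times \SM) = (4n+2) + (6n+2) - (8n+4) = 2n$ confirms that the excess is $1$. The fibration claim is then immediate from the parametrisation: $F|_{\SI}$ sends $(q_-, s) \mapsto (\Sg(q_-), q_-) \in \beta_b^*\mathrm{Gr}(\Sg)$, with one-dimensional fibres the bicharacteristic segments $\{\gamma_{q_-}(s) : 0 \le s \le T(q_-)\}$, which is precisely the identification asserted.

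The main obstacle is showing that this interior picture extends smoothly up to the boundary of $\SM^{2,*}$: the endpoints $q_\pm$ live at $\mathcal{R}_\pm$, i.e.~at spacetime infinity on the ps-side, and the flow parameter $s$ must remain a smooth coordinate even as the bicharacteristic approaches the radial sets. I would handle this using the identification from Section~\ref{subsec: 1c arises} of $W_\pm$ with the front face of $[\Sigma; \mathcal{R}_\pm]$ and the further rescaling \eqref{eq:further-rescaled-Hp}, which renders the rescaled Hamilton vector field smooth, non-vanishing, and transverse to $W_\pm$, so that the flow reaches $W_\pm$ in finite parameter time and $s$ is a bona fide coordinate up to the endpoints. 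Similarly, the blowup of $\{\rhops = \rhops' = 0\}$ in the definition of $\SM^{2,*}$ is exactly what promotes $\Delta_{\ps}$ to a p-submanifold across that interior corner and allows the tangent-space computation above to descend smoothly there. Since the metric and potential perturbations are compactly supported in spacetime, all behaviour near $\mathcal{R}_\pm$ reduces to the free model analysed explicitly in Section~\ref{subsec:Hvf radial}, making this last verification essentially a technical check in those coordinates.
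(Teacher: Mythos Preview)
Your interior argument is correct and essentially the same as the paper's: both identify $\SI$ as parametrised by $\Sigma$ (you use $(q_-,s)$, the paper uses a point on the bicharacteristic directly), verify cleanliness by showing the intersection of tangent spaces is $(2n+1)$-dimensional, and obtain the fibration to $\mathrm{Gr}(\Sg)$ by projection.

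Your boundary discussion, however, partly misidentifies the issue. Because $C_\pm$ are the canonical relations of the \emph{microlocalized} sojourn Lagrangians $\Lambda_\pm$ (see \eqref{eq:microlocalized sojourn relns}), the ps-coordinate $\gamma_{q_-}(s)$ is confined to the open set $G_\pm$, which is disjoint from spacetime infinity; the bicharacteristic therefore never approaches the radial sets, and the rescaling \eqref{eq:further-rescaled-Hp} is irrelevant here. The only boundary hypersurface in play is the front face of the blowup of $\{\rhops=\rhops'=0\}$ in $\SM^{2,*}$, corresponding to $q_-$ near $\partial W_-$ (i.e.\ $\xoc\to 0$, equivalently high ps-frequency). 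You do mention this blowup, but you do not carry out the tangent-space computation there. The paper does this explicitly: after blowup, smooth vector fields tangent to $C_+^**C_-$ normal to the new face are forced to take the combined form $V_n+\sigma_{\ps}V_n'$ (where $\sigma_{\ps}=\rhops'/\rhops$), because the individual normals $V_n$, $V_n'$ acquire singular $\partial_{\sigma_{\ps}}$-coefficients. Since $\sigma_{\ps}=1$ on $\Delta_{\ps}$, this combined vector automatically lies in $T\Delta_{\ps}$, and together with the $2n$ tangential vectors satisfying $V_1=V_2$ one again obtains a $(2n+1)$-dimensional intersection of tangent spaces, matching $\dim\SI$. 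This concrete check is the content missing from your last paragraph.
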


\begin{proof}
 We first consider the situation in the interior of $\SM^{2, *}$. In this region, the blowups are irrelevant and we are genuinely in the product situation. So consider first the product, $C_+^* \times C_-$. This takes the form (in the notation of Definition~\ref{defn:sojoun relations}), locally in the interior, 
 \begin{equation}\label{eq: can rel product}
C_+^* \times C_- =  \{  (q, \mu_{q}(s), \gamma_{q'}(s'), q')  \mid q, q' \in \ocphase \}.
 \end{equation}
Let $B_\pm : \Sigma \to \ocphase$ be the map that takes a point on $\Sigma$ to its forward, resp. backward endpoint in $\ocphase$. Then a vector field tangent to $C_+^* \times C_-$ takes the form 
\begin{equation}\label{eq:Tproduct}
((B_+)_* V_1, V_1, V_2, (B_-)_* V_2), \text{ where } V_1, V_2 \text{ are tangent to } \Sigma. 
\end{equation}
As a set, $(C_+^* \times C_-) \cap \Delta_{\ps}$ is given by 
 \begin{equation}\label{eq:intersection product case}
(C_+^* \times C_-) \cap \Delta_{\ps} =  \{  (q_+, \gamma_{q'}(s), \gamma_{q'}(s), q')  \mid q' \in \ocphase \}
 \end{equation}
where $q_+$ is the forward endpoint of the bicharacteristic $\gamma_{q'}$, that is, $q_+ = B_+(\gamma_{q'}(s))$. Since both $q'$ and $q_+$ are smooth functions of $\gamma_{q'}(s)$, and since $\gamma_{q'}(s)$ is an arbitrary point in $\Sigma$ we see that this intersection is diffeomorphic to $\Sigma$ (recall that this is always working locally in the interior of $\Sigma$). This is a submanifold of dimension $2n+1$, as claimed. Next consider the intersection of the tangent spaces of $C_+^* \times C_-$ and of $\Delta_{\ps}$. The tangent space of the former is given by \eqref{eq:Tproduct}. For the vector field on the RHS of \eqref{eq:Tproduct} to be tangent to $\Delta_{\ps}$ we require that $V_1 = V_2$, while $V_1$ can be an arbitrary vector tangent to $\Sigma$. We see that this intersection also has dimension $2n+1$ and coincides with the tangent space of the intersection. 

It is clear from \eqref{eq:intersection product case} that if we project to the first and last factors, then we get the graph of the classical scattering relation, and that the fibres of this map are one-dimensional and are given by the bicharacteristic determined by a point on the graph. Another way to view the non-transversality of $C_+^* \times C_-$ and $\Delta_{\ps}$ is to observe that their conormal bundles have a nontrivial intersection, namely the one form $(0, dp, -dp, 0)$ is common to both, where $p$ is the symbol of $P$; moreover, this spans the intersection of their conormal bundles. 

Now we consider the situation near the boundary of $\SM^{2,*}$. Recall first that we are only interested in a region near the blowup of $\ffocps \cap \ffpsoc$ and away from all other boundary hypersurfaces. As a manifold with corners, $C_+^* * C_-$ is the blowup of $C_+^* \times C_-$ at the corner:
\begin{equation}\label{eq:can rel blowup}
C_+^* * C_- = [C_+^* \times C_-; \{ \rhops = \rhops' = 0 \}].
\end{equation}
(In case this is confusing, the set being blown up could equally well be written $\{ \xoc = \xoc' = 0 \}$, since $\xoc$ vanishes on $C_\pm$ precisely when $\rhops$ does.) 
However, we are only interested in $C_+^* * C_-$ near the blowup face and away from the (lifts of the) original boundary hypersurfaces $\{ \rhops = 0 \}$ and $\{ \rhops' = 0 \}$. So the coordinates on $C_+^* * C_-$ in the region of interest are the same as for $C_+^* \times C_-$, except that we replace $(\rhops, \rhops')$ by $(\rhops, \sigma_{\ps})$ where $\sigma_{\ps} = \rhops'/\rhops$. 

We can view \eqref{eq:can rel blowup} as being parametrized by $\Sigma^2_b = [\Sigma \times \Sigma; \{ \rhops = \rhops' = 0 \}]$ since $C_\pm$ are both parametrized by $\Sigma$. Moreover, the intersection with $\Delta_{\ps}$ is parametrized by the `diagonal' $\Sigma \subset \Sigma^2_b$, which is a p-submanifold of dimension $2n+1$. 

The description of the tangent space $T(C_+^* * C_-)$ is, however, a little different compared to \eqref{eq:Tproduct}. The reason for this is that, at a boundary point of $C_+$, say, a basis of the tangent space of $C_+$ includes (at least) one vector of the form $((B_+)_*V_n, V_n)$ transverse to the boundary, where $V_n$ is tangent to $\Sigma$, and transverse to the boundary at $\rhops = 0$. Here, the subscript $n$ stands for non-tangential, or normal. Lifted to $C_+^* * C_-$ however, this vector field will not be smooth, as the coefficient of $\partial_{\sigma_{\ps}}$ will blow up as $\rhops^{-1}$. However, a suitable linear combination of $V_n$ and $V_n'$, namely $V_n + \sigma_{\ps} V_n'$, where $V_n'$ is the same vector field in the primed coordinates, is smooth on $C_+^* * C_-$ as the singular coefficients of $\partial_{\sigma_{\ps}}$ cancel. Thus, a vector field tangent to $C_+^* * C_-$ is a linear combination of 
\begin{equation}\label{eq:Tproduct n}
((B_+)_* V_n, V_n, \sigma_{\ps} V_n',  (B_-)_* (\sigma_{\ps} V_n')), 
\end{equation}
and 
\begin{equation}\label{eq:Tproduct t}
((B_+)_* V_1, V_1, V_2, (B_-)_* V_2), \text{ where } V_1, V_2 \text{ are tangent to both $\Sigma$ and $\partial (\psphase)$.}
\end{equation}
We consider which of these vectors are also in $T(\Delta_{\ps})$. First, to lie in $\Delta_{\ps}$ we must have $\sigma_{\ps} = 1$ and 
the middle components are equal, i.e. in reference to \eqref{eq: can rel product}, we must have $\mu_{q}(s)= \gamma_{q'}(s')$. Vectors tangent to $\Delta_{\ps}$ take the form $V + V'$ where $V$ is tangent to $\Sigma$, and $V'$ is the same vector field in the primed coordinates. This means that \eqref{eq:Tproduct n} is in the intersection of tangent spaces, since $\sigma_{\ps} = 1$ at the intersection. Also, the vector fields in \eqref{eq:Tproduct t} are tangent when $V_2 = V_1$. This gives us a $2n+1$-dimensional space of vectors tangent to both submanifolds $C_+^* * C_-$ and $\Delta_{\ps}$, which agrees with the dimension of the intersection of the two submanifolds. This demonstrates that the intersection is clean, with excess $1$ uniformly to the boundary. As above, the composition of the canonical relations gives, locally, the graph of the classical scattering map, and when we project $C_+^* * C_- \cap\Delta_{\ps}$ to $(\ocphase)^2_b$ then the fibres of this map are 1-dimensional and can be identified, at $(q_-, \Sg(q_-))$, with the bicharacteristic that runs between $q_-$ and $\Sg(q_-)$. 
\end{proof}

Let $\Phi_\pm$ be local non-degenerate phase functions parametrizing $\Lambda_\pm$. Then $-\Phi_+$, which the order of variables reversed, is a phase function for $\Lambda_\pm^*$. We may assume without loss of generality, according to Remark~\ref{rem:phase function leading term}, that $\Phi_\pm$ take the form
\begin{equation}\label{eq:Phi explicit form}
    \Phi_\pm(\xoc,\yoc,t,z,\theta_\pm) = -\frac{t}{\xoc^2} + \frac{\varphi_\pm(\mathsf{K}, \theta_\pm)}{\xoc}. 
\end{equation}

\begin{prop}\label{prop: clean phase fn}
Let $\Phi_\pm$ be as in \eqref{eq:Phi explicit form}. Then 
\begin{equation}
\Phi(\xoct,\sigma,\yoco,\yoct,\theta_+,\theta_-) = -\Phi_+(\xoco=\sigma \xoct,\yoco,t,z,\theta_+) + \Phi_-(\xoct,\yoct,t,z,\theta_-)    
\end{equation}
is a clean phase function parametrizing the (twisted) graph of the classical scattering map $\Sg$ (which is the composition of the two corresponding canonical relations, according to the previous lemma), with excess $1$. Here we regard the `intermediate' variables $(z, t)$, over which we would integrate to compose two Fourier operators associated with $\Lambda_+^*$ and $\Lambda_-$, as additional integrated variables for the phase function $\Phi$. 
\end{prop}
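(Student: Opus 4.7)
The plan is to compute $\Phi$ explicitly in the coordinates of the $\oc$-$\oc$ phase space, identify it with the clean parametrizing form of Definition~\ref{defn: 1c-1c parametrization clean}, and then read off both the parametrized Lagrangian and the excess directly from the geometric clean composition proved in Lemma~\ref{lemma:clean-composition-lagrangian}.

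First I would substitute \eqref{eq:Phi explicit form} and convert to $(\xoco,\sigma)$-coordinates using $\xoct = \xoco/\sigma$. The two copies of the leading $-t/\xoc^2$ term do not cancel because they carry different $\sigma$-scalings; instead they combine into $t(1-\sigma^2)/\xoco^2$. The result is
\[
\Phi = \frac{t(1-\sigma^2)}{\xoco^2} + \frac{-\varphi_+(\sigma\xoct,\yoco,t,z,\theta_+) + \sigma\,\varphi_-(\xoct,\yoct,t,z,\theta_-)}{\xoco},
\]
which is of the form \eqref{eq:Phi 1c-1c param clean} with $\ococparaone = t$, $\ococparatwo = (z,\theta_+,\theta_-)$, $\varphi_0(\sigma,t) = t(1-\sigma^2)$, and $\varphi_1 = -\varphi_+ + \sigma\varphi_-$.

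Next I would identify the critical set $C_\Phi$ and its image under the parametrization map \eqref{eq:1c-1c param clean}. The equation $d_t\varphi_0 = 1-\sigma^2 = 0$ forces $\sigma = 1$, the `equal 1c-radii' diagonal condition. At $\xoco = 0$, $\sigma = 1$, the equations $d_{\ococparatwo}\varphi_1 = 0$ give $\partial_{\theta_\pm}\varphi_\pm = 0$ (placing the associated points on $\Legps_\pm$ by the non-degeneracy of $\Phi_\pm$) together with $\partial_z\varphi_+ = \partial_z\varphi_-$ (matching spatial momenta); the matching of time frequencies $\partial_t\varphi_+ = \partial_t\varphi_-$ then follows automatically because $\Lambda_\pm$ both lie in the characteristic set $\tau+|\zeta|_g^2 = 0$. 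Under the identifications described in Section~\ref{subsec: 1c arises}, the image of $C_\Phi$ under \eqref{eq:1c-1c param clean} is precisely the composition $C_+^*\circ C_-$ of Lemma~\ref{lemma:clean-composition-lagrangian}, namely the Lagrangian $\mathrm{Gr}(\Sg)'$, with $C_\Phi \to \mathrm{Gr}(\Sg)'$ a fibration whose $1$-dimensional fibres are the bicharacteristics joining $q_-$ to $\Sg(q_-)$.

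Finally I would verify the two rank conditions of Definition~\ref{defn: 1c-1c parametrization clean}. The first, linear independence of $d_{\sigma,t}(\partial_t\varphi_0) = -2\sigma\,d\sigma$, is immediate from $\sigma > 0$. For the second, I would use a dimension count: the ambient space (including $\xoco$) has dimension $3n+1+k_{+,1}+k_{-,1}$, while the bijection with the clean intersection $(C_+^**C_-)\cap\Delta_{\ps}$ of Lemma~\ref{lemma:clean-composition-lagrangian} forces $\dim C_\Phi = 2n+1$, so the combined rank of $\{d_t\varphi_0 = 0,\ d_{\ococparatwo}\varphi_1 = 0\}$ must equal $n+k_{+,1}+k_{-,1}$. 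Since the $d_t\varphi_0 = -2\sigma\,d\sigma$ equation contributes an independent direction (the variables in \eqref{eq:varphi1 1c clean} do not include $\sigma$), the rank of $d_{\yoco,\yoct,\ococparatwo}(\partial_{\ococparatwo_j}\varphi_1)$ itself is $n+k_{+,1}+k_{-,1}-1 = k_1-e$ with $e=1$, confirming cleanness with excess $1$. The unique linear dependence among the $\ococparatwo$-row differentials corresponds to the bicharacteristic tangent direction, which by construction preserves the constraints $\partial_{\theta_\pm}\varphi_\pm = 0$ and $\partial_z\varphi_+ = \partial_z\varphi_-$ along the flow. The main obstacle is this rank computation, in particular handling the double role of $t$ as both $\ococparaone$-variable and argument of $\varphi_\pm$; the cleanest route, which I would take, is to transfer the excess-$1$ property directly from Lemma~\ref{lemma:clean-composition-lagrangian} via the standard correspondence between clean composition of canonical relations parametrized by non-degenerate phase functions and clean parametrization of the composition, rather than attempting a direct matrix calculation.
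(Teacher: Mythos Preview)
Your proposal is correct and follows essentially the same route as the paper's proof: write $\Phi$ in the form $\varphi_0/\xoco^2 + \varphi_1/\xoco$ with $\varphi_0 = t(1-\sigma^2)$, observe that $d_t\varphi_0 = 0$ forces $\sigma = 1$, and then invoke the standard correspondence (the paper cites \cite[Proposition~21.2.19]{hormander2007analysis}) between clean composition of canonical relations parametrized by non-degenerate phases and clean parametrization of the composed relation, feeding in Lemma~\ref{lemma:clean-composition-lagrangian} for the excess. Your final paragraph already identifies this transfer from Lemma~\ref{lemma:clean-composition-lagrangian} as the clean way to get the rank condition, which is exactly what the paper does; your preceding dimension count is a helpful sanity check but is not needed once you take that route.
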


\begin{proof}
This almost follow from the standard procedure in \cite[Proposition~21.2.19]{hormander2007analysis}, which still holds if the homogeneity is absent in both the assumption and conclusion, and we address minor differences caused by the b-blow up and parabolic scaling here.

To be consistent with Section~\ref{sec:1c-1c geometry}, we use $\xoc=\xoco$. We write $-\Phi_++\Phi_-$ as
\begin{equation}
   \frac{t(1-\sigma^2)}{\xoc^2} + \frac{-\varphi_+(\xoc,\yoco,t,z, \theta_+)+\sigma \varphi_-(\xoct,\yoct,t,z,\theta_-)}{\xoc}, 
\end{equation}
and now view $(t,z)$ as extra parameters in the 1c-1c parametrization. 
Then $\varphi_0 = t(1-\sigma^2)$ being critical in $t$ gives the condition $\sigma = 1$. And the critical condition in the second part (i.e., in $z,\theta_+,\theta_-$) is dealt in the same way as in \cite[Proposition~21.2.19]{hormander2007analysis}. 
In combination with Lemma~\ref{lemma:clean-composition-lagrangian}, this shows that $\Phi$ parametrizes $\beta_b^*(\mathrm{Gr}(\Sg))$ cleanly.

\end{proof}

\subsection{Composition}
We now give the main result of this section.

\begin{thm} \label{thm: 1c-ps composition to 1c-1c}
Suppose $A_- \in I_{\oc-\ps}^{m_-}(\R^{n+1} \times \R^n,\Lambda_-)$, $A_+ \in I_{\oc-\ps}^{m_+}(\R^{n+1} \times \R^n,\Lambda_+)$, we have
\begin{align}
A_+^*A_- \in I_{\oc-\oc}^{m_++m_-+\frac{1}{2}}(X_b^2,\beta_b^*(\mathrm{Gr}(\Sg))).
\end{align}
The principal symbol of $A_+^*A_-$ at $(q-, q_+)$, where $q_- \in W_-$ and $q_+ = \Sg(q_-)$, is 
\begin{align} \label{eq: principal symbol, 1c-ps composition to 1c-1c}
\int_{-\infty}^\infty \overline{\sigma_{1c-ps}^{m_+}(A_+)} \sigma_{1c-ps}^{m_-}(A_-) (q_-, \gamma_{q_-}(s)) ds ,
\end{align}
where the integral is a global non-vanishing section of\hfill\break $$\Omega^{1/2}(\beta_b^*(\mathrm{Gr}(\Sg))) \otimes S_{\oc-\oc}^{[m_-+m_++\frac{1}{2}]}(\beta_b^*(\mathrm{Gr}(\Sg))).$$ We remark that each of the principal symbols in \eqref{eq: principal symbol, 1c-ps composition to 1c-1c} has a half-density factor `along the flow', so the product is a density along the flow that can be invariantly integrated. Thus, the `$ds$' in \eqref{eq: principal symbol, 1c-ps composition to 1c-1c} is an abuse of notation, intended to indicate that the integral is along the bicharacteristic between $(q_-, q_+)$; the `$ds$' along the bicharacteristic is intrinsic to the product of principal symbols, and is not `added in later'. 
\end{thm}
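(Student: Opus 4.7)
\medskip

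The plan is to represent $A_\pm$ locally as oscillatory integrals of the form in Definition~\ref{defn: 1c-ps FIO}, with phase functions $\Phi_\pm$ in the normalized form \eqref{eq:Phi explicit form} guaranteed by Remark~\ref{rem:phase function leading term}. Then the Schwartz kernel of $A_+^*A_-$, viewed as a half-density on $\RR^n_{Z_1} \times \RR^n_{Z_2}$, arises by multiplying $\overline{u_+(\mathsf K_+)} \, u_-(\mathsf K_-)$ and integrating over the shared spacetime variables $(z,t)$. I would first check that the support properties of $A_\pm$ (inherited from the microlocalizations of $\Lambda_\pm$ in \eqref{eq:microlocalized sojourn relns}, where $G_\pm$ has compact closure in spacetime) make this $(z,t)$-integral a genuinely compactly-supported pairing, so no issues of convergence at infinity arise, and the Schwartz kernel is compactly supported away from the boundary hypersurfaces $\{\xoco=0\}\cap\{\xoct=0\}$-corner of $X^2_b$ except on $\mathrm{bf}$.

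The resulting phase is $\Phi = -\Phi_+ + \Phi_-$, with $(t,z,\theta_+,\theta_-)$ playing the role of integration variables over the 1c-1c base coordinates $(\xoct,\sigma,\yoco,\yoct)$. Here $\sigma = \xoco/\xoct$ and the substitution $\xoco = \sigma\xoct$ translates the 1c-ps base coordinates of $A_+^*$ into 1c-1c coordinates. By Proposition~\ref{prop: clean phase fn}, $\Phi$ is a clean phase function of excess $1$ parametrizing $\beta_b^*(\mathrm{Gr}(\Sg)')$; moreover, by Lemma~\ref{lemma:clean-composition-lagrangian}, the fibre of the critical-set-to-Lagrangian fibration is one-dimensional and may be identified with the bicharacteristic from $q_-$ to $q_+ = \Sg(q_-)$. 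Consequently the composition kernel has the shape \eqref{eq: 1c-1c FIO, local form} (with $e = 1$), which places it in $I^{m_-+m_++\frac{1}{2}}_{\oc-\oc}(X_b^2, \beta_b^*(\mathrm{Gr}(\Sg)'))$ once the orders are verified. To verify the order I would count: the two 1c-ps factors contribute $m_+ + m_-$ to the differential order; the $(n+1)$ extra integrated variables $(z,t)$ among which $t$ behaves like a $\theta_0$-type (paired with $1/\xoc^2$) and $z$ like $\theta_1$-type (paired with $1/\xoc$) yield the correct count $(k_0, k_1, e) = (k_{0,+}+k_{0,-}+1, \ k_{1,+}+k_{1,-}+n, \ 1)$ in Definition~\ref{def:1c-1c-FIO}; and the density factor $|dz\,dt|^{1/2} \otimes |dz\,dt|^{1/2}$ from the two 1c-ps half-densities combines with the $(z,t)$ measure to produce $|dz\,dt|$, with the remaining 1c half-densities matching the 1c-1c density prescription after substituting $\sigma = \xoco/\xoct$. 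Careful bookkeeping shows that the extra $+\frac{1}{2}$ in the final order is precisely the standard clean-composition shift $e/2$.

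For the principal symbol, I would apply the clean-composition formula \eqref{defn: 1c-1c principal symbol, clean}. At a fixed $(q_-, q_+) \in \mathrm{Gr}(\Sg)'$, the critical-set fibre parametrized by $\ococparatwo''$ (here: the flow parameter $s$) is precisely the bicharacteristic from $q_-$ to $q_+$. The product of amplitudes of $\bar A_+$ and $A_-$ along this fibre, combined with the appropriate Jacobian factor from the non-degenerate stationary phase in the remaining variables, reduces to the stated integral \eqref{eq: principal symbol, 1c-ps composition to 1c-1c} of the 1c-ps principal symbols of $\overline{A_+}$ and $A_-$ along the bicharacteristic. The density-valued nature of each 1c-ps symbol supplies exactly a half-density transverse to the flow, whose product is a density along the flow, making the $ds$-integration intrinsic, as noted in the theorem statement.

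The main obstacle will be the numerical bookkeeping: matching powers of $\xoc$, $(2\pi)$-factors, and half-density conventions between the 1c-ps and 1c-1c calculi, given the anisotropic (parabolic) rescaling in the ps-calculus versus the 1c rescaling on each side. Once the clean phase identification of Proposition~\ref{prop: clean phase fn} is invoked, and density factors are accounted for via the explicit blow-down $\beta_b$ and the substitution $\sigma = \xoco/\xoct$, the symbol identity \eqref{eq: principal symbol, 1c-ps composition to 1c-1c} is a direct application of the clean stationary phase method as developed by Duistermaat and Guillemin \cite{duistermaat-guillemin1975spectrum} and reviewed for instance in \cite[Theorem~21.6.7]{hormander2007analysis}, adapted to our fibred-Legendre setting.
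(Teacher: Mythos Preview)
Your proposal is correct and follows essentially the same approach as the paper's proof: write $A_\pm$ with the normalized phase of Remark~\ref{rem:phase function leading term}, form $\Phi=-\Phi_++\Phi_-$ with $(t,z,\theta_+,\theta_-)$ as integration variables, invoke Proposition~\ref{prop: clean phase fn} to identify this as a clean phase of excess $1$ for $\beta_b^*(\mathrm{Gr}(\Sg))$, do the bookkeeping of powers of $\xoc$ to obtain order $m_++m_-+\tfrac12$, and then read off the principal symbol from the clean formula \eqref{defn: 1c-1c principal symbol, clean} as an integral along the bicharacteristic fibre. The paper additionally simplifies by taking $k_{0,\pm}=0$ from the outset (so $k_0=1$, $k_1=N_++N_-+n$), but your slightly more general count is equally valid.
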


\begin{proof}
Locally we can write, with $\msf{K}_\pm=(t,z,x_{\oc,\pm},y_{\oc, \pm)})$ and $\theta_\pm \in \R^{N_\pm}$:
\begin{align}  
\begin{split}
A_- = & (2\pi)^{\frac{-N_-}{2} }
\Big(\int e^{ i( \frac{-t}{x_{\oc,-}^2} + \frac{\varphi_{-}(\msf{K}_-,\theta_{-})}{x_{\oc,-}} )} 
x_{\oc,-}^{-(m_- +\frac{N_{-}}{2})-\frac{1}{4}}
\\& a_-(\msf{K}_-,\theta_{-}) d\theta_{-} \Big)
|dtdz|^{1/2}|\frac{dx_{\oc,-}dy_{\oc,-}}{x_{\oc,-}^{n+2}}|^{1/2}, 
\end{split}
\end{align}

\begin{align}  
\begin{split}
A_+ = & (2\pi)^{\frac{-N_+}{2} }
\Big(\int e^{ i( \frac{-t}{x_{\oc,+}^2} + \frac{\varphi_{+}(\msf{K}_+,\theta_{+})}{x_{\oc,+}} )} 
x_{\oc,+}^{-(m_+ +\frac{N_{+}}{2})-\frac{1}{4}}
\\& a_+(\msf{K}_+,\theta_{+}) d\theta_{+} \Big)
|dtdz|^{1/2}|\frac{dx_{\oc,+}dy_{\oc,+}}{x_{\oc,+}^{n+2}}|^{1/2}.
\end{split}
\end{align}

Then we have, writing $x_{\oc, -} = \sigma x_{\oc, +}$, and $\msf{K}'_\pm = (t, z, y_{\oc, \pm})$, and writing $\dvolh_{\oc, \pm}$ for a smooth nonvanishing 1c-half density, 
\begin{align*}
A_+^*A_-  = & (2\pi)^{-\frac{ N_- + N_+ }{2}} 
\Big( \int e^{i(  \frac{t(1 - \sigma^{-2})}{x_{\oc,1}^2}
+ \frac{\varphi_{+}(x_{\oc},\msf{K}'_+, \theta_+) - \sigma^{-1} \varphi_-(\sigma x_{\oc, _+}, \msf{K}'_-)}{x_{\oc,1}})
 }  \overline{a_+(\msf{K}_+, \theta_+)} a_-(\msf{K}_-, \theta_-)
\\& 
x_{\oc,1}^{-(m_+ + m_-) - (N_+ + N_-)/2 -1/2} \sigma^{-m_2-\frac{N_-}{2}+\frac{1}{4}}
 dt \, dz \, d\theta_+ \, d\theta_-  \Big) \dvolh_{\oc, _+} \dvolh_{\oc, -}.
\end{align*}

Using Proposition~\ref{prop: clean phase fn}, we can view the overall phase function as a phase function $\Phi$ with $k_0 = 1$ (with $t$ as the corresponding `$\ococparaone$' variable, as we see explicitly in the expression above) and $k_1 = N_- + N_+ + n$ other integrated variables. Writing the above expression as (with $e = 1$ the excess)
\begin{align*}
A_+^*A_-  = & (2\pi)^{-\frac{ N_- + N_+ }{2}} 
\Big( \int e^{i\Phi(x_{\oc, +}, \sigma, y_{\oc, -}, y_{\oc, +}, \theta_-, \theta_+, z, t)}
  \overline{a_+(\msf{K}_+, \theta_+)} a_-(\msf{K}_-, \theta_-)
\\& 
x_{\oc,1}^{-(m_+ + m_- + e/2) - (2k_0 + k_1 -e)/2 + (n+1)/2} \sigma^{-m_2-\frac{N_-}{2}-\frac{3}{4}}
 dt \, dz \, d\theta_+ \, d\theta_-  \Big) \dvolh_{\oc, _+} \dvolh_{\oc, -}.
\end{align*}
We note that $$a(x_{\oc, +}, \sigma, y_{\oc, -} y_{\oc, +}, t, z, \theta_-, \theta_+) := \overline{a_+(\msf{K}_+, \theta_+)} a_-(\msf{K}_-, \theta_-) \sigma^{-m_2-\frac{N_-}{2}+\frac{1}{4}}$$ is a smooth function down to $x_{\oc, +} = 0$. This expression is therefore a 1c-1c Fourier integral operator of order $m_+ + m_- + 1/2$. The phase function $\Phi$ parametrizes the classical scattering map according to Proposition~\ref{prop: clean phase fn}.

We can rewrite $A_+^*A_-$ as
\begin{align*}
A_+^*A_-  = & (2\pi)^{-\frac{ N_{10}+N_{11}+N_{20}+N_{21} }{2}} 
\Big( \int e^{i\tilde{\Phi}}  \msf{a}(s,\msf{M}_-,v,w)
\\ &  x_{\oc,1}^{-(m_-+m_++\frac{1}{2})-\frac{2(N_{10}+N_{20}+1)+(N_{11}+N_{21}+n-1)}{2}+\frac{n+1}{2}}
\\& 
dw dsd\msf{M}_- dv \Big) |\frac{dx_{\oc,2}dy_{\oc,2}}{x_{\oc,2}^{n+2}}|^{1/2}\frac{dx_{\oc,1}dy_{\oc,1}}{x_{\oc,1}^{n+2}}|^{1/2},
\end{align*}
where $\msf{a}(s,\msf{M}_-,v,w)$ is given by
\begin{align*}
\msf{a}(s,\msf{M}_-,v,w) = a_-(t,z;x_{\oc,1},y_{\oc,1},w) \bar{a}_+(t,z;x_{\oc,2},y_{\oc,2},v) (\frac{x_{\oc,1}}{x_{\oc,2}})^{m_++\frac{2N_{20}+N_{21}}{2}+\frac{3}{4}} \msf{J},
\end{align*}
and $\tilde{\Phi}$ is $- \frac{t}{x_{\oc,1}^2}
+ \frac{\varphi_{11}(x_{\oc},t,z,y_{\oc,1},w)}{x_{\oc,1}}
+\frac{t}{x_{\oc,2}^2} -\frac{\varphi_{21}(x_{\oc},t,z,y_{\oc,2},v)}{x_{\oc,2}}$ after this coordinate change, which parametrizes $\beta_b^*(\mathrm{Gr}(\Sg))$ in the sense of Definition~\ref{defn: 1c-1c parametrization clean}.
The final conclusion on the principal symbol in \eqref{eq: principal symbol, 1c-ps composition to 1c-1c} follows from the definition of the principal symbol in \eqref{defn: 1c-1c principal symbol, clean} and and one can verify that (after changing to coordinates $(s,\msf{M}_-,v,w)$) the geodesic $\gamma_{q_-}(\cdot)$ is the fiber over $(q_-,q_+)$ of $C_{\tilde{\Phi}}$.
In fact, in the same way as verifying that $\tilde{\Phi}$ parametrizes $(\Lambda_+' \circ \Lambda_-')' = \beta_b^*(\mathrm{Gr}(\Sg))$, critical points of $\tilde{\Phi}$ over a fixed point $(q_-,q_+)$ in $\beta_b^*(\mathrm{Gr}(\Sg))$ corresponds to the fiber in $\Lambda_+' \times \Lambda_-'$ intersecting the diagonal in the intermediate (lift of) $\overline{{}^{\ps}T^*\R^{n+1}} \times \overline{{}^{\ps}T^*\R^{n+1}}$ over it, which is precisely the bicharacteristic with $q_-,q_+$ as limiting point in the backward and forward direction respectively.
\end{proof}


\begin{remark}
By our definition of $I_{\oc-\ps}^{m_-}(\R^{n+1} \times \R^n,\Lambda_\pm)$, $a_-,a_+$ have compact support in $t,z$. So the integral \eqref{eq: principal symbol, 1c-ps composition to 1c-1c} is always over a compact interval in $s$. 
\end{remark}


\section{The scattering map} \label{sec: scattering map}

\subsection{Proof of the main theorem}
We recall formula \eqref{eq:S formula} for the scattering map in terms of the Poisson operators:
\begin{equation}
 S = i(2\pi)^n \mathcal{P}_+^* [P,Q_+] \mathcal{P}_-.
\label{eq:S formula 2} \end{equation}
We microlocalize this formula to write $S$ as a sum of terms, and analyze each piece. First we choose a microlocal cutoff $Q_{\oc} \in \Psi_{\oc}^{0,0}(\RR^n)$ on the one-cusp side. This is chosen so that $Q_{\oc}$ is microlocally equal to the identity on all points of $\partial \overline{{}^{\oc} T^* \R^n}$ that are the initial point of a bicharacteristic that meets the perturbation, i.e. meets $\WF'_{\ps}(P - P_0)$,
and is microsupported away from fibre-infinity. (Recall that it is only points in $\partial \overline{{}^{\oc} T^* \R^n}$ at base-infinity, with finite one-cusp frequency, that produce bicharacteristics that travel over the interior of $\overline{\R^{n+1}}$.)
We break $S$ into $S Q_{\oc} + S (\Id - Q_{\oc})$. First we consider $S (\Id - Q_{\oc})$.

\begin{lmm}\label{lem:S near fibre infinity} The operator $S(\Id - Q_{\oc})$ is equal to $\Id - Q_{\oc}$, up to an operator mapping distributions to Schwartz functions, or equivalently, up to an operator whose Schwartz kernel is Schwartz, i.e. in $\Schw(\R^{2n})$.
\end{lmm}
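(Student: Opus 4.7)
The plan is to reduce $S(\Id - Q_{\oc})$ to the scattering map of the free operator applied to $\Id - Q_{\oc}$, for which the answer is $\Id - Q_{\oc}$ by the fact that $S_0 = \Id$ (recalled just after \eqref{eq:S formula}). The reduction exploits the defining property of $Q_{\oc}$: every bicharacteristic with endpoint in $\WF_{\oc}'(\Id - Q_{\oc})$ projects to a spacetime curve disjoint from the compact set $\supp(P - P_0) \supset \pi_{\ps}(\WF_{\ps}'(P - P_0))$.

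First I would decompose $\Id - Q_{\oc} = R_1 + R_2$ with $R_1 \in \Psi_{\oc}^{0,0}(\R^n)$ microsupported in a small neighbourhood of 1c-fibre-infinity, and $R_2 \in \Psi_{\oc}^{0,0}(\R^n)$ having compact microsupport in 1c-frequency and microsupported where $Q_{\oc}$ fails to be microlocally the identity. Under the identification of $\overline{{}^{\oc}T^* \SR_-}$ with $W_-$ from Section~\ref{subsec: 1c arises}, whereby the 1c-fibre coordinates parametrize $(-2t_0, z_0^\perp)$, the 1c-fibre-infinity regime corresponds to bicharacteristics of the form $z(t) = z_0^\perp + 2(t - t_0)\zeta$ with large $(t_0, z_0^\perp)$, which sweep far from the origin in spacetime; these, together with those in the microsupport of $R_2$ by the hypothesis on $Q_{\oc}$, avoid $\supp(P - P_0)$.

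Second, using \eqref{eq:Poim-expression}, $\Poim(\Id - Q_{\oc}) = \Poi_0(\Id - Q_{\oc}) - P_+^{-1}(P - P_0)\Poi_0(\Id - Q_{\oc})$. I claim the second summand has Schwartz kernel. For the $R_1$ piece this is the content of \eqref{eq:5.5-3} in Proposition~\ref{prop:Poiz-membership}. For $R_2$, Proposition~\ref{prop:general-1cps-FIO-WF} gives $\WF_{\oc-\ps}'(\Poi_0 R_2) \subset (\Lambda_0)' \circ \WF_{\oc}'(R_2)$, and the ps-projection of this set avoids $\supp(P - P_0) \supset \pi_{\ps}(\WF_{\ps}'(P - P_0))$; hence by the composition rules for operator wavefront sets, $(P - P_0)\Poi_0 R_2$ has empty 1c-ps operator wavefront set, which, combined with the built-in decay in $(t,z)$ and regularity in $Z$ of 1c-ps Lagrangian distributions (modulo the $x_{\oc}^{-1}$ growth noted before \eqref{eq:1c-ps-FIO-WF-general}), gives Schwartz kernel. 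An identical argument on the adjoint side, using $\Poip = \Poi_0 - P_-^{-1}(P - P_0)\Poi_0$, lets us replace $\Poip^*$ by $\Poi_0^*$ modulo an error operator producing Schwartz outputs on all inputs with the relevant wavefront structure.

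Third, I would decompose $[P, Q_+] = [P_0, Q_+] + [P - P_0, Q_+]$. The commutator $[P - P_0, Q_+]$ has spacetime-compact coefficients contained in $\supp(P - P_0)$, but $\Poim(\Id - Q_{\oc})$, equal to $\Poi_0(\Id - Q_{\oc})$ modulo Schwartz kernel by the previous step, has ps-wavefront set disjoint from $\supp(P - P_0)$; consequently $[P - P_0, Q_+]\Poim(\Id - Q_{\oc})$ has smooth, compactly supported, and Schwartz kernel, and Proposition~\ref{prop:Poisson adjoint mapping} then guarantees that $\Poip^*$ of this also has Schwartz kernel. Combining, modulo operators with Schwartz kernel,
\begin{equation*}
S(\Id - Q_{\oc}) = i(2\pi)^n \Poip^* [P, Q_+] \Poim (\Id - Q_{\oc}) = i(2\pi)^n \Poi_0^* [P_0, Q_+] \Poi_0 (\Id - Q_{\oc}) = S_0 (\Id - Q_{\oc}) = \Id - Q_{\oc},
\end{equation*}
using $S_0 = \Id$ in the last equality.

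The principal obstacle is the step showing that $(P - P_0)\Poi_0 R_2$ has Schwartz kernel in the strict sense, not merely Schwartz values on Schwartz inputs. This requires appealing to the characterization that empty 1c-ps operator wavefront set is equivalent to the kernel being Schwartz for distributions in the 1c-ps Lagrangian class, in tandem with the compactness of $\supp(P - P_0)$ in spacetime, so that the bicharacteristics in the microsupport of $\Poi_0 R_2$ really are excluded by multiplication against $(P - P_0)$.
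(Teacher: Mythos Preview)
Your overall strategy---reduce to the free scattering map $S_0 = \Id$ by successively replacing $\Poim$, $[P,Q_+]$, and $\Poip^*$ by their free counterparts---is exactly the paper's approach. However, there is a genuine gap in your Step~2.

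You claim that $P_+^{-1}(P-P_0)\Poi_0(\Id - Q_{\oc})$ has Schwartz kernel, citing \eqref{eq:5.5-3}. But \eqref{eq:5.5-3} (or more precisely the argument via Proposition~\ref{prop:general-1cps-FIO-WF}) only gives that $(P-P_0)\Poi_0(\Id - Q_{\oc})$ has Schwartz kernel. Applying $P_+^{-1}$ to Schwartz input does \emph{not} produce Schwartz output: by Corollary~\ref{coro:propagator WF mapping}, the output has ps-wavefront set contained in $\Rp$, which is nonempty. So $\Poim(\Id - Q_{\oc})$ and $\Poi_0(\Id - Q_{\oc})$ genuinely differ by an operator producing outputs with singularities at $\Rp$, not by a Schwartz-kerneled operator. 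Your Step~3 then inherits this error, since it invokes the conclusion of Step~2.

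The fix, which is what the paper does, is to keep the commutator $[P,Q_+]$ (or $[P_0,Q_+]$) in the composite before asserting Schwartz output. Since $Q_+$ is chosen so that $[P,Q_+]$ has operator wavefront set disjoint from the radial sets, in particular from $\Rp$, the composition $[P,Q_+]\,P_+^{-1}(P-P_0)\Poi_0(\Id - Q_{\oc})$ does map tempered distributions to Schwartz functions; then Proposition~\ref{prop:Poisson adjoint mapping} handles the final $\Poip^*$. The same mechanism is needed on the adjoint side when replacing $\Poip^*$ by $\Poi_0^*$. In short, you cannot peel off the Poisson operators one at a time without tracking the $\Rp$ (and $\Rm$) singularities through the intermediate $P_\pm^{-1}$ factors; the commutator is what kills them.

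A minor point: your decomposition $\Id - Q_{\oc} = R_1 + R_2$ is unnecessary. Both pieces satisfy the same property---bicharacteristics through their microsupport avoid $\WF_{\ps}'(P-P_0)$---so the paper simply works with $\Id - Q_{\oc}$ as a whole.
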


\begin{proof} 
Using \eqref{eq:S formula 2} we write 
  \begin{equation}
 S(\Id - Q_{\oc}) = i(2\pi)^n \mathcal{P}_+^* [P,Q_+] \mathcal{P}_- (\Id - Q_{\oc}).
\label{eq:S formula 3} \end{equation}

We write this as 
  \begin{equation}
 S(\Id - Q_{\oc}) = i(2\pi)^n \Big( \mathcal{P}_+^* [P - P_0,Q_+] \mathcal{P}_- (\Id - Q_{\oc}) + \mathcal{P}_+^* [P_0,Q_+] \mathcal{P}_- (\Id - Q_{\oc}) \Big).
\label{eq:S formula 4} \end{equation}
For the first term on the right hand side, $[P - P_0,Q_+] \mathcal{P}_- (\Id - Q_{\oc})$ maps any tempered distribution to a Schwartz function by \eqref{eq:5.5-3}. 
Therefore, $\mathcal{P}_+^*[P - P_0,Q_+] \mathcal{P}_- (\Id - Q_{\oc})$ maps distributions to Schwartz functions, and then by Proposition~\ref{prop:Poisson adjoint mapping}. 

Using the formula \eqref{eq:Poim-expression} for the Poisson operator $\Poim$ in terms of $\Poiz$, we further decompose the second term on the RHS of \eqref{eq:S formula 4}, 
  \begin{equation}
 \mathcal{P}_+^* [P_0,Q_+] \mathcal{P}_- (\Id - Q_{\oc}) =  \mathcal{P}_+^* [P_0,Q_+] \mathcal{P}_0 (\Id - Q_{\oc}) - \mathcal{P}_+^* [P_0,Q_+] P_+^{-1} (P-P_0) \mathcal{P}_0 (\Id - Q_{\oc}).
\label{eq:S formula 5} \end{equation}
Consider the second term on the right hand side. 
By \eqref{eq:5.5-3} again, the operator $(P-P_0) \mathcal{P}_0 (\Id - Q_{\oc})$ maps distributions to Schwartz functions. Then by Corollary~\ref{coro:propagator WF mapping}, applying $P_+^{-1}$ to a Schwartz function yields a function with wavefront set contained in $\Rp$. But $[P_0, Q_+]$ has operator wavefront set disjoint from $\Rp$, so this maps back to Schwartz functions, 
and finally $\Poip^*$ maps to Schwartz functions by Proposition~\ref{prop:Poisson adjoint mapping}. Thus, modulo operators mapping distributions to Schwartz functions, $S(\Id - Q_{\oc})$ is given by 
$$
\mathcal{P}_+^* [P_0,Q_+] \mathcal{P}_0 (\Id - Q_{\oc}).
$$
Using the expression for $\mathcal{P}_+^*$ obtained by taking adjoint on both sides of \eqref{eq:Poim-expression} (for $\Poip$), we have
  \begin{equation}
\mathcal{P}_+^* [P_0,Q_+] \mathcal{P}_0 (\Id - Q_{\oc}) 
= \mathcal{P}_0^* [P_0,Q_+] \mathcal{P}_0 (\Id - Q_{\oc}) + \mathcal{P}_0^* (P - P_0)^*   (P_-^{-1})^* [P_0,Q_+] \mathcal{P}_0 (\Id - Q_{\oc}) .
\label{eq:S formula 6} \end{equation}
Similar to before, the second term on the RHS maps distributions to Schwartz functions. Thus up to such operators, we find that 
 \begin{equation}
 S(\Id - Q_{\oc}) =  i(2\pi)^n  \mathcal{P}_0^* [P_0,Q_+] \mathcal{P}_0 (\Id - Q_{\oc}) = \Id - Q_{\oc}, 
 \label{eq:S formula 7} \end{equation}
 where the second identity is obtained from the observation that for the free operator $P_0$, the scattering matrix is the identity, thus from \eqref{eq:S formula 2} we obtain  
 $$
 \Id = i(2\pi)^n \mathcal{P}_0^* [P_0,Q_+] \mathcal{P}_0.
 $$
 \end{proof}
 
 \begin{prop}\label{prop:S microlocalized}
 The operator $S Q_{\oc}$, where $Q_{\oc}$ is as described above Lemma~\ref{lem:S near fibre infinity}, is a one-cusp FIO associated to a fibred-Legendre submanifold as described in Section~\ref{sec: 1c-1c Lagrangian distribution}. 
 \end{prop}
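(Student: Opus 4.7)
The plan is to express $SQ_{\oc}$ as a composition of 1c-ps Fourier integral operators and then apply Theorem~\ref{thm: 1c-ps composition to 1c-1c} to conclude that the result is a 1c-1c FIO associated with the graph of the classical scattering map. Starting from the formula $S Q_{\oc} = i(2\pi)^n \Poip^* [P, Q_+] \Poim Q_{\oc}$, I will insert suitable microlocal cutoffs at three positions. Choose $\tilde{Q}_{\ps}^{(\pm)} \in \Psi_{\ps}^{0,0}(\R^{n+1})$ that are microlocally equal to the identity on $\WF'_{\ps}([P, Q_+])$, with microsupport disjoint from $\SR_\pm$ and spacetime infinity. Choose a further $Q_{\oc}^{(+)} \in \Psi_{\oc}^{0,0}(\R^n)$ that is microlocally equal to the identity on $\Sg(\WF'(Q_{\oc}))$ (the image under the classical scattering map of the 1c-wavefront set of $Q_{\oc}$), with microsupport disjoint from 1c-fibre-infinity, and still disjoint from 1c-base-infinity points whose bicharacteristic stays in the unperturbed region.

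The next step is to show that
\[
SQ_{\oc} \equiv i(2\pi)^n \big(\tilde{Q}_{\ps}^{(+)} \Poip Q_{\oc}^{(+)}\big)^* [P, Q_+] \big(\tilde{Q}_{\ps}^{(-)} \Poim Q_{\oc}\big) \pmod{\Schw(\R^n \times \R^n)}.
\]
This follows by expanding the difference as a sum of terms each containing a residual factor of the form $(\Id - \tilde{Q}_{\ps}^{(\pm)})$ or $(\Id - Q_{\oc}^{(+)})^*$ at an appropriate position. The terms involving $(\Id - \tilde{Q}_{\ps}^{(\pm)})$ compose with $[P, Q_+]$ to give operators with Schwartz kernels on $\R^{n+1} \times \R^{n+1}$, which when followed or preceded by Poisson operators and their adjoints yield operators with Schwartz kernels using Proposition~\ref{prop:Poisson adjoint mapping}. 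The term involving $(\Id - Q_{\oc}^{(+)})^*$ on the output side requires a propagation argument: by Proposition~\ref{prop:general-1cps-FIO-WF} applied to $\tilde{Q}_{\ps}^{(-)} \Poim Q_{\oc}$ and the wavefront set propagation properties of $[P, Q_+]$ together with Proposition~\ref{prop:Poisson adjoint mapping}, the 1c-wavefront set of the relevant composition is confined to $\Sg(\WF'(Q_{\oc}))$, where $Q_{\oc}^{(+)}$ is microlocally the identity, so $(\Id - Q_{\oc}^{(+)})^*$ kills it microlocally and the contribution is Schwartz.

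With this equivalence in hand, Proposition~\ref{prop:microlocalized Poisson are 1c-ps FIOs} gives
\[
\tilde{Q}_{\ps}^{(+)} \Poip Q_{\oc}^{(+)} \in I^{-3/4}_{\oc-\ps}(\R^{n+1} \times \R^n, \Lambda_+), \qquad \tilde{Q}_{\ps}^{(-)} \Poim Q_{\oc} \in I^{-3/4}_{\oc-\ps}(\R^{n+1} \times \R^n, \Lambda_-),
\]
and Theorem~\ref{thm: PsiDO- 1c-ps FIO composition} (with $[P, Q_+]$ a ps-pseudodifferential operator of differential order $1$ with parabolically homogeneous principal symbol) yields
\[
A_- := [P, Q_+] \tilde{Q}_{\ps}^{(-)} \Poim Q_{\oc} \in I^{1/4}_{\oc-\ps}(\R^{n+1} \times \R^n, \Lambda_-).
\]
Setting $A_+ := \tilde{Q}_{\ps}^{(+)} \Poip Q_{\oc}^{(+)}$, Theorem~\ref{thm: 1c-ps composition to 1c-1c} then gives
\[
SQ_{\oc} \equiv i(2\pi)^n A_+^* A_- \in I^{\,-3/4 + 1/4 + 1/2}_{\oc-\oc}\bigl(X_b^2, \beta_b^*(\mathrm{Gr}(\Sg)')\bigr) = I^0_{\oc-\oc}\bigl(X_b^2, \beta_b^*(\mathrm{Gr}(\Sg)')\bigr),
\]
which is the desired conclusion. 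The orders match the numerology predicted by Theorem~\ref{thm: main}.

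The hard part will be rigorously justifying the $(\Id - Q_{\oc}^{(+)})^*$-removal in the second step: unlike the ps-side, where residual factors combine with $[P, Q_+]$ to produce outright smoothing kernels, the 1c-side requires the propagation of 1c-wavefront set through the full composition $\Poip^* [P, Q_+] \tilde{Q}_{\ps}^{(-)} \Poim Q_{\oc}$. This must be traced by combining the wavefront mapping property \eqref{eq:1c-ps-FIO-WF-general-2} for 1c-ps FIOs with the analogous property for their adjoints (verified using Proposition~\ref{prop:Poisson adjoint mapping}) and with Lemma~\ref{lemma:non-forward-related-composition} to control how $[P, Q_+]$ composes with $P_+^{-1}$ (which implicitly appears through the perturbation expansion \eqref{eq:Poim-expression} of $\Poipm$). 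Once this propagation statement is established, the remainder of the argument is a routine application of the composition results from Sections~\ref{sec: 1c-ps FIO} and \ref{sec:composition}.
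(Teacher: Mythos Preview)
Your proposal is correct and follows essentially the same route as the paper: insert ps- and 1c-microlocalizers into the formula $SQ_{\oc}=i(2\pi)^n\Poip^*[P,Q_+]\Poim Q_{\oc}$, peel off the residual pieces as Schwartz, and apply Theorem~\ref{thm: 1c-ps composition to 1c-1c} to the remaining $A_+^*A_-$. The orders you compute match the paper's.

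The one place the paper handles things a bit differently is your ``hard part'', the removal of the outgoing 1c-cutoff. You propose to trace the 1c-wavefront set through the whole composition $\Poip^*[P,Q_+]\tilde Q_{\ps}^{(-)}\Poim Q_{\oc}$ and then kill it with $(\Id - Q_{\oc}^{(+)})^*$. The paper instead takes the adjoint of that residual term and reduces to showing that a compactly spacetime-supported ps-cutoff composed with $\Poip(\Id-\tilde Q_{\oc})^*$ already maps distributions to $C_c^\infty(\R^{n+1})$; this follows directly from Proposition~\ref{prop:general-1cps-FIO-WF} (or really \eqref{eq:5.5-3}) once one chooses $\tilde Q_{\oc}$ so that the backward sojourn relation applied to $\WF'(\Id-\tilde Q_{\oc})$ misses the ps-cutoff. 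This avoids having to track 1c-wavefront set through $\Poip^*$ and is somewhat cleaner, but your propagation argument would also work. The rest of your proposal (the $(\Id-\tilde Q_{\ps}^{(\pm)})$ terms being $\Psi_{\ps}^{-\infty,-\infty}$ since $\tilde Q_{\ps}^{(\pm)}$ is microlocally the identity on $\WF'([P,Q_+])$, then composing with $\Poipm$ and $\Poipm^*$ via Proposition~\ref{prop:Poisson adjoint mapping}) matches the paper exactly.
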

 
 \begin{proof}
 We choose a microlocal cutoff $Q_{\ps}$ on the parabolic scattering side. We claim that $Q_{\ps}$ can be chosen so that
 \begin{itemize}
 \item $Q_{\ps}$ is microlocally equal to the identity on $\WF'([P, Q_+]) \cap \Lambda_-' \circ \WF'(Q_{\oc})$,  and 
 \item $\WF'(Q_{\ps})$ projects to a compact set in $\R^{n+1}_{z,t}$. 
 \end{itemize}
 These properties are possible due to the fact that $\WF'(Q_{\oc})$ is contained in a bounded set of one-cusp frequencies. One can therefore find a compact set $K \subset \R^{n+1}$ such that the bicharacteristics emanating from $\WF'(Q_{\oc})$ all meet the compact set $K$. Then, $\WF'([P, Q_+])$ is disjoint from the radial sets, hence, on each bicharacteristic emanating from $\WF'(Q_{\oc})$, the microsupport of $\WF'([P, Q_+])$ lies over a compact set in $(z,t)$-space. Since there are a compact family of such bicharacteristics, by enlarging $K$ if necessary we can assume that the projection of $\WF'([P, Q_+]) \cap \SjR_- \circ \WF'(Q_{\oc})$ to $\R^{n+1}_{z,t}$ is contained in $K$. Then we can take $Q_{\ps}$ to be a multiplication operator by a function that is identically $1$ on $K$, with compact support. 
 
 We then decompose $S Q_{\oc} = i(2\pi)^n \mathcal{P}_+^* [P,Q_+] \mathcal{P}_- Q_{\oc}$ as 
\begin{equation}
S Q_{\oc}  = i(2\pi)^n \mathcal{P}_+^* Q_{\ps} [P,Q_+] \mathcal{P}_- Q_{\oc}  + i(2\pi)^n \mathcal{P}_+^* (\Id - Q_{\ps}) [P,Q_+] \mathcal{P}_- Q_{\oc}.
\label{eq:S loc}\end{equation}

For the second term, by Proposition~\ref{prop:microlocalized Poisson are 1c-ps FIOs} we know $(\Id - Q_{\ps}) [P,Q_+] \mathcal{P}_- Q_{\oc}$ is a 1c-ps Lagrangian distribution.
By the first property of $Q_{\ps}$ above and  Proposition~\ref{eq:1c-ps-FIO-WF-general}, we know $\WF'_{\oc-\ps}\big( (\Id - Q_{\ps}) [P,Q_+] \mathcal{P}_- Q_{\oc} \big)$ is empty, which shows that $(\Id - Q_{\ps}) [P,Q_+] \mathcal{P}_- Q_{\oc}$ maps tempered distributions to Schwartz functions on $\R^{n+1}_{z,t}$.
Then applying Proposition~\ref{prop:Poisson adjoint mapping}, we see that the second term maps tempered distributions to Schwartz functions on $\R^n$. It remains to analyze the first term. To do this we introduce $\tilde Q_{\ps}$, a multiplication operator by a compactly supported function that is $1$ on the projection of $\WF'(Q_{\ps})$ in $\R^{n+1}_{z,t}$ (here we require the second condition on $Q_{\ps}$ above). We also 
introduce $\tilde Q_{\oc}$ that is microlocally the identity on $\WF'(Q_{\oc})$, such that 
$\SjR_+ \circ \WF'(\tilde Q_{\oc})$ is disjoint from $\WF'(\tilde Q_{\ps})$.  
The first term on the RHS of \eqref{eq:S loc} can be decomposed as 
\begin{equation}\begin{gathered}
\mathcal{P}_+^* Q_{\ps} [P,Q_+] \mathcal{P}_- Q_{\oc} = 
\mathcal{P}_+^* \tilde Q_{\ps} Q_{\ps} [P,Q_+] \mathcal{P}_- Q_{\oc} \\
=  \tilde Q_{\oc} \mathcal{P}_+^* \tilde Q_{\ps} Q_{\ps} [P,Q_+] \mathcal{P}_- Q_{\oc} 
+ (\Id - \tilde Q_{\oc})  \mathcal{P}_+^*  Q_{\ps}  \tilde Q_{\ps} [P,Q_+] \mathcal{P}_- Q_{\oc} .
\end{gathered}\end{equation}
 We claim that the last term on the RHS maps distributions to Schwartz functions. To show this, it suffices to prove the same property for the adjoint
 $$
 Q_{\oc}^* \mathcal{P}_-^* [Q_+^*, P^*] Q_{\ps}  \Poip (\Id - \tilde Q_{\oc})^*. 
 $$
 But the adjoint has the same form as the last term of \eqref{eq:S formula 5} above, and a similar argument applies: By the choice of $\tilde Q_{\oc}$, we find that $Q_{\ps}  \Poip (\Id - \tilde Q_{\oc})^*$ maps distributions to $C_c^\infty(\R^{n+1})$, so using Proposition~\ref{prop:Poisson adjoint mapping} we find this term maps distributions to Schwartz functions. 
 
 Thus, modulo operators mapping distributions to Schwartz functions, $S Q_{\oc}$ has the form 
 $$
 SQ_{\oc} =  \tilde Q_{\oc} \mathcal{P}_+^* \tilde Q_{\ps} Q_{\ps} [P,Q_+] \mathcal{P}_- Q_{\oc} \text{ modulo } \Schw(\R^{2n}). 
 $$
 This takes the form $A_+^* A_-$ where 
 \begin{equation}
 A_+ = \tilde Q_{\ps} \Poip \tilde Q_{\oc}, \qquad  A_- =  Q_{\ps} [P, Q_+] \Poim  Q_{\oc}.
 \end{equation}
 By Proposition~\ref{prop:microlocalized Poisson are 1c-ps FIOs}, these microlocalized Poisson operators are fibred-Legendre distributions associated to the (microlocalized) sojourn Lagrangians $\Lambda_\pm$ defined in \eqref{eq:microlocalized sojourn relns}. By the composition result of Theorem~\ref{thm: 1c-ps composition to 1c-1c}, this composition is a one-cusp FIO associated to the classical scattering map. This completes the proof. 
\end{proof}

\begin{prop} \label{prop: ellipticity of tilde S}
$\csmp = S Q_{\oc}$ is elliptic on the elliptic set of $Q_{\oc}$.
\end{prop}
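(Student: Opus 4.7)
The plan is to compute the principal symbol $\sigma_{\oc-\oc}(\csmp)$ at each $(q_-, q_+) = (q_-, \Sg(q_-))$ with $q_-$ in the elliptic set of $Q_{\oc}$ and show it is non-zero. From the proof of Proposition~\ref{prop:S microlocalized}, modulo Schwartz kernels we have $\csmp = i(2\pi)^n A_+^* A_-$ with $A_+ = \tilde Q_{\ps}\Poip \tilde Q_{\oc}$ and $A_- = Q_{\ps}[P,Q_+]\Poim Q_{\oc}$, where the cutoffs $\tilde Q_{\oc}, \tilde Q_{\ps}, Q_{\ps}$ are microlocally equal to the identity along the bicharacteristic $\gamma_{q_-}$ by construction. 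Combining Theorem~\ref{thm: 1c-ps composition to 1c-1c} with the composition results of Theorems~\ref{thm: PsiDO- 1c-ps FIO composition}--\ref{thm: PsiDO- 1c-ps FIO composition-2}, and using $\sigma_{\ps}([P,Q_+]) = -i H_p q_+$ at principal order, the principal symbol reduces to
\begin{equation}\label{eq:8.2-plan-integral}
\sigma_{\oc-\oc}(\csmp)(q_-, q_+) = (2\pi)^n \sigma_{\oc}(Q_{\oc})(q_-) \int_{\gamma_{q_-}} \overline{\sigma_{\oc-\ps}(\Poip)} \, \sigma_{\oc-\ps}(\Poim) \, (H_p q_+) \, ds.
\end{equation}

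The crux is to show the integral in \eqref{eq:8.2-plan-integral} is non-vanishing. The first step is to use the free case as a template: when $P = P_0$ we have $S_0 = \Id$ and the classical scattering map is the identity, so $\csmp_0 \equiv Q_{\oc}$ (modulo Schwartz) is a 1c-1c FIO associated to the diagonal with principal symbol $\sigma_{\oc}(Q_{\oc})$. Running \eqref{eq:8.2-plan-integral} for $P_0$ forces the normalization
\begin{equation}\label{eq:8.2-free-norm}
(2\pi)^n \int_{\gamma^0_q} |\sigma_{\oc-\ps}(\Poi_0)|^2 \, (H_{p_0} q_+) \, ds = 1,
\end{equation}
valid for any admissible $Q_+$ with $q_+$ chosen monotone along the flow (so that the integrand is pointwise positive). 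In the perturbed case, the symbols $\sigma(\Poipm)$ are non-vanishing along $\gamma_{q_-}$ by construction in the proof of Proposition~\ref{prop: Possion parametrix, K,-}, as they solve first-order linear transport ODEs along $\msf{H}_p^{2,0}$.

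The final step is a comparison. Since $P - P_0$ is compactly supported in spacetime, we may choose $Q_+$ so its transition region lies entirely in the far-future free region of $\supp(P-P_0)$. In that region $\Poip$ coincides with $\Poi_0$, while $\Poim$ factors locally as $\Poi_0 \circ S$ (free forward propagation of the outgoing data), so at the principal symbol level $\sigma(\Poim) = \sigma(\Poi_0) \cdot \sigma(S)(q_-, q_+)$ up to a non-vanishing transport factor. Substituting into \eqref{eq:8.2-plan-integral} and using the free normalization \eqref{eq:8.2-free-norm} yields
\begin{equation}
\sigma_{\oc-\oc}(\csmp)(q_-, q_+) = \sigma_{\oc}(Q_{\oc})(q_-) \cdot \sigma(S)(q_-, q_+),
\end{equation}
which is non-zero since $\sigma(S)$ is forced to be invertible by the operator identity $S S^{-1} = \Id$, with $S^{-1}$ itself a 1c-1c FIO by the symmetric time-reversed argument. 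The main obstacle is the precise identification of the transport factor relating $\sigma(\Poim)$ and $\sigma(\Poi_0) \cdot \sigma(S)$ in the far-future region; this can be handled either by direct matching of the two parametrices at the symbol level, or sidestepped by applying the $SS^{-1} = \Id$ argument directly to \eqref{eq:8.2-plan-integral} via composition of 1c-1c principal symbols.
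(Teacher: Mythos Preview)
Your approach is valid but substantially more indirect than the paper's. The paper argues directly from the symbol integral in Theorem~\ref{thm: 1c-ps composition to 1c-1c}: the amplitudes $a_{\pm,0}$ of the Poisson parametrices solve homogeneous first-order linear transport ODEs along the bicharacteristic with nonzero initial data (equal to the free Poisson amplitude outside the perturbation), hence are bounded away from zero; combined with $\msf{H}_p^{2,0}q_+ \geq 0$ and $\int \msf{H}_p^{2,0}q_+\,ds = 1$ (since $q_+$ transitions monotonically from $0$ to $1$), the integral $(2\pi)^n\int a_{+,0}(\msf{H}_p^{2,0}q_+)a_{-,0}\,ds$ is strictly positive. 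You actually record both of these ingredients (non-vanishing of $\sigma(\Poipm)$ from the transport construction, and pointwise positivity of the free integrand), but do not combine them to conclude directly.

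Instead, your ``comparison'' step via $\Poim = \Poi_0\circ S$ in the far-future region is tautological: substituting it into \eqref{eq:8.2-plan-integral} and using \eqref{eq:8.2-free-norm} simply recovers $\sigma(\csmp) = \sigma(Q_{\oc})\,\sigma(S)$, which already follows from $\csmp = SQ_{\oc}$ and Proposition~\ref{prop: PsiDO- 1c1c FIO, composition} once $S$ is known to be a 1c-1c FIO. So the only genuine content of your argument is the time-reversal step --- constructing $S^{-1}$ as the scattering map for the time-reversed operator and deducing $\sigma(S)\neq 0$ from $SS^{-1}=\Id$ at the symbol level. This is a legitimate alternative (the paper mentions it in the remark following the proof of Theorem~\ref{thm: main}), and it has the virtue of being insensitive to possible phase cancellation in the integrand when $a_{\pm,0}$ are complex. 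Its cost is that it requires redoing all of Proposition~\ref{prop:S microlocalized} for the time-reversed equation before the symbol calculus can be applied to the composition $SS^{-1}$.
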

\begin{proof}
Recalling the proof of Proposition \ref{prop: Possion parametrix, K,-}, we know that the principal symbol of $K_-$ is obtained by solving \eqref{eq: transport ODE, a-0} on $\mathcal{U}_-$ with $a_{-,0}=1$ outside the region with the metric perturbation. Thus the region over which we are actually solving this transport equation is compact and $a_{-,0}>0$ on it by the uniqueness theorem of linear homogeneous ODEs. So there is a constant $c>0$ such that $a_{-,0} \geq c>0$ on $\mathcal{U}_-$. Then by Theorem \ref{thm: PsiDO- 1c-ps FIO composition}, we know that the principal symbol of $i[P,Q_+]K_-$ is
\begin{equation}
 (\msf{H}_p^{2,0}q_+)a_{-,0} \cdot \nu_{\oc-\ps},
\end{equation}
where $\nu$ is the density factor of $\Omega^{1/2}(\Lambda_-) \otimes S_{\oc-\ps}^{[\frac{1}{4}]}(\Lambda_-)$. By our choice of $q_+$, we know $\msf{H}_p^{2,0}q_+ \geq 0$ and $q_+$ changes from $0$ to $1$ on $\mathcal{U}_-$ along the $\msf{H}_p^{2,0}$-flow.

By the same reason as above for $a_{-,0}$, we can assume the amplitude $a_{+,0} \geq c>0$ on $\mathcal{U}_+$. Applying the second part of Theorem \ref{thm: 1c-ps composition to 1c-1c}, the principal symbol of $\csmp$ is
\begin{align}
\big( (2\pi)^n\int  a_{+,0}(\msf{H}_p^{2,0}q_+)a_{-,0} ds \big) \cdot \nu_{\oc-\oc},
\end{align}
where $\nu_{\oc-\oc}$ is the density factor comes from $\Omega^{1/2}(\beta_b^*(\mathrm{Gr}(\Sg))) \otimes S_{\oc-\oc}^{[0]}(\beta_b^*(\mathrm{Gr}(\Sg)))$. By discussion above, the integral is strictly positive and finishes the proof.
\end{proof}

\begin{proof}[Proof of Theorem~\ref{thm: main}]
The main theorem follows from the combination of Lemma~\ref{lem:S near fibre infinity} and Propositions~\ref{prop:S microlocalized} and \ref{prop: ellipticity of tilde S}. In fact, by Lemma~\ref{lem:S near fibre infinity}, $S(\Id - Q_{\oc})$ is a pseudodifferential operator, therefore associated to the identity relation for large 1c-frequencies. This agrees with the classical scattering relation for such frequencies, as the corresponding bicharacteristics do not meet the perturbation and therefore scatter trivially. On the other hand, Proposition~\ref{prop:S microlocalized} shows that $S Q_{\oc}$ is an FIO associated to the classical scattering relation. 

To prove the ellipticity, first Lemma~\ref{lem:S near fibre infinity} demonstrates ellipticity for large 1c-frequency. For $q_- \in \partial \overline{{}^{\oc} T^* \R^n}$ with finite 1c-frequency, we can choose $Q_{\oc}$ so that it is microlocally equal to the identity near $q_-$. Then $S = S Q_{\oc}$ microlocally near $q_-$, and Proposition~\ref{prop: ellipticity of tilde S} shows the ellipticity of $S$ at $(\Sg(q_-), q_-)$. 

The last statement in Theorem~\ref{thm: main} is an immediate consequence of Lemma~\ref{lem:S near fibre infinity}. 
\end{proof}

\begin{remark} Another way to see the ellipticity of $S$ is to note that the Schr\"odinger equation is time-reversible, in the sense that the time-reversed equation is another Schr\"odinger equation with the $D_t$ term switched in sign, and with the perturbations replaced by their time-reversals. This time reversed equation has a scattering map which is a 1-cusp FIO associated with the inverse of the classical scattering map, and inverts $S$. The existence of such an inverse implies ellipticity of $S$. 
\end{remark}

\subsection{Egorov theorem}

Now we discuss mapping properties in terms of the 1c-wavefront set.
\begin{prop}  \label{prop: tilde S*S is 1c PsiDO}
For $\csmp = S Q_{\oc}$  as in Proposition~\ref{prop:S microlocalized}, we have 
\begin{align} \label{eq: tilde S*S is 1c PsiDO}
\csmp^*\csmp \in \Psi_{\oc}^{-\infty,0}(X).
\end{align} 
\end{prop}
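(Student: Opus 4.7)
The plan is to view $\csmp^*\csmp$ as a transversal composition of two 1c-1c Fourier integral operators, apply Proposition~\ref{prop: 1c-1c transversal composition}, and then identify the resulting class with the 1-cusp pseudodifferential algebra. This is the 1c-analogue of the standard principle in FIO theory that $F^*F$ is pseudodifferential when $F$ is associated to the graph of a canonical diffeomorphism. Since Definition~\ref{def:1c-1c-FIO} is manifestly symmetric under interchange of left and right boundary hypersurfaces combined with conjugation of the phase, I first obtain $\csmp^* \in I^{0}_{\oc-\oc}(X_b^2, \beta_b^*(\mathrm{Gr}(\Sg^{-1})'))$. As $\Sg$ is a diffeomorphism of $\overline{{}^{\oc}T^*X}$ (cf.\ the end of Section~\ref{subsec: 1c arises}), a dimension count shows that $\mathrm{Gr}(\Sg^{-1}) \times \mathrm{Gr}(\Sg)$ meets the partial diagonal in the middle two factors of $(\overline{{}^{\oc}T^*X})^4$ transversely, with composed canonical relation equal to the twisted diagonal $\Delta'$. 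Proposition~\ref{prop: 1c-1c transversal composition} then gives $\csmp^*\csmp \in I^{0}_{\oc-\oc}(X_b^2, \beta_b^*\Delta')$.

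To identify this class with the 1c pseudodifferential algebra, the twisted diagonal admits a canonical non-degenerate parametrization with $n$ phase variables $(\xi, \eta) \in \R^n$, namely
\[
\Phi_{\oc-\oc}(\mathsf{K}, \xi, \eta) = \frac{(1 - 1/\sigma)\xi}{x_{\oc,1}^2} + \frac{(y_{\oc,1} - y_{\oc,2}) \cdot \eta}{x_{\oc,1}},
\]
and one verifies, using the description of the canonical 1-form in \eqref{eq:1c1c-1form-1} together with the critical conditions $\partial_{\xi}\varphi_0 = 0$, $\partial_{\eta}\varphi_1 = 0$, that this recovers exactly the twisted-diagonal conditions $\sigma=1$, $y_{\oc,1}=y_{\oc,2}$, $\xi_{\oc,2}=-\xi_{\oc,1}$, $\eta_{\oc,2}=-\eta_{\oc,1}$ with free fibre variables $\xi_{\oc,1}=\xi$, $\eta_{\oc,1}=\eta$. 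Since $(1-1/\sigma)/x_{\oc,1}^2 = (x_{\oc,1}-x_{\oc,2})/x_{\oc,1}^3$, this is precisely the phase appearing in the Fourier representation of a 1c-PsiDO Schwartz kernel. Applying the equivalence-of-phase-functions result Proposition~\ref{prop: phase equivalence, general parameter number, signature} to reduce an arbitrary parametrization to this canonical form, and reconciling the powers of $x_{\oc,1}$ and density factors of \eqref{eq: 1c-1c FIO, local form} with the normalizations of Section~\ref{sec: 1c PsiDO}, I conclude that any element of $I^{0}_{\oc-\oc}(X_b^2, \beta_b^*\Delta')$ is the Schwartz kernel of a 1c-PsiDO of decay order $0$.

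Finally, the differential order $-\infty$ is forced by the microlocalization by $Q_{\oc}$ in $\csmp = SQ_{\oc}$: its microsupport is by construction bounded away from 1c-fibre-infinity, and moreover $\Sg$ acts as the identity in a neighbourhood of 1c-fibre-infinity, since bicharacteristics of sufficiently large 1c-frequency never encounter the metric perturbation and scatter trivially. Hence the fibred-Legendre submanifold underlying $\csmp$ is supported in a bounded region of the 1c-fibre variables, and by the wavefront-set bound Proposition~\ref{prop:1c-1c-WF} this property is preserved under adjoint and transversal composition, giving a $\WF'_{\oc-\oc}(\csmp^*\csmp)$ contained in a bounded 1c-fibre region of the diagonal. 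Under the PsiDO identification above, bounded 1c-fibre support of the wavefront set translates into compact support of the 1c-symbol in $(\xi_{\oc},\eta_{\oc})$, placing $\csmp^*\csmp$ in $\Psi_{\oc}^{-\infty, 0}(X)$. The main technical point requiring care is showing that the reduction to PsiDO form is valid to all orders in $x_{\oc}$, not merely at the principal symbol level; this I would handle by an iterative improvement of the amplitude order-by-order in $x_{\oc}$ using the middle-exactness of the 1c-1c symbol sequence, whose proof is the direct analogue of Lemma~\ref{lemma: vanishing amplitude->lower order}.
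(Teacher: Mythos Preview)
Your approach is essentially the same as the paper's: transversal composition via Proposition~\ref{prop: 1c-1c transversal composition} to land in $I^0_{\oc-\oc}(X_b^2, {}^{\oc}N^*\Delta_b)$, followed by equivalence of phase functions (Proposition~\ref{prop: phase equivalence, general parameter number, signature}) to reduce to the canonical diagonal phase and identify with the 1c-PsiDO kernel. The paper is terser --- it simply cites \cite{zachos2022inverting} for the last identification --- but the logic is identical, and your explicit phase $\frac{(1-1/\sigma)\xi}{x_{\oc,1}^2} + \frac{(y_{\oc,1}-y_{\oc,2})\cdot\eta}{x_{\oc,1}}$ is in fact better aligned with the kernel form in Section~\ref{sec: 1c PsiDO} than the paper's $(\sigma-1)\xi_{\oc}/x_{\oc}^2$.

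One simplification: your final paragraph arguing the differential order $-\infty$ via the $Q_{\oc}$-microlocalization is not needed. In Definition~\ref{def:1c-1c-FIO} the amplitude $a$ is required to have compact support in the phase variables $(v,w)$; once you reduce to the canonical diagonal phase with $(v,w)=(\xi_{\oc},\eta_{\oc})$, compact fibre-support of the 1c-symbol is automatic for \emph{any} element of $I^0_{\oc-\oc}(X_b^2,{}^{\oc}N^*\Delta_b)$. This is precisely the paper's claim that $I^m_{\oc-\oc}(X_b^2, {}^{\oc}N^*\Delta_b) = \Psi_{\oc}^{-\infty,m}(X)$ as a class, with no reference to the particular microlocalizer.
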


\begin{proof}
Let $\leftidx{^{\oc}}{N^*\Delta_b}$ be the 1-cusp conormal bundle in $\ococb$ of the lifted diagonal in $X^2_b$.
Then by Proposition \ref{prop: 1c-1c transversal composition}, and the fact that a canonical graph composing with its own transpose gives the conormal bundle of the (lifted) diagonal, we know
\begin{align*}
\csmp^*\csmp \in I^{0}_{\oc-\oc}(X^2_b;\leftidx{^{\oc}}{N^*\Delta_b}).
\end{align*}

Notice that $\leftidx{^{\oc}}{N^*\Delta_b}$ is parametrized by the phase function:
\begin{align} \label{eq: phase function, 1c conormal bundle}
\Phi  =  \frac{\xi_{\oc}(\sigma-1)}{x_{\oc}^2} +  \frac{\eta_{\oc} \cdot (y_{\oc}-y_{\oc}'')}{x_{\oc}},
\end{align}
with $v = \tau_{\oc}, w = \mu_{\oc}$. By the phase equivalence in Proposition~\ref{prop: phase equivalence, general parameter number, signature},
all elements in $I^{0}_{\oc-\oc}(X^2_b;\leftidx{^{\oc}}{N^*\Delta_b})$ can be written as a Lagrangian distributions with phase function of the form \eqref{eq: phase function, 1c conormal bundle}.

By the construction in \cite[Section~2.3]{zachos2022inverting} (see also \cite[Section~2.1]{jia2022tensorial}) we know that in fact
$I^{m}_{\oc-\oc}(X_b^2,\leftidx{^{\oc}}{N^*\Delta_b})$ viewed as a Schwartz kernel on the 1-cusp double space defined there is exactly the pseudodifferential class
\begin{align*}
\Psi_{\oc}^{-\infty,m}(X),
\end{align*}
defined there.
\end{proof}

Next we prove the composition law of 1c-pseudodifferential operators and 1c-1c Lagrangian distributions.
\begin{prop}  \label{prop: PsiDO- 1c1c FIO, composition}
Suppose $B, B' \in \Psi_{\oc}^{-\infty,0}$ be classical 1c-pseudodifferential operators of order zero, and let $A \in I^{m}_{\oc-\oc}(X_b^2,\mathcal{L})$, with principal symbols $b, b', a$ respectively.
Then
\begin{align*}
B'A, A B \in I^{m}_{\oc-\oc}(X_b^2,\mathcal{L}_{\oc}),
\end{align*}
with principal symbol $ba$, $a b'$ respectively, where the principal symbols $b', b$, being smooth functions on the 1c-cotangent space $\ocphase$, are lifted to $\ococb$ via the left, resp. right, stretched projections, (that is, composition of the blowdown map and the left, resp. right, projection,  see \eqref{eq: defn  b-double space}, \eqref{eq:def-lifted-ococ-bundle}) and then restricted to $\SL$. 
\end{prop}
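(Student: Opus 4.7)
The arguments for $B'A$ and $AB$ are symmetric, so I describe $B'A$. By a partition of unity and Proposition~\ref{prop: phase equivalence, general parameter number, signature} reduce to the case where $A$ has Schwartz kernel of the form \eqref{eq: 1c-1c FIO, local form} with a non-degenerate parametrization $\Phi_{\oc-\oc}$ of $\SL$ and amplitude $a$ compactly supported in the relevant coordinates. Writing $B' = q_{L,\oc}(b')$ with $b' \in S_{\oc}^{-\infty,0}$ Schwartz in the fibre variables, the Schwartz kernel of $B'A$ is a compound oscillatory integral in the extended set of phase variables $(v, w, \xi_{\oc}, \eta_{\oc}, \tilde x_{\oc,1}, \tilde y_{\oc,1})$ with total phase
\[
\Psi = \xi_{\oc}\frac{x_{\oc,1}-\tilde x_{\oc,1}}{x_{\oc,1}^3} + \eta_{\oc} \cdot \frac{y_{\oc,1}-\tilde y_{\oc,1}}{x_{\oc,1}} + \Phi_{\oc-\oc}\bigl(\tilde x_{\oc,1},\, \tilde x_{\oc,1}/x_{\oc,2},\, \tilde y_{\oc,1},\, y_{\oc,2},\, v, w\bigr).
\]

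The central step is to verify that $\Psi$ is again a (non-degenerate, up to the phase equivalence) parametrization of the same fibred-Legendre submanifold $\SL$. Stationarity in $(\xi_{\oc}, \eta_{\oc})$ forces $\tilde x_{\oc,1} = x_{\oc,1}$ and $\tilde y_{\oc,1} = y_{\oc,1}$, while stationarity in $(\tilde x_{\oc,1}, \tilde y_{\oc,1})$ equates $(\xi_{\oc}, \eta_{\oc})$ with the left 1c-fibre coordinates read off from $\Phi_{\oc-\oc}$ at the corresponding point of $\SL$. The Hessian in these extra variables is off-diagonal with diagonal blocks of schematic form $\mathrm{diag}(-x_{\oc,1}^{-3},\, -x_{\oc,1}^{-1} I_{n-1})$, hence non-degenerate with signature $0$. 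Invoking Proposition~\ref{prop: phase equivalence, general parameter number, signature} rewrites the compound expression as an oscillatory integral with the original phase $\Phi_{\oc-\oc}$, proving $B'A \in I^{m}_{\oc-\oc}(X^2_b, \SL)$.

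To extract the principal symbol, apply stationary phase explicitly in the new variables. Under the 1c-rescaling $\xi_{\oc} \mapsto x_{\oc,1}^{-2}\xi_{\oc}$, $\eta_{\oc} \mapsto x_{\oc,1}^{-1}\eta_{\oc}$, this becomes a standard large-parameter stationary phase with parameter $x_{\oc,1}^{-1}$. The Jacobian of this rescaling, combined with the $(2\pi)^{-n}$ from the 1c-quantization of $B'$ and the 1-cusp density factor $|dx_{\oc}\, dy_{\oc}/x_{\oc}^{n+2}|^{1/2}$, contributes exactly the power of $x_{\oc,1}$ required to preserve the order convention in Definition~\ref{def:1c-1c-FIO}. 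The leading term in the stationary phase expansion is $a$ times $b'$ evaluated at the critical value, which is precisely the restriction of $b'$ to $\SL$ under the left stretched projection; this yields the stated principal symbol $b'|_{\SL}\, a$. The case $AB$ is handled identically using the right 1c-quantization of $B$.

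The main technical point, rather than a true obstacle, is the bookkeeping of powers of $x_{\oc,1}$ and of $2\pi$ arising from the anisotropy of the 1-cusp calculus: the mismatch between the $x_{\oc,1}^{-3}$ weighting on $\xi_{\oc}$ and the $x_{\oc,1}^{-1}$ weighting on $\eta_{\oc}$ in the quantization phase of $B'$ must be absorbed consistently into the density factor and the $x_{\oc,1}^{-m - (n+1)/2}$ normalization of Definition~\ref{def:1c-1c-FIO} in order to confirm that the resulting operator has order exactly $m$ and that the invariant principal symbol of $B'A$ coincides with the pointwise product $(b'|_{\SL}) \cdot a$ as sections of $\Omega^{1/2}(\SL) \otimes S^{[m]}_{\oc-\oc}(\SL)$.
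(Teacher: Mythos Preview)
Your direct stationary-phase argument is correct and would go through essentially as you outline: the compound phase is equivalent (via Proposition~\ref{prop: phase equivalence, general parameter number, signature}) to the original $\Phi_{\oc-\oc}$, the extra Hessian block is non-degenerate of signature zero, and the power-of-$x_{\oc,1}$ bookkeeping balances exactly as you describe. This is the same strategy the paper uses for the analogous 1c-ps result (Theorem~\ref{thm: PsiDO- 1c-ps FIO composition}), and it is perfectly sound here.

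The paper, however, takes a much shorter route: it simply observes that a 1c-pseudodifferential operator is itself a 1c-1c Lagrangian distribution associated to the identity relation $\leftidx{^{\oc}}{N^*\Delta_b}$ (this is the content of the discussion around \eqref{eq: phase function, 1c conormal bundle}), and then invokes the already-proved general transversal composition result, Proposition~\ref{prop: 1c-1c transversal composition}, with $C_2 = \mathrm{Id}$ (or $C_1 = \mathrm{Id}$). The symbol statement then falls out of the bilinear symbol map \eqref{eq: symbol product, bundle maps}. So the paper's proof is one sentence, at the cost of having developed the general composition theorem first; your approach is self-contained and avoids that dependency, but re-does by hand a special case of work already done.
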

\begin{proof}
This proposition follows from Proposition \ref{prop: 1c-1c transversal composition}, since 1-cusp pseudodifferential operators are 1c-1c Lagrangian distributions with the canonical relation being the identity.
\end{proof}

Next we prove an Egorov-type result for our class of Lagrangian distributions. 
Let $\pi_L, \pi_R$ denote the left and right stretched projections from $\ococb$ to $\ocphase$ as described in Proposition~\ref{prop: PsiDO- 1c1c FIO, composition}. 
\begin{prop} \label{prop: 1c-1c Egorov}
Suppose $B \in \Psi_{\oc}^{-\infty,r}$ is a classical 1c-pseudodifferential operator of order $r$, and $A \in I^{m}_{\oc-\oc}(X_b^2,\mathcal{L})$ is such that $A$ is elliptic at the points of $\SL$ corresponding to the lift of $\WF'(B)$ by $\pi_R$.   Then there exists $B' \in \Psi_{\oc}^{-\infty,r}$ such that
\begin{align}
 B'A = AB \text{ modulo } I^{-\infty}_{\oc-\oc}(X_b^2,\mathcal{L}_{\oc}). 
\end{align}
Moreover, $B'$ is elliptic at $\Sg(q_-)$ whenever $B$ is elliptic at $q_- \in W_- \equiv \ocphase$. 
The same result holds when we interchange the roles of $B$ and $B'$. 
\end{prop}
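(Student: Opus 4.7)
The plan is to construct $B'$ iteratively by symbol calculus, cancelling the principal symbol of $B'A - AB$ at successively lower orders. The key geometric input is that $\mathcal{L} = \beta_b^*(\mathrm{Gr}(\Sg)')$ is the graph of the classical scattering map, so the two stretched projections $\pi_L, \pi_R : \mathcal{L} \to \ocphase$ are diffeomorphisms with $\pi_L \circ \pi_R^{-1} = \Sg$. Consequently, giving a smooth function on $\mathcal{L}$ is the same as giving a smooth function on $W_-$ (via $\pi_R$) or on $W_+$ (via $\pi_L$), and the relation between them is the pullback by $\Sg$.

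By Proposition~\ref{prop: PsiDO- 1c1c FIO, composition}, the top-order principal symbol of $B'A - AB$ is
\begin{equation*}
\sigma(B'A - AB) = \bigl((\pi_L^* b' - \pi_R^* b)\big|_\mathcal{L}\bigr)\cdot \sigma(A),
\end{equation*}
where $b, b'$ are the principal symbols of $B, B'$. To make this vanish, I would take $b_0' \in S_{\oc}^{-\infty, r}$ extending $b \circ \Sg^{-1}$ to all of $\ocphase$, cut off to zero away from a suitable neighbourhood of $\Sg(\WF'(B))$. Since $\Sg$ is a smooth diffeomorphism preserving the 1-cusp structure on the compactified cotangent bundle (as established in Section~\ref{subsec: 1c arises}), this pullback does lie in the correct symbol class. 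Setting $B_0' = \Op(b_0')$, the error $E_1 := B_0' A - AB$ is then of FIO order one less and remains associated to $\mathcal{L}$.

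Next, the principal symbol $\sigma(E_1)$ is a smooth section on $\mathcal{L}$ supported inside $\pi_R^{-1}(\WF'(B))$; by the ellipticity hypothesis on $A$ precisely at these points, we may write $\sigma(E_1) = c_1 \cdot \sigma(A)$ for a smooth function $c_1$ on $\mathcal{L}$ (extended by zero outside the ellipticity region, using a cutoff that is identically one on $\pi_R^{-1}(\WF'(B))$). Setting $b_1'$ to be a smooth symbol extension of $-c_1 \circ \pi_L^{-1}$ to $\ocphase$ and $B_1' = \Op(b_1') \in \Psi_{\oc}^{-\infty, r-1}$, the partial sum $(B_0' + B_1')A - AB$ becomes two FIO orders below $AB$. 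Iterating produces a sequence $B_k' \in \Psi_{\oc}^{-\infty, r-k}$, and taking an asymptotic sum $B' \sim \sum_k B_k'$ via Borel summation in the 1-cusp pseudodifferential calculus yields the desired $B'$ with $B'A - AB \in I^{-\infty}_{\oc-\oc}(X_b^2, \mathcal{L})$. The interchanged statement (given $B'$, constructing $B$) is obtained by running the same argument with $\Sg$ replaced by $\Sg^{-1}$.

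The main obstacle will be verifying that each symbol constructed in the iteration actually lies in the 1-cusp pseudodifferential calculus --- that is, that $b \circ \Sg^{-1}$ and the successive correction symbols $c_k \circ \pi_L^{-1}$ define smooth symbols uniformly up to the boundary of $\ocphase$, respecting the anisotropic 1-cusp scaling at fibre-infinity and at the base boundary (cf.\ Section~\ref{subsec: 1c bundles}). This reduces to the fact, essentially contained in Section~\ref{subsec: 1c arises}, that $\Sg$ is not merely a diffeomorphism of the interior but extends canonically to a smooth diffeomorphism of the compactified 1-cusp cotangent bundle, together with an appropriate use of microlocal cutoffs that confine all computations to the ellipticity region of $A$.
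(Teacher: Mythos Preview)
Your proposal is correct and follows essentially the same approach as the paper: an iterative symbol-level construction $B' \sim \sum_j B_j'$ with $B_j' \in \Psi_{\oc}^{-\infty,r-j}$, where at each step the principal symbol of $B_j'$ is determined by matching $\sigma(B_j'A)$ with the residual error, using the ellipticity of $A$ and the graph structure of $\mathcal{L}$ to transfer symbols from the right to the left factor via $\Sg$. The paper's proof is slightly more abstract (it invokes Proposition~\ref{prop: 1c-1c transversal composition} directly and the surjectivity of the principal symbol map rather than writing out $b_0' = b\circ\Sg^{-1}$ explicitly), and so does not explicitly address your ``main obstacle'' about symbol classes; your resolution via the smoothness of $\Sg$ on the compactified 1-cusp bundle is the right one, and indeed is what makes the paper's abstract argument work.
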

\begin{proof}
Consider the case where we are given $B$ and try to construct $B'$.
We can construct $B'$ as an asymptotic sum
\begin{align}
B' = \sum_{j=0}^\infty B'_j, \quad B'_j \in \Psi_{\oc}^{-\infty,r-j}, 
\end{align}
such that
\begin{align*}
(\sum_{j=0}^N B'_j)A - AB \in I^{m+r-1-N}_{\oc-\oc}(X_b^2,\mathcal{L}_{\oc}).
\end{align*}
For $j=0$, we only need 
\begin{align}
\sigma^{m+r}_{\oc-\oc}(B_0'A) = \sigma^{m+r}_{\oc-\oc}(AB).
\end{align}
This has the unique solution $\sigma^{m+r}_{\oc-\oc}(B_0')(\Sg(q_-)) = \sigma^{m+r}_{\oc-\oc}(B)(q_-)$ by Proposition \ref{prop: 1c-1c transversal composition}, using the ellipticity of $A$. 
Clearly $B_0'$ is elliptic at at $\Sg(q_-)$ precisely when $B$ is elliptic at $q_-$. Then $B_j'$ is defined inductively, using
\begin{align}
\sigma^{m+r-N}_{\oc-\oc}(B_{N}'A) = \sigma^{m+r-N}_{\oc-\oc}(-(\sum_{j=0}^{N-1} B'_j)A +AB)
\end{align}
and the ellipticity of $A$ again. 
The converse case is proved in the same manner.
\end{proof}

\subsection{Proof of Corollaries}\label{subsec:proof of corollaries}
We briefly sketch the proof of Corollaries~\ref{coro:bounded}, \ref{coro:wavefront} and \ref{coro:noncompact-difference}.

\begin{proof}[Proof of Corollary~\ref{coro:bounded}] We use the boundedness property \eqref{eq:1c boundedness} of the 1c-pseudodifferential algebra. We write $S = S Q_{\oc} + S (\Id - Q_{\oc})$, where $Q_{\oc}$ is as in Lemma~\ref{lem:S near fibre infinity}. Then, by this lemma, $S (\Id - Q_{\oc})$ has the desired boundedness property, so it remains to consider $\tilde S := S Q_{\oc}$. According to Proposition~\ref{prop: tilde S*S is 1c PsiDO}, $\tilde S^* \tilde S$ is a 1c-pseudodifferential operator of order $(0,0)$, hence bounded on $L^2$, and thus $\tilde S$ is itself bounded on $L^2$. Moreover, using Proposition~\ref{prop: 1c-1c transversal composition}, the same is true of $(B \tilde S B^{-1})^* B \tilde S B^{-1}$ for any elliptic, invertible $B \in \Psi_{\oc}^{s,l}$, showing that $\tilde S$ is bounded on $H_{\oc}^{s,l}$. 
\end{proof}

\begin{proof}[Proof of Corollary~\ref{coro:wavefront}] We have
\begin{align*}
q \in (\WF_{\oc}^{-\infty,r}(f))^c
\end{align*}
if and only if there exists $B \in \Psi_{\oc}^{-\infty,r}$ that is elliptic at $q$ such that
\begin{align*}
Bf \in L^2.
\end{align*}
By the $L^2-$boundedness of $\csmp$, we have
\begin{align*}
\csmp Bf \in L^2.
\end{align*}
On the other hand $\Sg(q) \in (\WF_{\oc}^{-\infty,r}(\csmp f))^c$ if and only if there exists $B' \in \Psi_{\oc}^{-\infty,r}$ that is elliptic at $\Sg(q)$ such that
\begin{align*}
B'\csmp f \in L^2.
\end{align*}
And the equivalence of these two statements follows from Proposition \ref{prop: ellipticity of tilde S} and Proposition \ref{prop: 1c-1c Egorov}.
\end{proof}

\begin{proof}[Proof of Corollary~\ref{coro:noncompact-difference}] 
Let $\beta$ be the projection $\ococb \to \overline{{}^{\oc}T^*\R^n} \times \overline{{}^{\oc}T^*\R^n}$ acting as the b-blow down map on base variables and identity on momentum variables.
By the given condition $\mathrm{Gr}(\Sg) \neq \mathrm{Gr}(\mathfrak{S}_{g_0})$, there exists $q \in \overline{{}^{\oc}T_{\partial \overline{\R^n}}^*\R^{n}}$ such that
\begin{equation}
     (q,q') \notin \mathrm{Gr}(\Sg),
\end{equation}
where $q'$ is $q$ with the sign of momentum variables switched.
This means we can find $Q \in \Psi_{\oc}^{0,0}(\R^{n})$ that is elliptic at $q$ while 
\begin{equation} \label{eq:9.3-1}
\{(q_1,q_2'): \, q_i \in \WF'_{\oc}(Q) \} \cap \mathrm{Gr}(\Sg) = \emptyset.
\end{equation}
Then applying Proposition~\ref{prop: 1c-1c transversal composition} twice, since $Q$ is a $\oc-\oc$ FIO associated to the part of $\leftidx{^{\oc}}{N^*\Delta_b}$ localized by $\WF'_{\oc}(Q)$, we know the wavefront set of $QSQ$ is contained in the left hand side of \eqref{eq:9.3-1},
hence empty.
So $Q(S-\Id)Q = Q^2$ modulo a Schwartz term and is an elliptic $\oc$-pseudodifferential operator, which is not compact from $H_{\oc}^{s,l}(\R^n)$ to itself. This implies that $S-\Id$ is not so either, since $Q$ is bounded on such space and otherwise the composition would be compact then. 
\end{proof}

\appendix

\section{Equivalence of phase functions}
\label{app: phase equivalence}

In this appendix, we discuss the equivalence of phase functions in both the $\oc-\ps$ and $\oc-\oc$ setting.
Both of them are similar to \cite[Section~3.1]{FIO1} and \cite[Section~5]{duistermaat-guillemin1975spectrum}, with the only difference being that we have we have parabolic scaling for two parts of our phase functions now and this is similar in both settings, so we only give details in the 1c-1c setting.

We first state the phase equivalence in the 1c-ps setting, which is used in the proof of Proposition~\ref{prop:invariance under parametrization change}.

\begin{prop} \label{prop:1c-ps-phase-equivalence}
Let $u \in I_{\oc-\ps}^m(\R^{n+1} \times \R^n, \Lagps)$ be as in \eqref{eq: 1c-ps FIO definition} with phase function $\Phi_{\oc-\ps} = \frac{ \varphi_0(t,\theta_0) }{x_{\oc}^2} + \frac{\varphi_1(\msf{K},\theta_0,\theta_1)}{x_{\oc}}$.
Suppose the image of the support of the amplitude under the parametrization map as in \eqref{eq:Legps param} is contained in $U \subset \Legps$,
and that 
\begin{equation}
    \tilde{\Phi}_{\oc-\ps} =   \frac{ \tilde{\varphi}_0(t,\tilde{\theta}_0) }{x_{\oc}^2} + \frac{\tilde{\varphi}_1(\msf{K},\tilde{\theta}_0,\tilde{\theta}_1)}{x_{\oc}}
\end{equation} 
is another parametrization of $\Legps$ in $U$. 

Namely if two phase functions parametrizing $L$ have the same number $(k_0, k_1)$ of $\tilde{\theta}_0$- and $\tilde{\theta}_1$-variables, and satisfy two signature conditions:
\begin{equation} \label{eq: signature assumptions-main-prop}
\sgn \; d_{\theta_0 \theta_0}^2\varphi_0 =  \sgn \; d_{\tilde{\theta}_0 \tilde{\theta}_0}^2 \tilde{\varphi}_0, \qquad 
\sgn \; d_{\theta_1 \theta_1}^2\varphi_1 =  \sgn \; d_{\tilde{\theta}_1 \tilde{\theta}_1}^2\tilde{\varphi}_0,
\end{equation}
at one point of $C_{\Phi_{\oc-\ps}}$,
then the two parametrizations are related by a coordinate change in the sense that: there are functions $\Theta_0(t,\tilde{\theta}_0,\tilde{\theta}_1),\Theta_1(\msf{K},\tilde{\theta}_0,\tilde{\theta}_1)$ such that 
\begin{equation}
    (\msf{K},\tilde{\theta}_0,\tilde{\theta}_1) \to (\msf{K},\Theta_0,\Theta_1)
\end{equation}
is a local diffeomorphism and $\Phi_{\oc-\ps}(\msf{K},\Theta_0,\Theta_1) = \tilde{\Phi}_{\oc-\ps}(\msf{K},\tilde{\theta}_0,\tilde{\theta}_1)$.
Consequently, $u$ can be written, modulo $\mathcal{S}(\R^{n+1} \times \R^n)$, as an oscillatory integral with respect to the parametrizing function $\tilde \Phi_{\oc-\ps}$. 
\end{prop}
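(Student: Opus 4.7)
The strategy is to parallel the classical Hörmander equivalence of phase functions (see \cite[Thm.~3.1.6]{FIO1}), handling the two-scale parabolic structure by successively matching the leading $x_{\oc}^{-2}$ part and then the subleading $x_{\oc}^{-1}$ part. The splitting is natural because the $x_{\oc}^{-2}$ coefficient $\varphi_0$ depends only on the `base' variable $t$, whereas the $x_{\oc}^{-1}$ coefficient $\varphi_1$ depends on the `fibre' variables $(\yoc, z)$ as well; this mirrors the fibration $\pi_L$ of $\Legps$ over a Legendre curve established in Proposition~\ref{prop:1c-ps contact symplectic property}.

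First, I would normalize the leading part. The functions $\varphi_0(t,\theta_0)$ and $\tilde{\varphi}_0(t,\tilde{\theta}_0)$ are, thanks to the non-degeneracy conditions \eqref{eq:varphi0}, classical non-degenerate phase functions in $t \in \R$ with phase variables in $\R^{k_0}$, and both parametrize the same Lagrangian curve in $T^*\R_t$, namely the $(t,\tilde\tau)$-shadow of the base Legendre curve $\pi_L(\Legps)$. Since the Hessians $d_{\theta_0 \theta_0}^2\varphi_0$ and $d_{\tilde{\theta}_0\tilde{\theta}_0}^2\tilde{\varphi}_0$ are assumed to have the same signature, the classical equivalence-of-phase-functions theorem yields a smooth local diffeomorphism $\Theta_0 = \Theta_0(t,\tilde{\theta}_0)$ with $\varphi_0(t,\Theta_0(t,\tilde{\theta}_0)) = \tilde{\varphi}_0(t,\tilde{\theta}_0)$.

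Second, I would match the subleading part. Substituting $\theta_0 = \Theta_0(t,\tilde\theta_0)$ into $\varphi_1$ produces an intermediate function $\check\varphi_1(\msf{K},\tilde{\theta}_0,\theta_1) := \varphi_1(\msf{K},\Theta_0(t,\tilde{\theta}_0),\theta_1)$. For each fixed $\tilde{\theta}_0$, both $\check\varphi_1(\msf{K},\tilde{\theta}_0,\cdot)$ and $\tilde{\varphi}_1(\msf{K},\tilde{\theta}_0,\cdot)$ are non-degenerate phase functions on $\msf{K}' = (\yoc,t,z)$-space (non-degeneracy being stable under the smooth parameter $\tilde{\theta}_0$ thanks to the diffeomorphism property of $\Theta_0$), and they parametrize the same Lagrangian fibre of $\pi_L$ inside the corresponding fibre of $\ffocps$. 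Using the second signature assumption, Hörmander's equivalence-of-phase-functions theorem with smoothly varying parameters produces $\Theta_1 = \Theta_1(\msf{K},\tilde{\theta}_0,\tilde{\theta}_1)$ with $\check\varphi_1(\msf{K},\tilde{\theta}_0,\Theta_1) = \tilde{\varphi}_1(\msf{K},\tilde{\theta}_0,\tilde{\theta}_1)$.

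Combining the two normalizations gives $\Phi_{\oc-\ps}(\msf{K},\Theta_0,\Theta_1) = \tilde{\Phi}_{\oc-\ps}(\msf{K},\tilde{\theta}_0,\tilde{\theta}_1)$, and the map $(\msf{K},\tilde{\theta}_0,\tilde{\theta}_1) \mapsto (\msf{K},\Theta_0,\Theta_1)$ is a local diffeomorphism since each component map is. Performing this change of variables inside the oscillatory integral \eqref{eq: 1c-ps FIO definition} and absorbing the Jacobian into a new amplitude $\tilde a$ rewrites $u$ as the analogous oscillatory integral with respect to $\tilde{\Phi}_{\oc-\ps}$ modulo an element of $\Schw(\R^{n+1} \times \R^n)$, the error coming from cutting off to the region where both parametrizations are valid. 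The main obstacle will be the parameter-dependent Morse reduction used in the second step, specifically verifying that the family $\check\varphi_1(\msf{K},\tilde{\theta}_0,\cdot)$ is uniformly non-degenerate in $\tilde{\theta}_0$ on the support of the amplitude and that $\Theta_1$ depends smoothly on all of its arguments; this is standard but requires careful bookkeeping because the parabolic scaling forces us to treat the $\theta_0$- and $\theta_1$-variables separately rather than all at once as in the classical setting.
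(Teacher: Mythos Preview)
Your two-step decoupled strategy has a genuine gap in the second step. You assert that for each fixed $\tilde\theta_0$, the functions $\check\varphi_1(\msf{K},\tilde\theta_0,\cdot)$ and $\tilde\varphi_1(\msf{K},\tilde\theta_0,\cdot)$ parametrize the same Lagrangian fibre and hence are related by a H\"ormander change of $\theta_1$-variables. But the hypothesis that $\Phi_{\oc-\ps}$ and $\tilde\Phi_{\oc-\ps}$ parametrize the same $\Legps$ only constrains them on the \emph{full} critical set $C_\Phi=\{d_{\theta_0}\varphi_0=0,\ d_{\theta_1}\varphi_1=0\}$ (Definition~\ref{defn: 1c-ps parametrization}); for $\tilde\theta_0$ off the $\tilde\varphi_0$-critical set there is no relation whatsoever between $\check\varphi_1$ and $\tilde\varphi_1$, so the classical equivalence theorem does not apply uniformly in $\tilde\theta_0$. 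More concretely, after your step~1 the residual difference $\check\varphi_1-\tilde\varphi_1$ vanishes on $C_\Phi$ but will typically contain terms proportional to $\partial_{\theta_0}\varphi_0$ (and products like $(\partial_{\theta_0}\varphi_0)(\partial_{\theta_1}\varphi_1)$), and a change of $\theta_1$ alone cannot absorb those.

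The paper (which gives details only in the parallel 1c--1c case, Lemma~\ref{lemma: phase equivalence, same parameter number, signature}) proceeds differently. It first applies a preliminary fibre-preserving diffeomorphism $\mathsf G$ so that the transformed $\tilde\Phi$ agrees with $\Phi$ to \emph{second} order on $C_\Phi$; this is the step your outline omits. After then matching the $\varphi_0$ parts exactly (your step~1, via \cite[Prop.~5]{melrose1996scattering}), the remaining discrepancy is $x_{\oc}$ times a quadratic form in $(\partial_{\theta_0}\varphi_0,\partial_{\theta_1}\varphi_1)$ with cross terms. Removing it requires a \emph{coupled} change of variables of the shape
\[
\theta_0\mapsto \theta_0 + x_{\oc}\big(a^{00}\,\partial_{\theta_0}\varphi_0 + a^{01}\,\partial_{\theta_1}\varphi_1\big),\qquad
\theta_1\mapsto \theta_1 + a^{10}\,\partial_{\theta_0}\varphi_0 + a^{11}\,\partial_{\theta_1}\varphi_1,
\]
where the $O(x_{\oc})$ shift in $\theta_0$, when fed through $\varphi_0/x_{\oc}^2$, contributes precisely the $\partial_{\theta_0}\varphi_0$-terms needed at the $x_{\oc}^{-1}$ level. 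The matrix coefficients $a^{ij}$ are then found by solving a nonlinear equation via the inverse function theorem and a connectedness argument using the signature hypothesis. In particular the final $\Theta_0$ does depend on $\tilde\theta_1$ (and on $\msf{K}$ through the $x_{\oc}$ factor), contrary to your $\Theta_0=\Theta_0(t,\tilde\theta_0)$.
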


Now we turn to the 1c-1c case, or more precisely the proof of
Proposition~\ref{prop: phase equivalence, general parameter number, signature}.
The main part of the proof of this equivalence, is the following lemma, which deals with the non-degenerate phase functions with fixed signature and follows closely to \cite[Lemma~4.5]{hassell1999spectral} and \cite[Theorem~3.1.6]{FIO1}.
\begin{lmm}  \label{lemma: phase equivalence, same parameter number, signature}
Suppose 
\begin{align*}
\Phi_{\oc-\oc}(x_{\oc},\sigma,y_{\oc},y_{\oc}'',v,w) = \frac{\varphi_0(\sigma,v)}{x_{\oc}^2} + \frac{\varphi_1(x_{\oc},\sigma,y_{\oc},y_{\oc}'',v,w)}{x_{\oc}} ,
\end{align*}
and
\begin{align*}
\tilde{\Phi}_{\oc-\oc}(x_{\oc},\sigma,y_{\oc},y_{\oc}'',\tilde{v},\tilde{w}) = \frac{\tilde{\varphi}_0(\sigma,\tilde{v})}{x_{\oc}^2} + 
\frac{\tilde{\varphi}_1(x_{\oc},\sigma,y_{\oc},y_{\oc}'',\tilde{v},\tilde{w})}{x_{\oc}},
\end{align*}
both parametrize $\mathcal{L}_{\oc}$ non-degenerately in the sense of Definition~\ref{defn: 1c-1c parametrization} near 
\begin{align*}
q = (0,\sigma_0,y_{\oc,0},y_{\oc,0}'', d_{x_{\oc},\sigma,y_{\oc},y_{\oc}''}\Phi_{\oc-\oc}(q')) =
(0,\sigma_0,y_{\oc,0},y_{\oc,0}'', d_{x_{\oc},\sigma,y_{\oc},y_{\oc}''}\tilde{\Phi}_{\oc-\oc}(\tilde{q}')),
\end{align*}
where
\begin{align*}
q'= (0,\sigma_0,y_{\oc,0},y_{\oc,0}'',v_0,w_0) \in C_{\Phi_{\oc-\oc}},
\; q'= (0,\sigma_0,y_{\oc,0},y_{\oc,0}'',\tilde{v}_0,\tilde{w}_0) \in C_{\tilde{\Phi}_{\oc-\oc}}.
\end{align*}
If in addition 
\begin{align} \label{eq: v tilde v, w tilde w, dimension equal}
\dim v = \dim \tilde{v} =k_0, \quad \dim w = \dim \tilde{w} = k_1,
\end{align}
and 
\begin{align} \label{eq: signature assumptions}
\sgn \; d_{vv}^2\varphi_0(\sigma_0,v_0) =  \sgn \; d_{\tilde{v}\tilde{v}}^2\tilde{\varphi}_0(\sigma_0,\tilde{v}_0),
\; \sgn \; d_{ww}^2\varphi_1(q') =  \sgn \; d_{\tilde{w}\tilde{w}}^2\tilde{\varphi}_1(\tilde{q}'),
\end{align}
where $\sgn$ denotes the signature of quadratic forms. Then $\Phi,\tilde{\Phi}_{\oc-\oc}$ are equivalent near $q$, in the sense that there exists a coordinate change 
\begin{align} \label{eq:lemma-A-2-coordinate-change}
\tilde{V}(x_{\oc},\sigma,y_{\oc},y_{\oc}'',v,w),
\tilde{W}(x_{\oc},\sigma,y_{\oc},y_{\oc}'',v,w),
\end{align}
sending $q'$ to $\tilde{q}'$, such that
\begin{align*}
\tilde{\Phi}_{\oc-\oc}(x_{\oc},\sigma,y_{\oc},y_{\oc}'',\tilde{V}(x_{\oc},\sigma,y_{\oc},y_{\oc}'',v,w),\tilde{W}(x_{\oc},\sigma,y_{\oc},y_{\oc}'',v,w))
=\Phi(x_{\oc},\sigma,y_{\oc},y_{\oc}'',v,w).
\end{align*}
\end{lmm}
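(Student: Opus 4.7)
The plan is to adapt H\"ormander's classical equivalence of phase functions (\cite[Theorem~3.1.6]{FIO1}, see also \cite[Lemma~4.5]{hassell1999spectral}) to our two-tier parabolic structure. The statement to prove can be re-packaged after clearing the $x_{\oc}^{-2}$ denominator: writing $\Psi = \varphi_0 + x_{\oc} \varphi_1$ and $\tilde{\Psi} = \tilde{\varphi}_0 + x_{\oc} \tilde{\varphi}_1$, we seek a diffeomorphism $(v,w) \mapsto (\tilde{V}, \tilde{W})$, smoothly depending on the remaining variables (including $x_{\oc}$ down to $x_{\oc} = 0$), so that $\tilde{\Psi}(\ldots, \tilde{V}, \tilde{W}) = \Psi(\ldots, v, w)$. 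The subtlety is that $\Psi$ is degenerate in $w$ at $x_{\oc} = 0$, so standard H\"ormander equivalence cannot be applied directly to $\Psi, \tilde{\Psi}$; instead I would proceed in two stages, matching the $\varphi_0$-component first and then absorbing the discrepancy in $\varphi_1$ by an $O(x_{\oc})$ correction of the $v$-variables plus a diffeomorphism in $w$.

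For the first stage I would apply H\"ormander's equivalence to $\varphi_0(\sigma, v)$ and $\tilde{\varphi}_0(\sigma, \tilde v)$, viewed as phase functions in $v$ and $\tilde v$ respectively, with $\sigma$ as the base variable. Both non-degenerately parametrize the same Legendre curve in $\mathbb{R}^3_{\xi_{\oc,1}, \xi_{\oc,2}, \sigma}$ at the base of the fibration $\pi_{\SL}$ of Proposition~\ref{prop:1c-1c contact symplectic property}, the equal number $k_0$ of $v$- and $\tilde v$-variables and the hypothesis $\sgn d^2_{vv} \varphi_0 = \sgn d^2_{\tilde v \tilde v} \tilde{\varphi}_0$ provide the input needed, and we obtain a local diffeomorphism $\tilde v \mapsto V_0(\sigma, \tilde v)$ such that after substitution we may assume $\varphi_0 = \tilde{\varphi}_0$. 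For the second stage, seeking $\tilde{V} = v + x_{\oc} V_1(x_{\oc}, \ldots, v, w)$ and $\tilde{W} = W(x_{\oc}, \ldots, v, w)$ and Taylor-expanding $\varphi_0(\sigma, v + x_{\oc} V_1)$, the $x_{\oc}^{-2}$-terms cancel and division by $x_{\oc}$ yields, at $x_{\oc} = 0$,
\begin{equation*}
V_1 \cdot \partial_v \varphi_0(\sigma, v) + \tilde{\varphi}_1(0, \sigma, y_{\oc}, y_{\oc}'', v, W) = \varphi_1(0, \sigma, y_{\oc}, y_{\oc}'', v, w).
\end{equation*}
Restricting first to the critical submanifold $\{ \partial_v \varphi_0 = 0 \}$, which by non-degeneracy of $\varphi_0$ is locally a graph $v = v(\sigma)$, the $V_1$-term drops out and what remains is a classical H\"ormander equivalence problem for $\varphi_1, \tilde{\varphi}_1$ in $w$, soluble by the second signature hypothesis. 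This determines $W$ on the critical submanifold, and the values of $V_1$ are then read off transversally using the fact that $\partial_v \varphi_0$ has full rank off the critical submanifold. Higher-order corrections in $x_{\oc}$ are constructed inductively in the same way, and a Borel summation assembles them into an actual smooth coordinate change; the non-degeneracy of $(\tilde V, \tilde W)$ at $q'$ can be read off the Jacobian at $x_{\oc} = 0$.

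The main technical obstacle I anticipate is the smoothness of the constructed coordinate change uniformly down to $x_{\oc} = 0$, together with the singularity caused by dividing by $\partial_v \varphi_0$ where it vanishes; both are standard issues in the H\"ormander--Malgrange theory and can be handled either by a Malgrange preparation argument along the critical submanifold or by a careful Taylor-series splitting. Once Lemma~\ref{lemma: phase equivalence, same parameter number, signature} is established, Proposition~\ref{prop: phase equivalence, general parameter number, signature} follows by the standard device of adding suitable non-degenerate quadratic factors in extra $v$- and $w$-variables to equalize both the dimensions $(k_0, k_1)$ and the signatures of the two phase functions; this does not change the oscillatory integral modulo a Schwartz error, thanks to the precise form of the $x_{\oc}$-exponents in Definition~\ref{def:1c-1c-FIO}, which are chosen precisely to absorb the factor $(2\pi)^{(k_0 + k_1/2)}$ and power of $x_{\oc}$ produced by stationary phase in the added variables. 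The same template (with $\sigma$ replaced by $t$ and one fewer tier of variables) proves Proposition~\ref{prop:1c-ps-phase-equivalence} in the 1c-ps setting used in the proof of Proposition~\ref{prop:invariance under parametrization change}.
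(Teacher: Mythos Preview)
Your two-stage strategy---first matching $\varphi_0$ via classical H\"ormander equivalence, then handling $\varphi_1$---parallels the paper's proof in its opening moves: the paper also invokes the Melrose--Zworski/H\"ormander equivalence to arrange $\tilde\varphi_0(\sigma,V'(\sigma,v)) = \varphi_0(\sigma,v)$ after a change in the $v$-variables alone. Your final paragraph on extending to unequal parameter numbers by padding with non-degenerate quadratic forms is likewise exactly what the paper does.

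The divergence comes in the second stage. You propose to solve for the remaining coordinate change order by order in $x_{\oc}$ and then Borel-sum. The paper takes a different route: once $\varphi_0$ is matched, the discrepancy $\psi - \varphi$ (where $\psi,\varphi$ are $x_{\oc}^2$ times the two phase functions) is $O(x_{\oc})$ and, since both parametrize the same Legendrian, vanishes to second order on $C_\Phi$. Hence it can be written as an $x_{\oc}$-weighted quadratic form in $(\partial_v\varphi_0, \partial_w\varphi_1)$ with smooth coefficient matrix $\mathsf B$. The paper then makes an explicit ansatz for the coordinate change of matching algebraic shape---linear in $(\partial_v\varphi_0, \partial_w\varphi_1)$, with the $x_{\oc}$-weight placed on the $v$-correction---reducing the problem to a finite-dimensional matrix equation $\mathsf A = \mathsf B + \mathsf Q(\mathsf A,\mathsf C,x_{\oc})$ with $\mathsf Q$ quadratic in $\mathsf A$. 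This is solved \emph{exactly} for small $\mathsf B$ by the implicit function theorem, and the general case is reached by a path of non-degenerate phase functions connecting $\varphi$ to $\psi$; the existence of such a path is precisely where the signature hypothesis on $d^2_{ww}\varphi_1$ enters.

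The gap in your approach is that Borel summation only yields a smooth $(\tilde V,\tilde W)$ for which $\tilde\Phi_{\oc-\oc}(\ldots,\tilde V,\tilde W) - \Phi_{\oc-\oc}$ vanishes to \emph{infinite order} at $x_{\oc}=0$, not identically on a neighbourhood; the lemma as stated asserts exact equality. For the downstream application (rewriting oscillatory integrals modulo $\mathcal S$) an $O(x_{\oc}^\infty)$ remainder would in fact suffice, so your argument does establish Proposition~\ref{prop: phase equivalence, general parameter number, signature} even if not the lemma verbatim. But if you want the exact statement, closing that $O(x_{\oc}^\infty)$ gap by further correction is delicate: the linearisation $(\partial_{\tilde v}\tilde\Psi, \partial_{\tilde w}\tilde\Psi)$ vanishes on the critical set, and $\partial_{\tilde w}\tilde\Psi$ carries an additional factor of $x_{\oc}$, so neither a straightforward IFT nor a division argument applies directly. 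The paper's ansatz-plus-path device avoids this issue by never leaving the exact regime.
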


\begin{proof}
Similar to \cite[Lemma~4.5]{hassell1999spectral} and \cite[Theorem~3.1.6]{FIO1}, the first step is to pull-back $\tilde{\Phi}_{\oc-\oc}$ by a fiber preserving diffeomorphism to make it equivalent to a phase function that coincide with $\Phi$ up to second order on $\mathcal{L}_{\oc}$.

We denote base variables by:
\begin{align*}
\mathsf{X} = (x_{\oc},\sigma,y_{\oc},y_{\oc}'').
\end{align*}
Then we consider the map
\begin{align*}
(\mathsf{X},\tilde{v},{\tilde{w}}) \rightarrow 
(\mathsf{X},d_{\sigma}(\tilde{\varphi}_0+x_{\oc}\tilde{\varphi}_1),d_{\msf{X}}\tilde{\varphi}_1,\partial_{\tilde{v}}\tilde{\varphi}_0(\sigma,{\tilde{v}}),\partial_{\tilde{w}}\tilde{\varphi}_1(\mathsf{X},{\tilde{v}},{\tilde{w}})) ,
\end{align*}
where \eqref{eq:1c1c-1form-1}

By the non-degenerate condition at $q'$, the derivative of $(d_{\sigma}\tilde{\varphi}_0,\partial_{\tilde{v}}\tilde{\varphi}_0(\sigma,{\tilde{v}}))$ with respect to ${\tilde{v}}$ 
and the derivative of $(d_{\mathsf{X}}\tilde{\varphi}_1,\partial_{\tilde{w}}\tilde{\varphi}_1(\mathsf{X},{\tilde{v}},{\tilde{w}}))$ with respect to ${\tilde{w}}$ have full rank, hence by the implicit function theorem (applied to two parts respectively), there is are maps $(\tilde{V},\tilde{W})$ (abusing the same notation as in \eqref{eq:lemma-A-2-coordinate-change}) such that
\begin{align} \label{eq:tildeV-tildeW-def2}
{\tilde{v}} = \tilde{V}(\mathsf{X},d_{\sigma}\tilde{\varphi}_0(\sigma,{\tilde{v}}),\partial_{\tilde{v}}\tilde{\varphi}_0(\sigma,{\tilde{v}})),
\; {\tilde{w}} = \tilde{W}(\mathsf{X},d_{\mathsf{X}}\tilde{\varphi}_1(\msf{X},{\tilde{v}},{\tilde{w}}),\partial_{\tilde{w}}\tilde{\varphi}_1(\mathsf{X},{\tilde{v}},{\tilde{w}})).
\end{align}

Now consider following map:
\begin{align*}
\msf{G}: (\msf{X},v,w) \rightarrow &
(\msf{X},\tilde{V}(\mathsf{X},d_{\sigma}\varphi_0(\sigma,v),\partial_{v}\varphi_0(\sigma,v)) + 
A_1 \partial_v\varphi_0(\sigma,v),
\\ & \tilde{W}(\mathsf{X},d_{\mathsf{X}}\varphi_1(\msf{X},v,w),\partial_{w}\tilde{\varphi}_1(\mathsf{X},v,w)) + A_2 \partial_{w}\varphi_1(\msf{X},v,w) ),
\end{align*}
with $A_1,A_2$ being linear maps that we determine next.
It is straightforward that $\msf{G}$ is fiber preserving. For it to be a diffeomorphism, we only need to choose $A_1,A_2$ so that its differential in $(v,w)$ for fixed $\msf{X}$ to be non-degenerate. Such $A_1,A_2$ can be chosen in the same way as in the argument after \cite[Equation~(3.1.6)]{FIO1}.
Then
\begin{align*}
\Psi_{\oc-\oc}:=\msf{G}^*\tilde{\Phi}_{\oc-\oc}= \frac{\psi_0(\sigma,v)}{x_{\oc}^2}+\frac{\psi_1(\msf{X},v,w)}{x_{\oc}},
\end{align*}
which is equivalent to $\tilde{\Phi}_{\oc-\oc}$ by definition,
is the desired phase function parametrizing $\mathcal{L}_{\oc}$, and approximates $\Phi_{\oc-\oc}$ to second order (in terms of $|\partial_v \varphi_0|,|\partial_w \varphi_1|$) on $C_{\Phi}$.

Let 
\begin{align*}
\psi = x_{\oc}^2\Psi_{\oc-\oc}= \psi_0(\sigma,v)+x_{\oc}\psi_1(\msf{X},v,w),
\quad \varphi = x_{\oc}^2\Phi_{\oc-\oc} = \varphi_0(\sigma,v)+x_{\oc}\varphi_1(\msf{X},v,w).
\end{align*}
By \cite[Proposition~5]{melrose1996scattering} i.e., the equivalence between phase functions of Legendrians on manifolds with boundary, which applies the proof of \cite[Theorem~3.1.6]{FIO1}, but without the homogeneity in $v$ in both the assumption and the conclusion, and the first part of \eqref{eq: signature assumptions}, we know there is a change of variable $(\sigma,v) \to (\sigma,V'(\sigma,v))$ such that
\begin{align*}
\psi_0(\sigma,V'(\sigma,v)) = \varphi_0(\sigma,v),
\end{align*}
and
\begin{align} \label{eq: psi=phi1+O(x)}
\psi(\msf{X},V'(\sigma,v),w) = \varphi_0(y',v) + O(x_{\oc}).
\end{align}

The goal of the rest of the proof is to further find a fiber preserving diffeomorphism so that after a pull-back, the $\varphi_1,\psi_1-$part are equal as well. By (\ref{eq: psi=phi1+O(x)}) and $\varphi-\psi$ vanishes to second order on $C_{\Phi}$, we have
\begin{align} \label{eq: psi - phi = O(x|dphi|^2) }
\psi - \varphi 
= \frac{x_{\oc}}{2}
\big(\sum_{1 \leq j,k \leq k_0} \frac{\partial \varphi_0}{\partial v_j}\frac{\partial \varphi_0}{\partial v_k}b^{00}_{jk}
+ 2 \sum_{ \substack{1 \leq j \leq k_0
\\1 \leq k \leq k_1} } \frac{\partial \varphi_0}{\partial v_j}\frac{\partial \varphi_1}{\partial w_k}b^{01}_{jk}
+ \sum_{1 \leq j,k \leq k_1} \frac{\partial \varphi_1}{\partial v_j}\frac{\partial \varphi_1}{\partial v_k}b^{11}_{jk} \big),
\end{align}
with $b^{ab}_{jk}$ being smooth functions of $(\msf{X},v,w)$.

We consider the change of variables of the form:
\begin{align} \label{eq: tilde v,w change}
\begin{split}
& \tilde{v}_j = v_j + x_{\oc} \sum_{1 \leq k \leq k_0} \frac{ \partial \varphi_0 }{\partial v_k}a_{jk}^{00} + x_{\oc} \sum_{1 \leq k \leq k_1} \frac{ \partial \varphi_1}{\partial w_k}a_{jk}^{01},\\
& \tilde{w}_j = w_j 
+  \sum_{1 \leq k \leq k_0} \frac{ \partial \varphi_0 }{\partial v_k}a_{jk}^{10} + \sum_{1 \leq k \leq k_1} \frac{ \partial \varphi_1}{\partial w_k}a_{jk}^{11}.
\end{split}
\end{align}
with $a^{ab}_{ij}$ to be determined. Then by Taylor expansion, we have
\begin{align} \label{eq: phi(tilde v, tilde w)-phi, expansion}
\begin{split}
& \varphi(\msf{X},\tilde{v},\tilde{w})-\varphi(\msf{X},v,w)
=  \sum_{1 \leq j \leq k_0} (\tilde{v}_j-v_j) \frac{\partial \varphi_0}{\partial v_j} + x_{\oc} \sum_{1 \leq j \leq k_1} (\tilde{w}_j-w_j) \frac{\partial \varphi_1}{\partial w_k}
\\ &+  \sum_{1 \leq j,k \leq k_0}(\tilde{v}_j-v_j)(\tilde{v}_k-v_k)c^{00}_{jk}
 + \sum_{ \substack{1 \leq j \leq k_0
\\1 \leq k \leq k_1} } (\tilde{v}_j-v_j)(\tilde{w}_k-w_k) c^{01}_{jk}
\\ & + \sum_{1 \leq j,k \leq k_1}(\tilde{w}_j-w_j)(\tilde{w}_k-w_k)c^{11}_{jk},
\end{split}
\end{align}
for some smooth function $c^{ab}_{jk}$. Now substituting (\ref{eq: tilde v,w change}) into (\ref{eq: phi(tilde v, tilde w)-phi, expansion}), and 
combining with (\ref{eq: psi - phi = O(x|dphi|^2) }), we know that requiring 
\begin{align*}
\psi(\msf{X},V'(v,\sigma),w) = \varphi(\msf{X},\tilde{v},\tilde{w}),
\end{align*}
is equivalent to matching coefficients of $\partial_v\varphi_0,\partial_w\varphi_1$ after this substitution, which turns out to be a equation of the form
\begin{align} \label{eq: matrix equation, A,B,C}
\msf{A} = \msf{B} + \msf{Q}(\msf{A},\msf{C},x_{\oc}),
\end{align}
where 
\begin{align*}
\msf{A} = \begin{pmatrix} 
a^{00}  & a^{01} \\
a^{10} & a^{11} 
\end{pmatrix}, 
\msf{B} = \begin{pmatrix} 
b^{00}  & b^{01} \\
b^{10} & b^{11} 
\end{pmatrix}, \msf{C} = \begin{pmatrix} 
c^{00}  & c^{01} \\
c^{10} & c^{11} 
\end{pmatrix}.
\end{align*}
with $a^{ab}$ being matrices with entries $a^{ab}_{ij}$ in (\ref{eq: tilde v,w change}), similarly for $\msf{B}$ with $b^{ab}_{ij}$ from (\ref{eq: psi - phi = O(x|dphi|^2) }), and $\msf{C}$ with entries from (\ref{eq: phi(tilde v, tilde w)-phi, expansion}). In addition, by direct computation, $\msf{Q}(\msf{A},\msf{C},x_{\oc})$ is polynomial in all variables and quadratic in entries of $\msf{A}$.

Consider the map
\begin{align*}
\msf{A} \rightarrow \msf{A} - \msf{Q}(\msf{A},\msf{C},x_{\oc}).
\end{align*}
It has non-degenerate derivative at $\msf{A}=0$. Hence by the inverse function theorem, for $\msf{B}$ sufficiently small, (\ref{eq: matrix equation, A,B,C}) has a unique solution $\msf{A}$. Thus if we can connect $\psi$ and $\varphi$ by a path of non-degenerate phase functions, then we can move along the path each time with small enough step to keep equivalence, and reach $\psi$ starting from $\varphi$ in finite step, hence proving the result. 

Now we consider a path of
\begin{align*}
\msf{B}_s = \begin{pmatrix} 
b^{00}_s  & b^{01}_s \\
b^{10}_s & b^{11}_s 
\end{pmatrix}, s \in [0,1],
\end{align*}
and define $\psi_s$ by (\ref{eq: psi - phi = O(x|dphi|^2) }) with $b^{ab}_{ij}$ replaced by entries of $\msf{B}_s$. Now we check that we can select $\msf{B}_s$ making $\psi_s$ is kept to be non-degenerate while connecting $\varphi$ and $\psi$, which is equivalent to keeping $\psi_s$ non-degenerate while letting the path $\msf{B}_s$ connecting $0$ and $\msf{B}$. 

Decomposing $\psi_s$ as (since the $\varphi_0-$part is not changed through this process)
\begin{align*}
\psi_s(\msf{X},V'(\sigma,v),w) = \varphi_0(\sigma,v) + x_{\oc} \psi_{s,1}(\msf{X},v,w),
\end{align*}
then the non-degeneracy condition is
\begin{align*}
d_{\sigma,v} \frac{\partial \varphi_0 }{\partial v_j}, 
\end{align*}
are linearly independent at $(\sigma_0,v_0)$;
and
\begin{align} \label{eq: psi s2 non-degenerate 2}
d_{y_{\oc},y_{\oc}'',w} \frac{\partial \psi_{s,1}}{\partial w_j},
\end{align}
are linearly independent at $q'$.

The first condition is always satisfied by the non-degeneracy of $\varphi$. For the second condition, we compute that on $C_{\Phi}$:
\begin{align*}
d_{ww}\psi_{s,1} = d^2_{ww} \varphi_1 + (d^2_{ww}\varphi_{1})(b^{11}_s)(d^{2}_{ww}\varphi_{1}),
\end{align*}
where we used the fact that $\partial_w\varphi_1 = 0$ on $C_{\Phi}$, hence only terms without any factors like $\frac{\partial \varphi_1}{\partial w_k}$ will have contribution. Similarly we have
\begin{align*}
d_{(y_{\oc},y_{\oc}'')w} \psi_{s,1} = d_{(y_{\oc},y_{\oc}'')w}\varphi_1
+ (d_{(y_{\oc},y_{\oc}'')w}\varphi_1)(b^{11}_s)(d_{(y_{\oc},y_{\oc}'')w}\varphi_1).
\end{align*}

So by the non-degeneracy condition on $\varphi_1$, we know that (\ref{eq: psi s2 non-degenerate 2})  has full rank if and only if
\begin{align*}
\Id + b^{11}_s d_{ww}^2\varphi_1
\end{align*}
is non-singular. As shown in \cite[Lemma~3.1.7]{FIO1}, we can connect $\msf{B}_0=0$ (corresponding to $\varphi$) and $\msf{B}_1=\msf{B}$ (corresponding to $\psi$) by such a path of $\msf{B}_s$ if and only if 
\begin{align*}
\sgn \; d^2_{ww}\varphi_1 = \sgn \; d^2_{ww}\psi_1,
\end{align*}
which is what we assumed, and the proof is completed.
\end{proof}

Another ingredient needed in the proof of Proposition \ref{prop: phase equivalence, general parameter number, signature} is the fact that the `degeneracy' in the Hessian $d_{vv}^2\varphi_0$ and $d_{ww}^2\varphi_1$ is an intrinsic quantity, which is essentially \cite[Theorem~3.1.4]{FIO1} in our setting of manifolds with boundary:
\begin{prop} \label{prop: N-rank Hessian = rank drop of the projection}
Let $\Phi_{\oc-\oc} = \frac{\varphi_0}{x_{\oc}^2}+\frac{\varphi_1}{x_{\oc}}$ be a non-degenerate phase function parametrizing $\mathcal{L}_{\oc}$ near $q$, with $v,w,k_0,k_1$ defined as above,  then we have
\begin{align} \label{eq: N-rank hessian = rank drop, 1} 
k_0 - \mathrm{rank} \; d^2_{vv} \varphi_0 = 1 - \mathrm{rank} \; d\pi_0,
\end{align}
where $\pi_0$ is restriction to $\mathcal{L}_{\oc}$ of the projection 
\begin{align*}
\pi_0: \; (\sigma,d_{\sigma}(\Phi_{\oc-\oc})) \rightarrow \sigma.
\end{align*}

\begin{align} \label{eq: N-rank hessian = rank drop, 2} 
k_1 - \mathrm{rank} \; d^2_{ww} \varphi_1 = (2n-2) - \mathrm{rank} \; d\pi_1,
\end{align}
where $\pi_1$ is restriction to $\mathcal{L}_{\oc}$ of the projection 
\begin{align*}
\pi_1: \; (y_{\oc},y_{\oc}'',d_{y_{\oc},y_{\oc}''}(\frac{\varphi_1}{x_{\oc}})) \rightarrow (y_{\oc},y_{\oc}'').
\end{align*}

\end{prop}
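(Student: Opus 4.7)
The plan is to establish the two identities \eqref{eq: N-rank hessian = rank drop, 1} and \eqref{eq: N-rank hessian = rank drop, 2} by applying H\"ormander's Theorem 3.1.4 from \cite{FIO1} (in the non-homogeneous variant employed in \cite[Proposition~5]{melrose1996scattering}) separately to each of the two components $\varphi_0$ and $\varphi_1$ of $\Phi_{\oc-\oc} = \varphi_0/x_{\oc}^2 + \varphi_1/x_{\oc}$. The parabolic scaling structure naturally decouples the two components at $x_{\oc} = 0$: the $\varphi_0$-piece parametrizes the one-dimensional Legendre curve at the base of the fibration $\pi_{\mathcal{L}_{\oc}}$ of Proposition~\ref{prop:1c-1c contact symplectic property}, while the $\varphi_1$-piece parametrizes the $(2n-2)$-dimensional Lagrangian fibres. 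The two identities correspond precisely to the rank-drop statements for each of these Lagrangians.

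For identity \eqref{eq: N-rank hessian = rank drop, 1}, the key observation will be that the $\varphi_0$-non-degeneracy condition in Definition~\ref{defn: 1c-1c parametrization} is precisely H\"ormander's non-degeneracy condition for $\varphi_0(\sigma,v)$ viewed as a phase function on the one-dimensional base $\RR_\sigma$ with $k_0$ phase variables $v$. A direct application of H\"ormander's theorem yields
\[
k_0 - \mathrm{rank}\, d^2_{vv}\varphi_0 = 1 - \mathrm{rank}\, d\pi^{\varphi_0},
\]
where $\pi^{\varphi_0}$ is the projection from the parametrized one-dimensional Lagrangian (the Legendre curve) to $\RR_\sigma$. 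To complete the identification with $\pi_0$, I will use the factorization $\pi_0 = \pi^{\varphi_0} \circ \pi_{\mathcal{L}_{\oc}}$, in which $\pi_{\mathcal{L}_{\oc}}$ is the fibration from Proposition~\ref{prop:1c-1c contact symplectic property} with $(2n-2)$-dimensional fibres, which is a submersion, so $\mathrm{rank}\, d\pi_0 = \mathrm{rank}\, d\pi^{\varphi_0}$.

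For identity \eqref{eq: N-rank hessian = rank drop, 2}, the plan is to apply H\"ormander's theorem to the restricted phase function $\tilde\varphi_1(y_{\oc}, y_{\oc}'', w) := \varphi_1(0, \sigma_0, y_{\oc}, y_{\oc}'', v_0, w)$ on the $(2n-2)$-dimensional base, where $(\sigma_0, v_0)$ are the coordinate values at the critical point in question. This parametrizes the Lagrangian fibre $F_{\sigma_0}$ of $\pi_{\mathcal{L}_{\oc}}$ through the image of that critical point, and H\"ormander's theorem will give
\[
k_1 - \mathrm{rank}\, d^2_{ww}\varphi_1 = (2n-2) - \mathrm{rank}\, d\pi_1|_{F_{\sigma_0}}.
\]

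The hard part will be twofold. First, the restricted function $\tilde\varphi_1$ may itself fail to be non-degenerate as a stand-alone phase function, since Definition~\ref{defn: 1c-1c parametrization} only guarantees independence of the larger set $d_{\sigma, y_{\oc}, y_{\oc}'', w}(\partial \varphi_1/\partial w_j)$: if the $\sigma$-column is essential for this rank count, dropping it reduces the rank. I plan to resolve this by invoking the equivalence-of-phase-functions results of Proposition~\ref{prop: phase equivalence, general parameter number, signature} to pass, if necessary, to an equivalent parametrization in which the restricted phase function is non-degenerate. The second and more delicate obstacle is the upgrade from $\mathrm{rank}\, d\pi_1|_{F_{\sigma_0}}$ to $\mathrm{rank}\, d\pi_1$ on all of $\mathcal{L}_{\oc}$: this amounts to showing that variation along the Legendre-curve direction of $\pi_{\mathcal{L}_{\oc}}$ contributes no new direction to the image of $d\pi_1$ beyond those already in $d\pi_1(T_q F_{\sigma_0})$. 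I will address this using the fibred-Legendre structure of Proposition~\ref{prop:1c-1c contact symplectic property}: the vanishing of the contact form \eqref{eq:contact form 1c-1c} on the Legendre curve, combined with the fibrewise symplectic form \eqref{eq:fibresympform-1c1c}, yields the compatibility constraint forcing the Legendre-direction contribution to $d\pi_1$ to lie within the fibrewise image $d\pi_1(T_q F_{\sigma_0})$.
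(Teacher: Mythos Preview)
Your overall strategy---reduce to H\"ormander's Theorem 3.1.4 applied separately to $\varphi_0$ and $\varphi_1$---is the right intuition, but the paper takes a much more direct route: it simply \emph{repeats} the short linear-algebra computation that proves H\"ormander's theorem, once for $(\sigma,v)$ and once for $(y_{\oc},y_{\oc}'',w)$. For \eqref{eq: N-rank hessian = rank drop, 2}, for instance, the paper identifies the kernel of $d\pi_1$ on $C_{\varphi_1}$ with the set of ambient vectors satisfying $dy_{\oc}(V)=dy_{\oc}''(V)=0$ and $d_{y_{\oc},y_{\oc}'',w}(\partial_w\varphi_1)(V)=0$, which immediately reduces to $\{V_w:\partial^2_{ww}\varphi_1\,V_w=0\}$. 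No black-box citation, no auxiliary reductions.

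Both of your ``hard parts'' are problematic. For (a), invoking Proposition~\ref{prop: phase equivalence, general parameter number, signature} is \emph{circular}: that proposition's proof explicitly uses the present proposition to show that $k_i-\mathrm{rank}\,d^2\varphi_i$ is an intrinsic invariant, which is exactly what allows one to match parametrizations by adding quadratic phase variables. You cannot appeal to phase equivalence here. The paper sidesteps this because the direct kernel computation needs no non-degeneracy of the restricted $\tilde\varphi_1$; it works purely at the level of the ambient $(y_{\oc},y_{\oc}'',w)$-tangent space.

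For (b), you are misreading the statement. The projection $\pi_1$ is defined on the slice $(y_{\oc},y_{\oc}'',d_{y_{\oc},y_{\oc}''}(\varphi_1/x_{\oc}))$, i.e.\ on a single Lagrangian \emph{fibre} $F$ of $\pi_{\mathcal{L}_{\oc}}$, not on all of $\mathcal{L}_{\oc}$; this is how the paper's proof treats it, working entirely in $(y_{\oc},y_{\oc}'',w)$ with $\sigma,v$ fixed. With that reading, your concern evaporates. Worse, your proposed resolution of (b) is actually false: take $k_0=0$, $k_1=1$, and $\varphi_1$ with $\partial^2_{ww}\varphi_1=0$, $\partial^2_{y_{\oc}w}\varphi_1\neq 0$, $\partial^2_{\sigma w}\varphi_1\neq 0$ at the critical point. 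The full non-degeneracy holds, the parametrization is an immersion, yet the Legendre-direction lift contributes a genuinely new direction to $d\pi_1$ beyond $d\pi_1(T_qF)$. So the identity \eqref{eq: N-rank hessian = rank drop, 2} would fail under your interpretation of $\pi_1$, and no amount of contact-form or fibrewise-symplectic reasoning will rescue it. The correct fix is simply to read $\pi_1$ as the paper does.
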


\begin{proof}

Let $F_0$ be the map defined by
\begin{align*}
F_0(\sigma,v) = (\sigma,d_\sigma(\frac{\varphi_0(\sigma,v)}{x_{\oc}^2})),
\end{align*}
then the image of its restriction to $C_{\varphi_0}$ is one dimensional.

Consider \eqref{eq: N-rank hessian = rank drop, 1} first. 
The right hand side of (\ref{eq: N-rank hessian = rank drop, 1}) is the dimension of the kernel of 
\begin{align}
d(\pi_0 \circ F_1|_{C_{\varphi_0}}).
\end{align}
This consists of vectors $V$ such that
\begin{align*}
d\sigma(V) = 0, \;  d_{\sigma,v}(\partial_v\varphi_0)(V) = 0,
\end{align*}
Or equivalently:
\begin{align*}
d\sigma(V) = 0, \; (\partial^2_{vv}\varphi_0)dv(V) = 0.
\end{align*}
The dimension of such forms is given by the left hand side of (\ref{eq: N-rank hessian = rank drop, 1}).

For the proof of (\ref{eq: N-rank hessian = rank drop, 2}), let $F_1$ be defined by 
\begin{align*}
F_1(y_{\oc},y_{\oc}'',w) =  (y_{\oc},y_{\oc}'',d_{y_{\oc},y_{\oc}''}(\frac{\varphi_1}{x_{\oc}})).
\end{align*}
Then its image restricted to $C_{\varphi_1}$ has dimension $2n-2$, and the right hand side of (\ref{eq: N-rank hessian = rank drop, 2}) is the dimension of the kernel of
\begin{align*}
d(\pi_1 \circ F_2|_{C_{\varphi_1}}),
\end{align*}
which consists of vectors such that
\begin{align*}
dy_{\oc}(V)=0, dy_{\oc}''(V)=0, \; d_{y_{\oc},y_{\oc}'',w}(\partial_w\varphi_1)(V) = 0.
\end{align*}
Or equivalently
\begin{align*}
dy_{\oc}(V)=0, dy_{\oc}''(V)=0, \; (\partial^2_{ww}\varphi_1)dw(V) = 0.
\end{align*}
This shows that this dimension coincides with the left hand side of \eqref{eq: N-rank hessian = rank drop, 2}, completing the proof.

\end{proof}

Now we return to the proof of Proposition \ref{prop: phase equivalence, general parameter number, signature}.

\begin{proof}[Proof of Proposition \ref{prop: phase equivalence, general parameter number, signature}]

First of all, we only need to prove it for non-degenerate phase functions.
For a clean parametrization, we have a splitting of parameters as in \eqref{eq:ococpara-splitting} and it can be viewed as a family of non-degenerate phase functions. If the conclusion for non-degenerate phase functions is already proven, then we can apply it to this family to reduce to an oscillatory integral using a fixed non-degenerate phase function. In addition, as one can check following the proof of Lemma~\ref{lemma: phase equivalence, same parameter number, signature} and the proof below, the diffeomorphism that pulls back a family of phase functions to a fixed one has smooth dependence on the parameter (parametrizing this family, i.e. the $\ococparatwo''$ part in \eqref{eq:ococpara-splitting}) if the original family depends on $\ococparatwo''$ smoothly.
Then for any two clean phase functions, we can apply such reduction respectively and then apply the result for non-degenerate phase functions again to conclude the proof.

So it remains to justify the setting where both phase functions are non-degenerate.
If the dimension of parameters and signatures in $\Phi,\tilde{\Phi}_{\oc-\oc}$ already match as in Lemma \ref{lemma: phase equivalence, same parameter number, signature}, then Lemma \ref{lemma: phase equivalence, same parameter number, signature} shows that we can rewrite $A \in I^{m}_{\oc-\oc}$ with phase function $\Phi$ as a oscillatory integral with phase function $\tilde{\Phi}_{\oc-\oc}$, up to a Schwartz error $u_0$.

For the general case, we may modify $\Phi$ by adding terms of the form
\begin{align*}
\msf{Q}_1(\bar{v},\bar{v})+x_{\oc}\msf{Q}_2(\bar{w},\bar{w}),
\end{align*}
where $\msf{Q}_1,\msf{Q}_2$ are non-degenerate quadratic forms, and replace $v$ by $(v,\bar{v})$, $w$ by $(w,\bar{w})$; and similarly for $\tilde{\Phi}_{\oc-\oc}$.
Then they can be made to satisfy the matching conditions in Lemma \ref{lemma: phase equivalence, same parameter number, signature} if and only if the difference of the dimension of $v,w$ respective and the rank of Hessian of $\varphi_0,\varphi_1$ with respect to $v,w$ are the same as the corresponding quantities of $\tilde{\Phi}_{\oc-\oc}$ (or equivalently, the difference of the dimension of $v,w$ and the signatures are the same mod 2). But this equals to the rank drop of corresponding projections in Proposition \ref{prop: N-rank Hessian = rank drop of the projection}, which are independent of the choice of parametrizations.

When we take the effect of these extra quadratic forms into account, by a direct computation using Gaussian integral (extended to the complex case, with the same formula), each dimension of $\bar{v}$ gives a factor of $x_{\oc}^{-1}$, while each dimension of $\bar{w}$ gives a factor of $x_{\oc}^{-1/2}$. These are collected in the factor $x_{\oc}^{- \frac{2k_0+k_1}{2} }$.
\end{proof}

\section{Proof of results from Section~\ref{sec: 1c-ps FIO}}
\label{app:proofs}

We give details of some proofs in Section~\ref{sec: 1c-ps FIO} in this appendix.

\begin{proof}[Proof of Theorem~\ref{thm: PsiDO- 1c-ps FIO composition}]

By the phase equivalence proven in Proposition~\ref{prop:1c-ps-phase-equivalence} and discussion in Remark~\ref{rem:phase function leading term}, $A$ can locally be written as
\begin{align*}
A = (2\pi)^{-\frac{N}{2} } (\int e^{ i( \frac{-t}{x_{\oc}^2} + \frac{\varphi_1(t,z,y_{\oc},\theta)}{x_{\oc}} )} x_{\oc}^{-(m+\frac{2N_0+N_1}{2}-\frac{3}{4})}a(t,z,x_{\oc},y_{\oc},\theta) d\theta).
\end{align*}
Then we have, with $p_{\mathrm{full}}$ being the left full symbol of $P$, 
\begin{align*}
PA = & (2\pi)^{-\frac{N+2(n+1)}{2} }\int e^{i( (t-t')\tau + (z-z') \cdot \zeta  )} p_{\mathrm{full}}(t,z,\tau,\zeta) (\int e^{ i( \frac{-t'}{x_{\oc}^2} + \frac{\varphi_1(t',z',y_{\oc},\theta)}{x_{\oc}} )} 
\\ &x_{\oc}^{-(m+\frac{2N_0+N_1}{2}-\frac{3}{4})}
a(t',z',x_{\oc},y_{\oc},\theta) d\theta) dt'dz'd\tau d\zeta.
\end{align*}
Choose $\chi \in C_c^\infty(\R)$ such that $\chi = 1$ on $[-C,C]$ for some large $C$, with 
\begin{align}
C > \max |\varphi_1|.
\end{align}

decompose $PA$ as
\begin{align} \label{eq: PA decomposition}
\begin{split}
PA = &P\tilde{A}+R,
\end{split}
\end{align} 
where 
\begin{align*}
P\tilde{A}= & (2\pi)^{-\frac{2n+1+N+2(n+1)}{2} }\int e^{i( (t-t')\tau + (z-z') \cdot \zeta  )} p_{\mathrm{full}}(t,z,\tau,\zeta) (\int e^{ i( \frac{-t'}{x_{\oc}^2} + \frac{\varphi_1(t',z',y_{\oc},\theta)}{x_{\oc}} )} 
\\ & x_{\oc}^{-(m+\frac{2N_0+N_1}{2}-\frac{3}{4})}\chi(|\tau| x_{\oc}^2)\chi(|\zeta|x_{\oc})a(t',z',x_{\oc},y_{\oc},\theta) d\theta) dt'dz'd\tau d\zeta,\\
R = &(2\pi)^{-\frac{2n+1+N+2(n+1)}{2} } \int e^{i( (t-t')\tau + (z-z') \cdot \zeta  )} p_{\mathrm{full}}(t,z,\tau,\zeta) (\int e^{ i( \frac{-t'}{x_{\oc}^2} + \frac{\varphi_1(t',z',y_{\oc},\theta)}{x_{\oc}} )} 
\\ & x_{\oc}^{-(m+\frac{2N_0+N_1}{2}-\frac{3}{4})}(1-\chi(\tau x_{\oc}^2)\chi(|\zeta|x_{\oc}))a(t',z',x_{\oc},y_{\oc},\theta) d\theta) dt'dz'd\tau d\zeta.
\end{align*}

Consider the part of the phase involving $(t',z')$, which is 
\begin{align*}
-t'(1+\tau x_{\oc}^2)x_{\oc}^{-2}+ (\varphi_1(t',z',y_{\oc},\theta)-z' \cdot (x_{\oc}\zeta) )x_{\oc}^{-1}.
\end{align*}
By the choice of $\chi$, we know that on $\supp(1-\chi(\tau x_{\oc}^2)\chi(|\zeta|x_{\oc}))$ there is no critical point with respect to $(t',z')$, hence a stationary phase argument shows that we can find constant $C_N$ such that
\begin{align}
|R| \leq C_Nx_{\oc}^N,
\end{align}
for any $N$. Hence up to an residual error, we only need to consider the first term on the right hand side of (\ref{eq: PA decomposition}). 
Introducing
\begin{align}  \label{eq: Defition, tilde tau, tilde zeta 2}
\tilde{{\tau}} = x_{\oc}^2 \tau , \tilde{{\zeta}} = x_{\oc}\zeta ,
\end{align} 
which are in a compact region on the support of the amplitude in $P\tilde{A}$, and we may rewrite $P\tilde{A}$ as
\begin{align*}
P\tilde{A}= &(2\pi)^{-\frac{2n+1+N+2(n+1)}{2} }  \int e^{i( (t-t')\frac{\tilde{{\tau}}}{x_{\oc}^2} + (z-z') \cdot \frac{\tilde{{\zeta}}}{x_{\oc}}  )} (\int e^{ i( \frac{-t'}{x_{\oc}^2} + \frac{\varphi_1(t',z',y_{\oc},\theta)}{x_{\oc}} )} x_{\oc}^{-(m+\frac{2N_0+N_1}{2}-\frac{3}{4})}
\\ & \chi(\tilde{{\tau}})\chi(|\tilde{{\zeta}}|)p_{\mathrm{full}}(t,z,\tau,\zeta)  a(t',z',x_{\oc},y_{\oc},\theta) d\theta) dt'dz' x_{\oc}^{-(n+2)}d\tilde{{\tau}} d\tilde{{\zeta}},
\end{align*}
which equals to
\begin{align} \label{eq: PtildeA, standard form}
\begin{split}
P\tilde{A}= &(2\pi)^{-\frac{2n+1+N+2(n+1)}{2} }  \int e^{i( \frac{(t\tilde{{\tau}} - t'(1+\tilde{{\tau}}) }{x_{\oc}^2} +  \frac{(z-z') \cdot \tilde{{\zeta}}+\varphi_1(t',z',y_{\oc},\theta)}{x_{\oc}})}  x_{\oc}^{-(m+\frac{2(N_0+2)+ (N_1+2n) }{2}-\frac{3}{4})}
\\ & \chi(\tilde{{\tau}})\chi(|\tilde{{\zeta}}|)p_{\mathrm{full}}(t,z,x_{\oc}^{-2}\tilde{{\tau}},x_{\oc}^{-1}\tilde{{\zeta}})  a(t',z',x_{\oc},y_{\oc},\theta) d\theta) dt'dz' d\tilde{{\tau}} d\tilde{{\zeta}},
\end{split}
\end{align}
which is in the form of (\ref{eq: 1c-ps FIO definition}), with
\begin{align} \label{eq: parameter tilde theta0, theta1}
\tilde{\theta}_0 = (t',\tilde{{\tau}},\theta_0), \tilde{\theta}_1=(z',\tilde{{\zeta}},\theta_1),
\end{align}
except that in the variables of $p$ we still have $x_{\oc}^{-1},x_{\oc}^{-2}-$factors, which will be dealt with below. And the phase function is
\begin{align} \label{eq: phase function, in PA composition}
\tilde{\Phi}(t,z,x_{\oc},y_{\oc},\tilde{\theta}_0,\tilde{\theta}_1) = 
\frac{(t\tilde{{\tau}} - t'(1+\tilde{{\tau}})) }{x_{\oc}^2} +  \frac{(z-z') \cdot \tilde{{\zeta}}+\varphi_1(t',z',y_{\oc},\theta)}{x_{\oc}} ,
\end{align}
with $\mathcal{L}$ defined by
\begin{align*}
\partial_{\tilde{\theta}_0,\tilde{\theta}_1}\tilde{\Phi}=0, \quad \tau dt+\zeta\cdot dz = d_{(t,z)}\tilde{\Phi}, \quad
\xi_{\oc}\frac{dx_{\oc}}{x_{\oc}^3}+\eta_{\oc}\cdot \frac{dy_{\oc}}{x_{\oc}} = d_{x_{\oc},y_{\oc}}\tilde{\Phi},
\end{align*}
or more concretely:
\begin{align} \label{eq: definition of the Lagrangian, in terms of phase in PA}
\begin{split}
& t=t',z=z', d_{\theta}\varphi_1(t,z,y_{\oc},\theta)= 0,\quad -(1+\tilde{{\tau}})+x_{\oc}\partial_{t}\varphi_1(t,z,y_{\oc},\theta) = 0, 
\tilde{{\zeta}} = \partial_{z}\varphi_1(t,z,y_{\oc},\theta),\\
&   \xi_{\oc} = (-2t+x_{\oc}\partial_{\oc}\varphi_1(t,z,y_{\oc},\theta)), \eta_{\oc} = \partial_{y_{\oc}}\varphi_1(t,z,y_{\oc},\theta).
\end{split}
\end{align}

Now write
\begin{align*}
p_{\mathrm{full}}(t,z,x_{\oc}^{-2}\tilde{{\tau}},x_{\oc}^{-1}\tilde{{\zeta}}) = p_{\mathrm{hom}}(t,z,x_{\oc}^{-2}\tilde{{\tau}},x_{\oc}^{-1}\tilde{{\zeta}})+r(t,z,x_{\oc}^{-1}\tilde{{\zeta}}),
\end{align*}
where $p_{\mathrm{hom}}(t,z,\tau,\zeta)$ is the parabolically homogeneous part. We have
\begin{align*}
p_{\mathrm{full}}(t,z,x_{\oc}^{-2}\tilde{{\tau}},x_{\oc}^{-1}\tilde{{\zeta}}) = x_{\oc}^{-m'}p_{\mathrm{hom}}(t,z,\tilde{{\tau}},\tilde{{\zeta}}) + x_{\oc}^{-(m'-1)} (x_{\oc}^{m'-1}r(t,z,x_{\oc}^{-1}\tilde{{\zeta}})),
\end{align*}
and
\begin{align*}
\tilde{r}(t,z,x_{\oc},\tilde{{\zeta}}):= x_{\oc}^{m'-1}r(t,z,x_{\oc}^{-1}\tilde{{\zeta}}).
\end{align*}
By the symbolic property of $r$ and since on the support of the amplitude $|\zeta|$ is comparable to $x_{\oc}^{-1}$, we have
\begin{align*}
|\partial_{\tilde{{\zeta}}}^\alpha \tilde{r}(t,z,x_{\oc},\tilde{{\zeta}})| =  &
 |x_{\oc}^{m'-1-|\alpha|}\partial^\alpha_\zeta r(t,z,x_{\oc}^{-1}\tilde{{\zeta}})|
 \\ \leq & Cx_{\oc}^{m'-1-|\alpha|} |x_{\oc}^{-1}\tilde{{\zeta}}|^{m'-1-|\alpha|}
 \\ \leq & C,
\end{align*}
for some constant $C$ depending on $\alpha$, which proves smoothness with respect to $\tilde{{\zeta}}$. Next we verify the conormal property in $x_{\oc}$:
\begin{align*}
x_{\oc}\partial_{x_{\oc}}\tilde{r} = x_{\oc}^{m'-1}r(t,z,x_{\oc}^{-1}\tilde{{\zeta}})
- x_{\oc}^{m'} \times x_{\oc}^{-2}\tilde{{\zeta}} \cdot \partial_{\zeta}r(t,z,x_{\oc}^{-1}\tilde{{\zeta}}).
\end{align*}
Since $r$ is a symbol of order $m'-1$, the first term is bounded by a constant multiple of 
\begin{align*}
x_{\oc}^{m'-1} |x_{\oc}^{-1}\tilde{{\zeta}}|^{m'-1} = O(1).
\end{align*}
The second term can be bounded by 
\begin{align*}
x_{\oc}^{m'-2} |\tilde{{\zeta}} \cdot \partial_{\zeta}r(t,z,x_{\oc}^{-1}\tilde{{\zeta}})|
\leq C x_{\oc}^{m'-2} |x_{\oc}^{-1}\tilde{{\zeta}}|^{m'-1-1} \leq C,
\end{align*}
and similarly for higher powers of $x_{\oc}\partial_{x_{\oc}}$. To summarize, (\ref{eq: PtildeA, standard form}) can be written as

\begin{align} 
\begin{split}
P\tilde{A}= I_1+I_2,
\end{split}
\end{align}
where

\begin{align} \label{eq: I1, I2}
\begin{split}
I_1 = & (2\pi)^{-\frac{2n+1+N+2(n+1)}{2} } \int e^{i( \frac{(t\tilde{{\tau}} - t'(1+\tilde{{\tau}}) }{x_{\oc}^2} +  \frac{(z-z') \cdot \tilde{{\zeta}}+\varphi_1(t',z',y_{\oc},\theta)}{x_{\oc}}  )}  x_{\oc}^{-(m+m'+\frac{2(N_0+2)+ (N_1+2n) }{2}-\frac{3}{4})}
\\ & \chi(\tilde{{\tau}})\chi(|\tilde{{\zeta}}|)p_{\mathrm{hom}}(t,z,\tilde{{\tau}},\tilde{{\zeta}})  a(t',z',x_{\oc},y_{\oc},\theta) d\theta) dt'dz' d\tilde{{\tau}} d\tilde{{\zeta}} \in 
I_{\oc-\ps}^{m+m'}
,
\\ I_2 = & (2\pi)^{-\frac{2n+1+N+2(n+1)}{2} }  \int e^{i( \frac{(t\tilde{{\tau}} - t'(1+\tilde{{\tau}}) }{x_{\oc}^2} +  \frac{(z-z') \cdot \tilde{{\zeta}}+\varphi_1(t',z',y_{\oc},\theta)}{x_{\oc}}  )}  x_{\oc}^{-(m+m'-1+\frac{2(N_0+2)+ (N_1+2n) }{2}-\frac{3}{4})}
\\ & \chi(\tilde{{\tau}})\chi(|\tilde{{\zeta}}|)
\tilde{r}(t,z,x_{\oc},\tilde{{\zeta}})
  a(t',z',x_{\oc},y_{\oc},\theta) d\theta) dt'dz' d\tilde{{\tau}} d\tilde{{\zeta}} \in I_{\oc-\ps}^{m+m'-1}.
\end{split}
\end{align}

Notice that
\begin{align*}
e^{i( \frac{(t\tilde{{\tau}} - t'(1+\tilde{{\tau}}) }{x_{\oc}^2} +  \frac{(z-z') \cdot \tilde{{\zeta}}+\varphi_1(t',z',y_{\oc},\theta)}{x_{\oc}}  )} 
= e^{i ( -\frac{t}{x_{\oc}^2}+ \frac{\varphi_1(t,z,y_{\oc},\theta)}{x_{\oc}})}
e^{i( \frac{(t - t')(1+\tilde{{\tau}}) }{x_{\oc}^2} +  \frac{(z-z') \cdot \tilde{{\zeta}}-
(\varphi_1(t,z,y_{\oc},\theta)-\varphi_1(t',z',y_{\oc},\theta))}{x_{\oc}})} .
\end{align*}
So we may rewrite $I_1$ as
\begin{align*}
I_1 = & (2\pi)^{-\frac{2n+1+N+2(n+1)}{2} } \int e^{i ( -\frac{t}{x_{\oc}^2}+ \frac{\varphi_1(t,z,y_{\oc},\theta)}{x_{\oc}})}
   x_{\oc}^{-(m+2+\frac{2(N_0+2)+ (N_1+2n) }{2}-\frac{3}{4})}
\\ & 
e^{i( \frac{(t - t')(1+\tilde{{\tau}}) }{x_{\oc}^2} +  \frac{(z-z') \cdot \tilde{{\zeta}}-
(\varphi_1(t,z,y_{\oc},\theta)-\varphi_1(t',z',y_{\oc},\theta))}{x_{\oc}})}
\chi(\tilde{{\tau}})\chi(|\tilde{{\zeta}}|)p_{\mathrm{hom}}(t,z,\tilde{{\tau}},\tilde{{\zeta}})  
\\&a(t',z',x_{\oc},y_{\oc},\theta) d\theta) dt'dz' d\tilde{{\tau}} d\tilde{{\zeta}}.
\end{align*}
Consider the integral in $t',z',\tilde{{\tau}},\tilde{{\zeta}}$, and apply the stationary phase lemma (see \cite[Theorem~7.7.5]{hormanderbookvolI}) to the integral:
\begin{align} \label{eq: PA, before t tau reduction}
\begin{split}
& \int e^{i( \frac{(t - t')(1+\tilde{{\tau}}) }{x_{\oc}^2} +  \frac{(z-z') \cdot \tilde{{\zeta}}-(\varphi_1(t,z,y_{\oc},\theta)-\varphi_1(t',z',y_{\oc},\theta))}{x_{\oc}})}
\chi(\tilde{{\tau}})\chi(|\tilde{{\zeta}}|)p_{\mathrm{hom}}(t,z,\tilde{{\tau}},\tilde{{\zeta}})  
\\&a(t',z',x_{\oc},y_{\oc},\theta) dt'dz' d\tilde{{\tau}} d\tilde{{\zeta}}.
\end{split}
\end{align}
Because of the parabolic nature in the phase, we consider the $t',\tilde{{\tau}}-$integral first, with $x_{\oc}^{-2}$ being the large parameters and then consider the $z',\tilde{{\zeta}}-$integral with $x_{\oc}^{-1}$ being the large parameter. 
The critical point in terms of $t',\tilde{{\tau}}$ is given by 
\begin{align} \label{eq: PA, t tau critical point}
t'=t,\tilde{{\tau}}=-1+x_{\oc}\partial_t\varphi_1(t,z',y_{\oc},\theta).
\end{align}

The Hessian in terms of $t',\tilde{{\tau}}$ is
\begin{align}
\mathcal{H}_1:=\begin{pmatrix}
\partial_{t'}^2\varphi_1 & -1\\
-1 & 0
\end{pmatrix},
\end{align}
with determinant $-1$. Hence the leading order contribution is
\begin{align*}
& ( - (x_{\oc}^{-2}/2\pi i)^2 )^{-1/2}  \int e^{i(\frac{(z-z') \cdot \tilde{{\zeta}}-(\varphi_1(t,z,y_{\oc},\theta)-\varphi_1(t,z',y_{\oc},\theta))}{x_{\oc}})}
\chi(-1+x_{\oc}\partial_t\varphi_1(t)\chi(|\tilde{{\zeta}}|)
\\&p_{\mathrm{hom}}(t,z,-1+x_{\oc}\partial_t\varphi_1(t,z',y_{\oc},\theta),\tilde{{\zeta}})  a(t,z',x_{\oc},y_{\oc},\theta) dz'  d\tilde{{\zeta}}.
\end{align*}
Now apply the stationary phase lemma in $(z',\tilde{{\zeta}})$. The critical point is given by
\begin{align*}
z'=z, \tilde{{\zeta}} = \partial_{z}\varphi(t,z,y_{\oc},\theta).
\end{align*}
The Hessian in $(z',\tilde{{\zeta}})$ is
\begin{align}  \label{eq: PA, z,zeta Hessian}
\mathcal{H}_2:=
\begin{pmatrix}
\partial^2_{z'z'}\varphi_1 & -I_n\\
-I_n & 0
\end{pmatrix},
\end{align}
where $I_n$ is the $n-$th order identity matrix, and this has determinant $(-1)^{2n(2n-1)/2}=(-1)^{n}$. Hence the leading order term is
\begin{align*}
& ( 2\pi )^{2/2} x_{\oc}^{2}  \times  
( (-1)^n (x_{\oc}^{-1}/2\pi i)^{2n} )^{-1/2}
\chi(-1+x_{\oc}\partial_t\varphi_1(t,z,y_{\oc},\theta))\chi(|\partial_z\varphi_1|)
\\&p_{\mathrm{hom}}(t,z,-1+x_{\oc}\partial_t\varphi_1(t,z,y_{\oc},\theta),\tilde{{\zeta}})  a(t,z,x_{\oc},y_{\oc},\theta) .
\end{align*}
Notice that
\begin{align*}
\chi(-1+x_{\oc}\partial_t\varphi_1(t,z,y_{\oc},\theta))=\chi(|\partial_z\varphi_1(t,z,y_{\oc},\theta)|)=1
\end{align*}
on the region we are concerning, hence this equals to
\begin{align*}
(2\pi)^{ \frac{2+2n}{2} }x_{\oc}^{ \frac{2\times2+2n}{2} }
p_{\mathrm{hom}}(t,z,-1+x_{\oc}\partial_t\varphi_1(t,z,y_{\oc},\theta),\tilde{{\zeta}})  a(t,z,x_{\oc},y_{\oc},\theta).
\end{align*}
Substitute this back to \eqref{eq: I1, I2} and we have
\begin{align} \label{eq: I1 final expression}
\begin{split}
I_1 = & (2\pi)^{-\frac{2n+1+N}{2} } \int e^{i( \frac{-t}{x_{\oc}^2} +  \frac{\varphi_1(t,z,x_{\oc},y_{\oc},\theta)}{x_{\oc}})}  x_{\oc}^{-(m+2+\frac{2N_0+N_1}{2}-\frac{3}{4})}
\\ &p_{\mathrm{hom}}(t,z,-1+x_{\oc}\partial_t\varphi_1(t,z,y_{\oc},\theta),\partial_z\varphi_1(t,z,y_{\oc},\theta))  a(t,z,x_{\oc},y_{\oc},\theta) d\theta.
\end{split}
\end{align}
Since $I_2$ has no contribution to the $I^{m+m'}_{\oc-\ps}$ order, this completes the proof.
\end{proof}

We now turn to the proof of Theorem~\ref{thm: vanishing principal symbol product}. We begin with a few preparatory results that help us to write the phase function in a convenient form. The first result shows that when the rank of the projection from $\Legps$ to $M$ drops, we can select some $z-$components, and duals of other $z-$components, together with $t,x_{\oc},y_{\oc}$, which are not involved in this rank drop, to parametrize $\Legps$. This allows us to put the phase function into a normal form, which will simplify the composition.

\begin{lmm}  \label{lemma: full rank projection}
Let $\tilde{\tau},\tilde{\zeta}$ be as in \eqref{eq: Defition, tilde tau, tilde zeta 2} and $L = \partial \Lambda_\pm$. There exists a linear change of coordinates in $z$ (with $\tilde \zeta$ transforming correspondingly) and $k \in \{ 1, \dots, n\}$ so that at $q_0 \in \Legps$ the projection
 \begin{align}\label{eq:fullrank}
    \Lambda_\pm \ni (t,z,\tilde{{\tau}},\tilde{\zeta},x_{\oc},y_{\oc},\xi_{\oc},\eta_{\oc}) \rightarrow      (t,z',\tilde{{\zeta}}'',x_{\oc},y_{\oc})
 \end{align}
   has full rank $2n+1$, where $z'=(z_1,...,z_k),\ \tilde{\zeta}''=(\tilde{\zeta}_{k+1},...,\tilde{\zeta}_n)$.
   Consequently, the projection has full rank in a neighbourhood of $q_0$.
\end{lmm}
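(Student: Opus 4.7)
My plan is to reduce the full-rank claim to a linear-algebraic statement about the tangent space $T_{q_0}\Lambda_\pm$, exploiting the fibred structure of $L = \partial \Lambda_\pm$ from Proposition~\ref{prop:1c-ps contact symplectic property} together with a constrained version of the standard Hörmander splitting lemma for Lagrangian subspaces. First I note that the target has dimension $1 + k + (n-k) + 1 + (n-1) = 2n+1$, matching $\dim \Lambda_\pm$, so the claim is equivalent to injectivity of the differential at $q_0$, i.e.\ linear independence on $T_{q_0}\Lambda_\pm$ of the $2n+1$ differentials $dt$, $dx_{\oc}$, $dy_{\oc,j}$, $dz_i$ ($i \leq k$), and $d\tilde{\zeta}_l$ ($l > k$).

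The next step is to peel off the $(t, x_{\oc})$-block. Transversality of $\Lambda_\pm$ to $\ffocps$ (first condition in Definition~\ref{defn: admissible 1c-ps Lagrangian submanifold and 1c-ps fibred-Legendre submanifold}) yields $dx_{\oc}|_{T_{q_0}\Lambda_\pm}\neq 0$, while the second condition in the same definition yields $dt|_{T_{q_0}L}\neq 0$, where $T_{q_0}L = T_{q_0}\Lambda_\pm \cap \ker dx_{\oc}$. Together these force $dt$ and $dx_{\oc}$ to be linearly independent on $T_{q_0}\Lambda_\pm$. Therefore the kernel $T_{q_0}F_{q_0} := T_{q_0}\Lambda_\pm \cap \ker dt \cap \ker dx_{\oc}$ has dimension exactly $2n-1$, and by Proposition~\ref{prop:1c-ps contact symplectic property} it coincides with the tangent space of the fibre of $\pi_L$ through $q_0$. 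It is then a Lagrangian subspace of the $(4n-2)$-dimensional fibre-symplectic space equipped with the form $d\eta_{\oc}\wedge dy_{\oc} + d\tilde{\zeta}\wedge dz$.

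It remains to produce a partition $z = (z', z'')$ with $|z'| = k$, together with a linear change of $z$-coordinates, so that the $2n-1$ differentials $dy_{\oc,j}, dz_i$ ($i \leq k$), $d\tilde{\zeta}_l$ ($l > k$) restrict to a basis of $T_{q_0}^* F_{q_0}$. This is a constrained version of Hörmander's splitting lemma \cite[Lemma~21.2.16]{hormander2007analysis}, in which the unconstrained lemma would allow an arbitrary mixing of $(y_{\oc}, z)$ with $(\eta_{\oc}, \tilde{\zeta})$, whereas here we must keep all $y_{\oc}$-components on the ``base'' side. The main obstacle is precisely this constraint. I would address it by first establishing that $dy_{\oc,1}, \dots, dy_{\oc,n-1}$ remain linearly independent on $T_{q_0}F_{q_0}$, using the flowout description of $\Lambda_\pm$ in \eqref{eq:microlocalized sojourn relns}: the $y_{\oc}$-coordinates parametrize the angular directions on the radial set $\mathcal{R}_\pm \cong \overline{\mathbb{R}^n_Z}$, which label distinct bicharacteristics preserved along the Hamilton flow, so they descend to independent coordinates on every fibre of $\pi_L$. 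Once this independence is in hand, symplectic reduction along the coisotropic span of the $y_{\oc}$-directions produces an $n$-dimensional Lagrangian in the $(z, \tilde{\zeta})$-symplectic space, to which the unconstrained Hörmander splitting applies to supply $k$ and the linear change of $z$-coordinates. The final ``consequently'' statement is immediate from the openness of the full-rank condition.
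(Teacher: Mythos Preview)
Your proposal is correct and follows essentially the same route as the paper: both arguments peel off $dt$ and $dx_{\oc}$ via the admissibility conditions, reduce to the $(2n-1)$-dimensional fibre Lagrangian $L_{t_0}$ inside $(y_{\oc},\eta_{\oc},z,\tilde\zeta)$-space from Proposition~\ref{prop:1c-ps contact symplectic property}, invoke the flowout description of $\Lambda_\pm$ in \eqref{eq:microlocalized sojourn relns} to see that the $dy_{\oc,j}$ remain independent on the fibre, and then appeal to H\"ormander's splitting to obtain the $(z',\tilde\zeta'')$ block. The only genuine difference is cosmetic: the paper applies \cite[Theorem~21.2.17]{hormander2007analysis} directly to the fibre and then asserts that the independence of $dy_{\oc}$ lets one take $y_{\oc}'=y_{\oc}$, whereas you make this step explicit by first quotienting out the $y_{\oc}$-block via symplectic reduction (using $L\cap C^\omega=0$, which follows from the $dy_{\oc}$-independence) and applying the splitting lemma only to the resulting $n$-dimensional Lagrangian in $(z,\tilde\zeta)$-space. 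Your version is slightly more transparent on exactly why no $\eta_{\oc}''$ coordinates are needed.
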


\begin{proof}
Write $q_0$ in local coordinates as
\begin{align*}
q_0 = (t_0,z_0,\tilde{{\tau}}_0,|\zeta_0|^{-1}=0,\tilde{{\zeta}}_0,x_{\oc}=0,y_{\oc,0},\xi_{\oc,0},\eta_{\oc,0}),
\end{align*}
and use the notation
\begin{align}
L_{t_0}:=\Legps \cap  \{ t=t_0\} = \Lambda_\pm \cap \{ t = t_0, \xoc = 0 \}.
\end{align}

By assumption, $d\xoc$ and $dt$ are linearly independent on $\Lambda_\pm$ at $q_0$ 
(see Definition~\ref{defn: admissible 1c-ps Lagrangian submanifold and 1c-ps fibred-Legendre submanifold}). We need to find $2n-1$ coordinates that restrict to a coordinate system on $L_{t_0}$; then these together with $\xoc$ and $t$ will form a local coordinate system on $\Lambda_\pm$. 

From Proposition~\ref{prop:1c-ps contact symplectic property}, $L_{t_0}$ is a fibre of $\pi_L$, and is therefore a Lagrangian submanifold in $(\yoc, \etaoc, z, \tilde \zeta)$-space relative to the standard symplectic form $d\etaoc \wedge d\yoc + d\tilde \zeta \wedge dz$. As shown in the proof of \cite[Theorem 21.2.17]{hormander2007analysis}, one can find a splitting of the base coordinates $(\yoc', \yoc'', z', z'')$ so that, if $(\etaoc', \etaoc'', \tilde \zeta', \tilde \zeta'')$ are the dual coordinates, then $(\yoc', \etaoc'', z', \tilde \zeta'')$ furnish coordinates on $L_{t_0}$. 

On the other hand, we know that the differentials $dy_{\oc, 1}, \dots dy_{\oc, n-1}$ are linearly independent on $\Lambda_\pm$. In fact,  if we return to the definition \eqref{eq:microlocalized sojourn relns}, then we see that local coordinates can be chosen to be local coordinates on $W_\pm$ (where $q_\pm$ lives), together with the parameter along the rescaled flow. Since $dy_{\oc, 1}, \dots dy_{\oc, n-1}$ are linearly independent on $W_\pm$, they are therefore also linearly independent on $\Lambda_\pm$. This means that we can take $\yoc' = \yoc$ and can dispense with any $\etaoc''$ coordinates. This establishes the full rank of the map \eqref{eq:fullrank} at $q_0$. 

\end{proof}

Next we give a normal form of the phase function when we have the full rank projection obained above. This is analogous to \cite[Theorem~21.2.18]{hormander2007analysis}, which considered the case where $\Legps$ is conic.
\begin{lmm} \label{lemma: normal form of phase function}
   Let $\Lambda_\pm \subset \mathcal{M}$ be the (microlocalized) sojourn relations as in \eqref{eq:microlocalized sojourn relns}, and let $q_0 \in L = \partial \Lambda_\pm$. 
   a Lagrangian submanifold. Suppose near $q_0 \in \Legps$, coordinates have been chosen as in Lemma~\ref{lemma: full rank projection} such that the projection
   \begin{align}
    \Lambda_\pm \ni (t,z,\tilde{\tau},\tilde{\zeta},x_{\oc},y_{\oc},\xi_{\oc},\eta_{\oc}) \rightarrow
     (t,z',\tilde{\zeta}'',x_{\oc},y_{\oc})
   \end{align}
   has full rank $2n+1$. Then we can take a phase function of the form
   \begin{align}
   \Phi = -\frac{t}{x_{\oc}^2}+\frac{z'' \cdot \tilde{\zeta}''-\tilde{\varphi}_1(t,z',\tilde{{\zeta}}'',x_{\oc},y_{\oc}) }{x_{\oc}},
   \end{align}
   to parametrize it near $q_0$, with $\tilde{\varphi}_1$ smooth in all of its arguments.  
\end{lmm}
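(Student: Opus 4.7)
The plan is to adapt the generating-function construction from Proposition~\ref{prop: 1c-ps nondeg param}, in particular formula~\eqref{eq:Phi-param-general}, to the coordinate choice produced by Lemma~\ref{lemma: full rank projection}. By Remark~\ref{rem:phase function leading term}, the base Legendre curve $\pi_{\Legps}(\Legps)$ at $\ffocps$ is the explicit curve $\xi_{\oc} = 2t$, $\tilde\tau = -1$, so we may start from a parametrization of $\Lambda_\pm$ near $q_0$ whose $O(x_{\oc}^{-2})$ part is $-t/x_{\oc}^2$; only the $O(x_{\oc}^{-1})$ part remains to be put into the claimed form, with $\tilde\zeta''$ as the integrated variables.

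By the full-rank hypothesis, $(t, z', \tilde\zeta'', x_{\oc}, y_{\oc})$ are local coordinates on $\Lambda_\pm$ near $q_0$, so the remaining variables $z''$, $\tilde\zeta'$, $\xi_{\oc}$, $\eta_{\oc}$, and $\tilde\tau$ become smooth functions on $\Lambda_\pm$ of these coordinates. Since $\Lambda_\pm$ is Lagrangian, the canonical one-form of $\SM$ restricts to an exact one-form on $\Lambda_\pm$ locally near $q_0$; using the fixed $-t/x_{\oc}^2$ piece to exhaust its $O(x_{\oc}^{-2})$ part, the residual $O(x_{\oc}^{-1})$ data yields a smooth function $R(t, z', \tilde\zeta'', x_{\oc}, y_{\oc})$. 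Define the Legendre transform
\begin{equation*}
\tilde{\varphi}_1(t, z', \tilde\zeta'', x_{\oc}, y_{\oc}) := z''(t, z', \tilde\zeta'', x_{\oc}, y_{\oc}) \cdot \tilde\zeta'' - R(t, z', \tilde\zeta'', x_{\oc}, y_{\oc}),
\end{equation*}
which is smooth in all of its arguments.

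Setting
\begin{equation*}
\Phi = -\frac{t}{x_{\oc}^2} + \frac{z'' \cdot \tilde\zeta'' - \tilde{\varphi}_1(t, z', \tilde\zeta'', x_{\oc}, y_{\oc})}{x_{\oc}}
\end{equation*}
and treating $\tilde\zeta''$ as the phase variables $\theta_1$, the stationary condition $d_{\tilde\zeta''}\Phi = 0$ reads $z'' = d_{\tilde\zeta''}\tilde{\varphi}_1$, which by construction of the Legendre transform agrees with the coordinate expression of $z''$ on $\Lambda_\pm$; the derivatives of $\Phi$ in $(t, z', x_{\oc}, y_{\oc})$ then recover the remaining frequency components correctly. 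Non-degeneracy in the sense of Definition~\ref{defn: 1c-ps parametrization} is immediate since $\partial^2 \Phi / (\partial z''_j \partial \tilde\zeta''_k) = \delta_{jk}/x_{\oc}$ on the critical set, and no auxiliary $\theta_0$ variables are needed.

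The principal technical point is maintaining the parabolic splitting between the $x_{\oc}^{-2}$ and $x_{\oc}^{-1}$ levels across the Legendre transform, i.e.\ ensuring that no spurious contribution is introduced into the $-t/x_{\oc}^2$ coefficient. This is controlled by the fact from Remark~\ref{rem:phase function leading term} that $\phi_0 = -t$ is independent of any phase variables, together with the homogeneity: $\tilde\zeta''$ is conjugate to $z''/x_{\oc}$, a frequency of order $x_{\oc}^{-1}$, so the Legendre transform acts purely within the $x_{\oc}^{-1}$ part of the phase. The argument is then formally identical to that of \cite[Proposition~21.2.18]{hormander2007analysis}, exactly as in the construction of~\eqref{eq:Phi-param-general}.
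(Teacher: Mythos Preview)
Your proposal is correct and takes essentially the same approach as the paper's proof, which consists of a two-line citation to Proposition~\ref{prop: 1c-ps nondeg param} (specifically the construction~\eqref{eq:Phi-param-general}, noting that Lemma~\ref{lemma: full rank projection} removes the need for any $\eta_{\oc}''$ phase variables) together with Remark~\ref{rem:phase function leading term} for the leading $-t/x_{\oc}^2$ term. You have simply unpacked the Legendre-transform/generating-function step underlying~\eqref{eq:Phi-param-general} explicitly rather than invoking it as a black box.
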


\begin{proof} This is a special case of Proposition~\ref{prop: 1c-ps nondeg param} (with no $\etaoc''$ variables thanks to Lemma~\ref{lemma: full rank projection}), combined with Remark~\ref{rem:phase function leading term} for the $O(\xoc^{-2})$ term. 
\end{proof}

From now on, we assume that we order our components as in Lemma \ref{lemma: full rank projection} and choose phase function as in Lemma \ref{lemma: normal form of phase function}. 
By \eqref{eq:fullrank}, we may choose $(t,z',\tilde{{\zeta}}'',x_{\oc},y_{\oc})$ as coordinates on $\Lambda_\pm$, hence $(t,z',\tilde{{\zeta}}'',y_{\oc})$ as coordinates on $\Legps$. In terms of those coordinates, we have the following formula for the restriction of the rescaled Hamilton vector field to $\Legps$.
\begin{prop} \label{prop:Hp-restriction}
Suppose $p$ be the left full symbol of $P \in \Psi^{m',0}_{\ps}(\R^{n+1})$ and $\tilde{p}(t,z,\tilde{\tau},\tilde{\zeta}) = x_{\oc}^{m'}p(t,z,\tau,\zeta)$ is smooth,
then the restriction of $H_p$ to $\Lambda_\pm$ is given by
\begin{align} \label{eq: Hp, parameter form-interior}
\rHp  = \sum_{j'=1}^k\frac{\partial \tilde{p}}{\partial \tilde{\zeta}'_{j'}} \frac{\partial}{\partial z'_{j'}} 
- \sum_{j''=1}^{n-k}\frac{\partial \tilde{p}}{\partial z''_{j''}} \frac{\partial}{\partial \tilde{\zeta}''_{j''}} + x_{\oc}V,
\end{align}
where $V$ is a smooth vector field on $\Lambda_\pm$.
Consequently, the restriction of $H_p$ to $\Legps$ is given by
\begin{align} \label{eq: Hp, parameter form}
H_p^{2,0}  = \sum_{j'=1}^k\frac{\partial \tilde{p}}{\partial \tilde{\zeta}'_{j'}} \frac{\partial}{\partial z'_{j'}} 
- \sum_{j''=1}^{n-k}\frac{\partial \tilde{p}}{\partial z''_{j''}} \frac{\partial}{\partial \tilde{\zeta}''_{j''}}.
\end{align}
\end{prop}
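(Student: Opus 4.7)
The plan is to compute $H_p$ in the rescaled fibre variables $(\tilde\tau,\tilde\zeta)$, divide by the appropriate power of $\rho_{\ps}$ to obtain $\rHp$, and then read off the coefficients of $\rHp$ in the coordinates $(t,z',\tilde\zeta'',x_{\oc},y_{\oc})$ on $\Lambda_\pm$ by applying $\rHp$ to each coordinate function in turn. Throughout I would take $x_{\ps}\equiv 1$ (as we work over a compact set in spacetime) and $\rho_{\ps}=x_{\oc}/\sigma$ with $\sigma$ bounded and positive on a neighbourhood of the piece of $\Lambda_\pm$ under consideration.

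First I would carry out the change of variables. Since $\tilde p(t,z,\tilde\tau,\tilde\zeta)=x_{\oc}^{m'}p(t,z,\tau,\zeta)$ with $\tilde\tau=x_{\oc}^2\tau$ and $\tilde\zeta=x_{\oc}\zeta$, a straightforward chain-rule computation gives
\begin{equation*}
H_p=x_{\oc}^{2-m'}\bigl(\partial_{\tilde\tau}\tilde p\,\partial_t-\partial_t\tilde p\,\partial_{\tilde\tau}\bigr)+x_{\oc}^{1-m'}\bigl(\partial_{\tilde\zeta}\tilde p\cdot\partial_z-\partial_z\tilde p\cdot\partial_{\tilde\zeta}\bigr).
\end{equation*}
Multiplying by $\rho_{\ps}^{m'-1}$ and absorbing the smooth positive factor $\sigma^{-(m'-1)}$ into a smooth nonvanishing rescaling, one finds
\begin{equation*}
\rHp=\bigl(\partial_{\tilde\zeta}\tilde p\cdot\partial_z-\partial_z\tilde p\cdot\partial_{\tilde\zeta}\bigr)+x_{\oc}\bigl(\partial_{\tilde\tau}\tilde p\,\partial_t-\partial_t\tilde p\,\partial_{\tilde\tau}\bigr)
\end{equation*}
as a smooth vector field on the ps-factor, lifted to $\SM$. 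In particular $\rHp$ involves only differentiations in the ps-variables $(t,z,\tilde\tau,\tilde\zeta)$, so $\rHp(x_{\oc})=0$ and $\rHp(y_{\oc})=0$ trivially.

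Next I would extract the coefficients on $\Lambda_\pm$. Since $p$ vanishes on $\Lambda_\pm$, the vector field $\rHp$ is tangent to $\Lambda_\pm$ (modulo terms vanishing at $x_{\oc}=0$, which is all the proposition requires), and in the coordinates $(t,z',\tilde\zeta'',x_{\oc},y_{\oc})$ it must therefore take the form
\begin{equation*}
\rHp|_{\Lambda_\pm}=A\,\partial_t+\sum_{j'}B_{j'}\partial_{z'_{j'}}+\sum_{j''}C_{j''}\partial_{\tilde\zeta''_{j''}}+D\,\partial_{x_{\oc}}+E\cdot\partial_{y_{\oc}}.
\end{equation*}
Applying both sides to the five coordinate functions and using the explicit form of $\rHp$ from the previous step yields $A=x_{\oc}\,\partial_{\tilde\tau}\tilde p|_{\Lambda_\pm}$, $B_{j'}=\partial_{\tilde\zeta'_{j'}}\tilde p|_{\Lambda_\pm}$, $C_{j''}=-\partial_{z''_{j''}}\tilde p|_{\Lambda_\pm}$, and $D=E=0$ (from $\rHp(x_{\oc})=\rHp(y_{\oc})=0$). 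Here $z''$ and $\tilde\zeta'$ are determined as functions of $(t,z',\tilde\zeta'',x_{\oc},y_{\oc})$ via the normal form of Lemma~\ref{lemma: normal form of phase function} ($z''=\partial_{\tilde\zeta''}\tilde\varphi_1$, $\tilde\zeta'=-\partial_{z'}\tilde\varphi_1$), and $\tilde p$ is evaluated after making these substitutions. This gives exactly \eqref{eq: Hp, parameter form-interior} with $V=\partial_{\tilde\tau}\tilde p|_{\Lambda_\pm}\,\partial_t$; restricting to $x_{\oc}=0$ kills the $x_{\oc}V$ term and yields \eqref{eq: Hp, parameter form}.

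The only subtlety to be careful about is the rescaling from $H_p$ to $\rHp$: the powers of $x_{\oc}$ have to balance against those of $\rho_{\ps}^{m'-1}$ so that the resulting vector field is smooth and nonvanishing at $\ffocps$, which is why the hypothesis that $\tilde p$ is smooth in $(\tilde\tau,\tilde\zeta)$ is needed. Other than this bookkeeping, the proof is a direct computation, with the tangency of $\rHp$ to $\Lambda_\pm$ (inherited from the eikonal equation $p|_{\Lambda_\pm}=0$, established in the construction of $\Lambda_\pm$ as a bicharacteristic flowout) being the only geometric input.
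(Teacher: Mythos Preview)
Your argument is correct, and it is a genuinely different and more economical route than the paper's. The paper proceeds by a verification: it applies both the claimed expression \eqref{eq: Hp, parameter form-interior} (interpreting $\partial/\partial z'_{j'}$, $\partial/\partial\tilde\zeta''_{j''}$ as total derivatives along $\Lambda_\pm$, so that chain-rule terms from the dependence $z''=Z''$, $\tilde\zeta'=\Xi'$, $\tilde\tau=\tilde T$ appear) and the full ambient Hamilton vector field to a test function $a$, and then matches the two by decomposing $\tilde p$ as a $C^\infty$-combination of the defining functions of $\Lambda_\pm$. Your approach instead uses the tangency of $\rHp$ to $\Lambda_\pm$ once, and then determines the coefficients by evaluating $\rHp$ on the coordinate functions $t,z',\tilde\zeta'',x_{\oc},y_{\oc}$. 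This sidesteps the chain-rule bookkeeping entirely: the ``cross terms'' that the paper has to cancel by hand simply never appear, because $z''$ and $\tilde\zeta'$ are not among the coordinates on $\Lambda_\pm$ that you test against.

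Two small points are worth tightening. First, rather than introducing $\rho_{\ps}=x_{\oc}/\sigma$ and then absorbing $\sigma^{-(m'-1)}$, it is cleaner to take $x_{\oc}$ itself as the local boundary defining function for fibre-infinity on $\Lambda_\pm$ (this is the convention the paper uses in writing $H_p^{2,0}a=x_{\oc}\partial_t a+\cdots$), so that $\rHp=x_{\oc}^{m'-1}H_p$ on the nose and no rescaling is needed. Second, your parenthetical ``modulo terms vanishing at $x_{\oc}=0$'' for the tangency is exactly right in the generality of Theorem~\ref{thm: vanishing principal symbol product} (where only $p_{\hom}$ vanishes on the projection of $L$), but note that both proofs in fact use the stronger statement that $\tilde p$ vanishes on $\Lambda_\pm$: the paper needs it for the decomposition \eqref{eq: p pj qj decompostion}, and you need it for exact tangency. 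This hypothesis is implicit in the proposition and holds in the application.
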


\begin{proof}
For $a(t,z,\tilde{\tau},\tilde{\zeta})$, restriction to $\mathcal{L}_{\tilde{\varphi}_1}$ is 
\begin{align*}
a(t,z',z''=\partial_{\tilde{\zeta}''}\tilde{\varphi}_1,\tilde{\tau} = -1-x_{\oc}\partial_t\tilde{\varphi}_1
,\tilde{\zeta}'=-\partial_{z'}\tilde{\varphi}_1,\tilde{\zeta}'').
\end{align*}

We use $\frac{d \cdot}{d \cdot}$ to indicate the total derivative under the dependence 
\begin{align*}
z''=\partial_{\tilde{\zeta}''}\tilde{\varphi}_1,
\quad \tilde{\zeta}'=-\partial_{z'}\tilde{\varphi}_1, \quad \tilde{\tau} = -1-x_{\oc}\partial_t\tilde{\varphi}_1.
\end{align*}
Notice that any term introduced by the chain rule due to the dependence $\tilde{\tau} = -1-x_{\oc}\partial_t\tilde{\varphi}_1$ is $O(x_{\oc})$.
Then \eqref{eq: Hp, parameter form-interior} without the $x_{\oc}V$-term applied to $a$ is:
\begin{align}  \label{eq: Hpa with dependence}
\begin{split}
 & \sum_{j'=1}^k \frac{\partial \tilde{p}}{\partial \tilde{\zeta}'_{j'}} \frac{d a}{d z'_{j'}} 
- \sum_{j''=1}^{n-k} \frac{\partial \tilde{p}}{\partial z''_{j''}} \frac{da}{d \tilde{\zeta}''_{j''}}
\\  = & \sum_{j'=1}^k \frac{\partial \tilde{p}}{\partial \tilde{\zeta}'_{j'}} 
(\frac{\partial a}{\partial z'_{j'}} + \sum_{l=1}^{n-k} \frac{\partial a}{\partial z''_l} \frac{\partial z''_l}{\partial z'_{j'}} + \sum_{l=1}^k \frac{\partial a}{\partial \tilde{\zeta}'_l} \frac{\partial \tilde{\zeta}'_l}{\partial z'_{j'} })
\\& - \sum_{j''=1}^{n-k} \frac{\partial \tilde{p}}{\partial z''_{j''}} (\frac{\partial a}{\partial \tilde{\zeta}''_{j''}}
+ \sum_{l=1}^{n-k} \frac{\partial a}{\partial z''_l} \frac{\partial z''_l}{\tilde{\zeta}''_{j''}} + \sum_{l=1}^k \frac{\partial a}{\partial \tilde{\zeta}'_l} \frac{\partial \tilde{\zeta}'_l}{\tilde{\zeta}''_{j''}}) + O(x_{\oc})
\\  = & \sum_{j'=1}^k \frac{\partial \tilde{p}}{\partial \tilde{\zeta}'_{j'}} 
(\frac{\partial a}{\partial z'_{j'}} + \sum_{l=1}^{n-k} \frac{\partial a}{\partial z''_l} \frac{\partial^2 \tilde{\varphi}_1}{\partial\tilde{\zeta}''_l\partial z'_{j'}} - \sum_{l=1}^k \frac{\partial a}{\partial \tilde{\zeta}'_l} \frac{\partial \tilde{\varphi}_1}{\partial z'_l \partial z'_{j'} })
\\ & - \sum_{j''=1}^{n-k} \frac{\partial \tilde{p}}{\partial z''_{j''}} (\frac{\partial a}{\partial \tilde{\zeta}''_{j''}}
+ \sum_{l=1}^{n-k} \frac{\partial a}{\partial z''_l} \frac{\partial \tilde{\varphi}_1}{ \partial \tilde{\zeta}''_l \partial \tilde{\zeta}''_{j''} } - \sum_{l=1}^k \frac{\partial a}{\partial \tilde{\zeta}'_l} \frac{\partial \tilde{\varphi}_1}{\tilde{z'}_l\tilde{\zeta}''_{j''}}) + O(x_{\oc}).
\end{split}
\end{align}

On the other hand, consider (without dependence of $z'',\tilde{\zeta}'$ in $z',\tilde{\zeta}''$ in differentiation)
\begin{align} \label{eq: Hpa, without dependence}
\begin{split}
H_p^{2,0}a = x_{\oc}\partial_ta + \sum_{j'=1}^k\frac{\partial \tilde{p}}{\partial \tilde{\zeta}'_{j'}} \frac{\partial a}{\partial z'_{j'}} 
+ \sum_{j''=1}^{n-k}\frac{\partial \tilde{p}}{\partial \tilde{\zeta}''_{j''}} \frac{\partial a}{\partial z''_{j''}}
-\sum_{j'=1}^k\frac{\partial \tilde{p}}{\partial z'_{j'}} \frac{\partial a}{\partial \tilde{\zeta}'_{j'}} 
- \sum_{j''=1}^{n-k}\frac{\partial \tilde{p}}{\partial z''_{j''}} \frac{\partial a}{\partial \tilde{\zeta}''_{j''}}.
\end{split}
\end{align}
Write $\tilde{p}$ as 
\begin{align} \label{eq: p pj qj decompostion}
\begin{split}
\tilde{p}(t,z,\tilde{\tau},\tilde{\zeta}) = & 
\tilde{p}_0(t,z,\tilde{\tau},\tilde{\zeta})(\tilde{\tau}+1+x_{\oc}\partial_t\varphi_1)+
\sum_{j''=1}^{n-k}\tilde{p}_{j''}(t,z,,\tilde{\tau},\tilde{\zeta}) (z_j''-\partial_{\tilde{\zeta}''_j}\tilde{\varphi}_1)
\\& + \sum_{j'=1}^k \tilde{q}_{j'}(t,z,\tilde{\tau},\tilde{\zeta})(\tilde{\zeta}'_{j'}+\partial_{z'_{j'}}\tilde{\varphi}_1).
\end{split}
\end{align}
Since factors of the form $(\tilde{\tau}+1+x_{\oc}\partial_t\varphi_1),(z_j''-\partial_{\tilde{\zeta}''_j}\tilde{\varphi}_1),(\tilde{\zeta}'_{j'}+\partial_{z'_{j'}}\tilde{\varphi}_1)$ vanishes on $\Lambda_\pm$, we only need to consider terms with derivatives hitting them instead of $\tilde{p}_0,\tilde{p}_{j''},\tilde{q}_{j'}$.
In addition, they only enter as $\tilde{\zeta}_{j''}''$ or $z'_{j'}$ derivatives, hence the contribution from the first term on the right hand side of \eqref{eq: p pj qj decompostion} is $O(x_{\oc})$.

Then restricted to $\Lambda_\pm$, we have
\begin{align*}
& \frac{\partial \tilde{p}}{\partial \zeta''_l} =
-\sum_{j''=1}^{n-k}\tilde{p}_{j''}\frac{\partial^2\tilde{\varphi}_1}{\partial \tilde{\zeta}''_j \partial \tilde{\zeta}''_l}
+ \sum_{j'=1}^kq_{j'}\frac{\partial^2 \tilde{\varphi}_1}{\partial z_j'\partial \tilde{\zeta}_l''}+O(x_{\oc}),  \quad  \frac{\partial \tilde{p}}{\partial z''_l} = p_l;
\\ &\frac{\partial \tilde{p}}{\partial z_l'} = -\sum_{j''=1}^{n-k}\frac{\partial \tilde{p}}{\partial z''_{j''}}\frac{\partial^2\tilde{\varphi}_1}{\partial \tilde{\zeta}_{j''} \partial z_l'}
+ \sum_{j'=1}^kq_{j'}\frac{\partial^2\tilde{\varphi}_1}{\partial z'_{j'} \partial z_l'} +O(x_{\oc}) , \quad \frac{\partial \tilde{p}}{\partial \tilde{\zeta}'_l} = q_l. 
\end{align*}
Substituting in (\ref{eq: Hpa, without dependence}) gives the last expression of (\ref{eq: Hpa with dependence}), which justifies \eqref{eq: Hp, parameter form-interior} and all $O(x_{\oc})$-terms are collected in $x_{\oc}V$ there.
\end{proof}

We are ready to prove Theorem~\ref{thm: vanishing principal symbol product}.

\begin{proof}[Proof of Theorem~\ref{thm: vanishing principal symbol product}]
Using the parametrization given in Lemma (\ref{lemma: normal form of phase function}), we have
  \begin{align*}
   \Phi = -\frac{t}{x_{\oc}^2}+\frac{z'' \cdot \tilde{\zeta}''-\tilde{\varphi}_1(t,z',\tilde{{\zeta}}'',x_{\oc},y_{\oc}) }{x_{\oc}},
   \end{align*}

When there is no confusion with indices of components, we denote svariables of $P$ by $t_1,z_1',z_1'',\tilde{\zeta}_1',\tilde{\zeta}_1''$, and variables of $A$ by $t_2,z_2',z_2'',\tilde{\zeta}_2',\tilde{\zeta}_2''$.
Then we have
\begin{align*}
A =&  (2\pi)^{ -\frac{2n+1+(n-k)}{2} }\int e^{i(-\frac{t_2}{x_{\oc}^2}+\frac{z''_2 \cdot \tilde{\zeta}''_2-\tilde{\varphi}_1(t_2,z'_2,\tilde{{\zeta}}''_2,x_{\oc},y_{\oc}) }{x_{\oc}})}
x_{\oc}^{-(m+\frac{n-k}{2}-\frac{3}{4})}
\\ &  a(t_2,z_2',z_2'',x_{\oc},y_{\oc},\tilde{\zeta}''_2)d\tilde{\zeta}''_2.
\end{align*}

Then we have (as in the proof of Theorem \ref{thm: PsiDO- 1c-ps FIO composition}, we only integrate over the region where $\tilde{\tau},|\tilde{\zeta}|$ are close to 1, modulo residual error)
\begin{align*}
PA = & (2\pi)^{-\frac{2n+1+(n-k)+2(n+1)}{2} }\int e^{i( \frac{(t_1-t_2)\tilde{\tau}_1 }{x_{\oc}^2} + \frac{(z_1-z_2) \cdot \tilde{\zeta}_1}{x_{\oc}} )} 
p(t_1,z_1,x_{\oc}^{-2}\tilde{\tau}_1,x_{\oc}^{-1}\tilde{\zeta}_1) 
\\&+ 
(\int e^{i(-\frac{t_2}{x_{\oc}^2}+\frac{z''_2 \cdot \tilde{\zeta}''_2-\tilde{\varphi}_1(t_2,z'_2,\tilde{{\zeta}}''_2,x_{\oc},y_{\oc}) }{x_{\oc}})}
x_{\oc}^{-(m+\frac{n-k}{2}-\frac{3}{4})}
a(t_2,z_2',z_2'',x_{\oc},y_{\oc},\tilde{\zeta}''_2)d\tilde{\zeta}''_2) dt_2dz_2\frac{d\tilde{\tau}_1 d\tilde{\zeta}_1}{x_{\oc}^{n+2}}.
\end{align*}
Recall that, because of the parabolic nature, when we applied the stationary phase method to (\ref{eq: PA, before t tau reduction}), $x_{\oc}^{-2}$ plays the role of large parameter and the lower order term are of order $O(x_{\oc}^2)$ compared with the leading order contribution from (\ref{eq: PA, t tau critical point}), hence modulo terms in $I^{m+m'-2}_{\oc-\ps}$, we may only consider the contribution from 
\begin{align} \label{eq: PA, t tau critical point, vanishing p case}
t_2=t_1,
\quad \tilde{\tau}_1=-1+x_{\oc}\partial_{t_1}\varphi_1(t_1,z_2,y_{\oc},\tilde{\zeta}_2''),
\end{align} 
and have (including the $(2\pi)x_{\oc}^2-$factor from the stationary phase expansion)
\begin{align*}
PA = & (2\pi)^{-\frac{2n+1+(n-k)+2n}{2} }\int e^{i(\frac{(z_1-z_2) \cdot \tilde{\zeta}_1}{x_{\oc}} )}  
 e^{i(-\frac{t_1}{x_{\oc}^2}+\frac{z''_2 \cdot \tilde{\zeta}''_2-\tilde{\varphi}_1(t_1,z'_2,\tilde{{\zeta}}''_2,x_{\oc},y_{\oc}) }{x_{\oc}})}
\\&x_{\oc}^{-(m+\frac{n-k}{2}-\frac{3}{4})}
p(t_1,z_1,x_{\oc}^{-2}(-1-x_{\oc}\partial_{t_1}\tilde{\varphi}_1),x_{\oc}^{-1}\tilde{\zeta}_1) 
\\&  
a(t_1,z_2',z_2'',x_{\oc},y_{\oc},\tilde{\zeta}''_2)d\tilde{\zeta}''_2 dz_2\frac{d\tilde{\zeta}_1}{x_{\oc}^{n}}.
\end{align*}

Recalling Lemma \ref{lemma: vanishing amplitude->lower order}, we may replace $a(t_2,z_2',z_2'',x_{\oc},y_{\oc},\tilde{\zeta}''_2)$ by
\begin{align*}
a_0(t_2,z'_2,x_{\oc},y_{\oc},\tilde{\zeta}_2''):=a(t_2,z'_2,\partial_{\tilde{\zeta}''}\tilde{\varphi}_1,x_{\oc},y_{\oc},\tilde{\zeta}_2''),
\end{align*}
without changing the leading order behaviour of $PA$, even in the case where $p$ has vanishing principal symbol on $\Legps$, since this vanishing of $a_0-a$ on $\Legps$ is on the top of vanishing of $p$. Thus we may write (modulo terms in $I^{m+m'-2}_{\oc-\ps}$):
\begin{align*}
PA = & (2\pi)^{-\frac{2n+1+(n-k)+2n}{2} }\int e^{i(\frac{(z_1-z_2) \cdot \tilde{\zeta}_1}{x_{\oc}} )}  
 e^{i(-\frac{t_1}{x_{\oc}^2}+\frac{z''_2 \cdot \tilde{\zeta}''_2-\tilde{\varphi}_1(t_1,z'_2,\tilde{{\zeta}}''_2,x_{\oc},y_{\oc}) }{x_{\oc}})}
\\&x_{\oc}^{-(m+\frac{n-k}{2}-\frac{3}{4})}
p(t_1,z_1,x_{\oc}^{-2}(-1-x_{\oc}\partial_{t_1}\tilde{\varphi}_1),x_{\oc}^{-1}\tilde{\zeta}_1) 
\\& 
a_0(t_1,z'_2,x_{\oc},y_{\oc},\tilde{\zeta}_2'')d\tilde{\zeta}''_2 dz_2\frac{d\tilde{\zeta}_1}{x_{\oc}^{n}}.
\end{align*}
Notice that
\begin{align*}
e^{i(\frac{(z_1-z_2) \cdot \tilde{\zeta}_1 }{x_{\oc}} )} 
= e^{i(\frac{z_1 \cdot \tilde{\zeta}_1 - z_2' \cdot \tilde{\zeta}_1' - z_2'' \cdot \tilde{\zeta}_1'' }{x_{\oc}}) } ,
\end{align*}
the integral over $z_2''$ gives 
\begin{align*}
(2\pi)^{n-k}x_{\oc}^{n-k}\delta(\tilde{\zeta}_2''-\tilde{\zeta}_1''),
\end{align*}
and then we further integrate over $\tilde{\zeta}_1''$ (effectively evaluating $\tilde{\zeta}_1''$ to be $\tilde{\zeta}_2''$) to obtain
\begin{align*}
PA = & (2\pi)^{-\frac{2n+1+(n-k)+2k}{2} }\int 
 e^{i(-\frac{t_1}{x_{\oc}^2}+\frac{z_1'' \cdot \tilde{\zeta}_2''+z_1' \cdot \tilde{\zeta}_1'- z_2' \cdot \tilde{\zeta}_1' -\tilde{\varphi}_1(t_1,z'_2,\tilde{{\zeta}}''_2,x_{\oc},y_{\oc}) }{x_{\oc}})}
\\&x_{\oc}^{-(m+\frac{n-k}{2}-\frac{3}{4})}
p(t_1,z_1,x_{\oc}^{-2}(-1-x_{\oc}\partial_{t_1}\tilde{\varphi}_1),x_{\oc}^{-1}\tilde{\zeta}_1',x_{\oc}^{-1}\tilde{\zeta}_2'') 
\\& 
a_0(t_1,z'_2,x_{\oc},y_{\oc},\tilde{\zeta}_2'')d\tilde{\zeta}''_2 dz_2'\frac{d\tilde{\zeta}_1'}{x_{\oc}^{k}}.
\end{align*}

Using the coordinate system in Lemma \ref{lemma: full rank projection} and 
Lemma \ref{lemma: normal form of phase function} means that we can write 
\begin{align}  \label{eq: Z'', Xi' definition}
z'' = Z'':= \partial_{\tilde{\zeta}''}\tilde{\varphi}_1, \; \tilde{\zeta}' = \Xi':= -\partial_{z'}\tilde{\varphi}_1 , \; \tilde{\tau} = \tilde{T}:= -1 - x_{\oc} \partial_t \tilde{\varphi}_1
\end{align}
locally on $\Lambda_\pm$. In addition, in terms of the notation in Proposition~\ref{prop:Hp-restriction}, we denote $x_{\oc}^{m'}p(t_1,z_1,x_{\oc}^{-2}(-1-x_{\oc}\partial_{t_1}\tilde{\varphi}_1),x_{\oc}^{-1}\tilde{\zeta}_1',x_{\oc}^{-1}\tilde{\zeta}_2'')$ by $\tilde{p}(t_1,z_1,\tilde{T},\tilde{\zeta}_1',\tilde{\zeta}_2'')$ below.

Consider $\tilde{I}_1$ first. We now apply the stationary phase method in the $(z_2', \tilde{\zeta}_1')$-variables. The stationary points occur at 
\begin{equation}
- \tilde{\zeta}_{1,j}' -    \frac{\partial \tilde{\varphi}_1}{\partial z'_{1, j}}  =0 \text{ and } z_1' = z_2'.
\end{equation}
 At the critical point, the Hessian $H$ of the phase function $\Phi$ and its inverse $H^{-1}$  are given in block-diagonal form by (replacing $z_2'$ in $\tilde{\varphi}_1$ by $z_1'$)
 \begin{equation}
 H =  \begin{pmatrix} - \frac{\partial^2 \tilde{\varphi}_1}{\partial z'_{1}\partial z'_{1}} & -I \\ -I &  0   \end{pmatrix} \text{ and }
 H^{-1} = \begin{pmatrix} 0 & -I \\ -I &    \frac{\partial^2 \tilde{\varphi}_1}{\partial z'_{1}\partial z'_{1}} \end{pmatrix} .
 \end{equation}
 
 We want to compute the principal symbol of $PA$ when $p$ vanishes, to leading order, on $L$. This will be at order $m'+m -1$ if $m$ is the order of $A$ and $m'$ is the order of $P$. 
To do this, we apply the stationary phase expansion, using \cite[Theorem~7.7.5]{hormanderbookvolI} with $\omega = 1$ (it is an expansion in the sense of decay as the frequency variables tend to infinity). We require the first two terms in this expansion, as the remaining terms decay faster than the term we are interested in. We observe that the $g$ function (in the notation of \cite[Theorem~7.7.5]{hormanderbookvolI}) in this theorem, which is the difference between the phase function and its quadratic approximation at the critical point, is independent of $\tilde{\zeta}_1'$. This means that we need three $z_2'$-derivatives to hit $g$ before it produces a nonzero term in the expansion. Now notice that the quadratic expression $\langle H^{-1} D, D \rangle$ there is at most one $z_2'$-derivative, as the top left corner of the matrix $H^{-1}$ vanishes.   It follows that in first two terms of the stationary phase expansion, the $g$ factor can be ignored. 

The top two terms therefore are the leading term, (we drop the lower indices for $z$ and $\tilde{\zeta}$ where convenient from now on, and collect the part of the phase function that is independent of $\tilde{\zeta}_2''$ as $\tilde{\Phi}$):
\begin{equation}\label{spe-leading}
 (2\pi)^{- \frac{2n+1+2k}{2} } \int e^{i\tilde{\Phi}}
 e^{i \frac{z''_1 \cdot \tilde{\zeta}''_2 - \tilde{\varphi}_1(t,z'_1, \tilde{\zeta}''_2,x_{\oc},y_{\oc})}{x_{\oc}} )  } x^{-m-m'+1-\frac{n-k}{2} -\frac{3}{4} } 
 \tilde{p}(t_1,z'_1, z''_1, \tilde{T}, \Xi', \tilde{\zeta}''_2) a_0(t,z'_1, \tilde{\zeta}''_2,x_{\oc},y_{\oc}) \, d\tilde{\zeta}_2'',
 \end{equation}
 and the second term (note that $\det H^{-1} = 1$)
 \begin{multline}\label{spe-second}
  (2\pi)^{-k} \int e^{i\tilde{\Phi}}
 e^{i \frac{z''_1\cdot \tilde{\zeta}''_2 - \tilde{\varphi}_1(z'_1, \tilde{\zeta}''_2)}{x_{\oc}} )  } x^{-m-m'+1-\frac{n-k}{2} -\frac{3}{4} } \Big( i D_{z_2'} \cdot D_{\tilde{\zeta}_1'} \big( \tilde{p}(t_1,z'_1, z''_1, \tilde{\tau}, \zeta_1', \tilde{\zeta}''_2) a_0(t_1,z_2', \tilde{\zeta}_1'',x_{\oc},y_{\oc}) \big) \\
- \frac{i}{2}    \frac{\partial^2 \tilde{\varphi}_1}{\partial z'_{1,l}\partial z'_{1,m}} D_{\tilde{\zeta}_{1, l}'} D_{\tilde{\zeta}_{1, m}'} \big( \tilde{p}(t_1,z'_1, z''_1, \tilde{\tau}, \zeta_1', \tilde{\zeta}''_2) a_0(t_1,z_2', \tilde{\zeta}_1'',x_{\oc},y_{\oc}) \big) \Big)\Big|_{z_2' = z_1', \tilde{\zeta}_1' = \Xi'(z_1', \tilde{\zeta}_2''),\tilde{\tau}=\tilde{T}} \, d\tilde{\zeta}_2''  .
\end{multline}

From now on, we denote $z_1',z_1'',\zeta''_2$ by $z',z'',\zeta''$.
We also abbreviate the $t,x_{\oc},y_{\oc}$ variables in functions $\tilde{\varphi}_1,\Xi',Z''$, since they only play the role as parameters in the following procedure, and don't affect the argument.

We work first on the leading term \eqref{spe-leading}. We expand $\tilde{p}$ around $z'' = Z''(z',\tilde{\zeta}'')$:
\begin{multline}
\tilde{p}(z', z'',\tilde{T}(z',\tilde{\zeta}''), \Xi'(z', \tilde{\zeta}''), \tilde{\zeta}'') = \tilde{p}(z', Z''(z', \tilde{\zeta}''), \tilde{T}, \Xi'(z', \tilde{\zeta}''), \tilde{\zeta}'') + (z'' - Z'')_j \frac{\partial \tilde{p}}{\partial z''_j}(z', Z'', \tilde{T}, \Xi', \tilde{\zeta}'') \\ + \frac1{2} (z'' - Z'')_j  (z'' - Z'')_k \frac{\partial^2 \tilde{p}}{\partial z''_j \partial z''_k}(z', Z'', \tilde{T}, \Xi', \tilde{\zeta}'') + O((z'' - Z'')^3).
\end{multline}

We now notice that $(z'' - Z'')_j = \partial_{\tilde{\zeta}''_j}(z''\cdot \tilde{\zeta}'' - \tilde{\varphi}_1(z', \tilde{\zeta}''))$. 
We can use this to integrate by parts to remove the $(z'' - Z'')_j$ factors. Notice that, to obtain terms at order $m'+m-1$, we need to do a Taylor expansion to second order here, which is a bit subtle. It is because, on integrating by parts the $D_{\tilde{\zeta}''}$, can hit the remaining $z'' - Z''$ factor, removing it as a vanishing factor. However, we can ignore terms of third order and higher. 
Because $p_{\hom}$ vanishes on (the projection of) $\Legps$, we can write
$$
\tilde{p}(z', Z''(z', \tilde{\zeta}''),\tilde{T}(z',\tilde{\zeta}''), \Xi'(z', \tilde{\zeta}''), \tilde{\zeta}'') = r(z', \tilde{\zeta}'')
$$
and note it is order $m-1$ and forms part of the subprincipal symbol of $P$. Putting this all together, from \eqref{spe-leading} we obtain, modulo Lagrangian distributions of order $m'+m-2$, 
\begin{align*}
 \frac1{(2\pi)^{k}} \int e^{i\tilde{\Phi}}
 e^{i \frac{z''\cdot \tilde{\zeta}'' - \tilde{\varphi}_1(z', \tilde{\zeta}'')}{x_{\oc}} )  } x^{-m-m'+1-\frac{n-k}{2} -\frac{3}{4} } \Big( r(z', \tilde{\zeta}'')  a(z', \tilde{\zeta}'') 
 \\ + i \frac{\partial}{\partial \tilde{\zeta}''_j} \big( \frac{\partial \tilde{p}}{\partial z''_j}(z', Z'',\tilde{T}, \Xi', \tilde{\zeta}'') a(z', \tilde{\zeta}'') \big)   - \frac{i}{2} \frac{\partial Z''_j}{\partial \tilde{\zeta}''_k} \frac{\partial^2 \tilde{p}}{\partial z''_j \partial z''_k}(z', Z'', \tilde{T}, \Xi', \tilde{\zeta}'') a(z', \tilde{\zeta}'') \Big)   \, d\tilde{\zeta}''.
\end{align*} 
 We use the product and chain rules to apply the $\tilde{\zeta}''$-derivatives, and we obtain 
  \begin{align}
      \label{spe-leading2}
      \begin{split}
 \frac1{(2\pi)^{k}} \int e^{i\tilde{\Phi}}
 e^{i \frac{z''\cdot \tilde{\zeta}'' - \tilde{\varphi}_1(z', \tilde{\zeta}'')}{x_{\oc}} )  } x^{-m-m'+1-\frac{n-k}{2} -\frac{3}{4} }
 \Big( r(z', \tilde{\zeta}'')  a(z', \tilde{\zeta}'')  + \\ i   \frac{\partial^2 \tilde{p}}{\partial \tilde{\zeta}''_j \partial z''_j}(z', Z'', \tilde{T}, \Xi', \tilde{\zeta}'') a(z', \tilde{\zeta}'')  
 + \frac{i}{2} \frac{\partial Z''_k}{\partial \tilde{\zeta}''_j} \frac{\partial^2 \tilde{p}}{\partial z''_j \partial z''_k}(z', Z'', \tilde{T}, \Xi', \tilde{\zeta}'') a(z', \tilde{\zeta}'') \\
 + i \frac{\partial \Xi'_k}{\partial \tilde{\zeta}''_j} \frac{\partial^2 \tilde{p}}{\partial \tilde{\zeta}'_k\partial z''_j}(z', Z'', \tilde{T}, \Xi', \tilde{\zeta}'') a(z', \tilde{\zeta}'')
 + i \frac{\partial \tilde{p}}{\partial z''_j}(z', Z'', \tilde{T}, \Xi', \tilde{\zeta}'') \frac{ \partial a}{\partial \tilde{\zeta}''_j} (z', \tilde{\zeta}'')
 \Big)  \, d\tilde{\zeta}'',          
      \end{split}
 \end{align} 
 where we used \eqref{eq: Z'', Xi' definition} to combine two terms involving derivatives of $Z''$.  
 
 The second term can be expressed, using the second identity in \eqref{eq: Z'', Xi' definition}, or more precisely, its partial derivative with respect to $z'$,  
 \begin{equation}
\label{spe-second2}
\begin{gathered}
  (2\pi)^{-k} \int e^{i\tilde{\Phi}}
 e^{i \frac{z''\cdot \tilde{\zeta}'' - \tilde{\varphi}_1(z', \tilde{\zeta}'')}{x_{\oc}} )  } x^{-m-m'+1-\frac{n-k}{2} -\frac{3}{4} }
  \\ \Big( - i   \frac{\partial \tilde{p}}{\partial \tilde{\zeta}'} (z', Z'', \tilde{T}, \Xi', \tilde{\zeta}'') \frac{\partial a}{\partial z'}(z', \tilde{\zeta}'') \big) 
- \frac{i}{2} \frac{\partial \Xi'_l}{\partial z'_{m}} \frac{\partial^2 \tilde{p}}{\partial \tilde{\zeta}_l' \partial \tilde{\zeta}_m'} (z', Z'', \tilde{T}, \Xi', \tilde{\zeta}'') a(z_2', \tilde{\zeta}_1'') \big) \Big) \, d\tilde{\zeta}_1''.     
\end{gathered}    
 \end{equation}

 We therefore find, combining \eqref{spe-leading2} and \eqref{spe-second2}, that the principal symbol of $PA$ is, as a multiple of the half-density $|dz' d\tilde{\zeta}''|^{1/2}$,  
 \begin{multline}
 r  a + i \big( \frac{\partial \tilde{p}}{\partial z''_j} \frac{ \partial a}{\partial \tilde{\zeta}''_j} - \frac{\partial \tilde{p}}{\partial \tilde{\zeta}'} \frac{\partial a}{\partial z'} \big) 
 + i \frac{\partial^2 \tilde{p}}{\partial \tilde{\zeta}''_j \partial z''_j} a \\ + \frac{i}{2} \Big( \frac{\partial Z''_k}{\partial \tilde{\zeta}''_j} \frac{\partial^2 \tilde{p}}{\partial z''_j \partial z''_k} +  2\frac{\partial \Xi'_k}{\partial \tilde{\zeta}''_j} \frac{\partial^2 \tilde{p}}{\partial \tilde{\zeta}'_k\partial z''_j}
 - \frac{\partial \Xi'_l}{\partial z'_{m}} \frac{\partial^2 \tilde{p}}{\partial \tilde{\zeta}_l' \partial \tilde{\zeta}_m'} \Big) a.
 \end{multline}
    
 We now write this in invariant terms. First, the (rescaled) subprincipal symbol of $P$ is 
 $$
 p_{\mathrm{sub}} = r + \frac{i}{2} \frac{\partial^2 \tilde{p}}{\partial \tilde{\zeta}_j \partial z_j}. 
 $$
 Next, the vector field appearing above applied to $a$ is $-i$ times the Hamilton vector field $H_p$ on the Legendre submanifold $\Legps$. 
 We can therefore rewrite the symbol, as  a multiple of the half-density $|dz' d\tilde{\zeta}''|^{1/2}$, as 
  \begin{multline}\label{prsy}
 p_{\mathrm{sub}}  a - i H_p a  + \frac{i}{2} \big( \frac{\partial^2 \tilde{p}}{\partial \tilde{\zeta}''_j \partial z''_j} -  \frac{\partial^2 \tilde{p}}{\partial \tilde{\zeta}'_j \partial z'_j} \big) a \\ + \frac{i}{2} \Big( \frac{\partial Z''_k}{\partial \tilde{\zeta}''_j} \frac{\partial^2 \tilde{p}}{\partial z''_j \partial z''_k} +  2\frac{\partial \Xi'_k}{\partial \tilde{\zeta}''_j} \frac{\partial^2 \tilde{p}}{\partial \tilde{\zeta}'_k\partial z''_j}
 -  \frac{\partial \Xi'_l}{\partial z'_{m}} \frac{\partial^2 \tilde{p}}{\partial \tilde{\zeta}_l' \partial \tilde{\zeta}_m'} \Big) a.
 \end{multline}
 
 Now we compute the divergence of the vector field $ H_p$ restricted to the Lagrangian. As we have just mentioned, $H_p$ on $L$ takes the form (using the chain rule for the dependence of $Z''$ and $\Xi'$ on $(z', \tilde{\zeta}'')$)
 $$
- \frac{\partial \tilde{p}}{\partial z''_j}(z', Z'', \Xi', \tilde{\zeta}'') \frac{ \partial }{\partial \tilde{\zeta}''_j} + \frac{\partial \tilde{p}}{\partial \tilde{\zeta}'}(z', Z'', \Xi', \tilde{\zeta}'') \frac{\partial }{\partial z'},
 $$
 so its divergence in these coordinates is 
 \begin{multline}
 - \frac{\partial^2 \tilde{p}}{\partial \tilde{\zeta}''_j \partial z''_j} - \frac{\partial Z''_k}{\partial \tilde{\zeta}''_j} \frac{\partial^2 \tilde{p}}{\partial z''_k \partial z''_j} - 
    \frac{\partial \Xi'_k}{\partial \tilde{\zeta}''_j} \frac{\partial^2 \tilde{p}}{\partial \tilde{\zeta}'_k \partial z''_j} \\
    +  \frac{\partial^2 \tilde{p}}{\partial z'_j \partial \tilde{\zeta}'_j } + \frac{\partial Z''_k}{\partial z'_j} \frac{\partial^2 \tilde{p}}{\partial z''_k \partial \tilde{\zeta}'_j} + 
    \frac{\partial \Xi'_k}{\partial z'_j} \frac{\partial^2 \tilde{p}}{\partial \tilde{\zeta}'_k \partial \tilde{\zeta}'_j} .
    \end{multline}
   Using  \eqref{eq: Z'', Xi' definition}, we can combine two terms, and write this as 
  \begin{equation}\label{div}
\big(  - \frac{\partial^2 \tilde{p}}{\partial \tilde{\zeta}''_j \partial z''_j}  +  \frac{\partial^2 \tilde{p}}{\partial z'_j \partial \tilde{\zeta}'_j } \big) 
 - \frac{\partial Z''_k}{\partial \tilde{\zeta}''_j} \frac{\partial^2 \tilde{p}}{\partial z''_k \partial z''_j} - 
    2\frac{\partial \Xi'_k}{\partial \tilde{\zeta}''_j} \frac{\partial^2 \tilde{p}}{\partial \tilde{\zeta}'_k \partial z''_j} + 
    \frac{\partial \Xi'_k}{\partial z'_j} \frac{\partial^2 \tilde{p}}{\partial \tilde{\zeta}'_k \partial \tilde{\zeta}'_j} .
    \end{equation}   
 Now using \eqref{div}, \eqref{prsy} can be expressed
   \begin{equation}\label{prsy2}
 p_{\mathrm{sub}}   a - i \rHp a  - \frac{i}{2} \mathrm{div} (\rHp|_{\Legps}) a. 
  \end{equation}  
 Writing $\mathbf{a} = a \cdot \nu$, with $\nu$ being the half-density factor as in (\ref{eq: 1c-ps principal symbol, half density form}) tensored with $M(\Legps)$, the previous equation can be invariantly written as
 \begin{equation}\label{prsy3}
(- i \mathscr{L}_{H_p}+p_{\mathrm{sub}}) \big(\textbf{a} \otimes |dx_{\oc}|^{-m-\frac{2n+5}{4}} \big). 
  \end{equation}  
Now notice that $H_p = x_{\oc}^{-(m'-1)}\rHp$, we know when acting on half densities we have:
\begin{align*}
\mathscr{L}_{H_p}  = &  x_{\oc}^{-(m'-1)}\mathscr{L}_{\rHp} + 
\frac{1}{2} \rHp x_{\oc}^{-(m'-1)}
\\ & = x_{\oc}^{-(m'-1)}\mathscr{L}_{\rHp} - 
\frac{m'-1}{2}
x_{\oc}^{-(m'-1)} x_{\oc}^{-1} \rHp x_{\oc},
\end{align*}
completing the proof.
\end{proof}

\bibliographystyle{plain}
\bibliography{bib_schrodinger}

\end{document}